\documentclass[11pt]{amsart}
\usepackage[utf8x]{inputenc}
\usepackage[english]{babel}
\usepackage[T1]{fontenc}
 
\usepackage{graphicx}
\usepackage{caption}
\usepackage{subcaption}

\usepackage{amssymb,amsmath}
\usepackage[hidelinks]{hyperref}
\usepackage{indentfirst}
\usepackage{enumerate,amsmath,amssymb, mathrsfs,mathtools}
\usepackage{appendix}
\usepackage{latexsym}
\usepackage{url}
\usepackage{color}
\usepackage{accents}
\usepackage{setspace}
\usepackage{pdfpages}
\usepackage{stmaryrd}
\usepackage{amsrefs}

\usepackage[margin=2.5cm]{geometry}
\allowdisplaybreaks

\newcommand{\hcirc}{\accentset{\circ}{h}}

\BibSpec{article}{%
	+{}{\PrintAuthors} {author}
	+{,}{ } {title}
	+{, }{\textit } {journal}
	+{}{ \parenthesize} {date}
		+{,  }{no. } {volume}
	+{,}{ } {pages}
	+{,}{ } {note}
}

\ExplSyntaxOn

\NewExpandableDocumentCommand{\gobblefirst}{m}
{
	\tl_tail:n { #1 }
}

\ExplSyntaxOff
\BibSpec{book}{%
	+{}{\PrintAuthors}  {author}
	+{. }{}{title}
	+{,}{ }{series}
	+{,}{ vol.~}{volume}
	+{. }{\textit}{publisher}
	+{,}{ ISBN \gobblefirst}{isbn}
}
\BibSpec{collection.article}{%
	+{}{\PrintAuthors}{author}
	+{, }{}{title}
	+{, }{\textit}{booktitle}
	+{, }{ \DashPages}{pages}
	+{,}{ }{series}
	+{, }{}{volume}
	+{, }{\textit}{publisher}
	+{,}{ }{date}
}

\parskip 0.0cm

\mathcode`l="8000
\begingroup
\makeatletter
\lccode`\~=`\l
\DeclareMathSymbol{\lsb@l}{\mathalpha}{letters}{`l}
\lowercase{\gdef~{\ifnum\the\mathgroup=\m@ne \ell \else \lsb@l \fi}}%
\endgroup

\def\XXint#1#2#3{{\setbox0=\hbox{$#1{#2#3}{\int}$ }
		\vcenter{\hbox{$#2#3$ }}\kern-.6\wd0}}

\newtheorem{prop}{Proposition}
\newtheorem{thm}[prop]{Theorem}
\newtheorem{lem}[prop]{Lemma}
\newtheorem{coro}[prop]{Corollary}

\newtheorem{rema}[prop]{Remark}

\newtheorem{conj}[prop]{Conjecture}

\title[Limits of isoperimetric surfaces]{Schoen's conjecture for limits of isoperimetric surfaces}
\author{Michael Eichmair}
\address{
	\textnormal{Michael Eichmair \newline  \indent
		University of Vienna \newline \indent
		Faculty of Mathematics  \newline \indent
		Oskar-Morgenstern-Platz 1 \newline \indent
		1090 Vienna, 	Austria  \newline\indent 
		 \href{https://orcid.org/0000-0001-7993-9536}{https://orcid.org/0000-0001-7993-9536} \newline\indent	
		 \href{mailto:michael.eichmair@univie.ac.atm}{michael.eichmair@univie.ac.at}}
}

\author{Thomas Koerber}
\address{\textnormal{Thomas Koerber  \newline \indent
		University of Vienna \newline \indent
		Faculty of Mathematics  \newline \indent
		Oskar-Morgenstern-Platz 1 \newline \indent 1090 Vienna,	Austria \newline\indent 
		 \href{https://orcid.org/0000-0003-1676-0824}{https://orcid.org/0000-0003-1676-0824} \newline \indent
		  \href{mailto:thomas.koerber@univie.ac.atm}{thomas.koerber@univie.ac.at}}
}

\begin{document}

	\date{\today}
	\onehalfspacing
	\begin{abstract}	
		Let $(M,g)$ be an $n$-dimensional asymptotically flat Riemannian manifold with nonnegative scalar curvature that admits a noncompact area-minimizing hypersurface $\Sigma \subset M$. In the case where $n = 3$, O.~Chodosh and the first-named author have proven that $(M, g)$ is necessarily isometric to Euclidean space, confirming a conjecture of R.~Schoen. In this paper, we extend this result to dimension $3 < n \leq 7$ provided that $\Sigma$ arises as a limit of isoperimetric surfaces. By contrast, we prove that when $3 < n \leq 7$, there is no such  result  for general noncompact area-minimizing $\Sigma \subset M$, even when additional assumptions on the stability of $\Sigma$ are imposed.
			\end{abstract}
	\maketitle 
	\section{Introduction}	
	Throughout, we assume that $(M,g)$ is a connected complete Riemannian manifold. \\ \indent 
The following conjecture of R.~Schoen is related to his proof of the positive mass theorem with S.-T.~Yau in \cites{SchoenYau}.

\begin{conj}[Cp.~{\cite[p.~979]{CCE}}] \label{conjecture} 
	Let $(M,g)$ be an asymptotically flat Riemannian manifold of dimension $ n=3$  with nonnegative scalar curvature. Suppose that there exists a noncompact area-minimizing boundary $\Sigma =\partial \Omega$, $\Omega\subset M$. Then $(M,g)$ is isometric to flat $\mathbb{R}^3$.
\end{conj}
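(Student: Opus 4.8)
We prove Conjecture~\ref{conjecture} in the remaining range $3<n\le 7$ under the additional hypothesis that $\Sigma$ is a limit of isoperimetric surfaces. The plan is to follow the template of the $n=3$ case of O.~Chodosh and the first-named author, and to bridge the two places where $n=3$ is used --- Gauss--Bonnet and the two-dimensional logarithmic cut-off --- by exploiting the isoperimetric origin of $\Sigma$. After passing to a connected component, I assume $\Sigma=\partial\Omega$ is connected and properly embedded, hence separates $M$. Since it is area-minimizing it is a smooth (here $3\le n\le 7$ enters), two-sided, stable minimal hypersurface, and the hypothesis supplies isoperimetric regions $\Omega_j$ with $\partial\Omega_j\to\Sigma$ locally smoothly and with Lagrange parameters --- equivalently, the mean curvatures of $\partial\Omega_j$ --- tending to zero. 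Elliptic theory on the asymptotically flat end, together with the area-minimizing property (and, where needed, the isoperimetric origin of $\Sigma$), shows that $\Sigma$ has one end, a graph of a function $u$ over the exterior of a ball in a hyperplane $P$ with $|x|^{-1}u\to 0$; rotating the chart we take $P=\{x^n=0\}$. The goal is to show $(M,g)$ is flat; since $\tau=n-2$ is exactly the decay threshold at which the ADM mass is defined, it suffices to produce a local isometric splitting along $\Sigma$ and propagate it (or, equivalently, to prove $m_{\mathrm{ADM}}(M,g)\le 0$ and invoke the rigidity case of the positive mass theorem of Schoen--Yau, valid for $n\le 7$).

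Next I would record the stability inequality $\int_\Sigma(|A|^2+\operatorname{Ric}(\nu,\nu))\,\varphi^2\le\int_\Sigma|\nabla\varphi|^2$ together with the Gauss equation $R_\Sigma=R_M-2\operatorname{Ric}(\nu,\nu)-|A|^2$ (using $H_\Sigma=0$), which combine into
\[
\tfrac12\int_\Sigma\bigl(R_M+|A|^2\bigr)\,\varphi^2\le\int_\Sigma|\nabla\varphi|^2+\tfrac12\int_\Sigma R_\Sigma\,\varphi^2.
\]
When $n=3$ the term $\int_\Sigma R_\Sigma\,\varphi^2$ is controlled by Gauss--Bonnet and the planarity of $\Sigma$, and the gradient term is killed by a two-dimensional logarithmic cut-off, forcing $A\equiv 0$ and $R_M|_\Sigma\equiv 0$. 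For $3<n\le 7$ there is no topological handle on $\int_\Sigma R_\Sigma\,\varphi^2$ and the gradient term no longer decouples, so stability alone is insufficient; this is precisely the gap the isoperimetric hypothesis must fill.

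The heart of the argument is to use the regions $\Omega_j$ to build, on one side of $\Sigma$, a smooth foliation $\{\Sigma_s\}_{s\in[0,\varepsilon)}$ with $\Sigma_0=\Sigma$ by hypersurfaces of constant mean curvature $H(s)\to 0$ as $s\to 0$, each $\Sigma_s$ inheriting, in the limit from the $\Omega_j$, a one-sided isoperimetric minimization for its enclosed volume; the $\Omega_j$ serve both to guarantee existence and as barriers. Writing $\mathcal A(s)=|\Sigma_s|$ and $\mathcal V(s)$ for the enclosed volume, the first variation gives $\mathcal A'(s)=H(s)\,\mathcal V'(s)$, while the second variation of area along the foliation, combined with $R_M\ge 0$ and the Gauss equation, forces at $s=0$, where $H=0$, that $A\equiv 0$ and $\operatorname{Ric}(\nu,\nu)\equiv 0$ on $\Sigma$; equivalently, the lapse of the foliation is constant and $(M,g)$ splits isometrically as $(-\varepsilon,\varepsilon)\times\Sigma$ with $\Sigma$ flat. (An equivalent route is to extract from the isoperimetric comparison a linear bound $|\partial\Omega_j\cap B_r|\lesssim r$, which suffices to run the displayed stability inequality in the higher-dimensional setting in place of the Gauss--Bonnet input.)

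It remains to globalize: the product structure extends along $M$ by a continuation argument --- each leaf is again totally geodesic, so there is no obstruction, and completeness of $M$ rules out a finite interval --- so $(M,g)\cong\mathbb{R}\times\Sigma$ with a product metric and $\Sigma$ complete and flat; carrying a single asymptotically Euclidean end, $\Sigma$ must be $\mathbb{R}^{n-1}$, whence $(M,g)\cong\mathbb{R}^n$. (Alternatively, with $R_M|_\Sigma\equiv 0$ and the splitting in hand near $\Sigma$, comparing $\Sigma$ with competitors built from the coordinate hyperplanes $\{x^n=t\}$ as $t\to\infty$ and using the $\tau=n-2$ decay yields $m_{\mathrm{ADM}}\le 0$, and the positive mass theorem finishes the proof.) I expect the main obstacle to be the construction and control of the one-sided isoperimetric/constant-mean-curvature foliation near the \emph{non-compact} limit $\Sigma$: existence with the correct barriers when the $\Omega_j$ degenerate with unbounded area, and uniform asymptotics of the leaves --- so that the second variation, or the mass comparison, can be run at infinity with only the borderline $\tau=n-2$ decay, and so that the volume that escapes to infinity in the degeneration $\Omega_j\to\Omega$ does not spoil the comparison.
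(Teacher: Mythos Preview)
Your proposal has two genuine gaps that the paper identifies as the essential difficulties in dimension $n>3$.

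First, the step ``second variation of area along the foliation \ldots\ forces at $s=0$ \ldots\ that $A\equiv 0$ and $\operatorname{Ric}(\nu,\nu)\equiv 0$'' is precisely what fails without extra input. In dimension $3$ this works because every non-compact area-minimizing boundary is automatically stable with respect to \emph{asymptotically constant} variations (via the logarithmic cut-off), but for $n>3$ this is false: the paper exhibits (Theorem~\ref{counterexample}) infinitely many disjoint non-compact area-minimizing hypersurfaces in spatial Schwarzschild, none of which are stable in this stronger sense, and none of which are totally geodesic. So neither the area-minimizing property nor a vague CMC foliation built from the $\Omega_j$ suffices. What the paper actually does is use the isoperimetric origin to prove directly that $\Sigma$ is stable with respect to asymptotically constant variations (Proposition~\ref{strictly stable}): one tests the second variation of $\partial\Omega_k$ against a Euclidean translation $e_n$, corrected by small far-away bumps to make it volume-preserving to second order, and then passes to the limit using the precise graphical asymptotics of $\Sigma$ (Proposition~\ref{graph estimate}) and an integration-by-parts identity (Lemma~\ref{euclidean lemma}). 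Only then does the Schoen--Yau argument (Proposition~\ref{flat flat}) yield $h\equiv 0$, $R=\operatorname{Ric}(\nu,\nu)=0$ on $\Sigma$. Your alternative ``linear bound $|\partial\Omega_j\cap B_r|\lesssim r$'' is dimensionally wrong and, even corrected to $r^{n-1}$, does not furnish the asymptotically-constant stability.

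Second, the globalization ``each leaf is again totally geodesic, so there is no obstruction'' is not a continuation argument that can be run here. Since, by Theorem~\ref{counterexample}, non-flat manifolds admit area-minimizing hypersurfaces, you cannot conclude flatness of $M$ merely from $\Sigma$ being flat and totally geodesic; you must produce, through \emph{every} point $p\in M$, a non-compact area-minimizing boundary $\Sigma_p$ that is \emph{also} stable with respect to asymptotically constant variations. The paper does this (Proposition~\ref{many area minimizing}) by a delicate metric-perturbation trick (Lemma~\ref{metric perturbation}): one lowers $g$ conformally on a small set containing $p$ and a point of $\Sigma$, takes isoperimetric regions for the perturbed metric (which exist by positive mass, Lemma~\ref{existence iso}), and shows their limits must pass near $p$ because the scalar curvature is made positive elsewhere in the perturbation region. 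The resulting $\Sigma_p$ need not foliate $M$, so the paper adapts the Anderson--Rodr\'iguez/Liu argument, handling separately the cases where nearby leaves are disjoint from $\Sigma$, meet it in one hyperplane, or in parallel hyperplanes, to conclude $Rm=0$ at $p$. Your product-splitting shortcut bypasses all of this and would, if valid, contradict the Schwarzschild example.
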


The background on asymptotically flat Riemannian manifolds, area-minimizing boundaries, and isoperimetric regions used in this paper is recalled in Appendix \ref{adm appendix} and Appendix \ref{iso appendix}. Our convention is that the boundary of an asymptotically flat Riemannian manifold $(M,g)$ is either empty or a closed minimal surface that is outermost in the sense that that every closed minimal hypersurface in $(M, g)$ is contained in $\partial M$.   \\ \indent 
Conjecture \ref{conjecture} has been confirmed by O.~Chodosh and the first-named author \cite[Theorem 1.6]{CCE}. A natural way in which noncompact area-minimizing boundaries arise is as  limits of isoperimetric surfaces. The goal of this paper is to extend Conjecture \ref{conjecture} to higher dimensions in this setting.

\begin{thm}
	\label{main result 2} Let $(M,g)$ be a Riemannian manifold of dimension $3<n\leq 7$ that is asymptotically flat of rate $\tau>n-3$ and has nonnegative scalar curvature. Suppose that there exist a noncompact area-minimizing boundary $\Sigma =\partial \Omega$, $\Omega\subset M$, and isoperimetric regions $\Omega_1,\,\Omega_2,\hdots\subset M$  such that $\Omega_k\to \Omega$ locally smoothly. Then $(M,g)$ is isometric to flat $\mathbb{R}^n$.
\end{thm}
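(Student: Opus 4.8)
The plan is to show that $(M,g)$ is flat outside a compact set, so that its ADM mass vanishes, and then to invoke the rigidity case of the positive mass theorem in dimension $n$, which is available for $3\le n\le 7$. To begin, note that since $3<n\le 7$ the area-minimizing boundary $\Sigma$ is smooth, properly embedded and two-sided, and separates $M$ into $\Omega$ and $M\setminus\overline{\Omega}$. Each $\partial\Omega_k$ is a connected, compact isoperimetric boundary; for $k$ large it lies in the region where $g$ is close to the Euclidean metric, and $\partial\Omega_k\to\Sigma$ locally smoothly. By the asymptotic analysis for area-minimizing boundaries in asymptotically flat manifolds of rate $\tau>n-3$ recalled in Appendix~\ref{adm appendix}, $\Sigma$ is connected and has a single end that is $C^2$-asymptotic to an affine hyperplane, which after a translation of the asymptotic coordinates we may take to pass through the origin.

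Next I would promote $\Sigma$ to a foliation, and here the isoperimetric structure enters essentially. For $k$ large, $\partial\Omega_k$ is a volume-constrained stable constant mean curvature hypersurface with mean curvature $H_k\to 0$. Varying the enclosed volume through an interval around $|\Omega_k|$ of length comparable to $|\partial\Omega_k|$, the corresponding isoperimetric hypersurfaces sweep out a fixed-size neighborhood of $\Sigma$; passing to the limit $k\to\infty$ and continuing the sweep-out, one obtains a foliation $\{\Sigma_t\}_{t\in I}$ of a neighborhood $U$ of infinity in $M$ by non-compact area-minimizing hypersurfaces, with $\Sigma_0=\Sigma$, each $C^2$-asymptotic to a hyperplane parallel to that of $\Sigma$, and each retaining quantitatively the volume-constrained stability of the approximating $\partial\Omega_k$.

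Then I would run a rigidity argument leaf by leaf. Fix a leaf $\Sigma_t$. Its induced metric $h_t$ is asymptotically flat of dimension $n-1$ and rate $\tau>n-3=(n-1)-2$, so by the faster-than-critical decay the ADM mass of $(\Sigma_t,h_t)$ vanishes. Since $\Sigma_t$ is area-minimizing and $R_g\ge 0$, the Schoen--Yau conformal descent produces a positive function $w_t$ on $\Sigma_t$, tending to a constant at infinity, with $R_{\widetilde h_t}\ge 0$ for $\widetilde h_t=w_t^{4/(n-3)}h_t$, the defect in this inequality being a non-negative multiple of $(|A_t|^2+R_g)\,w_t$; combining the decay rate $\tau>n-3$ with the volume-constrained stability and optimality of the approximating isoperimetric surfaces, one shows that $(\Sigma_t,\widetilde h_t)$ again has vanishing ADM mass, so by the rigidity case of the positive mass theorem in dimension $n-1$ (available since $3\le n-1\le 6$) it is flat. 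The equality case of the descent then forces $|A_t|^2+R_g\equiv 0$ along $\Sigma_t$ — hence $A_t\equiv 0$ and $R_g\equiv 0$ on $\Sigma_t$ — and forces $w_t$ to be constant, so $(\Sigma_t,h_t)$ is flat as well. As every leaf is totally geodesic, the foliation is a metric product $g=\varphi^2\,dt^2+h$ with $h$ flat; since moreover $R_g\equiv 0$ on $U$, the lapse $\varphi$ is a bounded harmonic function on the flat $\mathbb{R}^{n-1}$, hence constant, so $(M,g)$ is flat on $U$. Therefore the ADM mass of $(M,g)$ vanishes, and the rigidity case of the positive mass theorem in dimension $n$ shows that $(M,g)$ is isometric to flat $\mathbb{R}^n$.

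I expect two steps to be delicate. First, genuinely producing the foliation $\{\Sigma_t\}$ from the sequence $\Omega_k$ requires a quantitative description of the position of isoperimetric regions of nearby volumes, together with a non-degeneracy argument guaranteeing that the limiting leaves assemble into a smooth foliation rather than merely a lamination. Second, and more serious, is establishing that the conformally changed leaves $\widetilde h_t$ have vanishing ADM mass: for an arbitrary stable non-compact $\Sigma$ — the setting of Conjecture~\ref{conjecture} — the descent only yields $R_{\widetilde h}\ge 0$ and, a priori, strictly positive mass, and it is precisely here that the volume-constrained stability and optimality of the approximating isoperimetric surfaces, together with the rate $\tau>n-3$, must be exploited to control the mass contribution of $w_t$ and close the argument. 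By contrast, in the case $n=3$ treated by Chodosh and the first-named author in \cite{CCE}, the corresponding rigidity along $\Sigma$ follows directly from the Gauss--Bonnet theorem and needs no isoperimetric input, which is why that argument applies to arbitrary non-compact area-minimizing boundaries.
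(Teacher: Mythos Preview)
Your overall strategy---use the isoperimetric origin of $\Sigma$ to upgrade ordinary stability to stability with respect to asymptotically constant variations, deduce that $\Sigma$ is flat and totally geodesic with $R=Ric(\nu,\nu)=0$ along it, and then propagate this rigidity throughout $M$---is the paper's strategy too. The leaf-rigidity step in your third paragraph is essentially Proposition~\ref{flat flat}, and you correctly locate the isoperimetric input at the point where one must show the conformally deformed leaf has vanishing mass (this is the content of Proposition~\ref{strictly stable}).

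The genuine gap is the foliation construction, which you flag as delicate but which in fact does not work as described. Varying the enclosed volume of $\Omega_k$ gives you no a~priori control on where the resulting isoperimetric boundaries sit: nothing prevents them from drifting off, failing to be nested, or all limiting onto the same $\Sigma$. Even granting a local foliation near $\Sigma$, a neighborhood of $\Sigma$ is asymptotically a slab about a fixed hyperplane, not the complement of a compact set, so your step ``$g$ is flat on a neighborhood $U$ of infinity'' does not follow; the phrase ``continuing the sweep-out'' hides exactly the missing argument. Theorem~\ref{counterexample} is a warning here: half of Schwarzschild is foliated by non-compact area-minimizing hypersurfaces, so the mere existence of such a foliation is compatible with positive mass---each leaf must itself arise as a limit of isoperimetric surfaces to inherit the extra stability, and your construction does not guarantee this.

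The paper proceeds differently. Assuming $m>0$ for contradiction, it perturbs the metric with a chain of localized conformal bumps (Lemma~\ref{metric perturbation}) so as to \emph{force} the limit of large isoperimetric regions in the perturbed metric to pass through any prescribed interior point $p$; letting the perturbation vanish produces an area-minimizing $\Sigma_p\ni p$ that is again a limit of isoperimetric surfaces and hence, by Proposition~\ref{strictly stable} and Proposition~\ref{flat flat}, flat and totally geodesic (Proposition~\ref{many area minimizing}). The family $\{\Sigma_p\}$ is \emph{not} a foliation: distinct $\Sigma_p$ may intersect. Since each is totally geodesic in $M$ and intrinsically flat, any intersection $\Sigma\cap\Sigma_p$ is a union of parallel affine hyperplanes inside $\Sigma$, and the paper runs a three-case analysis (disjoint; one hyperplane; at least two hyperplanes), using the Harnack and boundary Harnack inequalities on the graphical Jacobi function together with a Liouville theorem on a slab (Lemma~\ref{harmonic on slab}), to conclude $Rm=0$ at $p$. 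This replaces your product-structure argument and is where the higher-dimensional difficulty---absent in the $n=3$ case of \cite{CCE}---is actually confronted.
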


O.~Chodosh, Y.~Shi, H.~Yu, and the first-named author have shown that in asymptotically flat Riemannian 3-manifolds with nonnegative scalar curvature and positive mass, there is a unique isoperimetric region for every given sufficiently large amount of volume and that these large isoperimetric regions are close to centered coordinate balls in the chart at infinity; see \cite[Theorem 1.1]{CESH}. An alternative proof of this result with a different condition on the scalar curvature was subsequently found by H.~Yu; see \cite[Theorem 1.6]{yu}. As a step towards the characterization of large isoperimetric regions in asymptotically flat Riemannian manifolds of dimension $3 < n \leq 7$, Corollary \ref{main result} shows that the (unique) large components of the boundaries of such regions necessarily diverge.
\begin{coro}\label{main result} Let $(M,g)$ be a Riemannian manifold of dimension $3<n\leq 7$ that is asymptotically flat of rate $\tau>n-3$ and has nonnegative scalar curvature and positive mass. Let $K\subset M$ be a compact set such that $K\cap \partial M=\emptyset$. Suppose that there are isoperimetric regions $\Omega_1,\,\Omega_2,\hdots\subset M$ with $|\Omega_k|\to\infty$. Then, for all $k$ sufficiently large, either $K\subset \Omega_k$  or $K\cap \Omega_k=\emptyset$.
\end{coro}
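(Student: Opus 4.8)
The plan is to derive Corollary \ref{main result} from Theorem \ref{main result 2} by contradiction, producing a non-compact area-minimizing boundary as a limit of the $\partial\Omega_k$. First I would reduce to the case that $K$ is connected: $M\setminus\partial M$ is connected, being the interior of the connected manifold $M$, so any compact $K\subset M\setminus\partial M$ lies in a connected compact set $K'\subset M\setminus\partial M$ --- adjoin finitely many compact arcs joining the components of $K$ and take a compact neighborhood --- and the conclusion for $K'$ implies that for $K$. Assuming now that $K$ is connected and that the conclusion fails, one passes to a subsequence along which $K\cap\Omega_k\neq\emptyset$ and $K\not\subset\Omega_k$ for every $k$; the connectedness of $K$ then yields points $x_k\in K\cap\partial\Omega_k$.

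Next I would extract a limit. Each $\Omega_k$ is bounded with compact boundary, $\partial\Omega_k$ is a stable constant-mean-curvature hypersurface, and its mean curvature $H_k$ --- the Lagrange multiplier of the isoperimetric problem --- tends to $0$ as $|\Omega_k|\to\infty$, by comparison with large coordinate balls in the chart at infinity (Appendix \ref{iso appendix}). Together with the uniform area-ratio bounds for isoperimetric boundaries and the interior regularity theory in dimension $3<n\leq7$, this gives, after passing to a further subsequence, a region $\Omega$ with $\Omega_k\to\Omega$ locally smoothly such that $\Sigma=\partial\Omega$ is minimal (since $H_k\to0$) and, because $x_k\in K$ for all $k$, non-empty. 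Moreover $\Sigma$ is area-minimizing: any compact competitor for $\Omega$ can be converted into a volume-preserving competitor for $\Omega_k$ by redistributing the bounded volume discrepancy as a thin layer over $\partial\Omega_k$ deep inside the end, at a cost of only $o(1)$ in area because $H_k\to0$ there and the area of $\partial\Omega_k$ diverges.

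The crux of the argument, and the step I expect to be the main obstacle, is to show that $\Sigma$ is \emph{non-compact}. Suppose it were compact, contained in a coordinate ball $B_R$ in the chart at infinity. Since $\Omega_k$ is bounded with compact boundary and $|\Omega_k|\to\infty$, the boundary $\partial\Omega_k$ must reach arbitrarily far into the end; combined with the locally smooth convergence $\partial\Omega_k\to\Sigma$, this forces, for large $k$, a splitting of $\partial\Omega_k$ into a piece $\Gamma_k^{\mathrm{near}}$ converging to $\Sigma$ and a non-empty piece $\Gamma_k^{\mathrm{far}}$ escaping to infinity, separated by an annular region of the end in which $\Omega_k$ is either empty or full. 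In the first case I would delete the bounded chamber enclosed by $\Gamma_k^{\mathrm{near}}$, in the second case fill it in; either way the perimeter changes by $-|\Gamma_k^{\mathrm{near}}|\to-|\Sigma|<0$, while the resulting bounded change of enclosed volume is compensated over $\Gamma_k^{\mathrm{far}}$ at cost $o(1)$ as above. This yields a competitor with the same volume as $\Omega_k$ but strictly smaller perimeter for large $k$, contradicting that $\Omega_k$ is isoperimetric; hence $\Sigma$ is non-compact.

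Thus $\Sigma=\partial\Omega$ is a non-compact area-minimizing boundary and $\Omega_k\to\Omega$ locally smoothly, so Theorem \ref{main result 2} forces $(M,g)$ to be isometric to flat $\mathbb{R}^n$. Since flat $\mathbb{R}^n$ has vanishing mass, this contradicts the positivity of the mass of $(M,g)$ and completes the proof. The two places requiring genuine care are the extraction of the smooth limit together with its area-minimality (standard for isoperimetric boundaries, recalled in Appendix \ref{iso appendix}) and, above all, the non-compactness of the limit, where the hypothesis $|\Omega_k|\to\infty$ enters in an essential way beyond merely ensuring $H_k\to0$.
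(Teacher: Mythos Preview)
Your argument is correct and follows the paper's strategy exactly: negate the conclusion to obtain points $x_k\in K\cap\partial\Omega_k$, extract a non-compact area-minimizing boundary $\Sigma=\partial\Omega$ as a subsequential limit of the $\partial\Omega_k$ (the paper packages this step as Proposition~\ref{smooth local convergence}, proved in Appendix~\ref{iso appendix} via essentially the same compactness and volume-adjustment arguments you outline), and then invoke Theorem~\ref{main result 2} to contradict the positivity of the mass. Your explicit reduction to connected $K$ is a detail the paper's short proof glosses over, and your cut-and-paste argument for the non-compactness of $\Sigma$ is a valid alternative to the paper's route via the rescaled convergence of Lemma~\ref{large blowdown2}.
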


\begin{rema}
	The assumption that $\tau>n-3$ is required in our proof in two different places. First, in the proof of Proposition \ref{gamma limit lemma}, it is used to estimate the difference between the second variation of area of $\partial \Omega_k$ and of $\partial \Omega$. Second, the assumption guarantees that coordinate hyperplanes in the end of $(M,g)$ are asymptotically flat with mass zero. This is crucial in the proof of Proposition \ref{flat flat}. Our methods give no clue as to whether Theorem \ref{main result 2} holds for all $\tau > (n-2)/2$. If true, then a different strategy of proof would be required.
\end{rema}

\subsection*{Outline of our arguments}
 Let $(M,g)$ be an asymptotically flat Riemannian manifold of dimension $3\leq n\leq 7$ with nonnegative scalar curvature. Suppose that $\Sigma=\partial \Omega$ is a noncompact area-minimizing boundary. In particular, for every open set $U \Subset M$ and every smooth variation $\{\Sigma(s)\}_{|s| < \epsilon}$ of $\Sigma = \Sigma(0)$ with compact support in $U$,
 $$ \frac{d}{ds}\bigg|_{s=0} |U\cap \Sigma(s)| = 0\qquad  \text{ and }\qquad \frac{d^2}{ds^2}\bigg|_{s=0} |U\cap \Sigma(s) | \geq 0.
 $$
 Equivalently, the mean curvature of $\Sigma$ vanishes and the stability inequality
  $$
 \int_{\Sigma} (|h|^2+Ric(\nu,\nu))\,f^2\,\mathrm{d}\mu\leq \int_{\Sigma} |\nabla f|^2\,\mathrm{d}\mu
 $$
 holds for all $f\in C^\infty_c(\Sigma)$. Here, $\mathrm{d}\mu$ is the area element, $\nabla$ the covariant derivative, $\nu$ the outward normal, and $h$ the second fundamental form, all with respect to $\Sigma$. $Ric$ denotes the Ricci tensor of $(M,g)$. We say that $\Sigma$ is stable with respect to asymptotically constant variations if, in addition, 
 \begin{align} \label{stable wrt acv} 
 \int_{\Sigma} (|h|^2+Ric(\nu,\nu))\,(1+f)^2\,\mathrm{d}\mu\leq \int_{\Sigma} |\nabla f|^2\,\mathrm{d}\mu
 \end{align} 
 for all $f\in C^\infty_c(\Sigma)$.\\
 \indent The proof of the positive mass theorem \cite[Theorem 4.2]{montecatini} shows that an asymptotically flat area-minimizing boundary that has mass zero and  is stable with respect to asymptotically constant variations is isometric to flat $\mathbb{R}^{n-1}$ and totally geodesic. Moreover, the scalar curvature of $(M,g)$ vanishes along such a boundary; see Lemma \ref{tilde flat}. An important ingredient in the proof of Conjecture \ref{conjecture} in \cite[Theorem 1.6]{CCE}  specific to three dimensions is that every noncompact area-minimizing boundary is stable with respect to asymptotically constant variations; see, e.g., \cite[p.~54]{SchoenYau}. Using this, O.~Chodosh and the first-named author have shown that $(M,g)$ is foliated by noncompact area-minimizing boundaries. The construction of these boundaries is based on solving Plateau problems with respect to a carefully chosen local perturbation of the metric $g$ and inspired by the proof of a conjecture of J.~Milnor due to G.~Liu \cite{liu}. An adaptation of an argument by M.~Anderson and L.~Rodr\'iguez \cite{andersonrodriguez} shows that the curvature tensor of $(M,g)$ vanishes along each leaf of this foliation and hence on all of $M$. \\ \indent
 As observed by R.~Schoen and S.-T.~Yau in \cite{pnas}, the situation is markedly different in higher dimensions. Indeed, every asymptotically flat Riemannian manifold $(M,g)$ of dimension $3<n\leq 7$ admits infinitely many noncompact area-minimizing hypersurfaces; see Remark \ref{schoenyaucarlotto}. Building on the gluing technique devised by A.~Carlotto and R.~Schoen \cite{carlottoschoen} and further developed by Y.~Mao and Z.~Tao \cite{maotao}, our first result shows that such $(M,g)$ can be chosen to have nonnegative scalar curvature and positive mass and such that some of these area-minimizing hypersurfaces satisfy \eqref{stable wrt acv}. 
\begin{thm} \label{counterexample 2}
 Let $3 < n \leq 7$. There exists a Riemannian manifold $(M,g)$ of dimension $n$ that is asymptotically flat of rate $\tau=n-2$ with nonnegative scalar curvature and positive mass that contains infinitely many mutually disjoint noncompact area-minimizing hypersurfaces,  all of which are stable with respect to asymptotically constant variations. 
\end{thm}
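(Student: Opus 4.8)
\emph{Overall plan and construction of $(M,g)$.} The plan is to produce $(M,g)$ with the Carlotto--Schoen gluing technique \cite{SchoenCarlotto} and to take the non-compact area-minimizing hypersurfaces to be coordinate hyperplanes lying in the Euclidean region of $(M,g)$. Start from any smooth asymptotically flat metric $g_0$ on $\mathbb{R}^n$ with non-negative scalar curvature and positive mass, and fix solid cones $\mathcal{C}\subset\mathcal{C}'\subset\mathbb{R}^n$ with common vertex at the origin, common axis $e_1$, and with $\mathcal{C}'$ of opening half-angle less than $\pi/2$. For any $\tau$ in the stated range, the Carlotto--Schoen construction then produces an asymptotically flat metric $g$ of rate $\tau$ on $\mathbb{R}^n$, with non-negative scalar curvature and positive mass, such that $g=g_0$ on $\mathcal{C}$ while $g$ equals the Euclidean metric $\delta$ on $\mathbb{R}^n\setminus\mathcal{C}'$. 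Since $\mathcal{C}'$ has opening half-angle less than $\pi/2$, every point with $x_1\leq 0$ except the origin lies outside $\mathcal{C}'$, so $g=\delta$ on $\{x_1\leq 0\}\setminus\{0\}$.

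\emph{The hypersurfaces and their ACV-stability.} For $c<0$ set $\Sigma_c:=\{x_1=c\}$. Each $\Sigma_c$ lies in the region where $g=\delta$, hence is a complete, non-compact, totally geodesic — in particular minimal — hypersurface, and distinct values of $c$ give mutually disjoint hypersurfaces, of which there are infinitely many. Since the metric is flat along $\Sigma_c$, both $Ric$ and the second fundamental form $h$ vanish there, so the left-hand side of \eqref{stable wrt acv} vanishes identically and $\Sigma_c$ is stable with respect to asymptotically constant variations. It therefore remains only to show that each $\Sigma_c$ is area-minimizing in $(M,g)$.

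\emph{Area-minimality via a minimal foliation.} I would prove area-minimality by calibration. The goal is to construct a smooth foliation $\{\hat\Sigma_t\}_{t\in\mathbb{R}}$ of $M$ by properly embedded minimal hypersurfaces with $\hat\Sigma_t$ asymptotic to $\{x_1=t\}$; for $t<0$ one takes $\hat\Sigma_t=\Sigma_t$, which is already minimal, and for the remaining $t$ the hyperplane $\{x_1=t\}$ meets the non-flat region and $\hat\Sigma_t$ is produced as a properly embedded minimal hypersurface trapped, outside a compact set, in a thin slab about $\{x_1=t\}$ — the neighbouring hyperplanes provide barriers, and the curvature estimates available in the range $n\leq 7$ yield regularity; the strong maximum principle forces the leaves to be ordered, hence pairwise disjoint, and a continuity argument shows that they exhaust $M$. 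When $g_0$ is a spatial Schwarzschild metric, such a foliation should be obtainable from the non-compact minimal hypersurfaces of Theorem \ref{counterexample}, glued to the flat foliation of $\mathbb{R}^n\setminus\mathcal{C}'$, in the spirit of the foliation constructions of \cite{CCE}, \cite{liu}, \cite{andersonrodriguez}. Once the foliation is available, its unit normal field $N$ is smooth, has $|N|_g\equiv 1$, and satisfies $\operatorname{div}_g N=0$ since every leaf is minimal; thus $N$ calibrates each leaf. The usual comparison via the divergence theorem, applied to the compact region between a leaf and any competitor surface with the same boundary data, then shows that every leaf — in particular every $\Sigma_c$ — is area-minimizing, which completes the proof.

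\emph{Main obstacle.} The main obstacle is the construction of the minimal foliation: producing the leaves that must penetrate the non-flat core, and showing that the family is a genuine foliation exhausting $M$ without loss of compactness or degeneration. This is analogous to — though I expect more tractable than — the foliation argument of \cite{CCE}, the essential simplification being that here the ambient metric is \emph{exactly} Euclidean outside a cone. A more elementary alternative would be to cut a competitor for $\Sigma_c$ along $\{x_1=0\}$ and to project the part lying in $\{x_1>0\}$ vertically onto $\{x_1=0\}$; that argument goes through provided the glued metric dominates the Euclidean metric on vectors tangent to the hyperplanes throughout $\{x_1>0\}$, a property that need not be preserved by the Carlotto--Schoen correction terms and would have to be verified (or built in) separately.
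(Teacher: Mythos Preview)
Your setup via Carlotto--Schoen and the choice of the hyperplanes $\Sigma_c$ in the Euclidean region as the candidate area-minimizers, together with the observation that ACV-stability is trivial there, match the paper. The divergence is in the proof of area-minimality, and here your proposal has a genuine gap: the minimal foliation is not constructed, and you correctly flag this as the main obstacle. Producing minimal leaves that penetrate the non-flat core, showing they sweep out all of $M$ without gaps, and controlling them in the transition region of the gluing are all serious issues; note in particular that the minimal hypersurfaces of Theorem~\ref{counterexample} live in exact Schwarzschild, not in the glued metric, so they cannot be imported directly. Your ``elementary alternative'' is closer to what actually works, but as you say, the required metric inequality is not a priori preserved by the Carlotto--Schoen correction.

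The paper bypasses the foliation entirely. It builds an auxiliary \emph{conformally flat} metric $\tilde g$ on $\mathbb{R}^n$, axially symmetric about $e_n$, with $\tilde g\leq g$ everywhere, $\tilde g<g$ precisely inside the non-flat cone, and $\tilde g=\bar g$ on $\{x^n\leq 0\}$. For $z<0$ one solves the Plateau problem in $(\mathbb{R}^n,\tilde g)$ with boundary $S^{n-2}_r(0)\times\{z\}$; axial symmetry reduces this to an ODE for a radial graph, and an explicit height estimate (using that the decay exponent exceeds $1$, which is where $n>3$ enters) shows that for $z$ sufficiently negative the solution $\tilde\Sigma_{r,z}$ stays in $\{x^n<0\}$ and hence is the flat disk. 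Now compare: the Plateau solution $\Sigma_{r,z}$ in $(\mathbb{R}^n,g)$ has $g$-area at most that of the flat disk; its $\tilde g$-area is at most its $g$-area since $\tilde g\leq g$, and at least the $\tilde g$-area of $\tilde\Sigma_{r,z}$, which is the flat disk. Equality throughout forces $\Sigma_{r,z}$ to avoid the region where $\tilde g<g$, so $\Sigma_{r,z}$ is itself the flat disk. Letting $r\to\infty$ shows that $\mathbb{R}^{n-1}\times\{z\}$ is area-minimizing in $(\mathbb{R}^n,g)$.

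This is precisely a rigorous implementation of the idea in your last paragraph: rather than hoping the Carlotto--Schoen metric dominates $\bar g$ in the right sense, one interposes an explicit comparison metric $\tilde g$ for which the Plateau problem can be solved by hand.
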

\begin{rema}
	Theorem \ref{counterexample 2} shows that an extension of Conjecture \ref{conjecture} to higher dimensions is not true without imposing any further global assumptions. By contrast,  the conclusion of  Conjecture \ref{conjecture} has been verified by A.~Carlotto \cite[Theorem 1 and Theorem 2]{Carlottocalcvar} in the case where $3\leq n\leq 7$ under the additional assumptions that $(M,g)$ is asymptotic to Schwarzschild and that $\Sigma$ is stable with respect to asymptotically constant variations.  In this case, Lemma \ref{tilde flat}  yields an immediate contradiction once $\Sigma$ is shown to be asymptotically flat with  mass zero; cf. Lemma \ref{vanishing mass}.
	
\end{rema}
\begin{rema}
	O.~Chodosh and D.~Ketover have shown in \cite{chodoshketover} that in every complete asymptotically flat Riemannian 3-manifold $(M, g)$ that does not contain closed embedded minimal surfaces, through every point, there exists a properly embedded minimal plane; see also the subsequent improvement due to L.~Mazet and H.~Rosenberg in \cite{mazetrosenberg}. Note that if the scalar curvature of $(M, g)$ is nonnegative, none of these planes are area-minimizing unless $(M, g)$ is flat $\mathbb{R}^3$.
\end{rema}
In the proof of \cite[Theorem 4.2]{montecatini}, R.~Schoen and S.-T.~Yau have shown that if $(M,g)$ is asymptotic to Schwarzschild
\begin{align} \label{schwarzschild}  
g_S=\left(1+\frac{m}{2}\,|x|_{\bar g}^{2-n}\right)^{\frac{4}{n-2}}\,\bar g
\end{align}  with negative mass $m<0$, then $(M,g)$ contains a noncompact area-minimizing boundary that is stable with respect to asymptotically constant variations. Here, $\bar g$ is the Euclidean metric. This boundary is obtained as the limit of solutions to the Plateau problem with prudently chosen boundaries. Our starting point is the following complementary consideration. If $\Sigma=\partial \Omega$  arises as the limit of isoperimetric surfaces, then we expect $\Sigma$  to be stable with respect to asymptotically constant variations.  \\ \indent 
We now describe the proof of Theorem \ref{main result 2}. Suppose that $3<n\leq 7$. A first difficulty not present in the case where $n=3$ is to show that  $\Sigma$ is asymptotically flat with  mass zero. This is complicated by the fact that  $\Sigma$ is not known to be stable with respect to asymptotically constant variations at this point. By contrast, this stability is an additional assumption in the work of A.~Carlotto \cite{Carlottocalcvar}. To remedy this, we prove the explicit estimate
\begin{align} \label{intro area est} 
1-O(r^{-\frac{\tau}{n-1}})\leq \frac{|B_r\cap \Sigma|}{\omega_{n-1}\,r^{n-1}}\leq 1+O(r^{-\tau});
\end{align} 
see Lemma \ref{upper area bound} and Lemma \ref{lemma lower area bound}.
Here, $B_r$, $r>1$, is the bounded region in $(M,g)$  whose boundary corresponds to $S^{n-1}_r(0)=\{x\in\mathbb{R}^n:|x|_{\bar g}=r\}$ in the chart at infinity and $\omega_{n-1}$ the Euclidean area of the $(n-1)$-dimensional unit ball $B^n_1(0)$. The proof of \eqref{intro area est} is based on the monotonicity formula applied to carefully chosen, off-centered balls. We then use \eqref{intro area est} to prove a precise asymptotic expansion for $\Sigma$; see Proposition \ref{graph estimate}. Using that $\tau>n-3$, it follows that $\Sigma$ is asymptotically flat with  mass zero; see Lemma \ref{vanishing mass}. We remark that these arguments work for any stable properly embedded noncompact minimal hypersurface with $r^{1-n} \, |B_r\cap \Sigma|=O(1)$ for $r >1$.\\ \indent
Next, we assume that $\Sigma=\partial \Omega$ where $\Omega$ is the limit of large isoperimetric regions $\Omega_1,\,\Omega_2,\hdots\subset M$ with $|\Omega_k|\to\infty$ and prove that $\Sigma$ is stable with respect to asymptotically constant variations. To this end, we consider the second variation of area of $\Omega_k$ with respect to a suitable Euclidean translation that is corrected to be volume-preserving. The stability with respect to asymptotically constant variations then follows by passing to the limit $k\to\infty$, using the asymptotic expansion for $\Sigma$ obtained in Proposition \ref{graph estimate}, the assumption that $\tau>n-3$, and the integration by parts formula in Lemma  \ref{euclidean lemma}; see Proposition \ref{strictly stable}. The arguments from \cite{montecatini} then show that $\Sigma$ is isometric to flat $\mathbb{R}^{n-1}$ and totally geodesic; see Proposition \ref{flat flat}.  \\ \indent Finally, given any point $p\in M$, we construct a new noncompact area-minimizing boundary $\Sigma_p\subset M$ that passes through $p$; see Proposition \ref{many area minimizing}. In view of Remark \ref{schoenyaucarlotto} and different from the situation in \cite{CCE}, we need to ensure that $\Sigma_p$ is again stable with respect to asymptotically constant variations. To this end, we construct suitable local perturbations of the metric $g$ in Lemma \ref{metric perturbation} and obtain $\Sigma_p$ as the limit of large isoperimetric regions with respect to these perturbations. A crucial ingredient in the construction of $\Sigma_p$ is a result from \cite{CCE}, stated here as Lemma \ref{existence iso}, namely: Asymptotically flat Riemannian manifolds of positive mass admit isoperimetric regions of every sufficiently large volume. Although the area-minimizing boundaries obtained in our construction do not necessarily form a foliation, we show how to adapt the techniques developed in \cites{CCE,andersonrodriguez,liu} to conclude that the curvature tensor of $(M,g)$ vanishes along each of these boundaries. This completes the proof of Theorem \ref{main result 2}.
\subsection*{Outline of related results} J. Metzger and the first-named author \cite{Eichmair-Metzer:2012} have observed that the existence of area-minimizing boundaries in asymptotically flat Riemannian manifolds is related to the positioning of large isoperimetric regions. In particular, they  show that, in asymptotically flat Riemannian 3-manifolds, the existence of large isoperimetric regions that do not diverge  is not compatible with positive scalar curvature; see \cite[Theorem 1.5]{Eichmair-Metzer:2012}. In subsequent work \cite[Theorem 1.1]{eichmairmetzgerinvent}, they have shown that, if $(M,g)$ is asymptotic to Schwarzschild of dimension $n \geq 3$, large isoperimetric regions are unique and geometrically close to centered coordinate balls. In particular, Theorem \ref{main result 2} holds in all dimensions if $(M,g)$ is assumed to be asymptotic to Schwarzschild. 
\subsection*{Acknowledgments}
The authors thank the referees for their careful reading of our work. Their comments have led to several corrections and other improvements. They are grateful to Richard Schoen for pointing out his observation with S.-T.~Yau, noted in \cite{pnas}, that asymptotically flat Riemannian manifolds of dimension $3 < n \leq 7$ admit area-minimizing hypersurfaces asymptotic to any given hyperplane. This research was funded in part by the Austrian Science Fund (FWF) [10.55776/PAT9828924, 10.55776/M3184, 10.55776/Y963]. For open access purposes, the authors have applied a CC BY public copyright license to an author accepted manuscript version of this paper.

\section{Asymptotic behavior of area-minimizing surfaces}
In this section, we assume that $g$ is a Riemannian metric on $\mathbb{R}^n$  such that
\begin{align} \label{AF section 2}
|g-\bar g|_{\bar g}+|x|_{\bar g}\,|\bar D g|_{\bar g}+|x|^2_{\bar g}\,|\bar D^2 g|_{\bar g}=O(|x|_{\bar g}^{-\tau})
\end{align}
where $3\leq  n\leq 7$ and $0<\tau< n-2$. Here, $\bar g$ denotes the Euclidean metric and $\bar D$ the Euclidean derivative. Geometric quantities are computed with respect to $g$ unless indicated otherwise. \\
\indent Let $\Sigma \subset \mathbb{R}^n$ be a noncompact two-sided properly embedded hypersurface with $\partial \Sigma=\emptyset$. We assume that $$\{x\in \mathbb{R}^n:|x|_{\bar g}>1\}\cap\Sigma$$ is a stable minimal surface and that
\begin{align} \label{area growth polynomial}  
\limsup_{r\to\infty} \frac{|B^n_r(0)\cap\Sigma|_{\bar g}}{\omega_{n-1}\,r^{n-1}}<\infty.
\end{align} 
 \indent 
The goal of this section is to prove the following result.
 \begin{prop} \label{graph estimate} There exist $r_0>2,$ an integer $N\geq 1$, numbers $a_1,\hdots,a_N\in\mathbb{R}$, functions $\psi_1,\hdots, \psi_N\in C^\infty(\mathbb{R}^{n-1}),$ and a rotation $S\in SO(n)$ such that 
$$
S(\Sigma\setminus B^n_{r_0}(0))\subset \bigcup_{i=1}^N\{(y,\psi_i(y)):y\in\mathbb{R}^{n-1}\}.
$$
Moreover, for all $0<\varepsilon<\tau/2$ and $i=1,\dots,N$,
 		\begin{align} \label{graph estimate optimal 2} 
 			|y|^{-1}_{\bar g}\,|\psi_i(y)-a_i|+|\bar\nabla \psi_i|_{\bar g}+|y|_{\bar g}\,|\bar\nabla^2 \psi_i |_{\bar g}=O(|y|_{\bar g}^{-\tau+\varepsilon }).
 		\end{align} 
 \end{prop}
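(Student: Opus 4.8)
The plan is to determine the shape of $\Sigma$ at infinity by a blow-down analysis and to then promote the resulting crude graphicality to the sharp expansion \eqref{graph estimate optimal 2} using the minimal surface equation satisfied by each graphical piece. First I would carry out the \emph{blow-down}. For $\lambda>1$, consider the rescaled hypersurfaces $\lambda^{-1}\,\Sigma$ together with the rescaled metrics $g_\lambda(x):=\lambda^{-2}\,(\xi\mapsto\lambda\,\xi)^*g$. By the decay assumptions on $g$, $g_\lambda\to\bar g$ in $C^2_{\mathrm{loc}}(\mathbb{R}^n\setminus\{0\})$; each $\lambda^{-1}\,\Sigma$ is a stable minimal hypersurface for $g_\lambda$; and \eqref{area growth polynomial} bounds the mass ratios of $\lambda^{-1}\,\Sigma$ at all scales uniformly in $\lambda$. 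Since $n\leq 7$, the compactness and regularity theory for stable minimal hypersurfaces then yields a subsequence converging, smoothly away from the origin, to a stable minimal cone $C$ in $(\mathbb{R}^n,\bar g)$ with Euclidean mass ratio bounded by $\Theta:=\limsup_{r\to\infty}\omega_{n-1}^{-1}\,r^{1-n}\,|B^n_r(0)\cap\Sigma|_{\bar g}$. A smooth stable minimal cone in $\mathbb{R}^n$ with $n\leq 7$ is a hyperplane, so $C$ is a hyperplane through the origin with some multiplicity $N\in\{1,\dots,\lfloor\Theta\rfloor\}$; the almost-monotonicity of $r^{1-n}\,|B^n_r(0)\cap\Sigma|_{\bar g}$, whose error terms are controlled by the decay of $g$, moreover shows that $\lim_{r\to\infty}\omega_{n-1}^{-1}\,r^{1-n}\,|B^n_r(0)\cap\Sigma|_{\bar g}$ exists and equals $N$.

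Next I would extract the \emph{global graphical decomposition}. Choose $S\in SO(n)$ mapping this hyperplane to $\mathbb{R}^{n-1}\times\{0\}$. By the previous step and Allard's theorem (applicable since there is no singular set for $n\leq 7$), for every sufficiently large $\rho$ the set $S(\Sigma)\cap\{\rho\leq|x|_{\bar g}\leq 4\rho\}$ decomposes into $N$ graphs over $\mathbb{R}^{n-1}\times\{0\}$ whose $C^{1}$-norms tend to $0$. To glue these across dyadic scales into graphs over all of $\{|y|_{\bar g}>r_0\}$, one needs a \emph{rate} for this flattening; this is supplied by the decay of the tilt excess, which in turn is controlled by the deficit $N\,\omega_{n-1}\,r^{n-1}-|B^n_r(0)\cap\Sigma|_{\bar g}$ — the refined area estimate obtained from the monotonicity formula applied to off-centered balls (cp.~\eqref{intro area est}). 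Summing this rate over dyadic scales forces the asymptotic planes on consecutive annuli to converge to a single plane, while the mass ratio bound forces the number of sheets to stabilise at $N$. Ordering the sheets by height and extending them arbitrarily to smooth functions on $\mathbb{R}^{n-1}$, we obtain $r_0>2$, the integer $N\geq1$, and $u_1,\dots,u_N\in C^\infty(\mathbb{R}^{n-1})$ with $|\bar\nabla u_i|_{\bar g}\to 0$ at infinity and $S(\Sigma\setminus B^n_{r_0}(0))\subset\bigcup_{i=1}^N\{(y,u_i(y)):y\in\mathbb{R}^{n-1}\}$.

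Finally I would prove the \emph{expansion}. Each $u_i$ solves the minimal surface equation for $g$; since $|g-\bar g|_{\bar g}$, $|x|_{\bar g}\,|\bar Dg|_{\bar g}$, $|x|^2_{\bar g}\,|\bar D^2g|_{\bar g}$ are $O(|x|^{-\tau}_{\bar g})$ and $|\bar\nabla u_i|_{\bar g}$ is bounded, this equation takes the divergence form $\mathrm{div}_{\bar g}(A\,\bar\nabla u_i)=F_i$ on $\{|y|_{\bar g}>r_0\}$, with $A=\mathrm{Id}+O(|y|^{-\tau}_{\bar g})+O(|\bar\nabla u_i|^2_{\bar g})$, $|\bar D A|_{\bar g}=O(|y|^{-\tau-1}_{\bar g})$, and $|F_i|_{\bar g}=O(|y|^{-\tau-1}_{\bar g})$: a uniformly elliptic equation on an exterior domain whose coefficients converge to those of $\bar\Delta$. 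Comparing $u_i$ with the radial functions $|y|^{\,3-n}_{\bar g}$ (which carry the homogeneous decay) and $|y|^{\,1-\tau+\varepsilon}_{\bar g}$ (which absorb the inhomogeneity, using $0<\tau\leq n-2$ so that $\bar\Delta|y|^{\,1-\tau+\varepsilon}_{\bar g}$ has a definite sign and dominates $|y|^{-\tau-1}_{\bar g}$) — or, when $\tau\leq1$, integrating the gradient estimate — shows that $u_i\to a_i$ for some $a_i\in\mathbb{R}$ with $|u_i(y)-a_i|=O(|y|^{\,1-\tau+\varepsilon}_{\bar g})$. Interior Schauder estimates applied to the equation rescaled to unit annuli then give $|\bar\nabla u_i|_{\bar g}=O(|y|^{-\tau+\varepsilon}_{\bar g})$ and $|\bar\nabla^2 u_i|_{\bar g}=O(|y|^{-\tau-1+\varepsilon}_{\bar g})$; feeding these back into $A$ and $F_i$, a finite bootstrap yields the exponents stated in \eqref{graph estimate optimal 2}, the $\varepsilon$ absorbing borderline logarithmic losses (e.g.\ at $\tau=n-2$, or in reading off the decay of $u_i-a_i$ when $\tau\leq1$).

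The step I expect to be the main obstacle is the second one: turning the subsequential, scale-by-scale $C^1_{\mathrm{loc}}$-convergence of the blow-down into a genuine decomposition into $N$ graphs over one fixed plane on all of $\{|y|_{\bar g}>r_0\}$. This needs a quantitative flattening rate — hence the excess-decay estimates adapted to the asymptotically flat background and the refined area estimate from the monotonicity formula — and one must also rule out the number of sheets changing between scales and the sheets colliding. Once the global graphical structure is in place, the third step is a standard, if somewhat lengthy, exercise in elliptic estimates on exterior domains, and it is the only place where the decay rate of $g$ enters quantitatively; stability of $\Sigma$ is used only to classify the blow-down.
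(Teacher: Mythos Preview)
Your overall strategy matches the paper's: blow down to a plane, prove uniqueness of the tangent plane at infinity via a quantitative excess estimate derived from the monotonicity formula, and then bootstrap the minimal surface equation to reach \eqref{graph estimate optimal 2}. Two technical differences are worth recording. For uniqueness of the tangent plane the paper does not sum rates over dyadic annuli; it adapts an argument of B.~White: the radial projection $x\mapsto x/|x|_{\bar g}$ restricted to $\Sigma\setminus B^n_s(0)$ has image of area $O(s^{-\tau/(2n-2)})$, which follows from Cauchy--Schwarz and the $L^2$ tilt bound of Lemma~\ref{excess estimate}, and this forces all subsequential tangent planes to coincide (Corollary~\ref{hyperplane uniqueness}). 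More substantively, your step 3 proposes to obtain $u_i\to a_i$ with rate directly from barriers for the divergence-form graph equation. The obstacle is that the coefficient matrix $A=\mathrm{Id}+O(|\bar\nabla u_i|^2_{\bar g})+O(|y|^{-\tau}_{\bar g})$ converges to $\mathrm{Id}$ only as fast as $|\bar\nabla u_i|^2_{\bar g}$ does, and at that stage you have only $|\bar\nabla u_i|_{\bar g}=o(1)$; the radial comparison functions are then not sub/super-solutions. The paper avoids this by first analysing the \emph{linear} Jacobi-type equation $\bar\Delta v+|\bar h|^2_{\bar g}\,v=O(|x|^{-1-\tau}_{\bar g})$ for $v=\bar g(x,\bar\nu)$: the excess bound of Lemma~\ref{excess estimate} gives an $L^2$ control of $v$ on annuli, and three iterations of interior $L^p$ estimates upgrade this to the pointwise seed $|x|_{\bar g}\,\bar h=O(|x|_{\bar g}^{-\tau/(2n-2)})$, i.e.\ \eqref{hoelder decay}. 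Only then does the paper return to the graph equation, written as $|y|_{\bar g}\,\bar\Delta u=O(|y|_{\bar g}\,|\bar\nabla u|^2_{\bar g}\,|\bar\nabla^2 u|_{\bar g})+O(|y|^{-\tau}_{\bar g})$, and bootstrap via a harmonic-plus-remainder decomposition (following \cite{Carlottocalcvar}) rather than barriers. Your step 2, through Allard on annuli with decaying tilt excess, does implicitly yield a rate for $|\bar\nabla u_i|_{\bar g}$; if you carry that rate explicitly into step 3 the gap closes, but as written the linkage between the two steps is missing.
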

\begin{rema} In the case where $g$ is asymptotic to the Schwarzschild metric \eqref{schwarzschild}, Proposition \ref{graph estimate} has been proven by A.~Carlotto in \cite[Lemma 18]{Carlottocalcvar} under the additional assumption that $\Sigma$ is stable with respect to asymptotically constant variations in the sense of \eqref{stable wrt acv}.  There, a version of Corollary \ref{hyperplane uniqueness} is obtained using techniques developed by L.~Simon \cite[Theorem 5.7]{simonisolated}; see \cite[p.~10]{Carlottocalcvar}. A version of  estimate \eqref{hoelder decay} is obtained as a consequence  of the stability with respect to asymptotically constant variations; see \cite[pp.~16-18]{Carlottocalcvar}. Here, we provide a new proof based on the monotonicity formula of these results in \cite{Carlottocalcvar}   that does not require the assumption of stability with respect to asymptotically constant variations. 
\end{rema}

We first reduce the proof of Proposition \ref{graph estimate} to the case where $\Sigma$ has only one end.
\begin{lem} \label{components reduction}
	There exists $r_0>1$ and an integer $N\geq 1$ such that $\Sigma\setminus  B^n_{r_0}(0)$ has $N$ connected components $\Sigma_1,\hdots,\Sigma_N\subset\mathbb{R}^n$ each satisfying
	\begin{align} \label{multiplicity one}
	\lim_{r\to\infty}\frac{|B^n_{r}(0)\cap \Sigma_i|_{\bar g}}{\omega_{n-1}\,r^{n-1}}=1.
	\end{align} 
\end{lem}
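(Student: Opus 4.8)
The plan is to combine a standard compactness/regularity argument for stable minimal hypersurfaces with the monotonicity formula to control multiplicities. First I would invoke the curvature estimates for stable minimal hypersurfaces (Schoen--Simon--Yau in low dimensions, or Schoen--Simon in dimension $n \leq 7$) together with the area bound \eqref{area growth polynomial}: for any sequence $r_j \to \infty$, the rescaled surfaces $r_j^{-1}(\Sigma \setminus B^n_{r_j}(0))$ have uniformly bounded area ratios and uniformly bounded second fundamental form on compact subsets away from the origin, because the rescaled metrics $r_j^{-2} (\lambda_{r_j})^* g$ converge smoothly to $\bar g$ on compact subsets of $\mathbb{R}^n \setminus \{0\}$ and the stability inequality is scale-invariant. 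Hence a subsequence converges, locally smoothly away from $0$ and with multiplicity, to a stable minimal hypersurface (actually a minimal cone, by the monotonicity formula, since the limit is dilation-invariant) $C \subset \mathbb{R}^n \setminus \{0\}$ with respect to $\bar g$. Since $3 \leq n \leq 7$, any stable minimal hypercone in $\mathbb{R}^n$ is a hyperplane through the origin, so $C$ is a finite union of hyperplanes through $0$; the area bound forces $C$ to be a single hyperplane with some integer multiplicity $N \geq 1$.

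Next I would upgrade this subsequential convergence to a statement about $\Sigma$ itself at a fixed large radius. Because the blow-down limit is a multiplicity-$N$ hyperplane, for every $\varepsilon > 0$ there is $r_0$ such that, after a rotation, $\Sigma \cap (B^n_{2r}(0) \setminus B^n_{r/2}(0))$ for $r \geq r_0$ is $\varepsilon r$-close in Hausdorff distance to a hyperplane and, by the curvature estimates, consists of $N$ sheets each of which is a graph of small gradient over that hyperplane. Patching these local graphical descriptions across dyadic annuli (the sheet count is locally constant in $r$ by the curvature estimate, and connectedness of each annulus pins down how the sheets match up) shows that $\Sigma \setminus B^n_{r_0}(0)$ has exactly $N$ connected components $\Sigma_1, \dots, \Sigma_N$, each an entire graph over the exterior of a ball in the hyperplane with sublinear height and decaying gradient. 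Finally, for the area statement \eqref{multiplicity one}, I would apply the monotonicity formula for the $\bar g$-area on each $\Sigma_i$: the graphical description gives $|B^n_r(0) \cap \Sigma_i|_{\bar g} = \omega_{n-1} r^{n-1}(1 + o(1))$ directly, since a graph with gradient tending to zero has area ratio tending to that of the hyperplane, namely $1$.

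The main obstacle I expect is the passage from the subsequential blow-down convergence to the \emph{uniform} (in $r$, not just along a subsequence) graphical decomposition at a single fixed radius $r_0$: a priori different sequences $r_j \to \infty$ could yield different hyperplanes or the sheets could ``rotate'' or change number as $r$ grows. Ruling this out requires the curvature estimate to propagate the graphical structure monotonically outward — once the second fundamental form of $\Sigma$ in the annulus at scale $r$ is smaller than a dimensional constant over $r$, the number of sheets and the approximating hyperplane cannot change as $r$ increases, so the blow-down is unique and the decomposition is global. Making this ``no rotation, no sheet creation'' argument precise, using only the scale-invariant stability inequality and the area bound \eqref{area growth polynomial}, is the crux; everything else is a routine consequence of the Schoen--Simon curvature estimates and the monotonicity formula.
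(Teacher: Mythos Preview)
Your approach is more ambitious than necessary, and the obstacle you flag --- uniqueness of the tangent plane at infinity --- is genuine for your route but can be sidestepped entirely. The paper's proof does not establish (or need) uniqueness of the blow-down plane at this stage; that is done only later, in Corollary~\ref{hyperplane uniqueness}, via a White-type argument using the monotonicity formula. Instead, the paper argues as follows: once you know every subsequential blow-down is \emph{some} hyperplane (possibly varying with the sequence), you get that $\Sigma$ meets $S^{n-1}_r(0)$ transversally for all large $r$. Transversality alone forces the number of components of $\Sigma\setminus B^n_r(0)$ to be finite and eventually constant in $r$. Now fix one component $\Sigma_1$; since the component count is constant, $\Sigma_1\setminus B^n_r(0)$ remains connected for all $r>r_0$. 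But a connected piece cannot blow down to a hyperplane with multiplicity $\geq 2$ (the sheets would separate it into several components in a far annulus), so every blow-down of $\Sigma_1$ has multiplicity one, and the area ratio tends to $1$ along every sequence.

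So the missing idea in your proposal is precisely this topological shortcut: connectedness of each end forces multiplicity one in \emph{every} blow-down, with no need to know that all blow-downs yield the \emph{same} plane. Your ``no rotation'' argument, by contrast, would require quantitative control beyond the bare curvature estimate $|h|=O(|x|^{-1})$ --- that bound alone does not prevent the approximating plane from rotating by a fixed small angle across each dyadic annulus, and summing those rotations gives no convergence. The paper resolves this later with refined area estimates (Lemmas~\ref{upper area bound}--\ref{area estimate of normal}), but none of that is needed for the present lemma.
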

\begin{proof}
By the work of R.~Schoen and L.~Simon \cite[Corollary 1]{schoensimon},  $h=O(|x|^{-1}_{\bar g})$ where $h$ is the second fundamental form of $\Sigma$ with respect to a choice of unit normal. In conjunction with \eqref{area growth polynomial}, the assumption that $\Sigma$ is noncompact, and the classification of stable minimal cones in $\mathbb{R}^n$ by J.~Simons \cite[\S6]{Simons}, it follows that for each sequence $r_1,\,r_2,\ldots>0$ with $r_k\to\infty$, $r_k^{-1}\,\Sigma$ converges to a hyperplane locally smoothly in $\mathbb{R}^n\setminus\{0\}$   with multiplicity $N\geq 1$. Moreover, 
$$
N \leq \limsup_{r\to\infty} \frac{|B^n_r(0)\cap\Sigma|_{\bar g}}{\omega_{n-1}\,r^{n-1}}.
$$
In particular, $\Sigma$ intersects $S^{n-1}_r(0)$ transversely for all $r>1$ sufficiently large. It follows that there is $r_0>1$ such that the number of components of $\Sigma\setminus B^n_{r}(0)$ is finite and constant for all $r>r_0$. Let $\Sigma_1$ be a component of $\Sigma\setminus B^n_{r_0}(0)$. Applying the same argument with $\Sigma$ replaced by $\Sigma_1$ and using that  $\Sigma_1\setminus B^n_r(0)$ is connected for all $r>r_0$, we see that $r_k^{-1}\,\Sigma_1$ converges  to a hyperplane locally smoothly in $\mathbb{R}^n\setminus \{0\}$ with multiplicity one for each sequence $r_1,\,r_2,\ldots>0$ with $r_k\to\infty$. The assertion follows.

\end{proof}
In view of Lemma \ref{components reduction}, we may and will assume that
\begin{align} \label{area bound at infinity}
	\lim_{r\to\infty}\frac{|B^n_{r}(0)\cap \Sigma|_{\bar g}}{\omega_{n-1}\,r^{n-1}}=1
\end{align}
in the proof of Proposition \ref{graph estimate}. By perturbing $\{x\in \mathbb{R}^n:|x|_{\bar g}<1\}\cap \Sigma$ if necessary, we may and will assume that 
\begin{align}\label{inner radius}
\inf\{|x|_{\bar g}:x\in \Sigma\}>0.
\end{align}
We also record the following by-product of the proof of Lemma \ref{components reduction}.
\begin{lem} \label{blowdown at infinity are planes}
	Let $r_1,\,r_2,\ldots>1$ be numbers with $r_k\to\infty$. Then, passing to a subsequence, $r_k^{-1}\,\Sigma$ converges locally smoothly in $\mathbb{R}^n\setminus \{0\}$  with multiplicity one to a hyperplane through the origin. 
\end{lem}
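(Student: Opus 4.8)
The plan is to deduce the statement from the blow-down analysis already carried out in the proof of Lemma~\ref{components reduction}, being careful to keep track of the multiplicity of the limit. Fix a sequence $r_k\to\infty$ and set $\Sigma_k:=r_k^{-1}\,\Sigma$ and $g_k(\cdot):=g(r_k\,\cdot)$, so that $\Sigma_k$ is a stable minimal hypersurface with respect to $g_k$ on $\{|y|_{\bar g}>r_k^{-1}\}$ and, since $g$ is asymptotically flat, $g_k\to\bar g$ locally smoothly on $\mathbb{R}^n\setminus\{0\}$. First I would record that, by the Schoen--Simon estimate \cite[Theorem~3]{schoensimon}, the second fundamental form of $\Sigma$ is $O(|x|_{\bar g}^{-1})$, so that the second fundamental form of $\Sigma_k$ with respect to $g_k$ is $O(|y|_{\bar g}^{-1})$ uniformly in $k$. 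Together with \eqref{area bound at infinity}, which after rescaling gives $|B^n_\rho(0)\cap\Sigma_k|_{\bar g}\to\omega_{n-1}\,\rho^{n-1}$ for every $\rho>0$, the standard compactness theory for stable minimal hypersurfaces yields, after passing to a subsequence, locally smooth convergence $\Sigma_k\to\Sigma_\infty$ on $\mathbb{R}^n\setminus\{0\}$, where $\Sigma_\infty$ is a minimal hypersurface with respect to $\bar g$ counted with some integer multiplicity $m\geq 1$; the limit is non-empty because $\Sigma$ is properly embedded and non-compact, hence meets every coordinate sphere of sufficiently large radius.

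Next I would identify $\Sigma_\infty$. Passing \eqref{area bound at infinity} to the limit shows that the Euclidean area ratio $\rho\mapsto\omega_{n-1}^{-1}\,\rho^{1-n}\,|B^n_\rho(0)\cap\Sigma_\infty|_{\bar g}$ is identically $m^{-1}$, so by the equality case of the monotonicity formula $\Sigma_\infty$ is a minimal cone with vertex at the origin. Passing the stability inequality for $\Sigma_k$ to the limit, and using $Ric_{g_k}\to Ric_{\bar g}=0$, shows that $\Sigma_\infty$ is stable; hence, by the classification of stable minimal hypercones in $\mathbb{R}^n$ for $3\leq n\leq 7$ \cite[\S6]{Simons}, $\Sigma_\infty$ is a hyperplane, which passes through the origin since it is a cone with vertex there. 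Finally, a hyperplane has area ratio $\omega_{n-1}$, so $m^{-1}=1$ and the convergence is of multiplicity one, as claimed.

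The only delicate point is the compactness step: one must combine the Schoen--Simon curvature estimate, which gives uniform smooth control away from the origin, with \eqref{area bound at infinity}, which both bounds the areas of the $\Sigma_k$ and pins down the density of the limit, and one must check that the stability inequality survives the passage to the limit even though the ambient metrics only converge to $\bar g$. Since all of these ingredients are already in place, I expect the proof to be very short; indeed, it is essentially contained in the discussion of a single component $\Sigma_i$ in the proof of Lemma~\ref{components reduction}.
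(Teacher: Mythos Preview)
Your proposal is correct and follows essentially the same approach as the paper, which records this lemma as a direct byproduct of the proof of Lemma~\ref{components reduction}: Schoen--Simon curvature bounds plus the area growth give smooth subsequential convergence of the blow-downs to a stable minimal cone, which by Simons' classification is a hyperplane through the origin. The only cosmetic difference is that the paper deduces multiplicity one from the connectedness of $\Sigma_i\setminus B^n_r(0)$ (and then obtains \eqref{area bound at infinity}), whereas you work in the reverse direction, invoking \eqref{area bound at infinity} (already assumed at this point) to force the density of the limiting plane to equal one; both arguments are equivalent here.
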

 \indent 
We collect basic properties of $\Sigma$ in Lemma \ref{mean curvature estimate} and Lemma \ref{area estimate sigma bar}.
\begin{lem} There holds, as $|x|_{\bar g}\to\infty$,  \label{mean curvature estimate}
\begin{align} \label{H estimate} 
|x|_{\bar g}\,	|\bar H|+|x|^2_{\bar g}\,|\bar\nabla \bar H|_{\bar g}=O(|x|_{\bar g}^{-\tau})
\end{align}
and
\begin{align} \label{h estimate}
	|x|_{\bar g}\,|\bar h|_{\bar g}+|x|^2_{\bar g}\,|\bar\nabla \bar h|_{\bar g}=o(1).
\end{align}
\end{lem} 
\begin{proof}
	\eqref{h estimate} follows from Lemma \ref{blowdown at infinity are planes} using that a flat plane is totally geodesic with respect to $\bar g$. \eqref{H estimate} follows from \eqref{h estimate} and Lemma \ref{geometric expansions}, using that $H=0$ on $\{x\in\mathbb{R}^n:|x|_{\bar g}>1\}\cap \Sigma$.
\end{proof}
To proceed, we recall the monotonicity formula from \cite{simonlectures}.
\begin{lem}[{\cite[(17.4)]{simonlectures}}]
	Let $x_0\in\mathbb{R}^n$ and $0<s<t$. There holds
	\begin{equation} \label{monotonicity formula} 
		\begin{aligned} 
			t^{1-n}\,| B^n_{t}(x_0)\cap  \Sigma|_{\bar g}&=s^{1-n}\,| B^n_{s}(x_0)\cap  \Sigma|_{\bar g}+\int_{(B^n_{t}(x_0)\setminus B^n_{s}(x_0))\cap  \Sigma}|x-x_0|_{\bar g}^{-1-n}\,\bar g(x-x_0,\bar\nu)^2\,\mathrm{d}\bar\mu\\ &\qquad -\frac{1}{n-1}\,\int_{(B^n_{t}(x_0)\setminus B^n_{s}(x_0))\cap  \Sigma}(t^{1-n}-|x-x_0|_{\bar g}^{1-n})\,\bar g(x-x_0,\bar\nu)\,\bar H\,\mathrm{d}\bar\mu\\ &\qquad  
			-\frac{1}{n-1}\,\int_{B^n_{s}(x_0)\cap  \Sigma}(t^{1-n}-s^{1-n})\,\bar g(x-x_0,\bar\nu)\,\bar H\,\mathrm{d}\bar\mu.
		\end{aligned} 
	\end{equation}
\end{lem}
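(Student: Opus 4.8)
The plan is to derive \eqref{monotonicity formula} from the first variation formula for $\Sigma$; this is the classical argument of L.~Simon, which I outline. Since $\Sigma$ is a smooth, two-sided, properly embedded hypersurface with empty boundary, it carries a globally defined Euclidean mean curvature $\bar H$ along the chosen unit normal $\bar\nu$, and since $\Sigma$ is properly embedded and obeys \eqref{area growth polynomial}, for every $x_0\in\mathbb{R}^n$ and $\rho>0$ the set $B^n_\rho(x_0)\cap\Sigma$ has finite $(n-1)$-dimensional Euclidean area and $\bar H$ is bounded on it. Decomposing the restriction to $\Sigma$ of a compactly supported $X\in C^1(\mathbb{R}^n,\mathbb{R}^n)$ into its parts tangent and normal to $\Sigma$, applying the divergence theorem on $\Sigma$ to the (compactly supported) tangential part, and using the definition of $\bar H$ for the normal part, one obtains the first variation formula
\begin{align*}
\int_\Sigma\operatorname{div}_\Sigma X\,\mathrm{d}\bar\mu=-\int_\Sigma\bar H\,\bar g(X,\bar\nu)\,\mathrm{d}\bar\mu ,
\end{align*}
where $\operatorname{div}_\Sigma$ is the tangential divergence with respect to $\bar g$ and $\bar H$ is normalized so that this holds.

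Fix $x_0\in\mathbb{R}^n$ and write $r=|x-x_0|_{\bar g}$. For a non-increasing $\gamma\in C^\infty([0,\infty))$ with $\gamma\equiv 1$ near $0$ and $\operatorname{supp}\gamma\subset[0,1)$, and for $\rho>0$, I would apply the first variation formula to the radial cutoff field $X=\gamma(r/\rho)\,(x-x_0)$. Using $\operatorname{div}_\Sigma(x-x_0)=n-1$ and $\bar g(\nabla^\Sigma r,\,x-x_0)=r-r^{-1}\,\bar g(x-x_0,\bar\nu)^2$, where $\nabla^\Sigma r$ denotes the $\bar g$-orthogonal projection of $(x-x_0)/r$ onto $T_x\Sigma$, one computes
\begin{align*}
\operatorname{div}_\Sigma X=(n-1)\,\gamma(r/\rho)+\rho^{-1}\,\gamma'(r/\rho)\left(r-\frac{\bar g(x-x_0,\bar\nu)^2}{r}\right) .
\end{align*}
Because $\tfrac{\partial}{\partial\rho}\gamma(r/\rho)=-r\rho^{-2}\gamma'(r/\rho)$, after integration over $\Sigma$ the two $\gamma'$-terms become $-\rho\,\tfrac{\mathrm{d}}{\mathrm{d}\rho}\int_\Sigma\gamma(r/\rho)\,\mathrm{d}\bar\mu$ and $\rho\,\tfrac{\mathrm{d}}{\mathrm{d}\rho}\int_\Sigma\gamma(r/\rho)\,r^{-2}\bar g(x-x_0,\bar\nu)^2\,\mathrm{d}\bar\mu$; substituting into the first variation formula, multiplying by $\rho^{-n}$, and rearranging yields the differential identity
\begin{align*}
\frac{\mathrm{d}}{\mathrm{d}\rho}\!\left(\rho^{1-n}\!\int_\Sigma\gamma(r/\rho)\,\mathrm{d}\bar\mu\right)=\rho^{1-n}\,\frac{\mathrm{d}}{\mathrm{d}\rho}\!\left(\int_\Sigma\gamma(r/\rho)\,\frac{\bar g(x-x_0,\bar\nu)^2}{r^2}\,\mathrm{d}\bar\mu\right)+\rho^{-n}\!\int_\Sigma\gamma(r/\rho)\,\bar H\,\bar g(x-x_0,\bar\nu)\,\mathrm{d}\bar\mu .
\end{align*}

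Next I would let $\gamma$ increase to the indicator function of $[0,1]$. By Sard's theorem applied to $r|_\Sigma$, the sphere $S^{n-1}_\rho(x_0)$ is transverse to $\Sigma$ for a.e.\ $\rho>0$; hence, by the coarea formula on $\Sigma$ together with dominated convergence (using the finiteness of $|B^n_\rho(x_0)\cap\Sigma|_{\bar g}$), the displayed identity passes to the limit for a.e.\ $\rho$, with each $\int_\Sigma\gamma(r/\rho)\,(\cdots)\,\mathrm{d}\bar\mu$ becoming $\int_{B^n_\rho(x_0)\cap\Sigma}(\cdots)\,\mathrm{d}\bar\mu$. Integrating the limiting identity in $\rho$ over $[s,t]$, the left-hand side contributes $t^{1-n}|B^n_t(x_0)\cap\Sigma|_{\bar g}-s^{1-n}|B^n_s(x_0)\cap\Sigma|_{\bar g}$; the first term on the right, via the coarea formula, contributes the first integral on the right-hand side of \eqref{monotonicity formula}; and a single integration by parts in $\rho$ applied to $\int_s^t\rho^{-n}\big(\int_{B^n_\rho(x_0)\cap\Sigma}\bar H\,\bar g(x-x_0,\bar\nu)\,\mathrm{d}\bar\mu\big)\,\mathrm{d}\rho$, followed by the coarea formula and the decomposition $B^n_t(x_0)=\big(B^n_t(x_0)\setminus B^n_s(x_0)\big)\cup B^n_s(x_0)$, contributes the last two integrals of \eqref{monotonicity formula} with their weights $(n-1)^{-1}$. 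Reassembling these pieces yields \eqref{monotonicity formula}.

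There is no essential obstacle here, since the statement is \cite[17.4]{simonlectures}; the only genuinely delicate step is justifying the limit $\gamma\uparrow\mathbf{1}_{[0,1]}$, for which the relevant inputs are the transversality of $S^{n-1}_\rho(x_0)$ and $\Sigma$ for a.e.\ $\rho$ and the finiteness of $|B^n_\rho(x_0)\cap\Sigma|_{\bar g}$ for \emph{every} $x_0\in\mathbb{R}^n$, both of which are consequences of the properness of $\Sigma$ and the area bound \eqref{area growth polynomial}. The rest is bookkeeping of the integrations by parts and of the sign convention for $\bar H$. In particular, this argument uses neither the minimality nor the stability of $\Sigma$ anywhere: it is valid for any smooth properly embedded hypersurface of at most polynomial area growth.
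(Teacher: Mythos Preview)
The paper does not prove this lemma; it is simply quoted from \cite[17.4]{simonlectures} without argument. Your outline is exactly the standard derivation found there---apply the first variation identity to the radial cutoff field $\gamma(r/\rho)\,(x-x_0)$, reorganize into a differential identity for $\rho^{1-n}\int_\Sigma\gamma(r/\rho)\,\mathrm{d}\bar\mu$, let $\gamma\uparrow\mathbf{1}_{[0,1]}$, integrate in $\rho$ over $[s,t]$, and split the $\bar H$-term by one integration by parts in $\rho$---and it is correct. (Your computation in fact produces the integrand $|x-x_0|_{\bar g}^{-1-n}\,\bar g(x-x_0,\bar\nu)^2$ in the excess term, which is the correct exponent and the one the paper actually uses downstream in Lemma~\ref{excess estimate}; the exponent $1+n$ printed in the statement is a typo.)
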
 
\begin{proof}
This is 	\cite[17.4]{simonlectures} up to writing the second integral in \cite[17.4]{simonlectures} as the sum of the second and third integral on the right side of \eqref{monotonicity formula}.
\end{proof}
\begin{lem} \label{area estimate sigma bar}
	There holds
	$$
	\sup_{x\in\mathbb{R}^n} \sup_{r>0} \frac{|  B^n_{r}(x)\cap \Sigma|_{\bar g}}{\omega_{n-1}\,r^{n-1}}<\infty.
	$$
\end{lem}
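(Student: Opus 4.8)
The goal is a bound, uniform in the center $x\in\mathbb{R}^n$ and the radius $r>0$, for the density ratio $\Theta(x,r):=\omega_{n-1}^{-1}\,r^{1-n}\,|B^n_r(x)\cap\Sigma|_{\bar g}$. First I would fix $R_0>1$ so large that, by \eqref{area bound at infinity}, $|B^n_\rho(0)\cap\Sigma|_{\bar g}\le 2\,\omega_{n-1}\,\rho^{n-1}$ for all $\rho\ge R_0$, and, by Lemma \ref{mean curvature estimate}, $|\bar H|\le C\,|x|_{\bar g}^{-1-\tau}$ on $\{|x|_{\bar g}\ge R_0\}\cap\Sigma$. The argument then splits into three regimes. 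If $|x|_{\bar g}\le 2r$ and $r\ge R_0/3$, then $B^n_r(x)\subset B^n_{3r}(0)$ and the choice of $R_0$ gives $\Theta(x,r)\le 2\cdot 3^{n-1}$. If $|x|_{\bar g}\le 2R_0$ and $r\le 2R_0$, then $B^n_r(x)\cap\Sigma$ lies in the compact set $\Sigma\cap\{|y|_{\bar g}\le 4R_0\}$; since $\Sigma$ is a smooth properly embedded hypersurface, finitely many graph charts with uniformly bounded gradient cover this set, giving a uniform bound for $\Theta(x,r)$ there. Together, these two regimes dispose of every pair $(x,r)$ with $|x|_{\bar g}\le 2r$ or $|x|_{\bar g}\le 2R_0$; the remaining pairs satisfy $|x|_{\bar g}>2R_0$ and $r<|x|_{\bar g}/2$.

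The remaining --- and main --- case is thus a small ball deep in the asymptotically flat region: $\rho:=|x|_{\bar g}>2R_0$ and $0<r<\rho/2$. On $B^n_{\rho/2}(x)$ one has $|y|_{\bar g}>\rho/2$, hence $|\bar H|\le\Lambda:=C\,(\rho/2)^{-1-\tau}$ there, and --- crucially, since $\tau>0$ --- the total mean curvature $\Lambda\cdot\rho/2=C\,(\rho/2)^{-\tau}$ is small provided $R_0$ is enlarged suitably. I would apply the monotonicity formula \eqref{monotonicity formula} with center $x$ (relabeling the integration variable as $y$) and radii $s=r<t=\rho/2$, discard the non-negative excess term, and estimate the two mean curvature integrals by means of $|\bar g(y-x,\bar\nu)|\le|y-x|_{\bar g}$ together with the identity $|y-x|_{\bar g}^{2-n}=t^{2-n}+(n-2)\int_{|y-x|_{\bar g}}^t\lambda^{1-n}\,\mathrm{d}\lambda$ and Fubini, which controls $\int_{(B^n_t(x)\setminus B^n_s(x))\cap\Sigma}|y-x|_{\bar g}^{2-n}\,\mathrm{d}\bar\mu$ by $\omega_{n-1}\,\big(t\,\Theta(x,t)+(n-2)\int_s^t\Theta(x,\lambda)\,\mathrm{d}\lambda\big)$. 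The outcome is the integral inequality
\[
\Big(1-\tfrac{\Lambda r}{n-1}\Big)\,\Theta(x,r)\le\Big(1+\tfrac{\Lambda\rho}{2(n-1)}\Big)\,\Theta(x,\rho/2)+\Lambda\int_r^{\rho/2}\Theta(x,\lambda)\,\mathrm{d}\lambda,
\]
valid for all $0<r<\rho/2$. Since $\Lambda\rho$ is small, the factor on the left is at least $1/2$, and a backwards Gr\"onwall argument --- legitimate because $\lambda\mapsto\Theta(x,\lambda)$ is locally bounded on $(0,\infty)$ by the smoothness and properness of $\Sigma$ --- yields $\Theta(x,r)\le 3\,e^{\Lambda\rho}\,\Theta(x,\rho/2)$. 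Finally, $B^n_{\rho/2}(x)\subset B^n_{3\rho/2}(0)$ with $3\rho/2\ge R_0$ forces $\Theta(x,\rho/2)\le 2\cdot 3^{n-1}$ by the choice of $R_0$, so $\Theta(x,r)\le C$. Alternatively, one may replace this computation by a direct appeal to the exponential form of the monotonicity inequality from \cite{simonlectures}.

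Combining the three regimes gives the asserted uniform bound. I expect the middle case to be the only genuine difficulty: a small off-center ball cannot be compared to a centered one without destroying the $r^{1-n}$ normalization, so one is forced to use the monotonicity formula with its mean curvature error terms, and the estimate closes only because the decay rate $\tau>0$ makes the total mean curvature over a ball of radius comparable to $\rho$ tend to zero as $\rho\to\infty$. The point requiring real care is to package those error terms into a Gr\"onwall inequality with $\rho$-independent constants, rather than absorbing them at the cost of a factor growing with $\rho$. The bounded-region estimate is routine but cannot be skipped, since the supremum in the statement ranges over all of $\mathbb{R}^n$.
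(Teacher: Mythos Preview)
Your proof is correct. The paper's argument is organized differently: it proceeds by contradiction, assuming there exist sequences $x_k$ and $r_k$ with $\Theta(x_k,r_k)\to\infty$, reduces to the regime $|x_k|_{\bar g}\ge 3r_k$ and $|x_k|_{\bar g}\to\infty$, and then---in place of your Gr\"onwall step---chooses $s_k\in[r_k,|x_k|_{\bar g}/2]$ that \emph{maximizes} the density ratio over this interval; applying monotonicity between $s_k$ and $t_k=|x_k|_{\bar g}/2$, the mean-curvature error is then estimated by $O(|x_k|_{\bar g}^{-\tau})$ times that very supremum and absorbed, forcing $\Theta(x_k,t_k)\to\infty$, which contradicts the centered bound. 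The two approaches use the same ingredients (monotonicity formula, the decay $|\bar H|=O(|x|_{\bar g}^{-1-\tau})$, and comparison with centered balls) and differ only in how the mean-curvature error is closed: the paper's supremum trick is slightly slicker and avoids any differential inequality, while your Gr\"onwall/exponential-monotonicity argument is more constructive, gives an explicit bound, and makes the three regimes (large $r$, compact region, far-off small ball) transparent. Your remark that the compact-region case cannot be skipped is well taken; the paper's contradiction framing absorbs it into the observation that proper embeddedness forces $|x_k|_{\bar g}\to\infty$ or $r_k\to\infty$.
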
 
\begin{proof}
Suppose, for a contradiction, that there are numbers $r_1,\,r_2,\ldots>0$ and points  $x_1,\,x_2,\ldots\in \mathbb{R}^n$ with
\begin{align} \label{contradiction area estimate}  
	\lim_{k\to\infty} \frac{|  B^n_{r_k}(x_k)\cap \Sigma|_{\bar g}}{\omega_{n-1}\,r_k^{n-1}}=\infty.
\end{align} 
Passing to a subsequence and using that  $\Sigma$ is properly embedded, we may assume that either
$$
\lim_{k\to\infty} |x_k|_{\bar g}=\infty\qquad\text{or}\qquad \lim_{k\to\infty} r_k=\infty.
$$
\indent Note that
\begin{align} \label{r vs x} 
	\liminf_{k\to\infty} \frac{|x_k|_{\bar g}}{r_k}\geq 3.
\end{align} 
Indeed,	if not, then $B^n_{r_k}(x_k)\subset B^n_{4\,r_k}(0)$ for a subsequence. This is not compatible with \eqref{contradiction area estimate} and \eqref{area bound at infinity}. \\ \indent 
Let $t_k=|x_k|_{\bar g}/2$.
By \eqref{H estimate}, we have \begin{align} \label{contr mean est}|x_k|_{\bar g}\,\bar H=O(|x_k|_{\bar g}^{-\tau})
	\end{align}  on $B^n_{t_k}(x_k)\cap \Sigma$. We choose $s_k>0$ with $r_k\leq s_k\leq t_k$ such that
\begin{align} \label{sup}  
\frac{| B^n_{s_k}(x_k)\cap \Sigma|_{\bar g}}{\omega_{n-1}\,s_k^{n-1}}= \sup_{r_k\leq r\leq t_k} \frac{| B^n_{r}(x_k)\cap \Sigma|_{\bar g}}{\omega_{n-1}\,r^{n-1}}.
\end{align} 
 Using the monotonicity formula \eqref{monotonicity formula} and \eqref{contr mean est}, we have
\begin{equation} \label{est}  
\begin{aligned} 
\frac{| B^n_{t_k}(x_k)\cap \Sigma|_{\bar g}}{\omega_{n-1}\, t_k^{n-1}}&\geq \frac{| B^n_{s_k}(x_k)\cap \Sigma|_{\bar g}}{\omega_{n-1}\,s_k^{n-1}} -O(|x_k|_{\bar g}^{-1-\tau})\,\int_{(B^n_{t_k}(x_k)\setminus B^n_{s_k}(x_k))\cap \Sigma}\,|x-x_k|_{\bar g}^{2-n}\,\mathrm{d}\bar\mu\\&\qquad -O(|x_k|_{\bar g}^{-\tau})\,\frac{| B^n_{s_k}(x_k)\cap \Sigma|_{\bar g}}{\omega_{n-1}\,s_k^{n-1}}. 
\end{aligned}
\end{equation} 
Using Lemma \ref{layer cake} and \eqref{sup}, we have
$$
\int_{(B^n_{t_k}(x_k)\setminus B^n_{s_k}(x_k))\cap \Sigma}\,|x-x_k|_{\bar g}^{2-n}\,\mathrm{d}\bar\mu=O(|x_k|_{\bar g})\,\frac{| B^n_{s_k}(x_k)\cap \Sigma|_{\bar g}}{\omega_{n-1}\,s_k^{n-1}}.
$$ 
In conjunction with \eqref{contradiction area estimate}, \eqref{sup}, and \eqref{est}, we conclude that
$$
\lim_{k\to\infty}\frac{| B^n_{t_k}(x_k)\cap \Sigma|_{\bar g}}{\omega_{n-1}\,t_k^{n-1}}=\infty.
$$
This is not compatible with \eqref{r vs x}.
\end{proof}

Next, we prove refined estimates on the area growth of $ \Sigma$.

\begin{lem} \label{upper area bound}
	As $s\to\infty$,
	$$
	\frac{| B^n_{s}(0)\cap \Sigma|_{\bar g}}{\omega_{n-1}\,s^{n-1}}\leq 1+O(s^{-\tau}).
	$$
\end{lem}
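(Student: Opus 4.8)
The plan is to use the monotonicity formula \eqref{monotonicity formula} at $x_0 = 0$, together with the mean-curvature decay from Lemma \ref{mean curvature estimate} and the uniform density bound from Lemma \ref{area estimate sigma bar}, to convert the known limit \eqref{area bound at infinity} into a quantitative upper bound with the explicit rate $O(s^{-\tau})$. Concretely, fix $s$ large and apply \eqref{monotonicity formula} with $x_0 = 0$, inner radius $s$, and let the outer radius $t \to \infty$. Since $\bar g(x,\bar\nu)^2 \geq 0$, the first integral term is non-negative and can simply be discarded for an upper bound on $s^{1-n}|B^n_s(0)\cap\Sigma|_{\bar g}$. The two remaining terms involve $\bar H$, which by Lemma \ref{mean curvature estimate} satisfies $|\bar H| = O(|x|_{\bar g}^{-1-\tau})$ on $\{|x|_{\bar g}>1\}\cap\Sigma$; on $B^n_s(0)\cap\Sigma$ we only have $|\bar H| = O(1)$ near the origin, but that region is compact and contributes a bounded constant, which is $O(s^{-\tau})$ times $s^{n-1}$ after multiplying through — harmless.

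The key steps, in order: (i) rewrite \eqref{monotonicity formula} as
$$
s^{1-n}\,|B^n_s(0)\cap\Sigma|_{\bar g} \leq \lim_{t\to\infty} t^{1-n}\,|B^n_t(0)\cap\Sigma|_{\bar g} + \frac{1}{n-1}\int_{(\mathbb{R}^n\setminus B^n_s(0))\cap\Sigma} |x|_{\bar g}^{1-n}\,|\bar g(x,\bar\nu)|\,|\bar H|\,\mathrm{d}\bar\mu + C\,s^{1-n},
$$
where the $C\,s^{1-n}$ collects the (bounded) contribution of $B^n_1(0)\cap\Sigma$ and we have used $\bar g(x,\bar\nu) = O(|x|_{\bar g})$; (ii) observe $\lim_{t\to\infty} t^{1-n}|B^n_t(0)\cap\Sigma|_{\bar g} = \omega_{n-1}$ by \eqref{area bound at infinity}; (iii) estimate the tail integral using $|\bar g(x,\bar\nu)| \leq |x|_{\bar g}$ and $|\bar H| = O(|x|_{\bar g}^{-1-\tau})$, so the integrand is $O(|x|_{\bar g}^{1-n-\tau})$, and then a dyadic decomposition $\{2^j s \leq |x|_{\bar g} \leq 2^{j+1} s\}$ combined with the uniform density bound from Lemma \ref{area estimate sigma bar} (which gives $|B^n_{2^{j+1}s}(0)\cap\Sigma|_{\bar g} = O((2^j s)^{n-1})$) shows this tail integral is $O(s^{-\tau})$; (iv) multiply through by $\omega_{n-1}^{-1}\,s^{n-1}$ and absorb the $C\,s^{1-n}\cdot\omega_{n-1}^{-1}\,s^{n-1} = O(s^{2-n}) = O(s^{-\tau})$ error (using $\tau \leq n-2$) to conclude.

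I expect the main obstacle to be the careful bookkeeping in step (iii): one must ensure the dyadic sum $\sum_{j\geq 0} (2^j s)^{1-n-\tau} \cdot (2^j s)^{n-1} = s^{-\tau}\sum_{j\geq 0} 2^{-j\tau}$ converges, which it does since $\tau > 0$, and that the density bound of Lemma \ref{area estimate sigma bar} is applied with the correct centering (here centered at $0$, so it is really just \eqref{area bound at infinity} plus the sup over all radii). A minor subtlety is handling the transition region near $|x|_{\bar g} = 1$ where $\bar H$ is merely bounded rather than decaying, but since $\Sigma$ is properly embedded this is a fixed compact piece and contributes only a constant, which is negligible after the rescaling. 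No genuinely new idea beyond monotonicity plus the two preceding lemmas should be needed.
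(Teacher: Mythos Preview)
Your approach is essentially the same as the paper's: apply the monotonicity formula \eqref{monotonicity formula} at $x_0=0$, drop the non-negative excess term, let $t\to\infty$ using \eqref{area bound at infinity}, and control the $\bar H$-terms via $|\bar H|=O(|x|_{\bar g}^{-1-\tau})$ together with the uniform density bound (the paper packages your dyadic sum as Lemma \ref{layer cake}).

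There is one bookkeeping omission. Your displayed inequality in step (i) accounts only for the tail integral over $\{|x|_{\bar g}>s\}$ and a bounded contribution from $B^n_1(0)\cap\Sigma$, but the last line of \eqref{monotonicity formula} contributes, after sending $t\to\infty$,
\[
\frac{s^{1-n}}{n-1}\int_{B^n_s(0)\cap\Sigma}|\bar g(x,\bar\nu)|\,|\bar H|\,\mathrm{d}\bar\mu,
\]
and the annulus $\{1<|x|_{\bar g}<s\}$ is \emph{not} bounded by a fixed constant. The paper keeps this term explicitly as $O(1)\,s^{1-n}\int_{B^n_s(0)\cap\Sigma}|x|_{\bar g}^{-\tau}\,\mathrm{d}\bar\mu$ and then applies Lemma \ref{layer cake} to get $O(s^{-\tau})$; your dyadic argument handles it identically once you include it. Also, the algebra in your step (iv) is garbled (you do not multiply by $s^{n-1}$; the $C\,s^{1-n}$ term is already $O(s^{1-n})=O(s^{-\tau})$ since $\tau\le n-2$), though the intended conclusion is correct.
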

\begin{proof}

	Using \eqref{H estimate} and the monotonicity formula \eqref{monotonicity formula},  we have, 	for every $0<s<t$,
	\begin{align*} 
		\frac{|B^n_{s}(0)\cap \Sigma|_{\bar g}}{\omega_{n-1}\,s^{n-1}}&\leq\frac{|B^n_{t}(0)\cap \Sigma|_{\bar g}}{{\omega_{n-1}\,t^{n-1}}}+O(1)\,\int_{(B^n_t(0)\setminus B^n_{s}(0))\cap \Sigma} |x|_{\bar g}^{1-n-\tau}\,\mathrm{d}\bar\mu\\
		 &\qquad +O(s^{1-n})\,\int_{  B^n_{s}(0)\cap \Sigma}|x|_{\bar g}^{-\tau}\,\mathrm{d}\bar\mu.
	\end{align*} 
 Using Lemma \ref{layer cake}, \eqref{area bound at infinity}, and \eqref{inner radius}, it follows that 
	$$
	\frac{| B^n_{s}(0)\cap \Sigma|_{\bar g}}{\omega_{n-1}\,s^{n-1}}\leq\frac{|B^n_{t}(0)\cap \Sigma|_{\bar g}}{{\omega_{n-1}\,t^{n-1}}}+O(t^{-\tau})+O(s^{-\tau})+O(s^{1-n}).
	$$
	Letting $t\to\infty$,  the assertion follows, using  \eqref{area bound at infinity} and that $1+\tau<n-1$.
\end{proof}
\begin{lem} \label{lemma lower area bound} 
As $t\to\infty$,
	$$
\frac{| B^n_{t}(0)\cap \Sigma|_{\bar g}}{{\omega_{n-1}\,t^{n-1}}}\geq 1-O(t^{-\tau/(n-1)}).
	$$
\end{lem}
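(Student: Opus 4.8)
The plan is to apply the monotonicity formula \eqref{monotonicity formula} \emph{centered at a point $p\in\Sigma$ far out in the end}, rather than at the origin as in Lemma \ref{upper area bound}. This has two advantages. Since $p$ is a smooth embedded point of $\Sigma$, the density of $\Sigma$ at $p$ equals one, so that $s^{1-n}\,|B^n_s(p)\cap\Sigma|_{\bar g}\to\omega_{n-1}$ as $s\downarrow 0$; and if $|p|_{\bar g}=\rho$ is large, then $\Sigma$ satisfies the minimal surface equation on a large ball around $p$ and, by Lemma \ref{mean curvature estimate}, has mean curvature of size $O(\rho^{-1-\tau})$ there, which keeps the error terms in \eqref{monotonicity formula} under control.

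In detail, fix $t$ large and let $1\ll\rho=\rho(t)\ll t$ be chosen at the end. Since $\Sigma$ is unbounded and connected outside a compact set (cf.\ the proof of Lemma \ref{components reduction}), it meets $S^{n-1}_\rho(0)$, so we may pick $p\in\Sigma$ with $|p|_{\bar g}=\rho$. Because $B^n_{t-\rho}(p)\subset B^n_t(0)$, it suffices to bound $|B^n_{t-\rho}(p)\cap\Sigma|_{\bar g}$ from below. Applying \eqref{monotonicity formula} at $x_0=p$ with outer radius $t-\rho$, discarding the non-negative gradient term, and letting the inner radius $s\downarrow 0$ --- the error integral over $B^n_s(p)\cap\Sigma$ vanishes in this limit since $\bar H$ is bounded near $p$ --- and using that $|\bar g(x-p,\bar\nu)|\leq|x-p|_{\bar g}$ and that the coefficient $(t-\rho)^{1-n}-|x-p|_{\bar g}^{1-n}$ has absolute value at most $|x-p|_{\bar g}^{1-n}$ on $B^n_{t-\rho}(p)$, we obtain
\begin{align*}
(t-\rho)^{1-n}\,|B^n_{t-\rho}(p)\cap\Sigma|_{\bar g}\ \geq\ \omega_{n-1}-C\int_{B^n_{t-\rho}(p)\cap\Sigma}|x-p|_{\bar g}^{2-n}\,|\bar H|\,\mathrm d\bar\mu.
\end{align*}

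The crux is to estimate this error integral, and here I would split the region of integration into three pieces. On $\{|x|_{\bar g}>2\rho\}$ one has $|x-p|_{\bar g}\geq|x|_{\bar g}/2$ and, by Lemma \ref{mean curvature estimate}, $|\bar H|=O(|x|_{\bar g}^{-1-\tau})$, so the contribution is $O\bigl(\int_{\{|x|_{\bar g}>2\rho\}\cap\Sigma}|x|_{\bar g}^{1-n-\tau}\,\mathrm d\bar\mu\bigr)=O(\rho^{-\tau})$ by the layer-cake formula Lemma \ref{layer cake} and the uniform area bound Lemma \ref{area estimate sigma bar}. On $\{|x-p|_{\bar g}<\rho/2\}$ one has $|x|_{\bar g}>\rho/2$, hence $|\bar H|=O(\rho^{-1-\tau})$, while $\int_{B^n_{\rho/2}(p)\cap\Sigma}|x-p|_{\bar g}^{2-n}\,\mathrm d\bar\mu=O(\rho)$, again by Lemma \ref{layer cake} and Lemma \ref{area estimate sigma bar}; this gives $O(\rho^{-\tau})$. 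On the remaining set $\{|x-p|_{\bar g}\geq\rho/2\}\cap\{|x|_{\bar g}\leq 2\rho\}$ one has $|x-p|_{\bar g}^{2-n}\leq C\rho^{2-n}$; here $|\bar H|=O(1)$ on the compact set $\{|x|_{\bar g}\leq 1\}\cap\Sigma$ and $|\bar H|=O(|x|_{\bar g}^{-1-\tau})$ on $\{1<|x|_{\bar g}\leq 2\rho\}$, and estimating $\int_{\{1<|x|_{\bar g}\leq 2\rho\}\cap\Sigma}|x|_{\bar g}^{-1-\tau}\,\mathrm d\bar\mu$ once more by Lemma \ref{layer cake} and Lemma \ref{area estimate sigma bar} yields $O(\rho^{2-n})=O(\rho^{-\tau})$, the last step using $\tau\leq n-2$. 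Combining the three pieces with $(t-\rho)^{n-1}\geq t^{n-1}\bigl(1-(n-1)\rho/t\bigr)$ for $\rho\leq t/2$ gives
\begin{align*}
\frac{|B^n_t(0)\cap\Sigma|_{\bar g}}{\omega_{n-1}\,t^{n-1}}\ \geq\ 1-O\!\left(\frac{\rho}{t}\right)-O(\rho^{-\tau}),
\end{align*}
and choosing $\rho=t^{1/(n-1)}$ balances the two errors and yields $1-O(t^{-\tau/(n-1)})$, using $\tau\leq n-2$ once more.

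I expect the estimate of the mean-curvature error integral to be the main obstacle. The decisive point is precisely the off-centering: because $p$ lies at distance $\rho\gg 1$ from the origin, the region $\{|x|_{\bar g}\leq 1\}$ --- on which $\Sigma$ need not even be minimal --- sits at distance at least $\rho-1$ from $p$, so its contribution to the error integral is only $O(\rho^{2-n})$ rather than $O(1)$; a monotonicity argument centered at the origin would not see this. (At the borderline rate $\tau=n-2$ the third region picks up a logarithmic factor, which is immaterial for the applications of the lemma.)
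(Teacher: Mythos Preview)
Your proof is correct and follows essentially the same strategy as the paper: apply the monotonicity formula centered at a point $p\in\Sigma$ with $|p|_{\bar g}$ a carefully chosen power of $t$, use that the density of $\Sigma$ at $p$ equals $1$, and bound the mean-curvature error integral by splitting according to distance from the origin and from $p$, invoking Lemma~\ref{area estimate sigma bar} and Lemma~\ref{layer cake}. The only cosmetic differences are that the paper takes $|p|_{\bar g}=t^{(n-1-\tau)/(n-1)}$ rather than $t^{1/(n-1)}$, uses a two-piece rather than three-piece decomposition of the error integral, and passes from $B^n_t(p)$ to the slightly larger origin-centered ball $B^n_{t(1+t^{-\tau/(n-1)})}(0)$ (again via Lemma~\ref{area estimate sigma bar}) rather than from $B^n_{t-\rho}(p)\subset B^n_t(0)$ as you do; your observation about the logarithmic correction at $\tau=n-2$ applies equally to the paper's argument.
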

\begin{proof}
		For $t>1$ large, we choose $x_t\in  \Sigma$ with $|x_t|_{\bar g}=t^{(n-1-\tau)/(n-1)}$. We apply the monotonicity formula \eqref{monotonicity formula} with $x_0=x_t$. Letting $s\to 0$, using that $\Sigma$ is properly embedded and \eqref{H estimate}, we obtain 
	$$
	\frac{| B^n_{t}(x_t)\cap \Sigma|_{\bar g}}{\omega_{n-1}\,t^{n-1}}\geq 1  +O(1)\,\int_{   B^n_{t}(x_t)\cap \Sigma}|x-x_t|_{\bar g}^{2-n}\,|x|_{\bar g}^{-1-\tau}\,\mathrm{d}\bar\mu.
	$$
Clearly, $|x|_{\bar g}> |x_t|_{\bar g}/2$ for all $x\in B^n_{|x_t|_{\bar g}/2}(x_t)$. Using Lemma \ref{layer cake} and Lemma \ref{area estimate sigma bar},	we obtain
	$$
	\int_{ B^n_{|x_t|_{\bar g}/2}(x_t)\cap \Sigma}|x-x_t|_{\bar g}^{2-n}\,|x|_{\bar g}^{-1-\tau}\,\mathrm{d}\bar\mu=O(|x_t|_{\bar g}^{-\tau}).
	$$
	Likewise,  $|x-x_t|_{\bar g}\geq|x_t|_{\bar g}/2$ for all $x\in\mathbb{R}^n\setminus B^n_{|x_t|_{\bar g}/2}(x_t)$. It follows that \begin{align*} 
		\int_{  ( B^n_{t}(x_t)\setminus B^n_{|x_t|_{\bar g}/2}(x_t))\cap \Sigma }|x-x_t|_{\bar g}^{2-n}\,|x|_{\bar g}^{-1-\tau}\,\mathrm{d}\bar\mu=O(|x_t|_{\bar g}^{2-n})\,\int_{ B^n_{2\,t}(0)\cap \Sigma}|x|_{\bar g}^{-1-\tau}\,\mathrm{d}\bar \mu.
	\end{align*} 
	Using Lemma \ref{layer cake} again, \eqref{area bound at infinity}, \eqref{inner radius}, and that $1+\tau<n-1$,  we find
	$$
	\int_{ B^n_{2\,t}(0)\cap \Sigma}|x|_{\bar g}^{-1-\tau}\,\mathrm{d}\bar\mu=O(t^{n-2-\tau}).
	$$
Since $0<\tau< n-2$, 
	 $$
-	\tau\,\frac{n-1-\tau}{n-1}< -\frac{\tau}{n-1}= (2-n)\,\frac{n-1-\tau}{n-1}+(n-2-\tau).
	$$
	We conclude that 
	\begin{align} \label{one}  
	\frac{|  B^n_{t}(x_t)\cap \Sigma|_{\bar g}}{\omega_{n-1}\,t^{n-1}}\geq 1 - O(t^{-\tau/(n-1)}).
	\end{align} 
Note that
	\begin{align} \label{two}  
	| B^n_{t}(x_t)\cap \Sigma|_{\bar g}\leq | B^n_{t\,(1+t^{-\tau/(n-1)})}(0)\cap \Sigma|_{\bar g}.
	\end{align} 
	Using  that $1-(1+t^{-\tau/(n-1)})^{1-n}\leq (n-1)\,t^{-\tau/(n-1)}$, we obtain that 
	\begin{equation} \label{three}
		\begin{aligned}  
	\frac{| B^n_{t\,(1+t^{-\tau/(n-1)})}(0)\cap \Sigma|_{\bar g}}{\omega_{n-1}\,t^{n-1}}&\leq \frac{|  B^n_{t\,(1+t^{-\tau/(n-1)})}(0)\cap \Sigma|_{\bar g}}{\omega_{n-1}\,t^{n-1}\,(1+t^{-\tau/(n-1)})^{n-1}}\\&\qquad +O(t^{-\tau/(n-1)})\,\frac{|  B^n_{t\,(1+t^{-\tau/(n-1)})}(0)\cap \Sigma|_{\bar g}}{t^{n-1}}.
	\end{aligned} 
	\end{equation} 
	By \eqref{area bound at infinity},
	\begin{align} \label{four}
	\frac{|  B^n_{t\,(1+t^{-\tau/(n-1)})}(0)\cap \Sigma|_{\bar g}}{t^{n-1}}=O(1).
	\end{align} 
	Let $\tilde t=t\,(1+t^{-\tau/(n-1)})$. Combing \eqref{one}, \eqref{two}, \eqref{three}, and \eqref{four}, we see that, as $\tilde t\to\infty$,  
	$$
\frac{| B^n_{\tilde t}(0)\cap \Sigma|_{\bar g}}{{\omega_{n-1}\,\tilde t^{n-1}}}\geq 1-O(\tilde t^{-\tau/(n-1)}).
$$
The assertion follows.
\end{proof}
\begin{lem} \label{excess estimate}
As $s\to\infty$,
	$$
	\int_{ \Sigma\setminus B^n_{s}(0)}|x|_{\bar g}^{-1-n}\,\bar g(x,\bar\nu)^2\,\mathrm{d}\bar\mu=O(s^{-\tau/(n-1)}).
	$$ 
\end{lem}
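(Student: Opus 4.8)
The plan is to apply the monotonicity formula \eqref{monotonicity formula} with $x_0=0$ on the annular region $B^n_t(0)\setminus B^n_s(0)$, rearrange so that the excess integral over $(B^n_t(0)\setminus B^n_s(0))\cap\Sigma$ stands alone, and then let $t\to\infty$, using that the excess integrand is non-negative so that monotone convergence identifies the limit with $\int_{\Sigma\setminus B^n_s(0)}|x|_{\bar g}^{-1-n}\,\bar g(x,\bar\nu)^2\,\mathrm{d}\bar\mu$. On the other side of \eqref{monotonicity formula} one finds $t^{1-n}\,|B^n_t(0)\cap\Sigma|_{\bar g}$, which tends to $\omega_{n-1}$ by \eqref{area bound at infinity}; the term $-s^{1-n}\,|B^n_s(0)\cap\Sigma|_{\bar g}$; and two integrals weighted by $\bar H$ in which every factor $t^{1-n}$ tends to $0$. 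To pass to the limit in these it suffices that $\int_{\Sigma\setminus B^n_s(0)}|x|_{\bar g}^{1-n}\,|\bar g(x,\bar\nu)|\,|\bar H|\,\mathrm{d}\bar\mu<\infty$, which follows from $|\bar H|=O(|x|_{\bar g}^{-1-\tau})$ (Lemma \ref{mean curvature estimate}), $|\bar g(x,\bar\nu)|\leq|x|_{\bar g}$, and Lemma \ref{layer cake} combined with the area bound of Lemma \ref{area estimate sigma bar}. This yields the identity
\begin{align*}
\int_{\Sigma\setminus B^n_s(0)}|x|_{\bar g}^{-1-n}\,\bar g(x,\bar\nu)^2\,\mathrm{d}\bar\mu&=\omega_{n-1}-s^{1-n}\,|B^n_s(0)\cap\Sigma|_{\bar g}\\
&\qquad-\frac{1}{n-1}\int_{\Sigma\setminus B^n_s(0)}|x|_{\bar g}^{1-n}\,\bar g(x,\bar\nu)\,\bar H\,\mathrm{d}\bar\mu-\frac{s^{1-n}}{n-1}\int_{B^n_s(0)\cap\Sigma}\bar g(x,\bar\nu)\,\bar H\,\mathrm{d}\bar\mu.
\end{align*}

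The second step is to estimate each term on the right. By Lemma \ref{lemma lower area bound}, $\omega_{n-1}-s^{1-n}\,|B^n_s(0)\cap\Sigma|_{\bar g}=O(s^{-\tau/(n-1)})$. For the first $\bar H$-integral, $|\bar H|=O(|x|_{\bar g}^{-1-\tau})$ and $|\bar g(x,\bar\nu)|\leq|x|_{\bar g}$ bound it by $C\int_{\Sigma\setminus B^n_s(0)}|x|_{\bar g}^{1-n-\tau}\,\mathrm{d}\bar\mu$, which is $O(s^{-\tau})$ by Lemma \ref{layer cake} and the area bound. For the second, I would split $B^n_s(0)\cap\Sigma$ into $\{|x|_{\bar g}\leq1\}\cap\Sigma$, which is compact so the integrand is bounded and the contribution is $O(1)$, and $\{1<|x|_{\bar g}<s\}\cap\Sigma$, on which $|\bar g(x,\bar\nu)\,\bar H|=O(|x|_{\bar g}^{-\tau})$ so that, using $\tau\leq n-2$ and the area bound, the contribution is $O(s^{n-1-\tau})$; multiplying by $s^{1-n}$ and using $\tau\leq n-2<n-1$ turns this into $O(s^{-\tau})$. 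Since $\tau/(n-1)\leq\tau$, all error terms are $O(s^{-\tau/(n-1)})$, and the asserted estimate follows. Because the excess integral is non-negative, only the upper bound just derived is needed, so no cancellation among the error terms has to be tracked.

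I do not expect a genuine obstacle here. The two points that require a little care are the justification of the limit $t\to\infty$ in the mean-curvature integrals, that is, the finiteness of the weighted integral of $\bar H$ over the end of $\Sigma$, and the treatment of $\bar H$ near the origin, where the decay estimate of Lemma \ref{mean curvature estimate} is unavailable but $\Sigma\cap\{|x|_{\bar g}\leq1\}$ is compact by proper embeddedness.
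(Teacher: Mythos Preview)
Your proof is correct and follows essentially the same approach as the paper: apply the monotonicity formula \eqref{monotonicity formula} with $x_0=0$, let $t\to\infty$, and control the resulting area-ratio difference via Lemma \ref{lemma lower area bound} and the mean-curvature integrals via Lemma \ref{mean curvature estimate} together with Lemma \ref{layer cake}. You are in fact slightly more careful than the paper in justifying the limit $t\to\infty$ and in treating the compact region $\{|x|_{\bar g}\leq 1\}\cap\Sigma$ where the decay estimate on $\bar H$ is not available.
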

\begin{proof}
	We apply the monotonicity formula \eqref{monotonicity formula} with $x_0=0$. In conjunction with Lemma \ref{upper area bound} and Lemma \ref{lemma lower area bound}, letting $t\to\infty$, we obtain
	\begin{align*}
		\int_{ \Sigma\setminus B^n_{s}(0)}|x|_{\bar g}^{-1-n}\,\bar g(x,\bar\nu)^2\,\mathrm{d}\bar\mu&=O(1)\,\int_{ \Sigma\setminus B^n_{s}(0)}|x|_{\bar g}^{1-n}\,\bar g(x,\bar\nu)\,\bar H\,\mathrm{d}\bar\mu\\ &\qquad  
		+O(s^{1-n})\,\int_{ B^n_{s}(0)\cap \Sigma}\bar g(x,\bar\nu)\,\bar H\,\mathrm{d}\bar\mu+O(s^{-\tau/(n-1)}).
	\end{align*}
	By Lemma \ref{layer cake}, \eqref{area bound at infinity}, \eqref{inner radius}, and \eqref{H estimate}, 
	$$
	\int_{ \Sigma\setminus B^n_{s}(0)}|x|_{\bar g}^{1-n}\,\bar g(x,\bar\nu)\,\bar H\,\mathrm{d}\bar\mu=O(s^{-\tau})
	\qquad \text{and}\qquad
\int_{  B^n_{s}(0)\cap \Sigma}\bar g(x,\bar\nu)\,\bar H\,\mathrm{d}\bar\mu=O(s^{n-1-\tau}).
	$$
	\indent The assertion follows from these estimates.
\end{proof}
Next, we  show that there is only one  tangent plane at infinity that can arise in the setting of Lemma \ref{blowdown at infinity are planes}. To this end, we apply an argument developed by B.~White \cite[Theorem 3]{whitetangentconeuniqueness} to study the uniqueness of tangent planes at isolated singularities of area-minimizing  surfaces. This argument has been adapted to study the uniqueness of tangent planes at infinity of certain minimal surfaces in $\mathbb{R}^3$ by P.~Gallagher \cite[Lemma 2.4]{gallgheruniqueness}.
\begin{lem} \label{area estimate of normal}  Let $F:\Sigma\setminus B^n_1(0)\to S^{n-1}_1(0)$ be given by $$F(x)=\frac{x}{|x|_{\bar g}}.$$ As $s\to\infty$, 
	$$
	|F(\Sigma\setminus B^n_s(0))|_{\bar g}=O(s^{-\tau/(2\,n-2)}).
	$$ 
\end{lem}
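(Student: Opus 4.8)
The plan is to estimate the spherical area of $F(\Sigma\setminus B^n_s(0))$ by the integral over $\Sigma\setminus B^n_s(0)$ of the tangential Jacobian of $F$, and then to control that integral by combining the excess bound of Lemma \ref{excess estimate} with the uniform area bound of Lemma \ref{area estimate sigma bar}, applied on a dyadic family of annuli rather than on the entire end of $\Sigma$.

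First I would compute the tangential Jacobian $J^\Sigma F$. Writing $F=\pi\circ\iota$, where $\iota\colon\Sigma\hookrightarrow\mathbb{R}^n$ and $\pi(x)=x\,|x|_{\bar g}^{-1}$, the differential $\mathrm{d}\pi_x$ equals $|x|_{\bar g}^{-1}$ times the orthogonal projection onto $x^\perp$. Choosing an orthonormal basis $e_1,\dots,e_{n-1},\bar\nu$ of $\mathbb{R}^n$ with $e_1,\dots,e_{n-1}$ spanning $T_x\Sigma$, one has $\sum_{i=1}^{n-1}\bar g(x,e_i)^2=|x|_{\bar g}^2-\bar g(x,\bar\nu)^2$, so the Gram determinant of the frame $\{\mathrm{d}\pi_x(e_i)\}_{i=1}^{n-1}$ equals $|x|_{\bar g}^{-2n}\,\bar g(x,\bar\nu)^2$, and hence
$$
J^\Sigma F(x)=\frac{|\bar g(x,\bar\nu)|}{|x|_{\bar g}^{n}}.
$$
By the area formula, $|F(\Sigma\setminus B^n_s(0))|_{\bar g}\leq\int_{\Sigma\setminus B^n_s(0)}|\bar g(x,\bar\nu)|\,|x|_{\bar g}^{-n}\,\mathrm{d}\bar\mu$.

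Next I would decompose $\Sigma\setminus B^n_s(0)=\bigcup_{k\geq 0}A_k$ with $A_k=(B^n_{2^{k+1}s}(0)\setminus B^n_{2^k s}(0))\cap\Sigma$ and, on each $A_k$, apply the Cauchy--Schwarz inequality after writing $|\bar g(x,\bar\nu)|\,|x|_{\bar g}^{-n}=\big(\bar g(x,\bar\nu)^2\,|x|_{\bar g}^{-n-1}\big)^{1/2}\,|x|_{\bar g}^{(1-n)/2}$:
$$
\int_{A_k}\frac{|\bar g(x,\bar\nu)|}{|x|_{\bar g}^{n}}\,\mathrm{d}\bar\mu\leq\left(\int_{\Sigma\setminus B^n_{2^k s}(0)}\frac{\bar g(x,\bar\nu)^2}{|x|_{\bar g}^{n+1}}\,\mathrm{d}\bar\mu\right)^{1/2}\left((2^k s)^{1-n}\,|B^n_{2^{k+1}s}(0)\cap\Sigma|_{\bar g}\right)^{1/2}.
$$
By Lemma \ref{excess estimate} the first factor is $O\big((2^k s)^{-\tau/(2n-2)}\big)$, and by Lemma \ref{area estimate sigma bar} the second factor is bounded by a constant independent of $k$ and $s$; thus the left-hand side is $O\big((2^k s)^{-\tau/(2n-2)}\big)$. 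Summing the geometric series $\sum_{k\geq0}O\big((2^k s)^{-\tau/(2n-2)}\big)=O\big(s^{-\tau/(2n-2)}\big)$, which converges since $\tau>0$, yields the claim.

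The only subtlety I anticipate is that Cauchy--Schwarz cannot be applied directly on all of $\Sigma\setminus B^n_s(0)$: in view of the area growth \eqref{area bound at infinity}, the factor $\int_{\Sigma\setminus B^n_s(0)}|x|_{\bar g}^{1-n}\,\mathrm{d}\bar\mu$ diverges logarithmically. The dyadic decomposition replaces this divergence by a convergent geometric series at the cost of a harmless constant, while the identification of $J^\Sigma F$ with $|\bar g(x,\bar\nu)|\,|x|_{\bar g}^{-n}$ is then routine linear algebra.
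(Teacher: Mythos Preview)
Your proof is correct and follows essentially the same approach as the paper: identify the Jacobian of $F$ as $|x|_{\bar g}^{-n}\,|\bar g(x,\bar\nu)|$, decompose $\Sigma\setminus B^n_s(0)$ into dyadic annuli, apply Cauchy--Schwarz on each annulus, and sum the resulting geometric series using Lemma~\ref{excess estimate} for the excess factor and an area bound for the other factor. The only cosmetic difference is that the paper cites Lemma~\ref{upper area bound} rather than Lemma~\ref{area estimate sigma bar} for the $O(1)$ bound on $(2^k s)^{1-n}\,|B^n_{2^{k+1}s}(0)\cap\Sigma|_{\bar g}$; either lemma suffices.
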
 
\begin{proof}
By the area formula,
	$$
	|F(\Sigma\setminus B^n_s(0))|_{\bar g}=\int_{\Sigma\setminus B^n_s(0)}|x|_{\bar g}^{-n}\,|\bar g(x,\bar\nu)|\,\mathrm{d}\bar\mu.
	$$
	By the Cauchy-Schwarz inequality, 
	\begin{align*} 
&\left(\int_{\Sigma\setminus B^n_s(0)}|x|_{\bar g}^{-n}\,|\bar g(x,\bar\nu)|\,\mathrm{d}\bar\mu\right)^2	\\&\qquad \leq \sum_{k=0}^\infty\int_{ (B^n_{2^{k+1}\,s}(0)\setminus B^n_{2^{k}s}(0))\cap \Sigma}|x|_{\bar g}^{-1-n}\,\bar g(x,\bar\nu)^2\,\mathrm{d}\bar\mu\,\int_{ (B^n_{2^{k+1}\,s}(0)\setminus B^n_{2^{k}s}(0))\cap \Sigma}|x|_{\bar g}^{1-n}\,\mathrm{d}\bar\mu. 
	\end{align*} 
Invoking Lemma \ref{upper area bound} and Lemma \ref{excess estimate}, we obtain
$$
\left(\int_{\Sigma\setminus B^n_s(0)}|x|_{\bar g}^{-n}\,|\bar g(x,\bar\nu)|\,\mathrm{d}\bar\mu\right)^2\leq O(1)\,\sum_{k=1}^\infty (2^k\,s)^{-\tau/(n-1)}=O(s^{-\tau/(n-1)}).
$$
\end{proof}
\begin{coro} \label{hyperplane uniqueness}
The  tangent planes in Lemma \ref{blowdown at infinity are planes} all agree.
\end{coro}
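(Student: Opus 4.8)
\textit{Proof plan.} The plan is to argue by contradiction by means of Lemma \ref{area estimate of normal}. Suppose the tangent planes in Lemma \ref{blowdown at infinity are planes} do not all agree. Passing to subsequences and interleaving, we obtain radii $r_1<\rho_1<r_2<\rho_2<\dots$ tending to infinity and distinct hyperplanes $P\neq P'$ through the origin such that $r_k^{-1}\,\Sigma$ and $\rho_k^{-1}\,\Sigma$ converge to $P$ and to $P'$, respectively, locally smoothly in $\mathbb{R}^n\setminus\{0\}$ and with multiplicity one. Let $e$ and $e'$ be unit normals of $P$ and $P'$ and $\theta_0>0$ the distance between $[e]$ and $[e']$ in $\mathbb{R}P^{n-1}$. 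Since the end of $\Sigma$ is connected, $\Sigma$ separates $\mathbb{R}^n$ into exactly two components $\Sigma^+$ and $\Sigma^-$, where we take $\Sigma^+$ to be the component into which a fixed global unit normal $\bar\nu$ of $\Sigma$ points. For $k$ large, multiplicity-one convergence implies that $\Sigma\cap(B^n_{2r_k}(0)\setminus B^n_{r_k/2}(0))$ is a single graph over $P$ of height at most $\varepsilon_k\,r_k$, where $\varepsilon_k\to0$, and we let $e_k\in\{e,-e\}$ be the unit normal of $P$ towards which $\bar\nu$ points along this graph; then the points of $B^n_{2r_k}(0)\setminus B^n_{r_k/2}(0)$ that project into the domain of the graph and satisfy $\bar g(x,e_k)>\varepsilon_k\,r_k$ (resp.\ $\bar g(x,e_k)<-\varepsilon_k\,r_k$) belong to $\Sigma^+$ (resp.\ $\Sigma^-$). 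Analogously, for $k$ large $\Sigma\cap(B^n_{2\rho_k}(0)\setminus B^n_{\rho_k/2}(0))$ is a single graph over $P'$ of height at most $\varepsilon_k\,\rho_k$, and there is $e_k'\in\{e',-e'\}$ such that the points of $B^n_{2\rho_k}(0)\setminus B^n_{\rho_k/2}(0)$ that project into the domain of this graph and satisfy $\bar g(x,e_k')>\varepsilon_k\,\rho_k$ belong to $\Sigma^+$.

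The key step is the claim that, for $k$ large, the region
$$
D_k:=\Big\{\xi\in S^{n-1}_1(0):\bar g(\xi,e_k)<-\varepsilon_k,\ \bar g(\xi,e_k')>\varepsilon_k,\ |\bar g(\xi,e_k)|<\tfrac12,\ |\bar g(\xi,e_k')|<\tfrac12\Big\}
$$
is contained in $F(\Sigma\setminus B^n_{r_k}(0))$. Indeed, fix $\xi\in D_k$. Since $|r_k\,\xi|=r_k$ and $|\bar g(\xi,e_k)|<\tfrac12$, the point $r_k\,\xi$ lies in $B^n_{2r_k}(0)\setminus B^n_{r_k/2}(0)$ and projects into the domain of the graph of $\Sigma$ over $P$; as $\bar g(r_k\,\xi,e_k)=r_k\,\bar g(\xi,e_k)<-\varepsilon_k\,r_k$, we get $r_k\,\xi\in\Sigma^-$. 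In the same way, $\rho_k\,\xi$ projects into the domain of the graph of $\Sigma$ over $P'$, and $\bar g(\rho_k\,\xi,e_k')=\rho_k\,\bar g(\xi,e_k')>\varepsilon_k\,\rho_k$ gives $\rho_k\,\xi\in\Sigma^+$. Hence the line segment $\{t\,\xi:r_k\leq t\leq\rho_k\}$ joins $\Sigma^-$ to $\Sigma^+$; since $\Sigma^+$ and $\Sigma^-$ are the two components of $\mathbb{R}^n\setminus\Sigma$, this segment meets $\Sigma$ at some point $t^*\,\xi$ with $r_k<t^*\leq\rho_k$. Then $t^*\,\xi\in\Sigma\setminus B^n_{r_k}(0)$ and $F(t^*\,\xi)=\xi$, which proves the claim.

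Finally, since $e_k\in\{\pm e\}$, $e_k'\in\{\pm e'\}$ and $[e]\neq[e']$, the vectors $e_k$ and $e_k'$ are linearly independent, and an elementary computation shows $|D_k|_{\bar g}\geq c(n)\,\theta_0$ for some dimensional constant $c(n)>0$ once $k$ is large. Combined with the claim,
$$
|F(\Sigma\setminus B^n_{r_k}(0))|_{\bar g}\geq|D_k|_{\bar g}\geq c(n)\,\theta_0
$$
for $k$ large, which contradicts Lemma \ref{area estimate of normal} since $|F(\Sigma\setminus B^n_{r_k}(0))|_{\bar g}\to 0$ as $k\to\infty$. Therefore the tangent planes all agree. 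I expect the main obstacle to be the geometric claim: making precise that $\Sigma$ is well approximated, with quantitative control of the error, by a single graph over $P$ near $S^{n-1}_{r_k}(0)$ and over $P'$ near $S^{n-1}_{\rho_k}(0)$; checking that the constraints in the definition of $D_k$ do place $r_k\,\xi$ and $\rho_k\,\xi$ over the respective graphical domains; and verifying that the two sides of $\Sigma$ near these scales are correctly identified with $\Sigma^\pm$, which uses the connectedness of the end of $\Sigma$ together with the fact that a connected, two-sided, properly embedded hypersurface separates $\mathbb{R}^n$ into two components.
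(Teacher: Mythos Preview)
Your proof is correct and follows essentially the same strategy as the paper's: argue by contradiction via Lemma \ref{area estimate of normal}, and use an intermediate-value argument along radial rays to show that if two distinct tangent planes occurred, then $|F(\Sigma\setminus B^n_s(0))|_{\bar g}$ would be bounded below by a positive constant. The paper phrases this step more tersely---noting that $F((B^n_{\lambda_2}(0)\setminus B^n_{\lambda_1}(0))\cap\Sigma)$ must contain at least two of the four components of $S^{n-1}_1(0)\setminus(\lambda_1^{-1}\Sigma\cup\lambda_2^{-1}\Sigma)$---whereas you spell out the underlying separation argument explicitly.
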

\begin{proof}
	Suppose, for a contradiction, that  $\pi_1, \pi_2\subset\mathbb{R}^n$ are two different tangent planes at infinity that arise as in Lemma \ref{blowdown at infinity are planes}. Let $s>1$ be sufficiently large such that $\lambda^{-1}\,\Sigma$ and $S^{n-1}_1(0)$ intersect transversely for every $\lambda>s$ and let $\lambda_2>\lambda_1>s$ be such that $S^{n-1}_{1}(0)\cap\lambda_1^{-1}\,\Sigma$ and $S^{n-1}_{1}(0)\cap\lambda_2^{-1}\,\Sigma$ are close to $S^{n-1}_1(0)\cap \pi_1$ and $S^{n-1}_1(0)\cap \pi_2$, respectively.  Note that $\{S^{n-1}_1(0)\cap \lambda^{-1}\,\Sigma:\lambda\in[\lambda_1,\lambda_2]\}$ is a homotopy of $S^{n-1}_1(0)\cap \lambda_1^{-1}\,\Sigma$ and $S^{n-1}_1(0)\cap \lambda_2^{-1}\,\Sigma$. Since a closed hypersurface of $\mathbb{R}^{n-1}$ that encloses the origin is not contractible in $\mathbb{R}^{n-1}\setminus\{0\}$, it follows that   $F((B^n_{\lambda_2}(0)\setminus B^n_{\lambda_1}(0))\cap\Sigma)$ contains at least two of the four components of 
	$S^{n-1}_1(0)\setminus (\lambda_1^{-1}\Sigma\cup \lambda_2^{-1}\,\Sigma)$. Using this and that the area of each of these four components is close to the area of one of the four components of $S^{n-1}_1(0)\setminus(\pi_1\cup \pi_2)$, it follows that 
	$$
	\liminf_{s\to\infty} |F(\Sigma\setminus B^n_{s}(0))|_{\bar g}>0.
	$$
	This is not compatible with Lemma \ref{area estimate of normal}.
\end{proof}

\begin{proof}[{Proof of Proposition \ref{graph estimate}}]
	Using Lemma \ref{components reduction}, we may assume that \eqref{area bound at infinity} holds.
Using Lemma  \ref{blowdown at infinity are planes} and Corollary \ref{hyperplane uniqueness}, we see that, after a rotation, there are $r_0>1$ and $\psi\in C^\infty(\mathbb{R}^{n-1})$ with
	$$
	\Sigma\setminus B^n_{r_0}(0)\subset \{(y,\psi(y)):y\in\mathbb{R}^{n-1}\}
	$$
and
	\begin{align} \label{preliminary graph estimate} 
		|y|_{\bar g}^{-1}\,|\psi|+|\bar\nabla \psi|_{\bar g}+|y|_{\bar g}\,|\bar\nabla^2 \psi|_{\bar g}=o(1).
	\end{align} 
\indent 
	Let $\upsilon\in C^\infty( \Sigma)$ be given by $\upsilon=\bar g(x,\bar\nu)$. Note that $\bar \nabla \upsilon=x^\top\lrcorner \bar h$ where $\lrcorner$ is the insertion operator. Consequently, $$\bar \Delta_\Sigma \upsilon=H-|\bar h|^2_{\bar g}\,\upsilon+\operatorname{div}_\Sigma \bar h(x^\top).$$ Using also the Codazzi equation $\operatorname{div}_\Sigma \bar h(x^\top)=\bar \nabla_{x^\top} H$ and \eqref{H estimate}, we obtain
	\begin{align} \label{v PDE} 
	\bar\Delta_\Sigma \upsilon +|\bar h|^2_{\bar g}\,\upsilon=O(|x|_{\bar g}^{-1-\tau}).
	\end{align} 
	Let $x\in \Sigma$ and $r=|x|_{\bar g}/4$. By \eqref{preliminary graph estimate}, $\Sigma_{x,r}=r^{-1}\,(-x+B^n_{3\,r}(x)\cap \Sigma)$ is smoothly close to  $B^{n-1}_3(0)$  provided that $|x|_{\bar g}$ is sufficiently large. Let $\upsilon_{x,r}\in C^\infty(\Sigma_{x,r})$ be given by $\upsilon_{x,r}(z)=v(x+r\,z)$. By \eqref{h estimate} and \eqref{v PDE}, 
	$$
	\bar \Delta_{\Sigma_{x,r}} \upsilon_{x,r}=O(1)\,\upsilon_{x,r}+O(r^{1-\tau}).
	$$
	 Using the interior $L^2$-estimate \cite[Theorem 9.11]{gilbargtrudinger} and the Sobolev embedding theorem, recalling that $3\leq n\leq 7$, we have
	 $$
	\left(\int_{ B^n_{2}(0)\cap \Sigma_{x,r}}\upsilon_{x,r}^6\,\mathrm{d}\bar\mu\right)^\frac16\leq O(1) \left(\int_{  \Sigma_{x,r}}\upsilon_{x,r}^2\,\mathrm{d}\bar\mu\right)^\frac12 +O(r^{1-\tau})
	 $$
	 so that 
	$$
r^{-1}\,\left(r^{1-n}\,\int_{ B^n_{2\,r}(x)\cap \Sigma}\upsilon^6\,\mathrm{d}\bar\mu\right)^\frac16\leq O(r^{-1})\,\left(r^{1-n}\,\int_{ B^n_{3\,r}(x)\cap \Sigma}\upsilon^2\,\mathrm{d}\bar\mu\right)^\frac12 +O(r^{-\tau}).
	$$
	By Lemma \ref{excess estimate},
	$$
	r^{1-n}\,\int_{  B^n_{3\,r}(x)\cap \Sigma}\upsilon^2\,\mathrm{d}\bar\mu=O(r^{-\tau/(n-1)}).
	$$
	It follows that
	$$
r^{-1}\,\left(r^{1-n}\,\int_{  B^n_{2\,r}(x)\cap \Sigma}\upsilon^6\,\mathrm{d}\bar\mu\right)^\frac16=O(r^{-\tau/(2\,n-2)}).
	$$
Using the interior $L^6$-estimate \cite[Theorem 9.11]{gilbargtrudinger} and the Sobolev embedding theorem, we have
		$$
r^{-1}\	\left(r^{1-n}\,\int_{  B^n_{r}(x)\cap \Sigma}\upsilon^{7}\,\mathrm{d}\bar\mu\right)^\frac1{7}=O(r^{-\tau/(2\,n-2)}).
	$$
	Finally, using the interior $L^7$-estimate \cite[Theorem 9.11]{gilbargtrudinger} and the Sobolev embedding theorem, we 
	 conclude that
	\begin{align} \label{v estimate} 
|x|_{\bar g}^{-1}\,|\upsilon|	+|\bar\nabla \upsilon|_{\bar g}=O(|x|_{\bar g}^{-\tau/(2\,n-2)}).
	\end{align}
Let $\tilde \upsilon:\{y\in \mathbb{R}^{n-1}:|y|_{\bar g}>r_0\}\to\mathbb{R}$ be given by $$\tilde \upsilon(y)=\frac{-\bar g((\bar\nabla \psi)(y),y)+\psi(y)}{\sqrt{1+|(\bar\nabla \psi)(y)|^2_{\bar g}}}.$$
Note that $\tilde \upsilon(y)=\upsilon((y,\psi(y)))$ and
	$$
	\bar\nabla \tilde \upsilon=-(1+|\bar\nabla \psi|^2_{\bar g})^{-1/2}\,y\lrcorner \bar\nabla^2\psi-(1+|\bar\nabla \psi|_{\bar g}^2)^{-1}\,\tilde \upsilon\,\bar\nabla \psi\lrcorner\bar\nabla^2\psi .
	$$
In conjunction with \eqref{preliminary graph estimate} and \eqref{v estimate}, 	 we obtain the improved estimate
	\begin{align} \label{hoelder decay}
	y\lrcorner\bar\nabla^2\psi=O(|y|_{\bar g}^{-\tau/(2\,n-2)}).
	\end{align} 
\indent Let $\alpha>0$ and suppose that $	y\lrcorner\bar\nabla^2\psi=O(|y|_{\bar g}^{-\alpha})$ so that $\bar\nabla \psi=O(|y|_{\bar g}^{-\alpha})$. Using \eqref{H estimate}, \eqref{preliminary graph estimate}, and the minimal surface equation
$$
\bar\Delta_{\mathbb{R}^{n-1}} \psi=(1+|\bar\nabla \psi|^2)^{-1}\,\bar \nabla^2 \psi(\bar \nabla \psi,\bar \nabla \psi)+\sqrt{1+|\bar\nabla \psi|^2}\,\bar H,
$$
we obtain that 
$$
|y|_{\bar g}\,\bar\Delta_{\mathbb{R}^{n-1}} \psi=O(|y|_{\bar g}\,|\bar\nabla \psi|_{\bar g}^2\,|\bar\nabla^2 \psi|_{\bar g})+O(|y|_{\bar g}^{-\tau})=O(|y|_{\bar g}^{\max\{-2\,\alpha,-\tau\}}).
$$
Proceeding as in \cite[p.~16]{Carlottocalcvar}, we see that, given $\varepsilon>0$, there are $\tilde \psi,\,\hat\psi\,\in C^\infty(\Sigma)$ such that $\psi=\tilde\psi+\hat\psi$ where $\hat\psi$ satisfies
$$
|y|_{\bar g}^{-1}\,|\hat\psi|+|\bar\nabla \hat\psi|_{\bar g}+|y|_{\bar g}\,|\bar \nabla ^2 \hat\psi|_{\bar g}=O(|y|_{\bar g}^{\max\{-2\,\alpha,-\tau\}+\varepsilon}).
$$ 
and
 $\tilde\psi$ is harmonic with $$ |y|_{\bar g}^{-1}\,|\tilde\psi-a|+|\bar\nabla \tilde\psi|_{\bar g}+|y|_{\bar g}\,|\bar\nabla^2 \tilde\psi|_{\bar g}+=O(|y|_{\bar g}^{2-n})$$ for some $a\in\mathbb{R}$ if $3<n\leq 7$ and 
 $$
 |y|^{-1}_{\bar g}\,|\tilde\psi-a\,\log|y|_{\bar g}|+|\bar\nabla \tilde\psi|_{\bar g}+|y|_{\bar g}\,|\bar\nabla^2 \tilde\psi|_{\bar g}=O(|y|_{\bar g}^{-1})
 $$
for some $a\in \mathbb{R}$ if $n=3$, respectively.
Iterating this argument, we obtain \eqref{graph estimate optimal 2} from \eqref{hoelder decay}.
\end{proof}
\begin{coro} \label{l2 second fundamental form}
	Suppose that $n-3<\tau< n-2$. There holds
	$$
	\int_{\Sigma} |h|^2\,\mathrm{d}\mu<\infty\qquad\text{and}\qquad  \int_{\Sigma} |{Ric}(\nu,\nu)|\,\mathrm{d}\mu<\infty.
	$$
\end{coro}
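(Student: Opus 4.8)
The plan is to read off the sharp pointwise decay of the second fundamental form and of the ambient Ricci curvature along $\Sigma$ from Proposition \ref{graph estimate} and the decay hypotheses on $g$, and then to reduce both integrals to elementary integrals over $\mathbb{R}^{n-1}$.

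First I would fix $0<\varepsilon<\tau/2$ and combine Lemma \ref{components reduction} with Proposition \ref{graph estimate}: after applying the rotation $S\in SO(n)$, which affects none of the quantities in the statement, $\Sigma\setminus B^n_{r_0}(0)$ is a union of $N$ connected components, each of which is the graph of some $u_i\in C^\infty(\mathbb{R}^{n-1})$ with $|\bar\nabla u_i|_{\bar g}=O(|y|_{\bar g}^{-\tau+\varepsilon})$ and $|\bar\nabla^2 u_i|_{\bar g}=O(|y|_{\bar g}^{-1-\tau+\varepsilon})$, and with $|x|_{\bar g}$ comparable to $|y|_{\bar g}$ along the graph. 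Since $|\bar\nabla u_i|_{\bar g}=o(1)$, the Euclidean second fundamental form of such a graph is controlled by $|\bar\nabla^2 u_i|_{\bar g}$, so $|\bar h|_{\bar g}=O(|x|_{\bar g}^{-1-\tau+\varepsilon})$ on $\Sigma\setminus B^n_{r_0}(0)$; by Lemma \ref{geometric expansions} the tensors $h$ and $\bar h$ differ by terms of order $O(|x|_{\bar g}^{-1-\tau})$, hence $|h|=O(|x|_{\bar g}^{-1-\tau+\varepsilon})$ there as well. In the same way the hypothesis $|x|_{\bar g}\,|\bar D g|_{\bar g}+|x|_{\bar g}^2\,|\bar D^2 g|_{\bar g}=O(|x|_{\bar g}^{-\tau})$ yields $|{Ric}|_g=O(|x|_{\bar g}^{-2-\tau})$ and therefore $|{Ric}(\nu,\nu)|=O(|x|_{\bar g}^{-2-\tau})$ on $\Sigma\setminus B^n_{r_0}(0)$.

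It then remains to integrate. On $\Sigma\cap B^n_{r_0}(0)$, which is relatively compact because $\Sigma$ is properly embedded, the functions $|h|^2$ and $|{Ric}(\nu,\nu)|$ are bounded and the area is finite by Lemma \ref{area estimate sigma bar}, so these contributions are finite. Away from $B^n_{r_0}(0)$, the area element $\mathrm{d}\mu$ is comparable to $\mathrm{d}\bar\mu$, which under each graphical parametrization $y\mapsto(y,u_i(y))$ pushes forward to a measure comparable to Lebesgue measure on $\mathbb{R}^{n-1}$; hence $\int_{\Sigma\setminus B^n_{r_0}(0)}|h|^2\,\mathrm{d}\mu\leq C\sum_{i=1}^N\int_{\{|y|_{\bar g}>r_0/2\}}|y|_{\bar g}^{-2-2\tau+2\varepsilon}\,\mathrm{d}y$ and $\int_{\Sigma\setminus B^n_{r_0}(0)}|{Ric}(\nu,\nu)|\,\mathrm{d}\mu\leq C\sum_{i=1}^N\int_{\{|y|_{\bar g}>r_0/2\}}|y|_{\bar g}^{-2-\tau}\,\mathrm{d}y$. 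The first integral is finite since $-2-2\tau+2\varepsilon+(n-1)<n-3-\tau<0$, using $\varepsilon<\tau/2$ and $\tau>n-3$, and the second is finite exactly because $-2-\tau+(n-1)=n-3-\tau<0$.

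I do not expect a genuine obstacle: the argument is bookkeeping built on Proposition \ref{graph estimate}. The one point that matters is to use the optimal expansion \eqref{graph estimate optimal 2} rather than the preliminary Hölder bound \eqref{hoelder decay}, which decays far too slowly to make $\int_\Sigma|h|^2\,\mathrm{d}\mu$ converge in the admissible range $\tau\le n-2$; and it is worth emphasising that the threshold $\tau>n-3$ assumed in the corollary is precisely, and only, what is needed for the curvature integral $\int_\Sigma|{Ric}(\nu,\nu)|\,\mathrm{d}\mu$ to converge.
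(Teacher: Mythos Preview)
Your argument is correct and is exactly the expansion of the paper's one-line proof, which simply cites Proposition \ref{graph estimate} and Lemma \ref{geometric expansions}. You have filled in precisely the bookkeeping those two references encode: the sharp decay \eqref{graph estimate optimal 2} gives $|h|=O(|x|_{\bar g}^{-1-\tau+\varepsilon})$ via Lemma \ref{geometric expansions}, the metric decay gives $|Ric(\nu,\nu)|=O(|x|_{\bar g}^{-2-\tau})$, and both integrands lie in $L^1$ over the graphical ends exactly when $\tau>n-3$.
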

\begin{proof}
	This follows from Proposition \ref{graph estimate}, Lemma \ref{geometric expansions}, and \eqref{AF section 2}.
\end{proof}
For the next lemma, recall the definition \eqref{def adm mass} of the mass of an asymptotically flat manifold.
\begin{lem} \label{vanishing mass} 
Suppose that $3<n\leq 7$ and  $n-3<\tau< n-2$.	Each end of the  Riemannian $(n-1)$-manifold $(\Sigma,g|_{\Sigma})$ is asymptotically flat with mass zero. 
\end{lem}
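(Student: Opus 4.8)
The plan is to read off the asymptotics of $(\Sigma,g|_\Sigma)$ directly from the graphical description obtained in Proposition \ref{graph estimate}, exploiting that, as noted in the Remark following Theorem \ref{main result 2}, the hypothesis $\tau>n-3$ is exactly the threshold beyond which an $(n-1)$-dimensional coordinate hyperplane in the end of $(M,g)$, equipped with the induced metric, is asymptotically flat with vanishing mass. Concretely, the key point is that a pointwise bound $|\bar D g|_{\bar g}=O(|x|_{\bar g}^{-\tau-1})$ forces the $(n-2)$-dimensional mass flux over the sphere of radius $r$ to decay like $r^{n-3-\tau}$.

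First I would fix one of the (finitely many, by Lemma \ref{components reduction}) ends of $\Sigma$. By Proposition \ref{graph estimate}, after applying the rotation $S$, this end has the form $E=\{(y,u_i(y)):y\in\mathbb{R}^{n-1},\ |y|_{\bar g}>r_0\}$ for some $u_i\in C^\infty(\mathbb{R}^{n-1})$ and $a_i\in\mathbb{R}$ with $|y|_{\bar g}^{-1}|u_i(y)-a_i|+|\bar\nabla u_i|_{\bar g}+|y|_{\bar g}|\bar\nabla^2u_i|_{\bar g}=O(|y|_{\bar g}^{-\tau+\varepsilon})$ for every $0<\varepsilon<\tau/2$, and I would use $y\mapsto(y,u_i(y))$ as an asymptotic chart for $E$. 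Since $u_i=o(|y|_{\bar g})$, one has $|x|_{\bar g}=(|y|_{\bar g}^2+u_i(y)^2)^{1/2}\asymp|y|_{\bar g}$ along $E$, so the decay of $g$ in $|x|_{\bar g}$ translates into decay in $y$. In this chart the induced metric is
\[
(g|_\Sigma)_{ab}=g_{ab}+(\partial_au_i)\,g_{bn}+(\partial_bu_i)\,g_{an}+(\partial_au_i)(\partial_bu_i)\,g_{nn},\qquad 1\le a,b\le n-1,
\]
with all terms on the right evaluated at $(y,u_i(y))$. Using $g_{ab}=\delta_{ab}+O(|y|_{\bar g}^{-\tau})$, $g_{an}=O(|y|_{\bar g}^{-\tau})$ for $a\le n-1$, and $\bar\nabla u_i=O(|y|_{\bar g}^{-\tau+\varepsilon})$, the terms involving $\bar\nabla u_i$ are $O(|y|_{\bar g}^{-2\tau+2\varepsilon})$, which for $\varepsilon<\tau/2$ is of strictly higher order than $O(|y|_{\bar g}^{-\tau})$; differentiating, via $\partial_c[g_{ab}(y,u_i(y))]=(\partial_cg_{ab})+(\partial_ng_{ab})\partial_cu_i$ and Proposition \ref{graph estimate}, every contribution to $\bar\partial(g|_\Sigma)$ and $\bar\partial^2(g|_\Sigma)$ other than $(\partial_cg_{ab})(y,u_i(y))=O(|y|_{\bar g}^{-\tau-1})$, resp.\ $(\partial_c\partial_dg_{ab})(y,u_i(y))=O(|y|_{\bar g}^{-\tau-2})$, is again of strictly higher order. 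Hence $(E,g|_\Sigma)$ is asymptotically flat of rate $\tau>n-3\ge(n-3)/2=((n-1)-2)/2$, and together with $R_{g|_\Sigma}\in L^1(E)$ — which follows from the Gauss equation $R_{g|_\Sigma}=R_g-2\,{Ric}(\nu,\nu)-|h|^2$ (here $H=0$), Corollary \ref{l2 second fundamental form}, and $R_g=O(|x|_{\bar g}^{-\tau-2})$ with $\tau>n-3$ — this guarantees that the mass of $E$ in the sense of \eqref{def adm mass} is well defined.

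It then remains to compute this mass. In the chart $y\mapsto(y,u_i(y))$ the mass of $E$ equals a dimensional constant times
\[
\lim_{r\to\infty}\int_{\{|y|_{\bar g}=r\}}\big(\partial_b(g|_\Sigma)_{ab}-\partial_a(g|_\Sigma)_{bb}\big)\,\nu^a\,\mathrm{d}\sigma_{\bar g},
\]
where $\nu=y/|y|_{\bar g}$ and $\mathrm{d}\sigma_{\bar g}$ is Euclidean surface measure on the round $(n-2)$-sphere of radius $r$ in $\mathbb{R}^{n-1}$. By the estimates above, the integrand is $O(r^{-\tau-1})$ pointwise, so the integral is bounded by a constant times $r^{n-2}\cdot r^{-\tau-1}=r^{n-3-\tau}$, which tends to $0$ precisely because $\tau>n-3$. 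Therefore the mass of each end of $(\Sigma,g|_\Sigma)$ is zero.

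I do not expect a genuine obstacle here: the substantive work is already contained in Proposition \ref{graph estimate}, and the rest is bookkeeping of the induced metric together with the elementary flux estimate above. The two places that require care are verifying that every contribution of the graph functions $u_i-a_i$ and their derivatives to the induced metric and to the flux integrand is of strictly higher order than the ambient contribution — this is where the freedom to take $0<\varepsilon<\tau/2$ in Proposition \ref{graph estimate} is used — and confirming that the decay of $g|_\Sigma$ and the integrability of its scalar curvature are strong enough for the mass to be defined in the first place, both of which again come down to the inequality $\tau>n-3$.
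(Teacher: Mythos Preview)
Your proposal is correct and follows the same approach as the paper: both use the graphical chart $y\mapsto(y,u_i(y))$ from Proposition \ref{graph estimate}, observe that the induced metric satisfies $(g|_\Sigma)_{ab}=\delta_{ab}+O(|y|_{\bar g}^{-\tau})$, and conclude from $\tau>n-3=(n-1)-2$ that the mass vanishes. The paper compresses the entire argument into this single observation together with the remark at the end of Appendix \ref{adm appendix} that the mass vanishes whenever the decay rate exceeds the dimension minus two; your version spells out the derivative bookkeeping, the scalar curvature integrability via the Gauss equation and Corollary \ref{l2 second fundamental form}, and the explicit $O(r^{n-3-\tau})$ flux estimate, but these are exactly the details underlying the paper's one-line proof.
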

\begin{proof}
Fix $i \in \{1, \ldots, N\}$ and let $\varphi : \mathbb{R}^{n-1} \to \mathbb{R}^n$ be the chart given by $\varphi(y) = (y, \psi_i(y))$ where $\psi_i\in C^\infty(\mathbb{R}^{n-1})$ is as in Proposition \ref{graph estimate}. Note that, for every $\varepsilon>0$,
\[
|(\varphi^* g)-\bar g|_{\mathbb{R}^{n-1}}|_{\bar g|_{\mathbb{R}^{n-1}}}+|y|_{\bar g}\,|\bar D (\varphi^* g)|_{\bar g|_{\mathbb{R}^{n-1}}}+|y|^2_{\bar g}\,|\bar D^2 (\varphi^* g)|_{\bar g|_{\mathbb{R}^{n-1}}}
= O (|y|_{\bar g}^{-\tau+\varepsilon}).
\]
It follows that
$$
\lambda^{-1}\,\int_{S^{n-2}_{\lambda}(0)}\sum_{i,\,j=1}^{n-1}x^i\,\left[(\bar D_{e_j}(\varphi^*g))(e_i,e_j)-(\bar D_{e_i}(\varphi^*g))(e_j,e_j)\right]\,\mathrm{d}\bar\mu=O(\lambda^{n-3-\tau+\varepsilon}).
$$
Using that $\tau > n - 3$, the assertion follows.
\end{proof}

\section{Stability with respect to asymptotically constant variations}
In this section, we assume that $(M,g)$ is  a Riemannian manifold of dimension  $3< n\leq 7$ which is  asymptotically flat of rate $\tau$ where
\begin{align}
	\label{tau condition} n-3<\tau< n-2.
\end{align}
 \indent Let $\Omega_1,\,\Omega_2,\hdots\subset M$ be isoperimetric regions in $(M,g)$  with $| \Omega_k|\to\infty$ such that $\Omega_k$ converges locally smoothly to a region $\Omega\subset M$ whose boundary $\Sigma=\partial \Omega$ is noncompact and area-minimizing. \\ \indent 
 We define the area radius of $\Sigma_k=\partial \Omega_k$ to be $$\lambda(\Sigma_k)=\left(\frac{|\Sigma_k|}{n\,\omega_{n}}\right)^{\frac{1}{n-1}}.$$
 Recall from Lemma \ref{lem:locallyisoperimetricboundary}  that $\Sigma$ has one noncompact component and that the other components of $\Sigma$ are contained in $\partial M$. By Proposition  \ref{smooth local convergence}, passing to a subsequence, there holds $\lambda(\Sigma_k)^{-1}\,(\Omega_k\setminus B_1)\to B^n_1(\xi)$ locally smoothly in $\mathbb{R}^n\setminus \{0\}$ for some $\xi\in\mathbb{R}^n$ with $|\xi|_{\bar g}=1$. By Proposition \ref{graph estimate}, $\Sigma$ is  asymptotic to a coordinate hyperplane. After a rotation, we may assume that the normal of this plane pointing towards $\Omega$ is the standard unit vector $e_n$.
  	\begin{figure}\centering
 	\includegraphics[width=0.7\linewidth]{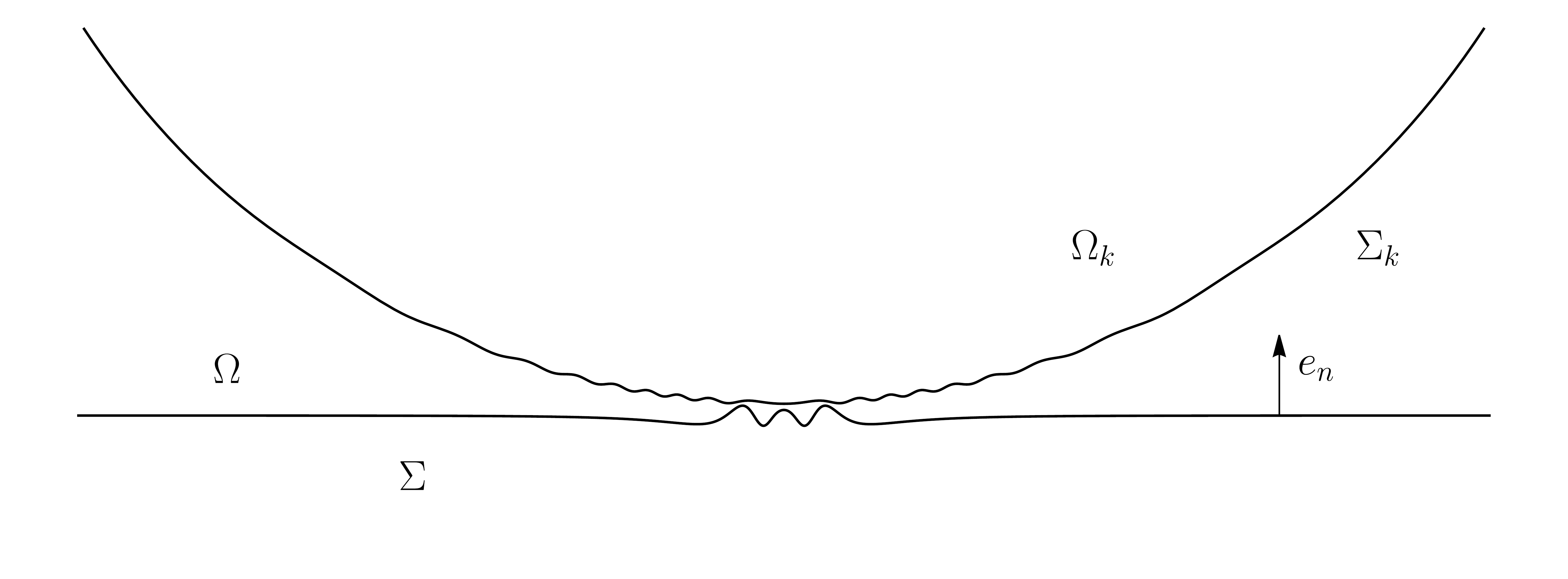}
 	\caption{The hypersurfaces $\Sigma_k=\partial \Omega_k$ converge locally uniformly to $\Sigma=\partial \Omega$.
 	}
 	\label{boundary}
 \end{figure}
\\  \indent  
The goal of this section is to show that $\Sigma$ is  stable with respect to asymptotically constant variations, i.e., that
$$
\int_{\Sigma} (|h|^2+Ric(\nu,\nu))\,(1+f)^2\,\mathrm{d}\mu\leq\int_{\Sigma}|\nabla f|^2\,\mathrm{d}\mu
$$
for all $f\in C^\infty_c(\Sigma)$. 
 To this end, we will study the second variation of area of $\Sigma_k=\partial\Omega_k$ with respect to a Euclidean translation; see Figure \ref{boundary}. \\ \indent  
 Let $\chi\in C^\infty(M,TM)$ be a vector field with $\chi=e_n$ in $M\setminus B_2$ and $\chi=0$ in $B_1$.  Let $u_k,\,v_k\in C^\infty(\Sigma_k)$ be the functions
$$
u_k=g(\chi,\nu)\qquad \text{and}\qquad  v_k=-h(\chi^\top,\chi^\top).
$$ 
Note that $u_k$ and $v_k$ are, respectively, the initial normal speed and initial normal acceleration of the variation $\{x+s\,\chi(s):x\in \Sigma_k\}_{|s|<\varepsilon}$ where $\varepsilon>0$ is chosen sufficiently small.  
\begin{lem} \label{replacement surfaes}
There are closed hypersurfaces $\hat{\Sigma}_1,\,\hat{\Sigma}_2,\ldots\subset \mathbb{R}^n$ with 
	\begin{align}
\label{tilde one}		&\circ \qquad  B^n_1(0)\cap \hat \Sigma_k=\emptyset,\qquad\qquad\qquad\qquad\qquad\qquad\qquad\qquad\qquad\\
\label{tilde two}		&\circ \qquad \hat \Sigma_k\setminus B^n_2(0)=\Sigma_k\setminus B_2, \\
\label{tilde three}		&\circ \qquad | B^n_2(0)\cap \hat \Sigma_k|_{\bar g}=O(1), \text{ and} \\
\label{tilde four}		&\circ \qquad  h(\hat\Sigma_k)=O(1).
	\end{align}
\end{lem}
\begin{proof}
		Since $\Sigma_k$ converges to $\Sigma$ locally smoothly,  there exist closed hypersurfaces $\tilde \Sigma_1,\,\tilde \Sigma_2,\,\ldots\subset M$ such that 
 		\begin{align*}
		&\circ \qquad  \tilde \Sigma_k\setminus B_{7/4}=\Sigma_k\setminus B_{7/4},\qquad\qquad\qquad\qquad\qquad\qquad\qquad\qquad\qquad\\
		&\circ \qquad B_{3/2}\cap \tilde \Sigma_k=B_{3/2}\cap \Sigma ,\text{ and}\\
		&\circ \qquad \tilde \Sigma_k\text{ converges  to $\Sigma$ locally smoothly}.
 	\end{align*}
Let $r\in (1,3/2)$ be such that  $\Sigma$ and $\partial B_{r}$ intersect transversely and $\tilde \Sigma \subset M$ be  a hypersurface such that   $B_{r/2+1/2}\cap \tilde \Sigma=\emptyset$ and $\tilde \Sigma\setminus B_r=\Sigma\setminus B_r$. The concatenate hypersurfaces $\hat\Sigma_1,\,\hat \Sigma_2,\ldots\subset \mathbb{R}^n$ given by $\hat \Sigma_k=(\tilde \Sigma_k\setminus B_r)\cup (\tilde \Sigma \cap  B_r) $ have all of the asserted properties.
\end{proof}
Recall from \eqref{H est} that $\lambda(\Sigma_k)\,H(\Sigma_k)=(n-1)+o(1)$.
\begin{lem}
	As $k\to\infty$, \label{first volume var}
	$$
\lambda(\Sigma_k)^{1-n}\,	\int_{\Sigma_k}u_k\,\mathrm{d}\mu=O(\lambda(\Sigma_k)^{-\tau}).
	$$
\end{lem}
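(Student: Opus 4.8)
The plan is to recognize $\int_{\Sigma_k}u_k\,\mathrm d\mu$ as the flux of $\chi$ through $\partial\Omega_k$ and apply the divergence theorem. Since $\chi$ vanishes on $B_1$, and in particular near the compact core of $M$, the divergence theorem gives, with $\nu$ the outward unit normal of $\Omega_k$,
$$
\int_{\Sigma_k}u_k\,\mathrm d\mu=\int_{\Omega_k}\operatorname{div}_g(\chi)\,\mathrm dv_g ,
$$
where $\mathrm dv_g$ is the volume element of $(M,g)$. On the fixed compact set $\overline{B_2}$, the function $\operatorname{div}_g(\chi)$ is bounded independently of $k$, so this part of the integral is $O(|\Omega_k\cap B_2|)=O(1)$. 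On $M\setminus B_2$ we have $\chi=e_n$, and in the chart at infinity $\operatorname{div}_g(e_n)=\partial_n\log\sqrt{\det g}=\tfrac12\,g^{ij}\,\partial_n g_{ij}$; since $|\bar D g|_{\bar g}=O(|x|_{\bar g}^{-1-\tau})$ by asymptotic flatness of rate $\tau$, this gives $\operatorname{div}_g(e_n)=O(|x|_{\bar g}^{-1-\tau})$ there. As $g$ is uniformly equivalent to $\bar g$ away from the core, we obtain
$$
\Big|\int_{\Sigma_k}u_k\,\mathrm d\mu\Big|\leq O(1)+O(1)\int_{\Omega_k\setminus B_2}|x|_{\bar g}^{-1-\tau}\,\mathrm dv_{\bar g}.
$$

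To estimate the remaining integral, set $\lambda_k=\lambda(\Sigma_k)$ and split $\Omega_k\setminus B_2$ into the part where $|x|_{\bar g}\leq\lambda_k$ and the part where $|x|_{\bar g}>\lambda_k$. On the first part I enlarge the domain to the coordinate ball $B^n_{\lambda_k}(0)$; since $\tau\leq n-2<n-1$, a direct computation gives $\int_{B^n_{\lambda_k}(0)}|x|_{\bar g}^{-1-\tau}\,\mathrm dv_{\bar g}=\tfrac{n\,\omega_n}{\,n-1-\tau\,}\,\lambda_k^{\,n-1-\tau}$. On the second part I use the crude bound $|x|_{\bar g}^{-1-\tau}\leq\lambda_k^{-1-\tau}$ together with the volume bound $|\Omega_k|=O(\lambda_k^{\,n})$ --- a consequence of the (asymptotically Euclidean) isoperimetric inequality in $(M,g)$, or of the asymptotic description of the $\Omega_k$ recalled above --- to get $\int_{\Omega_k\setminus B^n_{\lambda_k}(0)}|x|_{\bar g}^{-1-\tau}\,\mathrm dv_{\bar g}\leq\lambda_k^{-1-\tau}\,|\Omega_k|=O(\lambda_k^{\,n-1-\tau})$. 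Adding the two parts and the $O(1)$ from the core, and using $\lambda_k\to\infty$ together with $n-1-\tau\geq 1$, yields $\int_{\Sigma_k}u_k\,\mathrm d\mu=O(\lambda_k^{\,n-1-\tau})$; dividing by $\lambda_k^{\,n-1}$ gives the claim.

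The computation is routine once the integral is identified with a divergence, and the two genuine inputs are the improved decay $\operatorname{div}_g(e_n)=O(|x|_{\bar g}^{-1-\tau})$ --- one power faster than $|g-\bar g|_{\bar g}=O(|x|_{\bar g}^{-\tau})$ by itself would suggest --- and the volume bound $|\Omega_k|=O(\lambda_k^{\,n})$. I expect the only points needing a few lines of care to be the vanishing of the boundary contributions in the divergence theorem (i.e.\ checking that $\chi$ may be taken to vanish near $\partial M$, so that $\partial\Omega_k$ contributes only through $\Sigma_k$) and the treatment of the part of $\Omega_k$ at scales $\gg\lambda_k$, which is why splitting at scale $\lambda_k$ and invoking the volume bound --- rather than attempting a diameter estimate for $\Omega_k$ --- is the efficient route.
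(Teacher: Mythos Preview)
Your argument is correct but takes a genuinely different route from the paper. You apply the divergence theorem directly in $(M,g)$ to convert the flux $\int_{\Sigma_k}g(\chi,\nu)\,\mathrm d\mu$ into the volume integral $\int_{\Omega_k}\operatorname{div}_g(\chi)\,\mathrm dv_g$, observe that $\operatorname{div}_g(e_n)=\tfrac12\,g^{ij}\partial_n g_{ij}=O(|x|_{\bar g}^{-1-\tau})$ outside $B_2$, and then control the volume integral by splitting at scale $\lambda_k$ and invoking the bound $|\Omega_k|=O(\lambda_k^{\,n})$ from Lemma~\ref{lem:coarseisobound}. The paper instead stays on the boundary: it modifies $\Sigma_k$ inside $B_2$ to a closed auxiliary hypersurface $\hat\Sigma_k$ disjoint from $B_1$, compares the $g$-flux of $\chi$ through $\hat\Sigma_k$ to the $\bar g$-flux of $e_n$ (which vanishes by the Euclidean divergence theorem), and estimates the discrepancy by the surface integral $\int_{\hat\Sigma_k}|x|_{\bar g}^{-\tau}\,\mathrm d\bar\mu=O(\lambda_k^{\,n-1-\tau})$ via the area-growth Lemma~\ref{area growth estimate}. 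Your approach is more direct and avoids the auxiliary construction, at the cost of using a volume bound rather than only area bounds; the paper's choice has the advantage that the same surface $\hat\Sigma_k$ and the same comparison scheme are reused verbatim in the proof of the companion Lemma~\ref{second volume var}, where one must handle quantities like $\bar H\,\bar v_k$ that live naturally on the surface.
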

\begin{proof}
Let $\hat{\Sigma}_1,\,\hat{\Sigma}_2,\ldots\subset \mathbb{R}^n$ be the hypersurfaces from Lemma \ref{replacement surfaes}.  
Note that
	$$
	\int_{\Sigma_k} g(\chi,\nu)\,\mathrm{d}\mu =\int_{\hat \Sigma_k} g(\chi,\nu)\,\mathrm{d}\mu- \int_{B_2(0)\cap \hat  \Sigma_k} g(\chi,\nu)\,\mathrm{d}\mu+\int_{ B_2\cap \Sigma_k} g(\chi,\nu)\,\mathrm{d}\mu
	$$ 
By \eqref{tilde three}, 
	$$
	 \int_{ B_2\cap \hat  \Sigma_k} g(\chi,\nu)\,\mathrm{d}\mu=O(1).
	$$
	By Lemma \ref{area growth estimate}, $\limsup_{k\to \infty}|B_2\cap \Sigma_k|<\infty$, so that 
	$$
	\int_{ B_2\cap \Sigma_k} g(\chi,\nu)\,\mathrm{d}\mu=O(1).
	$$
Using Lemma \ref{geometric expansions}, we see that
$$
\int_{\hat \Sigma_k} g(\chi,\nu)\,\mathrm{d}\mu=\int_{\hat \Sigma_k} \bar g (e_n,\nu)\,\mathrm{d}\bar \mu+O(1)\,\int_{\hat \Sigma_k} |x|_{\bar g}^{-\tau}\,\mathrm{d}\bar\mu.
$$
By the divergence theorem,
$$
\int_{\hat \Sigma_k} \bar g (e_n,\nu)\,\mathrm{d}\bar \mu=0.
$$ 
By \eqref{tilde two}, 
$$
\int_{\hat \Sigma_k} |x|_{\bar g}^{-\tau}\,\mathrm{d}\bar\mu=\int_{ \Sigma_k\setminus B_2} |x|_{\bar g}^{-\tau}\,\mathrm{d}\bar\mu+\int_{B_2\cap\hat \Sigma_k} |x|_{\bar g}^{-\tau}\,\mathrm{d}\bar\mu.
$$
By \eqref{tilde one} and \eqref{tilde three}, 
$$
\int_{B_2\cap\hat \Sigma_k} |x|_{\bar g}^{-\tau}\,\mathrm{d}\bar\mu=O(1).
$$
Finally, using Lemma \ref{area growth estimate} and that $\tau<n-2$, 
$$
\int_{\Sigma_k\setminus B_2} |x|_{\bar g}^{-\tau}\,\mathrm{d}\bar\mu=O(\lambda(\Sigma_k)^{n-1-\tau})+O(1)=O(\lambda(\Sigma_k)^{n-1-\tau}).
$$
\indent The assertion follows from these estimates. 
\end{proof}
\begin{lem} \label{second volume var} 
	There holds, as $k\to\infty$,
	$$
\lambda(\Sigma_k)^{2-n}\,	\int_{\Sigma_k} v_k+H\,u_k^2\,\mathrm{d}\mu=O(\lambda(\Sigma_k)^{-\tau}).
	$$
\end{lem}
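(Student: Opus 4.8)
The plan is to show that $v_k+H\,u_k^2$ is, up to terms of the size of $|\bar D g|$, a tangential divergence on the closed hypersurface $\Sigma_k=\partial\Omega_k$, so that its integral is governed entirely by lower order contributions. First I would establish a pointwise identity on $\Sigma_k\setminus B_2$, where $\chi=\partial_n$. Writing $\chi=\chi^\top+u_k\,\nu$, using that $X(u_k)=h(X,\chi^\top)+g(\nabla_X\chi,\nu)$ for $X\in T\Sigma_k$ and that $\mathrm{div}_{\Sigma_k}(u_k\,\nu)=H\,u_k$, a direct computation gives
\[
\mathrm{div}_{\Sigma_k}(u_k\,\chi^\top)=-v_k-H\,u_k^2+u_k\,\mathrm{div}_{\Sigma_k}\chi+g(\nabla_{\chi^\top}\chi,\nu).
\]
Since $\partial_n$ is parallel with respect to $\bar g$, both $\mathrm{div}_{\Sigma_k}\chi$ and $g(\nabla_{\chi^\top}\chi,\nu)$ are of order $|\bar D g|=O(|x|_{\bar g}^{-1-\tau})$ on $M\setminus B_2$, while $|u_k|+|\chi^\top|_g=O(1)$; hence
\[
v_k+H\,u_k^2=-\mathrm{div}_{\Sigma_k}(u_k\,\chi^\top)+O(|x|_{\bar g}^{-1-\tau})\qquad\text{on }\Sigma_k\setminus B_2.
\]
On $\Sigma_k\cap B_2$ I would record only the crude estimate $v_k+H\,u_k^2+\mathrm{div}_{\Sigma_k}(u_k\,\chi^\top)=O(1)$, which follows from $|h(\Sigma_k)|=O(1)$ on $B_2$ --- local smooth convergence of $\Sigma_k$ to the smooth hypersurface $\Sigma$ --- together with $|\chi|+|\nabla\chi|+|H|=O(1)$.

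Next I would integrate over $\Sigma_k$. Since $\Sigma_k$ is closed and $u_k\,\chi^\top$ vanishes on $B_1\supset\partial M$, the divergence theorem yields $\int_{\Sigma_k}\mathrm{div}_{\Sigma_k}(u_k\,\chi^\top)\,\mathrm{d}\mu=0$, so that
\[
\int_{\Sigma_k}(v_k+H\,u_k^2)\,\mathrm{d}\mu=O\big(|B_2\cap\Sigma_k|\big)+\int_{\Sigma_k\setminus B_2}O(|x|_{\bar g}^{-1-\tau})\,\mathrm{d}\mu.
\]
By Lemma \ref{area growth estimate} one has $|B_2\cap\Sigma_k|=O(1)$; moreover $\Sigma_k\subset B_{C\,\lambda(\Sigma_k)}$ for some constant $C$ (by the monotonicity formula and the area bound) and $|B_r\cap\Sigma_k|_{\bar g}=O(r^{n-1})$ for $1\le r\le C\,\lambda(\Sigma_k)$. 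A dyadic decomposition as in Lemma \ref{layer cake} then bounds the last integral by $O(\lambda(\Sigma_k)^{n-2-\tau})$, so that $\int_{\Sigma_k}(v_k+H\,u_k^2)\,\mathrm{d}\mu=O(\lambda(\Sigma_k)^{n-2-\tau})$; multiplying by $\lambda(\Sigma_k)^{2-n}$ gives the assertion.

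The point requiring care is the identity above: one must verify that the deviation of $\partial_n$ from being $g$-parallel enters precisely at the order $|\bar D g|=O(|x|_{\bar g}^{-1-\tau})$, one power of $|x|_{\bar g}^{-1}$ better than the error in Lemma \ref{first volume var}, since this is exactly what produces the weight $\lambda(\Sigma_k)^{2-n}$ here rather than $\lambda(\Sigma_k)^{1-n}$; a minor refinement is needed at the borderline rate $\tau=n-2$, where the dyadic sum above carries an extra logarithmic factor. Alternatively, one can follow the proof of Lemma \ref{first volume var} verbatim with $\Sigma_k$ replaced by the closed reference surface $\hat\Sigma_k$, using the Euclidean identity $\bar v_k+\bar H\,\bar u_k^2=-\mathrm{div}_{\bar g}(\bar u_k\,e_n^{\bar\top})$ valid for any hypersurface (so $\int_{\hat\Sigma_k}(\bar v_k+\bar H\,\bar u_k^2)\,\mathrm{d}\bar\mu=0$ exactly), and then comparing $v_k$, $u_k$, the constant $H(\Sigma_k)$, and $\mathrm{d}\mu$ with their Euclidean counterparts on $\hat\Sigma_k$ by means of Lemma \ref{geometric expansions}, using $H(\Sigma_k)-\bar H=O(|x|_{\bar g}^{-1-\tau})$ and that the Euclidean mean curvature of $\hat\Sigma_k$ is $O(\lambda(\Sigma_k)^{-1})$ on its portion at scale $\lambda(\Sigma_k)$.
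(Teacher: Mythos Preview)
Your proposal is correct and rests on the same mechanism as the paper's proof: the quantity $v_k+H\,u_k^2$ is, up to errors of order $|x|_{\bar g}^{-1-\tau}$, an exact tangential divergence on the closed hypersurface, so its integral is controlled entirely by the lower-order remainder and the contribution from $B_2$. Your primary route carries this out directly in the $g$-metric on $\Sigma_k$ via the identity $v_k+H\,u_k^2=-\mathrm{div}_{\Sigma_k}(u_k\,\chi^\top)+u_k\,\mathrm{div}_{\Sigma_k}\chi+g(\nabla_{\chi^\top}\chi,\nu)$, thereby avoiding the auxiliary surface $\hat\Sigma_k$; the paper instead passes to Euclidean quantities on $\hat\Sigma_k$ and invokes the first variation formula to obtain $\int_{\hat\Sigma_k}\big(-\bar h(e_n^{\bar\top},e_n^{\bar\top})+\bar H\,\bar g(e_n,\bar\nu)^2\big)\,\mathrm{d}\bar\mu=0$, which is exactly the Euclidean instance of your divergence identity. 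Your ``alternative'' paragraph is precisely the paper's argument, so both routes are accounted for; the remark about the logarithmic loss at $\tau=n-2$ is apt and harmless for the application.
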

\begin{proof}
Let $\hat{\Sigma}_1,\,\hat{\Sigma}_2,\ldots\subset \mathbb{R}^n$ be the hypersurfaces from Lemma \ref{replacement surfaes}. Note that
\begin{align*} 
\int_{\Sigma_k}  v_k+H\,u_k^2\,\mathrm{d}\mu &=\int_{\hat \Sigma_k}  -h(\chi^\top,\chi^\top)+H\,g(\chi,\nu)^2\,\mathrm{d}\mu
\\&\qquad - \int_{B_2\cap \hat  \Sigma_k}  -h(\chi^\top,\chi^\top)+H\,g(\chi,\nu)^2\,\mathrm{d}\mu+\int_{ B_2\cap \Sigma_k} v_k+H\,u_k^2\,\mathrm{d}\mu.
\end{align*} 
   Using the area estimate \eqref{appendix area est} and the curvature estimates \eqref{H est} and Lemma \ref{lem:locisoconverge}, we see that
	$$
	\int_{B_2\cap \Sigma_k} v_k+H\,u_k^2\,\mathrm{d}\mu=O(1).
	$$
	Similarly, by \eqref{tilde three} and \eqref{tilde four},
	$$
	\int_{B_2\cap \hat \Sigma_k} -h(\chi^\top,\chi^\top)+H\,g(\chi,\nu)^2\,\mathrm{d}\mu=O(1).
	$$
	Using   Lemma \ref{geometric expansions} and the curvature estimates \eqref{H est} and \eqref{hcirc est}, we obtain, on $\tilde \Sigma_k$,
	$$
-h(\chi^\top,\chi^\top)+H\,g(\chi,\nu)^2=-\bar h(e_n^{\bar\top},e_n^{\bar \top})+\bar H\,\bar g(e_n,\bar \nu)^2+O(|x|_{\bar g}^{-1-\tau}).
	$$
By \eqref{tilde two}, 
$$
\int_{\hat \Sigma_k} |x|_{\bar g}^{-1-\tau}\,\mathrm{d}\bar\mu=\int_{ \Sigma_k\setminus B_2} |x|_{\bar g}^{-1-\tau}\,\mathrm{d}\bar\mu+\int_{B_2\cap\hat \Sigma_k} |x|_{\bar g}^{-1-\tau}\,\mathrm{d}\bar\mu.
$$
By \eqref{tilde one} and \eqref{tilde three}, 
$$
\int_{B_2\cap\hat \Sigma_k} |x|_{\bar g}^{-1-\tau}\,\mathrm{d}\bar\mu=O(1).
$$		
By Lemma \ref{area growth estimate}, using that $1+\tau<n-1$, 
$$
\int_{\Sigma_k\setminus B_2} |x|_{\bar g}^{-1-\tau}\,\mathrm{d}\bar\mu=O(\lambda(\Sigma_k)^{n-2-\tau})+O(1)=O(\lambda(\Sigma_k)^{n-2-\tau}).
$$
	Combining these equations,
$$
\int_{\Sigma_k} v_k+H\,u_k^2\,\mathrm{d}\mu=\int_{\hat \Sigma_k}-\bar h(e_n^{\bar\top},e_n^{\bar \top})+\bar H\,\bar g(e_n,\bar \nu)^2\,\mathrm{d}\bar \mu+O(\lambda(\Sigma_k)^{n-2-\tau}).
$$ 
Note that $ \bar{\operatorname{div}}_{\hat \Sigma_k}(\bar g(e_n,\bar \nu)\,e_n^\top)=\bar h(e_n^\top,e_n^\top)-\bar H\,\bar g(e_n,\bar \nu)^2$. Consequently, by the first variation formula,
$$
\int_{\hat \Sigma_k}-\bar h(e_n^{\bar \top},e_n^{\bar \top})+\bar H\,\bar g(e_n,\bar \nu)^2\,\mathrm{d}\bar \mu=0.
$$
\indent The assertion follows from these estimates.
\end{proof}
Fix $\eta\in C^\infty(\mathbb{R})$ with 
\begin{itemize}
	\item[$\circ$]$\eta(t)=0$ if $t\leq 1$, 
	\item[$\circ$]$\eta(t)>0$ if $1<t<2$, and
	\item[$\circ$] $\eta(t)=0$ if $t\geq 2$.
\end{itemize}
 We define
$$
\kappa_k=\lambda(\Sigma_k)^{1-n}\,\int_{\Sigma_k} \eta\left(\lambda(\Sigma_k)^{-1}\,|x|_{\bar g}\right)\,\mathrm{d}\mu
\qquad\text{and}\qquad 
\kappa=\int_{S^{n-1}_1(\xi)}\eta(|x|_{\bar g})\,\mathrm{d}\bar\mu.
$$
Note that $\kappa>0$.
\begin{lem} 
	Passing to a subsequence, there holds
	$
 \kappa_k\to\kappa. 
	$ \label{test integral}
\end{lem}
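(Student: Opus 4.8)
The plan is to reduce $\kappa_k$ to a purely Euclidean area integral over the rescaled hypersurface $\tilde\Sigma_k:=\lambda(\Sigma_k)^{-1}\,\Sigma_k$ and then to let $k\to\infty$ using the convergence of the rescaled boundaries recorded at the beginning of this section.

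First, I would observe that, since $\eta$ vanishes outside $(1,2)$, the function $\eta(\lambda(\Sigma_k)^{-1}\,|x|_{\bar g})$ is supported in the annulus $\{x:\lambda(\Sigma_k)\leq|x|_{\bar g}\leq 2\,\lambda(\Sigma_k)\}$, on which $|x|_{\bar g}\geq\lambda(\Sigma_k)\to\infty$; in particular, this support lies outside $B_2$ for $k$ large. By Lemma \ref{geometric expansions}, the area elements of $\Sigma_k$ with respect to $g$ and $\bar g$ satisfy $\mathrm{d}\mu=(1+O(|x|_{\bar g}^{-\tau}))\,\mathrm{d}\bar\mu$ there, so
$$
\int_{\Sigma_k}\eta\left(\lambda(\Sigma_k)^{-1}\,|x|_{\bar g}\right)\,\mathrm{d}\mu=\big(1+O(\lambda(\Sigma_k)^{-\tau})\big)\int_{\Sigma_k}\eta\left(\lambda(\Sigma_k)^{-1}\,|x|_{\bar g}\right)\,\mathrm{d}\bar\mu.
$$
Changing variables $x=\lambda(\Sigma_k)\,y$ and using the homogeneity of Euclidean $(n-1)$-area, the factor $\lambda(\Sigma_k)^{1-n}$ is absorbed and one obtains
$$
\kappa_k=\big(1+O(\lambda(\Sigma_k)^{-\tau})\big)\int_{\tilde\Sigma_k}\eta(|y|_{\bar g})\,\mathrm{d}\bar\mu.
$$

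Next, I would recall from the discussion at the start of the section (via Proposition \ref{smooth local convergence} and Proposition \ref{graph estimate}) that, after a rotation and along a subsequence, $\lambda(\partial\Omega_k)^{-1}\,(\Omega_k\setminus B_1)\to B^n_1(e_n)$ locally smoothly in $\mathbb{R}^n\setminus\{0\}$. Since the rescaled copy of $B_1$ collapses to the origin, this forces $\tilde\Sigma_k\to S^{n-1}_1(e_n)$ locally smoothly in $\mathbb{R}^n\setminus\{0\}$ with multiplicity one. The function $\eta(|\cdot|_{\bar g})$ is continuous with support contained in the compact set $\{1\leq|y|_{\bar g}\leq 2\}\subset\mathbb{R}^n\setminus\{0\}$, so on a neighborhood of this set $\tilde\Sigma_k$ is eventually a small normal graph over $S^{n-1}_1(e_n)$, the induced area elements converge, and therefore
$$
\int_{\tilde\Sigma_k}\eta(|y|_{\bar g})\,\mathrm{d}\bar\mu\to\int_{S^{n-1}_1(e_n)}\eta(|y|_{\bar g})\,\mathrm{d}\bar\mu=\kappa.
$$
Since $\lambda(\Sigma_k)^{-\tau}\to0$, combining this with the previous display yields $\kappa_k\to\kappa$ along the chosen subsequence.

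The only step requiring any care is the passage to the limit in the last display: one must use that the local smooth, multiplicity-one convergence of the regions to the solid ball $B^n_1(e_n)$ transfers to the induced area measures of the boundaries, which is legitimate precisely because the support of $\eta(|\cdot|_{\bar g})$ is a compact subset of $\mathbb{R}^n\setminus\{0\}$ — the set on which that convergence holds. As this convergence has already been established at the start of the section, I do not anticipate a genuine obstacle; the remaining manipulations are routine.
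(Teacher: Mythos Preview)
Your proof is correct and follows essentially the same approach as the paper, which simply cites Proposition \ref{smooth local convergence}; you have spelled out the details of how the local smooth convergence of the rescaled regions on the compact annulus $\{1\le|y|_{\bar g}\le 2\}\subset\mathbb{R}^n\setminus\{0\}$ (where $\eta(|\cdot|_{\bar g})$ is supported) yields convergence of the area integrals, after first replacing $\mathrm{d}\mu$ by $\mathrm{d}\bar\mu$ via Lemma \ref{geometric expansions}. The reference to Proposition \ref{graph estimate} is unnecessary here, but this does not affect the argument.
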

\begin{proof}
	This follows from Proposition \ref{smooth local convergence}.
\end{proof} 
Let $f\in C_c^\infty(M)$ be a function whose support is disjoint from $\partial M$. We define
\begin{align*} 
\tilde u_k(x)=\begin{dcases}
	\alpha_k\,\lambda(\Sigma_k)^{-\tau}\,\eta\left(\lambda(\Sigma_k)^{-1}\,|x|_{\bar g}\right) \qquad&\text{if } x\notin B_1,\\
	0&\text{else}.
\end{dcases}
\end{align*} 
Here, $\alpha_k\in\mathbb{R}$ is chosen such that
\begin{align} \label{tilde u compatibility}
\int_{\Sigma_k}u_k+f+\tilde u_k\,\mathrm{d}\mu=0.
\end{align} 
Using Lemma \ref{first volume var} and Lemma \ref{test integral}, we see that $\alpha_k=O(1)$. Next, we define

\begin{align*}
\tilde v_k(x)=\begin{dcases} \beta_k\,\lambda(\Sigma_k)^{-1-\tau}\,\eta(\lambda(\Sigma_k)^{-1}\,|x|_{\bar g})\qquad&\text{if }x\notin B_1,
	\\0&\text{else},
	\end{dcases}
\end{align*} 
where $\beta_k\in\mathbb{R}$ is chosen such that
\begin{align} \label{tilde vcompatibility}  
	\int_{\Sigma_k} v_k+\tilde v_k+H\,(u_k+f+\tilde u_k)^2\,\mathrm{d}\mu=0.
\end{align} 
Using Lemma \ref{second volume var}, Lemma \ref{test integral}, and \eqref{H est}, we see that  $\beta_k=O(1)$. \\ \indent Note that there is $\varepsilon>0$ such that, for all $s\in(-\varepsilon,\varepsilon)$, $$\Sigma_k(s)=\{\exp_x\left(U_k(x,s)\,\nu(\Sigma_k)(x)\right):x\in \Sigma_k\}$$
where
$$
 U_k(x,s)=	s\,(u_k+f+\tilde u_k)(x)+\tfrac12\,s^2\,(v_k+\tilde v_k)(x)
$$
is an embedded hypersurface in $M$ that bounds a region  $\Omega_k(s)$.
\begin{lem}
	There holds \label{volume preserving}
	$$
	\frac{d}{ds}\bigg|_{s=0}|\Omega_k(s)|=	\frac{d^2}{ds^2}\bigg|_{s=0}|\Omega_k(s)|=0.
	$$
	\end{lem}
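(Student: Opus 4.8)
The plan is to differentiate $s \mapsto |\Omega_k(s)|$ twice directly from the first and second variation of volume formulas and then to invoke the defining relations \eqref{tilde u compatibility} and \eqref{tilde vcompatibility}. Since $\Sigma_k$ is a closed hypersurface, all integrals that occur are over a compact domain, so no issue of convergence arises. Abbreviate $\phi_k = u_k + f + \tilde u_k$ and $\psi_k = v_k + \tilde v_k$, so that $U_k(x,s) = s\,\phi_k(x) + \tfrac{1}{2}\,s^2\,\psi_k(x)$ and $\Sigma_k(s) = \{F_s(x) : x \in \Sigma_k\}$ with $F_s(x) = \exp_x(U_k(x,s)\,\nu(x))$.

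First I would record that, for fixed $x \in \Sigma_k$, the curve $s \mapsto F_s(x)$ is a reparametrization of the geodesic $\gamma_x \colon t \mapsto \exp_x(t\,\nu(x))$, so that $\partial_s F_s(x) = \partial_s U_k(x,s)\,\dot\gamma_x(U_k(x,s))$; in particular, the variation vector field of $\{\Sigma_k(s)\}$ at $s = 0$ equals $\phi_k\,\nu$. The first variation of volume then yields $\frac{d}{ds}\big|_{s=0}|\Omega_k(s)| = \int_{\Sigma_k} \phi_k\,\mathrm{d}\mu$, which vanishes by \eqref{tilde u compatibility}.

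For the second derivative I would start from $\frac{d}{ds}|\Omega_k(s)| = \int_{\Sigma_k(s)} g(V_s,\nu_s)\,\mathrm{d}\mu_s$, valid for $s$ small, where $V_s$ is the variation vector field along $\Sigma_k(s)$ and $\nu_s$ its outward unit normal, and differentiate once more using the transport identity $\frac{d}{ds}\int_{\Sigma_k(s)} h_s\,\mathrm{d}\mu_s = \int_{\Sigma_k(s)} (\partial_s h_s + h_s\,\mathrm{div}_{\Sigma_k(s)} V_s)\,\mathrm{d}\mu_s$ applied with $h_s = g(V_s,\nu_s)$. At $s = 0$ one has $V_0 = \phi_k\,\nu$, hence $\mathrm{div}_{\Sigma_k} V_0 = \phi_k\,H$, since the tangential gradient of $\phi_k$ is orthogonal to $\nu$ and $\mathrm{div}_{\Sigma_k}\nu = H$. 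Moreover $\partial_s[g(V_s,\nu_s)]\big|_{s=0} = \psi_k$: the contribution of the $s$-derivative of $\nu_s$ drops out because $|\nu_s|_g \equiv 1$, and the remaining contribution is the normal component of the acceleration $\nabla_{\partial_s}\partial_s F_s$, which by the geodesic equation equals $\partial_s^2 U_k(x,s)\,\dot\gamma_x(U_k(x,s))$ and thus is $\psi_k\,\nu$ at $s = 0$. Putting these together, $\frac{d^2}{ds^2}\big|_{s=0}|\Omega_k(s)| = \int_{\Sigma_k} (\psi_k + H\,\phi_k^2)\,\mathrm{d}\mu = \int_{\Sigma_k} (v_k + \tilde v_k + H\,(u_k + f + \tilde u_k)^2)\,\mathrm{d}\mu$, which vanishes by \eqref{tilde vcompatibility}.

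The calculation is routine, and the one point that requires care is this last step. It is precisely because the variation is carried out through the normal exponential map that each curve $s \mapsto F_s(x)$ is a geodesic, so that its acceleration is purely normal; this is what makes the quadratic coefficient $\tfrac{1}{2}\,\psi_k$ of $U_k$ enter $\frac{d^2}{ds^2}\big|_{s=0}|\Omega_k(s)|$ only through the single linear term $\int_{\Sigma_k}\psi_k\,\mathrm{d}\mu$, matching exactly the normalization built into \eqref{tilde vcompatibility}. A secondary bookkeeping matter is the sign convention for $H$ (the trace of the second fundamental form with respect to the outward normal $\nu$), which is pinned down by $\mathrm{div}_{\Sigma_k}\nu = H$ and is consistent with $\lambda(\Sigma_k)\,H(\Sigma_k) = (n-1) + o(1)$ used elsewhere.
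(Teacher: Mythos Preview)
Your argument is correct and follows the same logic as the paper. The only difference is packaging: the paper isolates the first and second variation of volume formulas
\[
\frac{d}{ds}\bigg|_{s=0}|\Omega(s)|=\int_{\Sigma}u\,\mathrm{d}\mu,\qquad
\frac{d^2}{ds^2}\bigg|_{s=0}|\Omega(s)|=\int_{\Sigma}v+H\,u^2\,\mathrm{d}\mu
\]
as a separate lemma (Lemma~\ref{variation of area and volume}) and then simply cites it together with \eqref{tilde u compatibility} and \eqref{tilde vcompatibility}, whereas you re-derive these formulas on the spot via the transport identity and the geodesic property of $s\mapsto F_s(x)$.
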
  
\begin{proof}
	This follows from Lemma \ref{variation of area and volume} using \eqref{tilde u compatibility} and \eqref{tilde vcompatibility}.
\end{proof}
By Proposition \ref{graph estimate},  $\partial B_r$ and $\Sigma$ intersect transversely for all sufficiently large $r>1$. Increasing $r>1$ if necessary, we may arrange that  \begin{align} 
	\label{spt f}\operatorname{spt}(f)\subset B_r. \end{align}  \indent 
For Proposition \ref{gamma limit lemma} below, let $u\in C^\infty(\Sigma)$ be the function given by $u=g(\chi,\nu)$.
\begin{prop}There holds \label{gamma limit lemma} 
	$$
	\int_{ B_r\cap \Sigma} (|h|^2+{Ric}(\nu,\nu))\,(u+f)^2\,\mathrm{d}\mu\leq \int_{B_r\cap \Sigma}|\nabla (u+f)|^2\,\mathrm{d}\mu+O(r^{n-3-\tau}).
	$$
\end{prop}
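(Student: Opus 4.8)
The plan is to derive the inequality by localizing to $B_r$ the second variation (stability) inequality of the isoperimetric regions $\Omega_k$ and then passing to the limit $k\to\infty$; the term $O(r^{n-3-\tau})$ collects the contributions coming from $\Sigma_k\setminus B_r$. First I would record a \emph{global} inequality on $\Sigma_k$. The variation $\{\Omega_k(s)\}$ introduced before Lemma \ref{volume preserving} has first-order normal speed $\phi_k=u_k+f+\tilde u_k$ and, by Lemma \ref{volume preserving}, is volume-preserving to second order, while \eqref{tilde u compatibility} gives $\frac{d}{ds}\big|_{s=0}|\Sigma_k(s)|=H\int_{\Sigma_k}\phi_k\,\mathrm{d}\mu=0$. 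Since $\Omega_k$ is isoperimetric, correcting $\{\Omega_k(s)\}$ by a further small normal perturbation so that the enclosed volume is exactly $|\Omega_k|$ changes the area only to third order in $s$, whence $\frac{d^2}{ds^2}\big|_{s=0}|\Sigma_k(s)|\ge 0$. Expanding the second variations of area and enclosed volume by Lemma \ref{variation of area and volume} and using the constraint \eqref{tilde vcompatibility} together with the constancy of $H$ on $\Sigma_k$, the mean-curvature-dependent terms cancel, leaving
$$
\int_{\Sigma_k}(|h|^2+Ric(\nu,\nu))\,\phi_k^2\,\mathrm{d}\mu\;\le\;\int_{\Sigma_k}|\nabla\phi_k|^2\,\mathrm{d}\mu.
$$

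Next I would localize. For $k$ large the bump $\tilde u_k$ is supported in $\{\lambda(\Sigma_k)<|x|_{\bar g}<2\lambda(\Sigma_k)\}$, so $\phi_k=u_k+f$ on $B_r\cap\Sigma_k$; by the smooth local convergence $\Sigma_k\to\Sigma$ of Proposition \ref{smooth local convergence} (and $u_k=g(\chi,\nu)\to u$), the integrals over $B_r\cap\Sigma_k$ of $|\nabla\phi_k|^2$ and of $(|h|^2+Ric(\nu,\nu))\phi_k^2$ converge as $k\to\infty$ to the integrals over $B_r\cap\Sigma$ of $|\nabla(u+f)|^2$ and of $(|h|^2+Ric(\nu,\nu))(u+f)^2$; here $\partial B_r$ is transverse to $\Sigma$ and to $\Sigma_k$ for $r$ large, by Proposition \ref{graph estimate}. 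Splitting $\Sigma_k=(B_r\cap\Sigma_k)\cup(\Sigma_k\setminus B_r)$ in the global inequality and rearranging, it thus suffices to show
$$
\limsup_{k\to\infty}\int_{\Sigma_k\setminus B_r}|\nabla\phi_k|^2-(|h|^2+Ric(\nu,\nu))\,\phi_k^2\,\mathrm{d}\mu=O(r^{n-3-\tau}),
$$
with implied constant independent of $r$. On $\Sigma_k\setminus B_r$ one has $\phi_k=u_k+\tilde u_k$ since $\operatorname{spt}f\subset B_r$; using $|\tilde u_k|=O(\lambda(\Sigma_k)^{-\tau})$ and $|\nabla\tilde u_k|=O(\lambda(\Sigma_k)^{-1-\tau})$ on an annulus of area $O(\lambda(\Sigma_k)^{n-1})$ (Lemma \ref{area growth estimate}), together with the curvature estimates \eqref{H est} and \eqref{hcirc est}, all terms involving $\tilde u_k$ are $O(\lambda(\Sigma_k)^{n-3-\tau})$ and hence vanish in the limit. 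It remains to estimate $\int_{\Sigma_k\setminus B_r}|\nabla u_k|^2-(|h|^2+Ric(\nu,\nu))u_k^2\,\mathrm{d}\mu$.

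This is the main obstacle, because the integrand has no uniform pointwise sign and its two parts do not decay fast enough individually — on a large coordinate sphere both $|\nabla u_k|^2$ and $|h|^2u_k^2$ are of order $\lambda(\Sigma_k)^{-2}$, so each of the two integrals is of order $\lambda(\Sigma_k)^{n-3}$, while their difference integrates to zero. To capture this cancellation I would integrate by parts, using that $u_k=g(\chi,\nu)$ is an \emph{approximate Jacobi field}: outside $B_2$ the field $\chi=e_n$ is a Euclidean translation, hence a Killing field for $\bar g$, so $\mathcal{L}_\chi g=\mathcal{L}_\chi(g-\bar g)=O(|x|_{\bar g}^{-1-\tau})$ with $\bar\nabla\mathcal{L}_\chi g=O(|x|_{\bar g}^{-2-\tau})$, and since $H$ is constant on $\Sigma_k$, the standard formula for $\Delta g(\chi,\nu)+(|h|^2+Ric(\nu,\nu))g(\chi,\nu)$ in terms of $\mathcal{L}_\chi g$, combined with the curvature bounds \eqref{H est}, \eqref{hcirc est} and the fact that $\Sigma_k$ is contained in a ball of radius comparable to $\lambda(\Sigma_k)$ for $k$ large, gives
$$
\Delta u_k+(|h|^2+Ric(\nu,\nu))\,u_k=O(|x|_{\bar g}^{-2-\tau})\qquad\text{on }\{|x|_{\bar g}>2\}\cap\Sigma_k.
$$
Integrating by parts over $\Sigma_k\setminus B_r$ (a manifold whose only boundary is the compact set $\Sigma_k\cap\partial B_r$, $\Sigma_k$ being closed) produces $-\int_{\Sigma_k\setminus B_r}u_k\big(\Delta u_k+(|h|^2+Ric(\nu,\nu))u_k\big)\mathrm{d}\mu$ plus a boundary term $\int_{\Sigma_k\cap\partial B_r}u_k\,\langle\nabla u_k,\mathrm n\rangle$, with $\mathrm n$ the outward unit conormal. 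For the bulk term, $|u_k|=O(1)$ and the uniform area growth $|B_\rho\cap\Sigma_k|_{\bar g}=O(\rho^{n-1})$ (Lemma \ref{area growth estimate}) together with $\tau>n-3$ give, via the layer-cake estimate of Lemma \ref{layer cake}, $\int_{\Sigma_k\setminus B_r}|x|_{\bar g}^{-2-\tau}\,\mathrm{d}\mu=O(r^{n-3-\tau})$ with constant independent of $k$ and $r$. For the boundary term, $\Sigma_k\to\Sigma$ smoothly on $B_{2r}$, and on $\Sigma$ Proposition \ref{graph estimate} gives $|u|=O(1)$ and $|\nabla u|=O(|x|_{\bar g}^{-1-\tau})$ — the metric-difference contribution dominates the graph-curvature contribution, so no loss of a power $|x|_{\bar g}^{\varepsilon}$ occurs — whence $\int_{\Sigma_k\cap\partial B_r}|u_k|\,|\nabla u_k|=O(r^{n-2})\cdot O(r^{-1-\tau})=O(r^{n-3-\tau})$ for $k$ large. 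Combining these estimates with the reduction above yields the claim; it is precisely the hypothesis $\tau>n-3$ that makes the bulk integral converge at the rate $r^{n-3-\tau}$, which, upon letting $r\to\infty$ in the sequel, will give the stability of $\Sigma$ with respect to asymptotically constant variations.
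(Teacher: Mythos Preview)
Your argument is correct and reaches the same conclusion, but the execution on $\Sigma_k\setminus B_r$ differs from the paper's. The paper does \emph{not} use the constraint \eqref{tilde vcompatibility} to cancel the mean-curvature terms; instead it keeps $H\,v_k+H^2\,(u_k+f)^2$ in the second variation, replaces the combination
\[
H\,v_k+H^2\,u_k^2+|\nabla u_k|^2-|h|^2\,u_k^2
\]
on $\Sigma_k\setminus B_r$ by its Euclidean counterpart $\bar H\,\bar v_k+\bar H^2\,\bar u_k^2+|\bar\nabla\bar u_k|_{\bar g}^2-|\bar h|_{\bar g}^2\,\bar u_k^2$ (at the cost of $O(|x|_{\bar g}^{-2-\tau})$, via Lemma \ref{geometric expansions}), and then invokes the exact Euclidean identity of Lemma \ref{euclidean lemma} to reduce this to a boundary integral on $\partial B_r\cap\Sigma$, estimated by $|\bar h(\Sigma)|_{\bar g}\,|e_n^{\bar\top}|_{\bar g}=O(r^{-1-\tau})$ from Proposition \ref{graph estimate}. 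The $Ric(\nu,\nu)\,u_k^2$ term is treated separately and crudely by $Ric=O(|x|_{\bar g}^{-2-\tau})$. On $B_r\cap\Sigma_k$ the residual $H\,v_k+H^2\,(u_k+f)^2$ vanishes in the limit because $H\to 0$.

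Your route---using \eqref{tilde vcompatibility} together with the constancy of $H$ to obtain the clean stability inequality $\int_{\Sigma_k}(|h|^2+Ric(\nu,\nu))\phi_k^2\le\int_{\Sigma_k}|\nabla\phi_k|^2$, and then integrating by parts against the approximate Jacobi equation $\Delta u_k+(|h|^2+Ric(\nu,\nu))\,u_k=O(|x|_{\bar g}^{-2-\tau})$---is the same idea (the translation $e_n$ is an approximate Killing field) carried out intrinsically in $g$ rather than by comparison with $\bar g$. It is slightly more conceptual and avoids introducing Lemma \ref{euclidean lemma}, at the price of invoking the linearization formula for $H$ in the metric (which is standard but not recorded in the paper). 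Your boundary estimate $|\nabla u|=O(|x|_{\bar g}^{-1-\tau})$ on $\Sigma$ is correct: the Christoffel contribution is $O(|x|_{\bar g}^{-1-\tau})$, while $|h|\cdot|e_n^\top|=O(|x|_{\bar g}^{-1-2\tau+2\varepsilon})=o(|x|_{\bar g}^{-1-\tau})$ for $\varepsilon<\tau/2$, so no $\varepsilon$-loss occurs, matching the paper's boundary bound $O(r^{n-3-\tau})$.
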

\begin{proof}

By Lemma \ref{volume preserving}, the variation $\{\Sigma_k(s)\}_{|s|<\varepsilon}$ is volume-preserving up to second order. Since $\Omega_k$ is isoperimetric, it follows that
	$$
	\frac{d^2}{ds^2}\bigg|_{s=0}| \Sigma_k(s)|\geq 0.
	$$
	By \eqref{second area},
	$$
	\frac{d^2}{ds^2}\bigg|_{s=0}| \Sigma_k(s)|_{g}=\int_{\Sigma_k}H\,\ddot{ U}_k+H^2\,\dot{ U}^2_k+|\nabla \dot{ U}_k|^2-(|h|^2+{Ric}(\nu,\nu))\,\dot{ {U}}^2_k\,\mathrm{d}\mu.
	$$
	Note that $\tilde v_k(x)=\tilde u_k(x)=0$ if  $|x|_{\bar g} \leq \lambda(\Sigma_k)$. In particular, $\{x\in M:\tilde u_k(x)\neq 0\text{ and }f(x)\neq0\}$ is empty provided that $k$ is sufficiently large. 	Using also the curvature estimates \eqref{H est} and \eqref{hcirc est}, that is, $|x|_{\bar g}\,h=O(1)$,    we check that
	\begin{itemize}
		\item[$\circ$]$H\,\tilde v_k=O(\lambda(\Sigma_k)^{-2-\tau}),$
\item[$\circ$]$H^2\,\tilde u_k\,(u_k+f+\tilde u_k)=O(\lambda(\Sigma_k)^{-2-\tau})$,
\item[$\circ$]$g(\nabla \tilde u_k,\nabla (u_k+f+\tilde u_k))=O(\lambda(\Sigma_k)^{-2-\tau})$, and
\item[$\circ$]$(|h|^2+Ric(\nu,\nu))\,\tilde u_k\,(u_k+f+\tilde u_k)=O(\lambda(\Sigma_k)^{-2-\tau})$.		
	\end{itemize}	 
It follows that
	\begin{align*}
	\frac{d^2}{ds^2}\bigg|_{s=0}| \Sigma_k(s)|_{g}&=\int_{\Sigma_k}H\,v_k+H^2\,(u_k+f)^2+|\nabla (u_k+f)|^2-(|h|^2+{Ric}(\nu,\nu))\,(u_k+f)^2\,\mathrm{d}\mu\\&\qquad +O(\lambda(\Sigma_k)^{n-3-\tau}).
	\end{align*} 
 \indent Note that $f=0$ on $\Sigma_k\setminus B_r$. By Lemma \ref{area growth estimate}, using that $2+\tau>n-1$, we have
	$$
	\int_{\Sigma_k\setminus B_{r}} {Ric}(\nu,\nu)\,u_k^2\,\mathrm{d}\mu=O(1)\,\int_{\Sigma_k\setminus B_r}|x|_{\bar g}^{-2-\tau}\,\mathrm{d}\bar \mu=O(\lambda(\Sigma_k)^{n-3-\tau})+O(r^{n-3-\tau}).
	$$
\indent 	Similarly, using also Lemma \ref{geometric expansions}, the curvature estimates \eqref{H est} and \eqref{hcirc est}, and \eqref{spt f}, we check that
	\begin{align*} 
	&\int_{\Sigma_k\setminus B_{r}}H\,v_k+H^2\,u_k^2+|\nabla u_k|^2-|h|^2\,u_k^2\,\mathrm{d}\mu\\&\qquad =\int_{\Sigma_k\setminus B_{r}}\bar H\,\bar v_k+\bar H^2\,\bar u_k^2+|\bar\nabla\bar u_k|_{\bar g}^2-|\bar h|^2_{\bar g}\,\bar u_k^2\,\mathrm{d}\bar \mu
 + O(\lambda(\Sigma_k)^{n-3-\tau})+O(r^{n-3-\tau})
	\end{align*} 
where $\bar u_k,\,\bar v_k \in C^\infty(\Sigma_k)$ are  given by
$
\bar u_k=\bar g(e_n,\bar\nu)$ and $ \bar v_k=-\bar h(e_n^{\bar\top},e_n^{\bar\top}).
$ For instance, $$H\,v_k=\bar H\,\bar v_k+O(|x|_{\bar g}^{-2-\tau})$$ and 
$$
\int_{\Sigma_k\setminus B_r}|x|_{\bar g}^{-2-\tau}\,\mathrm{d}\bar\mu=O(\lambda(\Sigma_k)^{n-3-\tau})+O(r^{n-3-\tau}). 
$$\indent 
 Since $\Sigma_k\to\Sigma$ locally smoothly, we obtain, using also the integration by parts formula from Lemma \ref{euclidean lemma},	
$$
\int_{\Sigma_k\setminus B_{r}}\bar H\,\bar v_k+\bar H^2\,\bar u_k^2+|\bar\nabla\bar u_k|_{\bar g}^2-|\bar h|^2_{\bar g}\,\bar u_k^2\,\mathrm{d}\bar \mu
=O(1)\,\int_{\partial B_{r}\cap \Sigma} |\bar h(\Sigma)|_{\bar g}\,|e_n^{\bar\top}|_{\bar g}\,\mathrm{d}\bar l.
$$
\indent By Proposition \ref{graph estimate},  $\Sigma\setminus B_r$ is contained in the graph of a function $\psi\in C^\infty(\mathbb{R}^{n-1})$ that satisfies
	\begin{align*}  
	|y|^{-1}_{\bar g}\,|\psi(y)-a|+|\bar\nabla \psi|_{\bar g}+|y|_{\bar g}\,|\bar\nabla^2 \psi |_{\bar g}=O(|y|_{\bar g}^{-\tau/2 })
\end{align*}   
for some $a\in\mathbb{R}$. 
Consequently,  $|\bar h(\Sigma)|_{\bar g}=O(|y|_{\bar g}^{-1-\tau/2})$ and $e_n^{\bar \top}=O(|y|_{\bar g}^{-\tau/2})$ as $y\to\infty$. Moreover, $| \partial B_r\cap \Sigma|_{\bar g}=O(r^{n-2})$ as $r\to\infty$. Thus, 
\begin{align*} 
\int_{\partial B_{r}\cap \Sigma} |\bar h(\Sigma)|_{\bar g}\,|e_n^{\bar\top}|_{\bar g}\,\mathrm{d} \bar l=O(r^{n-3-\tau}).
\end{align*} 
 \indent The assertion follows from the above estimates using \eqref{tau condition}, that $H(\Sigma_k)=o(1)$ by \eqref{H est}, and that $\Sigma_k\to\Sigma$ and  $u_k\to u$ locally smoothly.
\end{proof}

\begin{prop} \label{strictly stable} 
Let $f\in C^\infty_c(\Sigma)$.
There holds
$$
\int_{\Sigma} (|h|^2+{Ric}(\nu,\nu))\,(1+f)^2\,\mathrm{d}\mu\leq \int_{\Sigma}|\nabla f|^2\,\mathrm{d}\mu.
$$
\end{prop}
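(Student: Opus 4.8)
The plan is to derive Proposition \ref{strictly stable} from Proposition \ref{gamma limit lemma} in two moves: first letting $r\to\infty$, and then replacing the function $u=g(\chi,\nu)$, which is only \emph{asymptotically} constant, by the genuinely constant function $1$.

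For the first move, I would fix $\phi\in C^\infty_c(\Sigma)$, choose $r>1$ so large that $\operatorname{spt}(\phi)\subset B_r$, and apply Proposition \ref{gamma limit lemma} with $f=\phi$ to get
$$
\int_{B_r\cap\Sigma}(|h|^2+Ric(\nu,\nu))\,(u+\phi)^2\,\mathrm{d}\mu\leq\int_{B_r\cap\Sigma}|\nabla(u+\phi)|^2\,\mathrm{d}\mu+O(r^{n-3-\tau}).
$$
I would then send $r\to\infty$. The error term vanishes because $\tau>n-3$. On the left, $u+\phi$ is bounded and $|h|^2+|Ric(\nu,\nu)|\in L^1(\Sigma)$ by Corollary \ref{l2 second fundamental form}, so dominated convergence applies. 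On the right, the integrand is non-negative, and it is integrable over $\Sigma$ because Proposition \ref{graph estimate} (together with $\chi=e_n$ outside $B_2$) gives $|\nabla u|_g=O(|x|_{\bar g}^{-1-\tau+\varepsilon})$ on the end of $\Sigma$, which is square-integrable since $2\tau>n-3$; hence monotone convergence applies. This produces
$$
\int_{\Sigma}(|h|^2+Ric(\nu,\nu))\,(u+\phi)^2\,\mathrm{d}\mu\leq\int_{\Sigma}|\nabla(u+\phi)|^2\,\mathrm{d}\mu\qquad\text{for every }\phi\in C^\infty_c(\Sigma).
$$

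For the second move, given $f\in C^\infty_c(\Sigma)$, I would insert $\phi=\phi_j:=f+\chi_j\,(1-u)$ into the last inequality, where $\chi_j\in C^\infty_c(\Sigma)$ are cut-off functions with $0\leq\chi_j\leq1$, $\chi_j\equiv1$ on $B_j$, $\chi_j\equiv0$ outside $B_{2j}$, and $|\nabla\chi_j|_g=O(j^{-1})$. Since $u$ is bounded and $u-1=O(|x|_{\bar g}^{-\tau+\varepsilon})$ on the end by Proposition \ref{graph estimate}, one has $u+\phi_j\to1+f$ pointwise with a uniform $L^\infty$-bound, so the left-hand sides converge to $\int_\Sigma(|h|^2+Ric(\nu,\nu))(1+f)^2\,\mathrm{d}\mu$ by dominated convergence. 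Writing $\nabla(u+\phi_j)=\nabla f+(1-\chi_j)\nabla u+(1-u)\nabla\chi_j$, the middle term tends to $0$ in $L^2(\Sigma)$ because $\nabla u\in L^2(\Sigma)$, and the last term tends to $0$ in $L^2(\Sigma)$ because $\int_\Sigma(1-u)^2|\nabla\chi_j|_g^2\,\mathrm{d}\mu=O(j^{n-3-2\tau+2\varepsilon})\to0$, again using $2\tau>n-3$. Hence $\int_\Sigma|\nabla(u+\phi_j)|^2\,\mathrm{d}\mu\to\int_\Sigma|\nabla f|^2\,\mathrm{d}\mu$, and letting $j\to\infty$ yields the claimed inequality.

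The main obstacle, and the only place where something beyond routine convergence arguments is needed, is the second move: the test field $u$ coming from the corrected Euclidean translation is asymptotically constant but not constant, so to reach the constant $1$ demanded by \eqref{stable wrt acv} one must know that infinity has negligible capacity for the weight $(1-u)^2$. This is precisely where the decay rates of $u$ and $\nabla u$ furnished by Proposition \ref{graph estimate}, together with the hypothesis $\tau>n-3$, enter in an essential way; everything else reduces to dominated and monotone convergence and the $L^1$-bounds from Corollary \ref{l2 second fundamental form}.
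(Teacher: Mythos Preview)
Your proposal is correct and follows essentially the same two-step approach as the paper: first pass to the limit $r\to\infty$ in Proposition \ref{gamma limit lemma} using Corollary \ref{l2 second fundamental form} and dominated convergence to obtain the inequality with $u+\phi$ for all $\phi\in C^\infty_c(\Sigma)$, and then substitute $\phi_j=f+\chi_j(1-u)$ with logarithmic cutoffs $\chi_j$ and let $j\to\infty$, using the decay of $1-u$ and $\nabla u$ from Proposition \ref{graph estimate} together with $\tau>n-3$ to kill the error terms. The paper's proof is the same in structure; it states the decay a bit more conservatively as $|u-1|+|x|_{\bar g}\,|\nabla u|=O(|x|_{\bar g}^{-\tau/2})$, but your sharper estimates $u-1=O(|x|_{\bar g}^{-\tau+\varepsilon})$ and $|\nabla u|=O(|x|_{\bar g}^{-1-\tau+\varepsilon})$ are also correct and lead to the same conclusion. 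One small point you omit is the reduction to the case where $\operatorname{spt}(f)$ is disjoint from the boundary of $M$, which the paper handles by noting that the boundary components of $\Sigma$ are stable minimal surfaces; this is needed because Proposition \ref{gamma limit lemma} was set up for test functions on $M$ supported away from $\partial M$.
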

\begin{proof}
We may assume that the support of $f$ is disjoint from $\partial M$, which is stable. Letting $r \to\infty$ in Proposition \ref{gamma limit lemma} and using Corollary \ref{l2 second fundamental form} and the dominated convergence theorem, we conclude that 
 \begin{align} \label{fk}  
 \int_{\Sigma} (|h|^2+{Ric}(\nu,\nu))\,(u+f)^2\,\mathrm{d}\mu\leq \int_{\Sigma}|\nabla (u+f)|^2\,\mathrm{d}\mu
 \end{align} 
 for all $f\in C^\infty_c(\Sigma)$. 
Fix  a function $\rho\in C^\infty_c(\mathbb{R})$ with $\rho(t)=1$ if $t\leq 1$. Let $f_1,\,f_2,\ldots\in C_c^\infty(\Sigma)$ be given by
 $$
 f_k(x)=\begin{dcases}1-u(x)+f(x) \qquad &\text{if } x\in B_1,\\
 \rho(k^{-1}\,|x|_{\bar g})\,(1-u(x))+f(x)&\text{else}.
 \end{dcases}.
 $$ Note that, on $\Sigma\setminus B_1$,  
 \begin{align} \label{dominated convergence} 
 |\nabla f_k|^2\leq O(1)\,\big( |x|_{\bar g}^{-2}\,|u-1|^2+|\nabla u|^2+ |\nabla f|^2\big).
 \end{align} 
 By Proposition \ref{graph estimate}, $\Sigma\setminus B_{r_0}$ is contained in the graph of a function $\psi\in C^\infty(\mathbb{R}^{n-1})$ satisfying
 \begin{align*}  
 	|y|^{-1}_{\bar g}\,|\psi(y)-a|+|\bar\nabla \psi|_{\bar g}+|y|_{\bar g}\,|\bar\nabla^2 \psi |_{\bar g}=O(|y|_{\bar g}^{-\tau/2 })
 \end{align*}   
for some $a\in\mathbb{R}$ provided that $r_0>2$ is sufficiently large. Consequently,  
 $$|u-1|+|x|_{\bar g}\,|\nabla u|=O(|x|_{\bar g}^{-\tau/2}).
 $$ 
It follows that the right side of \eqref{dominated convergence} is integrable. The assertion  now follows from applying \eqref{fk} with $f_k$ in place of $f$, letting $k\to\infty$, and using the dominated convergence theorem.
\end{proof}
\begin{rema}
	Proposition \ref{strictly stable} holds for all $f\in H^1_{loc}(\Sigma)$ for which there are $f_1,\,f_2,\hdots\in C^\infty_c(\Sigma)$ with
	$
	\nabla f_k\to\nabla f\text{ in } L^2(\Sigma)$, as can be seen using the Sobolev inequality.
\end{rema}
The following lemma is essentially due to R.~Schoen and S.-T.~Yau \cite{SchoenYau} and  A.~Carlotto \cite{Carlottocalcvar}.
\begin{lem} \label{tilde flat}
	Let $\tilde \Sigma \subset M$ be a noncompact connected two-sided properly embedded hypersurface with $\partial \tilde \Sigma=\emptyset$. Suppose that $\tilde \Sigma$  is a stable minimal surface satisfying 
	\begin{align} \label{tilde swrac} 
	\int_{\Sigma} (|h|^2+{Ric}(\nu,\nu))\,(1+f)^2\,\mathrm{d}\mu\leq \int_{\Sigma}|\nabla f|^2\,\mathrm{d}\mu
	\end{align}  
	for every $f\in C^\infty_c(\tilde \Sigma)$ and 
	\begin{align} \label{tilde areagrowth}   
		\limsup_{r\to\infty} \frac{|B_r\cap\tilde \Sigma|_{\bar g}}{\omega_{n-1}\,r^{n-1}}=1.
	\end{align} 
Suppose that $R\geq 0$ along $\tilde \Sigma$. 	Then 	$\tilde \Sigma$ is totally geodesic and isometric to flat $\mathbb{R}^{n-1}$. Moreover, there holds $R={Ric}(\nu,\nu)=0$ along the noncompact component of $\tilde \Sigma$. 
\end{lem}
\begin{proof}
	By \eqref{tilde areagrowth}, $\tilde \Sigma$ has only one end. 
	According to Lemma \ref{vanishing mass}, $(\tilde \Sigma,g|_{\tilde \Sigma})$ is asymptotically flat with mass zero. Using this and arguing as in the proof of \cite[Theorem 2]{Carlottocalcvar} using \eqref{tilde swrac}, we see that the scalar curvature of   $(\tilde \Sigma,g|_{\tilde \Sigma})$ vanishes.   By the rigidity statement of the positive mass theorem \cite[Lemma 3 and Proposition 2]{schoenconformal},  $(\tilde \Sigma,g|_{\tilde \Sigma})$ is isometric to flat $\mathbb{R}^{n-1}$. Using \eqref{tilde swrac} with $f=0$, we see that 
	$$
	\int_{\tilde \Sigma} |h|^2+{Ric}(\nu,\nu)\,\mathrm{d}\mu\leq 0.
	$$
	Using the Gauss equation and that $\tilde \Sigma$ is scalar flat, we have
	$$
	|h|^2+{Ric}(\nu,\nu)=\frac12\,(|h|^2+R).
	$$
	In conjunction with $R\geq 0$, we obtain that $h=0$ and that $R={Ric}(\nu,\nu)=0$ along $\tilde \Sigma$.
\end{proof}
\begin{rema}
	The argument in \cite[Theorem 2]{Carlottocalcvar} referred to in the proof of Lemma \ref{tilde flat} is carried out in the special case where $(M,g)$ is asymptotic to Schwarzschild. However, as the relevant part of this argument is  based on the results in \cite[\S1]{bartnik} and the positive mass theorem \cite{SchoenYau}, it applies verbatim to our setting here. 
\end{rema}
\begin{prop} Suppose that $R\geq 0$ along the noncompact component of $\Sigma$. Then \label{flat flat} 
the noncompact component of	$\Sigma$ is totally geodesic and isometric to flat $\mathbb{R}^{n-1}$. Moreover, there holds $R={Ric}(\nu,\nu)=0$ along the noncompact component of $\Sigma$. 
\end{prop}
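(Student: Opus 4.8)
The statement to prove is Proposition \ref{flat flat}: assuming $R \geq 0$ along the non-compact component $\Sigma_0$ of $\Sigma$, this component is totally geodesic, flat, and isometric to $\mathbb{R}^{n-1}$, with $R = \mathrm{Ric}(\nu,\nu) = 0$ along it. The key input is Proposition \ref{strictly stable}, which says $\Sigma$ is stable with respect to asymptotically constant variations, and Lemma \ref{vanishing mass}, which says each end of $(\Sigma_0, g|_{\Sigma_0})$ is asymptotically flat with mass zero. The strategy is the Schoen--Yau descent argument from the proof of the positive mass theorem \cite{montecatini}, adapted following Carlotto \cite{Carlottocalcvar}: combine the stable-with-respect-to-asymptotically-constant-variations inequality with the Gauss equation and the scalar curvature assumption to force the relevant curvature quantities to vanish.

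\textbf{First step: extract the conformal/scalar-curvature information.} I would start from the stability inequality \eqref{stable wrt acv}, which after the identity
$$
|h|^2 + \mathrm{Ric}(\nu,\nu) = \tfrac12\bigl(R - R_{\Sigma_0} + |h|^2 + H^2\bigr)
$$
(Gauss equation, using $H = 0$) together with the Gauss--Bonnet-type / intrinsic scalar curvature bookkeeping, reads: for all $f \in C^\infty_c(\Sigma_0)$,
$$
\int_{\Sigma_0}\Bigl(\tfrac12(R + |h|^2) - \tfrac12 R_{\Sigma_0}\Bigr)(1+f)^2\,\mathrm{d}\mu \leq \int_{\Sigma_0}|\nabla f|^2\,\mathrm{d}\mu.
$$
Since $R \geq 0$ by hypothesis, one gets the inequality with $R + |h|^2$ replaced by anything $\leq$ it; the point is that we have a nonnegative potential controlling a conformal-type operator on the $(n-1)$-manifold $\Sigma_0$. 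The Remark after Proposition \ref{strictly stable} extends the inequality to $f \in H^1_{loc}$ whose gradients are $L^2$-approximable, which is what lets us plug in logarithmic cutoffs / constant-at-infinity test functions.

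\textbf{Second step: run the descent.} Because $(\Sigma_0, g|_{\Sigma_0})$ is asymptotically flat with mass zero (Lemma \ref{vanishing mass}) and dimension $n-1$ with $3 \leq n \leq 7$ so $2 \leq n-1 \leq 6$, I would invoke the argument of \cite[Theorem 4.2]{montecatini} / \cite{pmt}: the operator $-\Delta_{\Sigma_0} + c_{n-1}(R + |h|^2)$ (with the dimensional conformal constant) being nonnegative in the sense of \eqref{stable wrt acv}, one conformally changes $g|_{\Sigma_0}$ to a scalar-flat (for $n-1\geq 3$) or flat (for $n-1 = 2$, i.e. $n = 3$) asymptotically flat metric whose mass is $\leq$ the mass of $g|_{\Sigma_0}$, which is zero. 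By the (inductively known, in this dimension range) positive mass theorem, or directly by the rigidity in the base case, this forces the conformal factor to be constant and $R + |h|^2 \equiv 0$ on $\Sigma_0$, hence $h \equiv 0$ (totally geodesic) and $R \equiv 0$ along $\Sigma_0$. Feeding $h = 0$, $R = 0$ back into the Gauss equation gives $R_{\Sigma_0} = -2\,\mathrm{Ric}(\nu,\nu)$, and the stability inequality then degenerates to $\int_{\Sigma_0}\mathrm{Ric}(\nu,\nu)(1+f)^2 \leq \int |\nabla f|^2$ with, after the conformal rigidity, $\Sigma_0$ already flat; testing with constants-at-infinity forces $\mathrm{Ric}(\nu,\nu) \equiv 0$ as well. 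Flatness plus asymptotic flatness with a single end asymptotic to a hyperplane then gives the isometry with $\mathbb{R}^{n-1}$.

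\textbf{Main obstacle.} The delicate point is that in the proof of the positive mass theorem the descent hypersurface is \emph{a priori} compact or the construction forces good behavior at infinity; here $\Sigma_0$ is noncompact and we only control it through the asymptotic expansion of Proposition \ref{graph estimate} and the stability-with-respect-to-asymptotically-constant-variations inequality rather than ordinary stability. So the real work is justifying the conformal change and the mass monotonicity in this weighted, noncompact setting — verifying that the solution of the relevant linear PDE on $\Sigma_0$ exists with the right decay (using $\tau > n-3$, so $n - 3 \geq (n-1)-2$ and the induced metric genuinely decays at the subcritical-mass rate), and that the "$1+f$" rather than "$f$" in \eqref{stable wrt acv} is exactly what is needed to absorb the nonzero boundary/asymptotic terms. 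This is precisely the adaptation Carlotto carried out under the extra assumption that $g$ is asymptotic to Schwarzschild; I would follow \cite[proof of Theorem 1, Theorem 2]{Carlottocalcvar} line by line, noting that the only place the Schwarzschild assumption entered there was to obtain the asymptotic expansion, which we have now replaced by Proposition \ref{graph estimate}, so the argument goes through verbatim.
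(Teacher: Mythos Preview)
Your proposal is correct and follows essentially the same approach as the paper: invoke Carlotto's adaptation \cite{Carlottocalcvar} of the Schoen--Yau descent, using Lemma \ref{vanishing mass} (mass zero) and Proposition \ref{strictly stable} (stability with respect to asymptotically constant variations) as inputs, to force the rigidity. The paper organizes the endgame slightly more cleanly---it first deduces $R_{\Sigma^0}\equiv 0$ and hence $(\Sigma^0,g|_{\Sigma^0})\cong\mathbb{R}^{n-1}$ via positive mass rigidity, and only then plugs $f=0$ into \eqref{stable wrt acv} to get $\int_{\Sigma^0}(|h|^2+\mathrm{Ric}(\nu,\nu))\,\mathrm{d}\mu\leq 0$, which together with the Gauss identity $|h|^2+\mathrm{Ric}(\nu,\nu)=\tfrac12(|h|^2+R)$ (now that $R_{\Sigma^0}=0$) immediately yields $h=0$, $R=0$, $\mathrm{Ric}(\nu,\nu)=0$---but the substance is the same.
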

\begin{proof}
	This follows from Proposition \ref{strictly stable} and Lemma \ref{tilde flat}.
\end{proof}

\section{Proof of Theorem \ref{main result 2}}
In this section, $(M,g)$ is a Riemannian manifold of dimension  $3< n\leq 7$ that is asymptotically flat of rate $\tau>n-3$ and  has nonnegative scalar curvature and positive mass. \\ \indent We defer the proof of the following lemma to the end of this section.
\begin{lem} \label{metric perturbation}
 Let $p_1,\,p_2$ be points in the interior of $M$. There exists  $U$ open and compactly contained in the interior of $M$ with $p_1,\,p_2\in U$  such that the following holds. For all $r>0$ sufficiently small, there exist $W\subset U$ open with $p_1,\,p_2\in W$ and a   family $\{g_t\}_{t\in(0,1)}$ of Riemannian metrics $g_t$ on $M$ such that
	\begin{align}  
			\label{g properties 2} &\circ\qquad   g_t \to g \text{ smoothly as } t\to  0,\\
	\label{g properties 1} &\circ\qquad  g_t=g  \text{ in }  M\setminus W,\\
	\label{g properties 5} &\circ\qquad  g_t<g \text{ in }W,\text{ and}\\
	\label{g properties 3} &\circ\qquad  R(g_t)>0 \text{ in } \{x\in W:\operatorname{dist}(x,p_2)> r\}. 
\end{align} 
\end{lem}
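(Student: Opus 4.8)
The plan is to take $g_t$ to be a conformal perturbation $g_t=(1-t\,w)^{\frac{4}{n-2}}\,g$, where $w=w_r\in C^\infty(M)$ satisfies $0\leq w\leq 1/2$, $w>0$ on $W$, and $w=0$ on $M\setminus W$. For such a deformation, \eqref{g properties 2}, \eqref{g properties 1}, and \eqref{g properties 5} hold at once, since $0<(1-t\,w)^{\frac{4}{n-2}}\leq 1$ with equality exactly where $w=0$. By the conformal transformation law for scalar curvature,
$$
R(g_t)=(1-t\,w)^{-\frac{n+2}{n-2}}\left(R(g)\,(1-t\,w)+\frac{4(n-1)}{n-2}\,t\,\Delta_g w\right),
$$
and, since $R(g)\geq 0$ and $1-t\,w\geq 1/2$, property \eqref{g properties 3} will follow once $\Delta_g w>0$ on $\{x\in W:\operatorname{dist}_g(x,p_2)>r\}$; I would arrange this on the slightly larger open set $W\setminus \overline B$, where $B=\{x\in M:\operatorname{dist}_g(x,p_2)<r/4\}$. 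Note that the maximum principle forbids $\Delta_g w>0$ throughout $W$, since the positive maximum of $w$ would then be attained at an interior point of $W$; this is precisely why the ball $B$ must be excluded, and the construction will place the maximum of $w$ inside $B$.

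First I would fix a bounded open set $U$ with $p_1,p_2\in U$ and $\overline U$ compactly contained in the interior of $M$, and then fix a bounded open set $W$ with smooth boundary with $p_1,p_2\in W$ and $\overline W\subset U$; this $W$ may be chosen independent of $r$, and we only require $r$ small enough that $\overline B\subset W$. Fix a constant $c>0$ and a non-negative $f\in C^\infty_c(W)$ with $f\not\equiv 0$ and $\operatorname{spt}(f)\subset B$, and let $\phi$ solve $-\Delta_g\phi+c\,\phi=f$ in $W$ with $\phi=0$ on $\partial W$. Then $\phi>0$ in $W$ by the maximum principle for $-\Delta_g+c$, $\phi\in C^\infty(\overline W)$ by elliptic regularity, $\partial_\nu\phi<0$ on $\partial W$ by the Hopf lemma, and, crucially, $\Delta_g\phi=c\,\phi>0$ on $W\setminus\operatorname{spt}(f)\supseteq W\setminus\overline B$.

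It remains to cap $\phi$ so that the weight vanishes to infinite order along $\partial W$ without losing strict subharmonicity away from $B$. Let $M_1=\max_{\overline W}\phi$, choose $a=a(r)>2M_1$, and let $\beta\in C^\infty(\mathbb{R})$ be given by $\beta(s)=0$ for $s\leq 0$ and $\beta(s)=e^{-a/s}$ for $s>0$; then $0<\beta\leq 1$, $\beta'>0$ on $(0,\infty)$, $\beta''>0$ on $(0,a/2)$, and all derivatives of $\beta$ vanish at $0$. Set $w=\tfrac12\,\beta(\phi)$ on $\overline W$ and $w=0$ on $M\setminus W$. Since $\beta$ is flat at $0$ and $\phi\in C^\infty(\overline W)$ vanishes on $\partial W$ with $\partial_\nu\phi<0$, extending $\phi$ smoothly past $\partial W$ so as to be negative there shows that $w$ is smooth on all of $M$; moreover $0\leq w\leq 1/2$ and $w>0$ precisely on $W$. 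On $W\setminus\overline B$,
$$
\Delta_g w=\tfrac12\big(\beta'(\phi)\,\Delta_g\phi+\beta''(\phi)\,|\nabla\phi|^2\big)=\tfrac12\big(\beta'(\phi)\,c\,\phi+\beta''(\phi)\,|\nabla\phi|^2\big),
$$
and since $0<\phi\leq M_1<a/2$ there, the first summand is strictly positive and the second is non-negative, so $\Delta_g w>0$. In view of the conformal formula, $R(g_t)>0$ on $\{x\in W:\operatorname{dist}_g(x,p_2)>r\}$ for all $t\in(0,1)$, as required.

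The steps that demand care are making $w$ \emph{globally} smooth, despite $\phi$ vanishing only to first order along $\partial W$ and despite $f$ forcing $\Delta_g\phi<0$ inside $B$ — both handled by composing with the flat cutoff $\beta$ and by supporting the source $f$ in $B$, which is harmless because \eqref{g properties 3} imposes nothing there — and obtaining \emph{strict} subharmonicity of $w$ on $W\setminus\overline B$. For the latter, working with $-\Delta_g+c$ for some $c>0$ rather than with $-\Delta_g$ is the key point: it is what makes the term $\beta'(\phi)\,c\,\phi$ strictly positive, so that $\Delta_g w>0$ even at critical points of $\phi$. The obstruction behind all of this is the maximum principle, which rules out a non-negative, globally subharmonic weight vanishing on $\partial W$, and which is exactly what forces the excluded ball around $p_2$.
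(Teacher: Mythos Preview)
Your argument is correct and genuinely different from the paper's. The paper builds the weight as a finite sum $v=\sum_i a_i\,v_{r,q_i}$ of local radial bumps centered at points $q_1=p_1,\ldots,q_N=p_2$ along a chain of overlapping balls of radius $6r$; each $v_{r,q}$ is superharmonic on the annulus $r<\operatorname{dist}(\cdot,q)<6r$, and the coefficients $a_i$ are chosen inductively so that the superharmonic annulus of $v_{r,q_{i+1}}$ dominates the possibly subharmonic core of $v_{r,q_i}$. Thus $W$ is a narrow tube of scale $r$ connecting $p_1$ to $p_2$, and the only region where superharmonicity may fail is the ball of radius $r$ about $p_2=q_N$. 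The conformal factor is then $(1+t\,\delta\,v)^{4/(n-2)}$ with $v<0$.

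Your construction instead fixes a single large domain $W$ once and for all, solves $(-\Delta_g+c)\phi=f$ with Dirichlet data and source supported near $p_2$, and then composes with a flat convex cutoff to obtain a smooth compactly supported weight that is strictly subharmonic on $W\setminus\overline B$. What this buys you is that $W$ is independent of $r$ and no combinatorial chain-of-balls argument with tuned coefficients is needed; the price is invoking elliptic existence, regularity, and the Hopf lemma, all of which are routine. The paper's approach is more hands-on and yields a $W$ that shrinks with $r$, which is not needed here but is consistent with how the reference \cite{chaoli} is used. Both routes hinge on the same conformal mechanism and the same obstruction you correctly identify: the maximum principle forces a bad region somewhere, and one arranges for it to sit inside the excluded ball about $p_2$.
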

Suppose, for a contradiction, that $\Omega_1,\,\Omega_2,\hdots\subset M$ are isoperimetric regions in $(M,g)$   with $| \Omega_k|\to\infty$ and such that $\Omega_k$ converges locally smoothly to a region $\Omega\subset M$ whose boundary $\Sigma=\partial \Omega$ is noncompact and area-minimizing.
\begin{prop}  \label{many area minimizing}  Let $p$ be a point in the interior of $M$. There exist  $U$ open and compactly contained in the interior of $M$, Riemannian metrics $\tilde g_1,\,\tilde g_2,\hdots $ on $M$, and regions  $ \Omega_p,\,\tilde \Omega_1,\,\tilde \Omega_2,\hdots \subset M$  with the following properties.
	\begin{align}
\label{a}		&\circ\qquad \operatorname{spt}(\tilde g_k-g)\subset U\text{ for all $k$ and $\tilde g_k\to g$ smoothly.}\\
\label{b}		&\circ\qquad \tilde \Omega_k\text{ is an isoperimetric region in $(M,\tilde g_k)$ and $|\tilde \Omega_k|_{\tilde g_k}\to\infty$}.\\
\label{c}		&\circ\qquad \tilde \Omega_k\to  \Omega_p\text{ locally smoothly.}\\
\label{d}		&\circ\qquad \partial \Omega_p\text{ is a noncompact area-minimizing boundary in $(M,g)$ that is stable with respect}\\ &\qquad\hspace{.4cm}\text{to asymptotically constant variations. Moreover,  $p\in \partial \Omega_p$}. \notag
	\end{align}
\end{prop}
\begin{proof}
	We first assume that $\partial M=\emptyset$. \\ \indent 
  We choose $p_1\in \Sigma$ and let $p_2=p$. Let $U\subset M$ be the  open set from Lemma \ref{metric perturbation} and $r>0$  small enough so that the conclusion of Lemma \ref{metric perturbation} holds. According to Lemma \ref{metric perturbation}, there is  $W(r)\subset U$ open and a family $\{g_t\}_{t\in(0,1)}$ of Riemannian metrics $g_t$ on $M$ satisfying  \eqref{g properties 2}, \eqref{g properties 1}, \eqref{g properties 5}, and \eqref{g properties 3}. \\ \indent Let    $t\in(0,1)$. Recall the isoperimetric profile $A_g$ defined in \eqref{isoperimetric profile}. Let  $c(t)=|W(r)|_g-|W(r)|_{g_t}$. Note that
  \begin{itemize}
  	\item[$\circ$] $c(t)>0$ by \eqref{g properties 5} and
  	\item[$\circ$] $c(t)=o(1)$ as $t\to  0$ by \eqref{g properties 2}.
  \end{itemize}
  Since $\partial \Omega_k\to \Sigma$ locally smoothly and $W(r)\cap\Sigma\neq \emptyset$, it follows from $\eqref{g properties 5}$ that  there is $\varepsilon(t)>0$ such that, for all $k$ sufficiently large,
    \begin{align} \label{area comparison} |\partial \Omega_k|_{g_t}+\varepsilon(t)<|\partial \Omega_k|_{g}.
     \end{align} 
   Using Lemma \ref{profile properties}, we see that
	\begin{align} \label{profile growth estimate}
		|\partial \Omega_k|_{g}=A_g(|\Omega_k|_{g})\leq A_g(V)+o(1)
	\end{align} 
as $t\to 0$ for any amount of volume $V$ such that    $|\Omega_k|_{g}-c(t)\leq V
\leq |\Omega_k|_{g}+c(t)$. 
	By Lemma \ref{existence iso}, for every $k$ sufficiently large,  there exists an isoperimetric region $\Omega_k(t)\subset M$ with respect to $g_t$ such that $|\Omega_k(t)|_{g_t}=|\Omega_k|_{g_t}$. \\ \indent We claim that there is  $\tilde W(r)\Subset W(r)$ open such that, for all $k$ sufficiently large,
	\begin{align} \label{non empty intersection} \tilde W(r)\cap \partial \Omega_k(t)\neq\emptyset. 
	\end{align}  
	 To see this, assume the contrary. Passing to a subsequence and using \eqref{g properties 1}, we have $g=g_t+o(1)$ on $\partial \Omega_k\cap  W(r)$ as $k\to\infty$. Moreover, using Lemma \ref{area growth estimate}, we have $|\partial \Omega_k\cap W(r)|_g=O(1)$ as $k\to\infty$. By \eqref{g properties 1}, we have $|\partial \Omega_k\setminus  W(r)|_{g_t}=|\partial \Omega_k\setminus  W(r)|_{g}$.   It follows that 
	\begin{align} \label{comparison} 
	|\partial \Omega_k(t)|_{g_t}=|\partial \Omega_k(t)|_{g}+o(1)
\end{align} 
as $k\to\infty$. 	Using \eqref{g properties 5}, we have
	$$
	|\Omega_k(t)|_g\leq |\Omega_k(t)|_{g_t}+c(t)=|\Omega_k|_{g_t}+c(t)\leq |\Omega_k|_g+c(t).
	$$
	Similarly,
	$
	|\Omega_k(t)|_g\geq  |\Omega_k|_g-c(t).
	$
Using \eqref{profile growth estimate} with $V=|\Omega_k(t)|_g$, we conclude that $$|\partial \Omega_k(t)|_{g}\geq |\partial \Omega_k|_{g}-o(1)$$ as $k\to\infty$. This is not compatible with \eqref{area comparison}, \eqref{comparison}, and the fact that $\Omega_k(t)$ is isoperimetric with respect to $g_t$. This establishes \eqref{non empty intersection}. \\ \indent 
	By Proposition \ref{smooth local convergence}, $ \Omega_k(t)$ converges locally smoothly as $k\to\infty$ to a region $\Omega(t)$ with noncompact boundary $\Sigma(t)$ that is area-minimizing with respect to $g_t$. Using \eqref{non empty intersection}, we see that $\Sigma(t)$ intersects the closure of  $\tilde W(r)$. 
	Using Proposition  \ref{flat flat} and \eqref{g properties 3}, it follows that, in fact,   $ \{x\in M:\operatorname{dist}(x,p)<r\}\cap \Sigma(t)\neq \emptyset$. \\ \indent 
	Passing to a subsequence, we see that, as $t\to 0$, $\Omega(t)$ converges locally smoothly to a  region $\Omega_p(r)\subset M$ with noncompact boundary $\Sigma_p(r)$ that is  area-minimizing with respect to $g$  such that $$\{x\in M:\operatorname{dist}(x,p)\leq r\}\cap \Sigma_p(r)\neq\emptyset.$$ Moreover, passing to a subsequence, we see that, as $r\to 0$, $\Omega_p(r)$ converges locally smoothly to a region $\Omega_p$ with noncompact boundary $\Sigma_p$ that is area-minimizing with respect to $g$ and such that $p\in  \Sigma_p$. \\ \indent  
	By a diagonal argument, using that $W(r)\subset U$ for every $r>0$ sufficiently small, we see that there are Riemannian metric $\tilde g_1,\,\tilde g_2,\hdots $ on $M$ and regions  $ \,\tilde \Omega_1,\,\tilde \Omega_2,\hdots\subset M$ satisfying \eqref{a}, \eqref{b}, and \eqref{c}. By \eqref{a}, the asymptotic behavior of $\tilde g_k$ does not depend on $k$. We can therefore repeat the argument that led to Proposition \ref{strictly stable} to see that \eqref{d} holds.  This completes the proof in the case where $\partial M=\emptyset$.  \\ \indent
In the case where $\partial M\neq\emptyset$, we note that $\partial M$ is also an outermost minimal surface with respect to $g_t$ for all sufficiently small $t>0$. As we explain in Lemma \ref{lem:locallyisoperimetricboundary}, this ensures that all bounded components of a noncompact area-minimizing boundary in $(M,g_t)$   are contained in $\partial M$. The rest of the proof therefore only requires formal modifications. 
\end{proof}
\begin{proof}[Proof of Theorem \ref{main result 2}] 
	Let $(M,g)$ be asymptotically flat of rate $\tau>n-3$ with nonnegative scalar curvature. Suppose that there are isoperimetric regions $\Omega_1,\,\Omega_2,\hdots\subset M$ of $(M,g)$ with $|\Omega_k|\to \infty$
	 such that $\Omega_k$ converges locally smoothly to a region $\Omega\subset M$ whose boundary $\Sigma=\partial \Omega$ is noncompact and area-minimizing.
	\\ \indent
	Suppose, for a contradiction, that $(M,g)$ has positive mass $m$. \\ \indent 
	We first assume that $\partial M=\emptyset$. \\ \indent 
Our goal is to show that the curvature tensor ${Rm}$ vanishes everywhere.\\ \indent  Let $p,\,p_1\in M$  be such that $\operatorname{dist}(p_1,p)=1$. Applying Proposition \ref{many area minimizing} to $p_1$ and using  Lemma \ref{tilde flat}, we obtain a noncompact area-minimizing boundary $\Sigma_{p_1}$ with $p_1\in \Sigma_{p_1}$ that is isometric to flat $\mathbb{R}^{n-1}$, totally geodesic, and such that $R=Ric(\nu,\nu)=0$ along $\Sigma_{p_1}$. Next, applying Proposition \ref{many area minimizing} to a point $p_2\in M$ with $\operatorname{dist}(p_2,p)=1/2$ and $p_2\notin \Sigma_{p_1}$. Using also  Lemma \ref{tilde flat}, we obtain a noncompact area-minimizing boundary $\Sigma_{p_2}\neq \Sigma_{p_1}$ with $p_2\in \Sigma_{p_2}$ that is isometric to flat $\mathbb{R}^{n-1}$, totally geodesic, and such that $R=Ric(\nu,\nu)=0$ along $\Sigma_{p_2}$. Iterating this procedure and using standard convergence results from geometric measure theory, we see that there are  distinct connected noncompact   area-minimizing boundaries $\Sigma_p,\,\Sigma_1,\,\Sigma_2,\hdots\subset M$ such that
	\begin{itemize}
		\item[$\circ$] $\Sigma_k$ is isometric to flat $\mathbb{R}^{n-1}$ for all $k$,
		\item[$\circ$] $\Sigma_k$ is totally geodesic for all $k$, 
		\item[$\circ$] $\Sigma_k\to \Sigma_p$ locally smoothly,
	\end{itemize}
and $\Sigma_p$ is isometric to flat $\mathbb{R}^{n-1}$, totally geodesic, and such that $R=Ric(\nu,\nu)=0$ along $\Sigma_{p}$.
	   Let $W,\,X,\,Y,\,Z$ be tangent fields along $\Sigma_p$. Note that
\begin{align} \label{first flat}  
{Rm}(W,\,X,\,Y,\,Z)=0.
\end{align} 
Indeed, this follows from the Gauss equation, using that $\Sigma_p$ is isometric to flat $\mathbb{R}^{n-1}$ and totally geodesic. Similarly, using the Codazzi equation, we obtain
\begin{align} \label{second flat}  
{Rm}(W,\,X,\,Y,\nu)=0.
\end{align} 
Since $\Sigma_p$ and $\Sigma_k$ are totally geodesic and distinct, their intersection is either empty or  transverse. In the latter case, ${Rm}=0$ along the intersection. We may therefore assume that there is a neighborhood $U$ of $p$ such that $U\cap \Sigma_p\cap \Sigma_k=\emptyset$ for all $k\geq 1$. \\ \indent 
Since $\Sigma_p$ and $\Sigma_k$ are totally geodesic, the components of $\Sigma_p\cap \Sigma_k$ are totally geodesic and therefore hyperplanes of $\Sigma_p$. Since $\Sigma_p$ and  $\Sigma_k$ are embedded, the components of $\Sigma_p\cap \Sigma_k$ must be parallel. It follows that, passing to a subsequence, there are three cases; cf.~\cite[Lemma 1]{liu}.
\begin{itemize}
	\item[$\circ$] For every $k$, $\Sigma_p\cap \Sigma_k=\emptyset$.
	\item[$\circ$] For every $k$, $\Sigma_p\cap \Sigma_k$ consists of a single hyperplane in $\Sigma_p$.
	\item[$\circ$] For every $k$, $\Sigma_p\cap \Sigma_k$ consists of at least two parallel hyperplanes in $\Sigma_p$.
\end{itemize}
In the second and the third case, let $\Sigma^0_k$ be the closure of the component of $\Sigma_p\setminus \Sigma_k$ that contains $p$. Note that $\Sigma^0_k$ is isometric to a half-space in the second case and isometric to a slab in the third case. Since $U\cap \Sigma_p\cap \Sigma_k=\emptyset$, it follows that
$
\liminf_{k\to\infty}\operatorname{dist}(p,\partial \Sigma^0_k)>0.
$\\\indent
In the first case, we may argue exactly as in \cite[p.~993]{CCE}. Specifically, we may represent $\Sigma_k$ as the graph of a positive function $\psi_k$ over larger and larger domains, exhausting  $\Sigma_p$ as $k\to\infty$. By the Harnack inequality \cite[\S8.8]{gilbargtrudinger}, $\psi_k$ is bounded by a multiple of $\psi_k(p)$ locally in $\Sigma_p$ as $k\to\infty$. 
Using the first variation of the second fundamental form and proceeding as in \cite[p.~333]{simonstrict},  
we obtain a positive function $f\in C^\infty(\Sigma_p)$  such that
\begin{align} \label{f PDE} 
	\nabla_{X,Y}^2f+{Rm}(X,\nu,\nu,Y)\,f=0
\end{align} 
for all tangent fields $X,\,Y$ of $\Sigma_p$. Tracing \eqref{f PDE} and using that ${Ric}(\nu,\nu)=0$ along $\Sigma_p$, we see that $f$ is harmonic. By the Liouville theorem,  $f$ is equal to a  constant. It follows that ${Rm}(X,\nu,\nu,Y)=0$. In conjunction with \eqref{first flat} and \eqref{second flat}, we conclude that ${Rm}=0$ along $\Sigma_p$ and in particular at $p$.
\\ \indent 
In the second case, if $\limsup_{k\to\infty}\operatorname{dist}(p,\partial \Sigma^0_k)=\infty$, we may argue as in the first case. If $\operatorname{dist}(p,\partial \Sigma^0_k)=O(1)$, then, passing to a subsequence, we may assume that $\Sigma_k^0$ converges locally smoothly to a half-space $\Sigma^0\subset \Sigma_p$. As before, we may represent $\Sigma^0_k$ as the graph of a smooth function $\psi_k$ over larger and larger domains, exhausting $\Sigma^0$ as $k\to\infty$.  Arguing as in the first case, using also the Boundary Harnack inequality (see, e.g., \cite[Theorem 11.5]{caffarelli}), we obtain a harmonic function $f\in C^\infty(\Sigma^0)$ that satisfies \eqref{f PDE} in $\Sigma^0$, $f=0$ on $\partial \Sigma^0$, and $f>0$ away from $\partial \Sigma^0$. By \cite[Theorem I]{shortproofs}, $f$ is a linear function. As before, it follows that ${Rm}=0$ along $\Sigma_p$ and in particular at $p$. \\ \indent 
Finally, suppose, for a contradiction, that the third case arises and that $\operatorname{dist}(p,\partial \Sigma^0_k)=O(1)$. Passing to a subsequence, $\Sigma_k^0$ converges locally to a slab $\Sigma^0\subset \Sigma_p$. As in the previous case, we obtain a harmonic function $f\in C^\infty(\Sigma^0)$ that satisfies \eqref{f PDE} in $\Sigma^0$, $f=0$ on $\partial \Sigma^0$, and $f>0$ away from $\partial \Sigma^0$.  By \eqref{f PDE}, $\nabla^2 f=0$ along $\partial \Sigma^0$ and it follows that $|\nabla f|$ is bounded from above by a positive constant on $\partial \Sigma^0$. This is not compatible with Lemma \ref{harmonic on slab}. \\ \indent 
It follows that $Rm=0$ in $M$. Since  $(M,g)$ is asymptotically flat, it follows that $(M,g)$ is isometric to flat $\mathbb{R}^n$.  This is not compatible with $m>0$.  
This completes  the proof in the case where $\partial M=\emptyset$. \\ \indent In the case where $\partial M\neq \emptyset$ is an outermost minimal surface, Lemma \ref{lem:locallyisoperimetricboundary} shows that all bounded components of a noncompact area-minimizing boundary in $(M,g)$ are contained in $\partial M$. The proof therefore only requires formal modifications.
	\end{proof} 
\begin{proof}[Proof of Corollary \ref{main result}]
	Let $(M,g)$ be asymptotically flat of rate $\tau>n-3$  with nonnegative scalar curvature and positive mass $m$. Suppose, for a contradiction, that there are isoperimetric regions $\Omega_1,\,\Omega_2,\hdots\subset M$ of $(M,g)$ with $|\Omega_k|\to \infty$ and a compact set $K\subset M$   such that $K\cap \partial M=\emptyset$ and  $K\cap\partial \Omega_k\neq \emptyset$ for all $k$.
	\\ \indent By Proposition \ref{smooth local convergence},  $\Omega_k$ converges locally smoothly to a region $\Omega\subset M$ whose boundary $\Sigma=\partial \Omega$ is noncompact and area-minimizing. By Theorem \ref{main result 2}, $(M,g)$ is isometric to flat $\mathbb{R}^n$. This is not compatible with the assumption $m>0$.
\end{proof}
\begin{proof}[{Proof of Lemma \ref{metric perturbation}}]
Arguing as in \cite[p.~21]{chaoli}, we see that there is $r_0>0$ depending only on $(M,g)$ with the following property. Given $0<r<r_0$ and $q\in M$, there exists a function $v_{r,q}\in C^\infty(M)$ satisfying
	\begin{align}  
		\label{v properties 1} &\circ\qquad  v_{r,q}=0  \text{ in }  \{x\in M:\operatorname{dist}_g(x,q)\geq 6\,r\},\qquad\qquad\qquad\qquad\qquad\\
		\label{v properties 2} &\circ\qquad  v_{r,q}<0  \text{ in }  \{x\in M:\operatorname{dist}_g(x,q)< 6\,r\},\text{ and}\\
		\label{v properties 3} &\circ\qquad  \Delta v_{r,q}<0 \text{ in } \{x\in M:r<\operatorname{dist}_g(x,q)<6\,r\}.		
	\end{align} 
	Decreasing $r>0$ if necessary, we may choose points $q_1,\hdots,q_N\in M$ with $q_1=p_1$ and $q_N=p_2$ such that  
	\begin{align}
		\label{ball property 2} &\{x\in M:\operatorname{dist}(x,q_i)\leq r\}\subset \{x\in M:3\,r\leq \operatorname{dist}(x,q_{i+1})\leq 5\,r\} \text{ for all }i\leq N-1\text{ and}\\ \notag
		&\{x\in M:\operatorname{dist}(x,q_i)\leq 6\,r\}\text{ is disjoint from }\partial M\text{ for all }1\leq i\leq N; 
	\end{align}
see Figure \ref{perturbation}.
 	\begin{figure}\centering
	\includegraphics[width=0.7\linewidth]{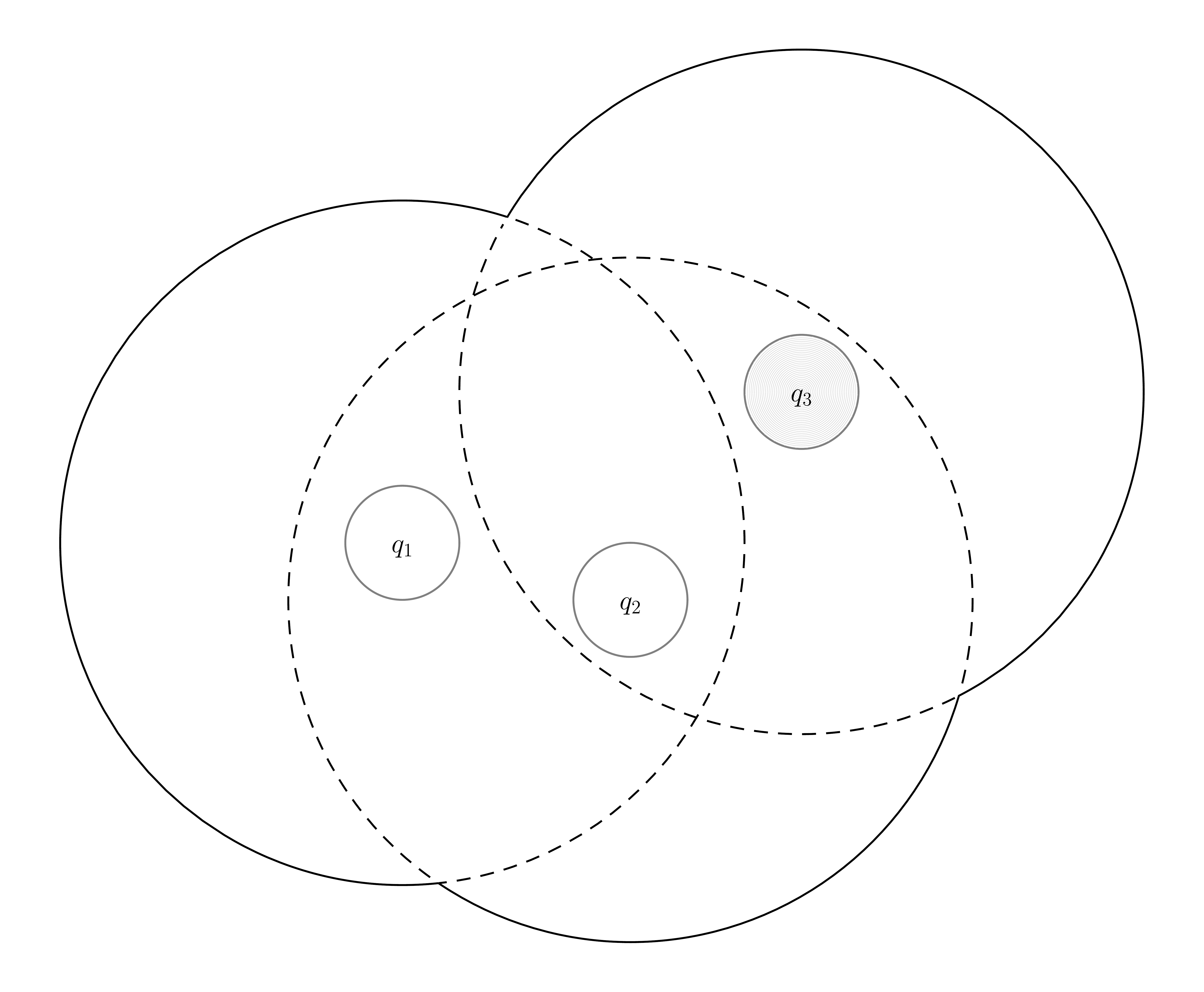}
	\caption{An illustration of the construction in Lemma \ref{metric perturbation}. The open set $W$ is bounded by the solid black line. The function $v$ is superharmonic outside of the hatched region. }
	\label{perturbation}
\end{figure}
	Define $a_1,\hdots,a_N$ recursively by $a_1=1$ and, for $i= 2,\hdots,N$,
	$$
	a_i=1+\frac{\sup\{\Delta v_{r,q_{i-1}}(x):\operatorname{dist}(x,q_{i-1})<r\}}{|\inf\{\Delta v_{r,q_i}(x):3\,r\leq \operatorname{dist}(x,q_{i})\leq 5\,r\}|}\,a_{i-1}.
	$$
	Let
	$$
	W=\bigcup_{i=1}^N\{x\in M:\operatorname{dist}(x,q_i)< 6\,r\}
	\qquad \text{and} \qquad v=\sum_{i=1}^N a_i\,v_{r,q_i}.
	$$
	Note that
	\begin{itemize}
		\item[$\circ$] $v=0$ in $M\setminus W$ by \eqref{v properties 1},
		\item[$\circ$] $v<0$ in $W$ by \eqref{v properties 2}, and 
		\item[$\circ$] $\Delta v<0$ in $\{x\in W:\operatorname{dist}(x,p_2)\geq r\}$ by \eqref{v properties 3} and \eqref{ball property 2}.
	\end{itemize}\indent  \indent 
	For all $\delta>0$ sufficiently small, the Riemannian metrics $$g_t=(1+t\,\delta\,v)^{\frac{4}{n-2}}\,g$$ where $t\in(0,1)$ satisfy  the properties asserted in the lemma.\end{proof} 
 \section{Proof of Theorem \ref{counterexample 2}}
Let $3<n\leq 7$. In this section, we use the gluing technique recently developed by Y.~Mao and Z.~Tao \cite{maotao}, improving upon the pioneering work of A.~Carlotto and R.~Schoen \cite{carlottoschoen},  to construct an example of a Riemannian manifold that is asymptotically flat of rate $n-2$ with nonnegative scalar curvature and positive mass  that admits a noncompact area-minimizing boundary that is stable with respect to asymptotically constant variations \eqref{stable wrt acv}. 
\begin{lem} \label{localization}
	There exists a Riemannian metric $g$ on $\mathbb{R}^n$ that is asymptotically flat of rate $ n-2$ with nonnegative scalar curvature and positive mass such that $$g=\bar g \text{ on } \{x\in\mathbb{R}^n:x^n\leq 0\}.$$
\end{lem}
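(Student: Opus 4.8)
The plan is to apply the Carlotto--Schoen gluing construction to combine the exterior region of a Schwarzschild end with a flat half-space along a conical transition region placed away from the $x^n$-axis. The target decay rate $(n-2)/2 < \tau < n-2$ is precisely the regime in which the Carlotto--Schoen interpolation works: it is strictly faster than the optimal decay $(n-2)/2$ needed for positivity of the gluing estimates, and strictly slower than the Schwarzschild rate $n-2$.

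\begin{proof}[{Proof of Lemma \ref{localization}}]
Let $\mathcal{C}$ be an open cone with vertex at the origin, axis transverse to $e_n$, and opening angle small enough that
\[
\{x \in \mathbb{R}^n : x^n \leq 2 + |x - x^n \, e_n|_{\bar g}\} \cap \{x \in \mathbb{R}^n : |x|_{\bar g} \geq 1\}
\]
is contained in the complement of $\mathcal{C}$. We apply the gluing construction of A.~Carlotto and R.~Schoen \cite[Theorem 2.1]{SchoenCarlotto}, interpolating between the Schwarzschild metric $g_S$ of \eqref{schwarzschild} with some mass $m > 0$ inside a slightly narrower cone $\mathcal{C}' \Subset \mathcal{C}$ and the Euclidean metric $\bar g$ outside of $\mathcal{C}$. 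Since $(n-2)/2 < \tau < n-2$, the hypotheses of the gluing theorem are satisfied, and the resulting metric $g$ on $\mathbb{R}^n$ is asymptotically flat of rate $\tau$, agrees with $\bar g$ outside of $\mathcal{C}$, and agrees with $g_S$ inside of $\mathcal{C}'$. In particular, $g$ equals $\bar g$ on the set $\{x \in \mathbb{R}^n : x^n \leq 2 + |x - x^n \, e_n|_{\bar g}\}$ by the choice of $\mathcal{C}$.

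It remains to verify that $g$ has non-negative scalar curvature and positive mass. The Carlotto--Schoen construction initially produces a metric whose scalar curvature is supported in the transition region $\mathcal{C} \setminus \mathcal{C}'$ and is small in a suitable weighted norm; a conformal change, as in \cite[Section 4]{SchoenCarlotto}, corrects this to non-negative scalar curvature while preserving asymptotic flatness of rate $\tau$ and the property that $g = \bar g$ on the slab, since the conformal factor is identically $1$ there. The mass of the glued metric is a perturbation of the Schwarzschild mass $m$, and choosing $m$ large relative to the error terms in the gluing estimates ensures that it remains positive.
\end{proof}

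\noindent\textbf{Main obstacle.} The delicate point is to simultaneously achieve \emph{non-negative} (not merely small) scalar curvature, positive mass, and exact flatness on the prescribed slab; the conformal correction step of Carlotto--Schoen handles the first two provided the transition-region scalar curvature is controlled in the correct weighted space, which is exactly where the restriction $(n-2)/2 < \tau$ enters, while exact flatness on the slab follows for free from placing the cone off the $x^n$-axis.
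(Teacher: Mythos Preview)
Your approach via Carlotto--Schoen gluing is the same as the paper's, but the geometric setup contains an error that breaks the argument. The region $\{x^n \leq 2 + |x - x^n e_n|_{\bar g}\}$ on which the metric must be exactly flat is the \emph{complement} of the upward-pointing cone with vertex at $2e_n$, axis $e_n$, and $45^\circ$ half-angle; it is not a slab, and it contains the entire lower half-space as well as a neighborhood of the origin. Consequently the gluing cone $\mathcal{C}$ must be contained in $\{x^n > 2 + |x - x^n e_n|_{\bar g}\}$, which forces its axis to be $e_n$ and its vertex to lie on the $e_n$-axis at height at least $2$. A cone with vertex at the origin can never satisfy your disjointness condition: any point on its axis at distance $1$ from the origin has $x^n \leq 1 < 2 = 2 + |y|$, so it lies in the forbidden region. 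Placing the axis ``transverse to $e_n$'' (off the $x^n$-axis, as you put it) only makes things worse, since such a cone meets the lower half-space.

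The paper repairs this simply by taking the gluing cone to be $\{x^n > 2 + |x - x^n e_n|_{\bar g}\}$ itself, so that exact flatness on the complement is precisely the desired conclusion. A second difference: rather than gluing to Schwarzschild $g_S$ (which has a horizon and is undefined near the origin), the paper glues to the smooth conformally flat metric
\[
\hat g = \left[1 + (1 + |x|_{\bar g}^{2n-4})^{-1/2}\right]^{4/(n-2)}\bar g
\]
on all of $\mathbb{R}^n$, which already has non-negative scalar curvature and mass $2$. This removes any issue with the interior of the horizon and lets \cite[Theorem 2.3]{SchoenCarlotto} apply directly; non-negative scalar curvature and positive mass are outputs of that theorem and need no separate conformal correction step.
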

\begin{proof}
	The Riemannian metric
	$$
	\hat g=\left[1+(1+|x|_{\bar g}^{2\,n-4})^{-\frac12}\right]^{\frac{4}{n-2}}\,\bar g
	$$
	is asymptotically flat of rate $n-2$ with nonnegative scalar curvature and  mass $m=2$. The assertion follows from localizing $(\mathbb{R}^n,\hat g)$ to the half-space $\{x\in\mathbb{R}^n:x^n>0\}$ using  \cite[Theorem 1]{maotao}.
\end{proof}
\begin{rema}
	We use \cite[Theorem 1]{maotao} instead of \cite[Theorem 2.3]{carlottoschoen} in the proof of Lemma \ref{localization} to ensure that the localized metric $g$ is again asymptotically flat of rate $n-2$.
\end{rema}

Given $\beta>0$, let  
$$
c_\beta=\int_{\beta }^\infty \frac{1}{\sqrt{\beta^{-3}\,s^3-1}}\,\mathrm{d}s
$$
and note that $c_\beta<\infty$. 
Let $ f_\beta\in C^\infty((\beta,\infty))\cap C^0([\beta,\infty))$ be given by 
$$
 f_\beta (t)=\int_{\beta }^t \frac{1}{\sqrt{\beta^{-3}\,s^3-1}}\,\mathrm{d}s
$$ 
and $S_{\beta,z}\subset \mathbb{R}^n$ be the smooth hypersurface with boundary given by 
 \begin{align} \label{barriers2} S_{\beta,z}=\{(y,z+ c_\beta- f_\beta (|y|)) :y\in\mathbb{R}^{n-1}\text{ and }|y|_{\bar g}\geq \beta \}.\end{align}   We orient $S_{\beta,z}$ by the unit normal pointing in direction of $e_n$ in $S_{\beta,z}\setminus \partial S_{\beta,z}$. Note that, on $\partial S_{\beta,z}$, 
 \begin{align} \label{bc}
 \bar g(\bar \nu(S_{\beta,z}),e_n)=0.
 \end{align}
\begin{lem} \label{catenoidal barriers}
There is $\beta_*>1$ such that	$S_{\beta_*,z}$ is strictly mean-convex for every $z\in\mathbb{R}$.   
\end{lem}    
\begin{proof}
	At a point $x\in S_{\beta,z}$ with $x^n=z+c_\beta- f_\beta (t)$ for some $t\in(\beta,\infty)$, the Euclidean principal curvatures $\kappa_1,\ldots\kappa_{n-1}$ of $S_{\beta,z}$ are given by 
$$
\kappa_1,\ldots,\kappa_{n-2}=\beta^{3/2}\,t^{-5/2}\qquad\text{and}\qquad \kappa_{n-1}=-\frac32\,\beta^{3/2}\,t^{-5/2}.
$$
	It follows that 
	$$
	\bar H(S_{\beta,z})=\left(n-\frac{7}{2}\right)\,\beta^{3/2}\,t^{-5/2}
\qquad\text{and}\qquad 
	|\bar h(S_{\beta,z})|_{\bar g}=\sqrt{n+\frac14}\,\beta^{3/2}\,t^{-5/2}. 
	$$
	Using that $|x|_{\bar g}\geq t$ and Lemma \ref{geometric expansions}, we conclude that 
	$$
	H(S_{\beta,z})=\left(n-\frac{7}{2}\right)\,\beta^{3/2}\,t^{-5/2}
+O(1)\,t^{1-n}+O(1)\,\beta^{3/2}\,t^{-1/2-n}.
	$$
	Since $n>7/2$, we conclude that $H(S_{\beta,z})>0$ provided that $\beta>1$ is sufficiently large. 
\end{proof}
Note that there is $r_*>1$ such that, for every $r>r_*$, $S^{n-2}_r(0)\times \mathbb{R}$ is strictly mean-convex in $(\mathbb{R}^n,g)$ when oriented by the normal pointing away from the vertical axis. Given  $r>r^*$ and $z<0$,  let $ \Sigma_{r,z}\subset \mathbb{R}^n$ be the least area hypersurface in $(\mathbb{R}^n, g)$ with $$\partial  \Sigma_{r,z}= S^{n-2}_r(0)\times\{z\}.$$ 
By the strong maximum principle, $\Sigma_{r,z}$ is a vertical graph with finite slope near $\partial \Sigma_{r,z}$. 
\begin{lem}  \label{smp}
	Let $r>r_*$ and $z<0$. There holds  $\Sigma_{r,z}\subset \mathbb{R}^{n-1}\times[z,\infty)$ and $\Sigma_{r,z}\cap S_{\beta_*,z}=\emptyset$ where $\beta_*$ is as in Lemma \ref{catenoidal barriers}.
\end{lem}
\begin{proof}
	Let $r>1$ and $z<0$. 
	Using that $g=\bar g$ in $\{x\in \mathbb{R}^n:x^n\leq 0\}$, we see that the planes $\{x\in \mathbb{R}^n:x^n=z'\}$ are minimal in $(\mathbb{R}^n,g)$ for every $z'\leq 0$. 
	By the strong maximum principle, we conclude that $\Sigma_{r,z}\cap \{x\in \mathbb{R}^n:x^n=z'\}=\emptyset$ for every $z'<z$. Similarly, by the strong maximum principle, using Lemma \ref{catenoidal barriers} and \eqref{bc},   we conclude that $\Sigma_{r,z}\cap S_{\beta^*,z'}=\emptyset$ for every $z'\geq z$. The assertion follows.
\end{proof}
\begin{lem} \label{rz}
There holds $\Sigma_{r,z}\setminus \partial \Sigma_{r,z}=B^{n-1}_r(0)\times\{z\}$ provided that $z<-4\,\beta_*-4\,c_{\beta_*}$ and $r>\beta_*+r_*$.
\end{lem}
\begin{proof}
	Let $z<-4\,\beta_*-4\,c_{\beta_*}$ and $r>\beta_*+r_*$. 
	Since $\Sigma_{r,z}$ is area-minimizing, we have \begin{align} \label{basic area estimate} |\Sigma_{r,z}|\leq |B^{n-1}_r(0)\times\{z\}|_{\bar g}= \omega_{n-1}\,r^{n-1}.\end{align} Let $\Sigma^-_{r,z}=\{x\in \Sigma_{r,z}: x_n \leq z+c_{\beta_*} \}$ and $\Sigma^+_{r,z}=\{x\in \Sigma_{r,z}: x_n > z+c_{\beta_*} \}$. Note that $g=\bar g$ near $\Sigma^-_{r,z}$.  By Lemma \ref{smp}, $\partial \Sigma^-_{r,z}\setminus \partial \Sigma_{r,z}\subset B^{n-1}_{\beta_*}(0)\times\{z+c_{\beta_*}\}$. Using this and the intermediate value theorem, we obtain
	$$
	|\Sigma_{r,z}^-|=|\Sigma_{r,z}^-|_{\bar g}\geq \omega_{n-1}\,r^{n-1}-\omega_{n-1}\, \beta_*^{n-1}.
	$$ 
	 Suppose, for a contradiction, that there is $x\in \Sigma_{r,z}$ with $x^n=z/2$. It follows that  $B^n_{-z/4}(x)\cap \Sigma_{r,z}\subset \Sigma_{r,z}^+$ so that, by the monotonicity formula \eqref{monotonicity formula}, using that $g=\bar g$ in $B^n_{-z/4}(x)$, 
	$$
|\Sigma^+_{r,z}|\geq |B^n_{-z/4}(x)\cap \Sigma_{r,z}|=|B^n_{-z/4}(x)\cap \Sigma_{r,z}|_{\bar g}\geq 4^{1-n}\,\omega_{n-1}\,z^{n-1}. 	
	$$
Consequently, $|\Sigma^-_{r,z}|+|\Sigma^+_{r,z}|>n\,\omega_n\,r^{n-1}$. This is incompatible with \eqref{basic area estimate}. It follows that $\Sigma_{r,z}\subset\{x\in \mathbb{R}^n:x^n\leq z/2\}$. Using that $g=\bar g$ in $\{x\in \mathbb{R}^n:x^n \leq 0\}$ and that $\Sigma_{r,z}$ is area-minimizing, we conclude that $\Sigma_{r,z}\setminus \partial \Sigma_{r,z}=B^{n-1}_r(0)\times\{z\}$.
\end{proof}
\begin{proof}[Proof of Theorem \ref{counterexample 2}]
Let $z<-4\,\beta_*-4\,c_{\beta_*}$. By Lemma \ref{rz}, $\mathbb{R}^{n-1}\times\{z\}$ is area-minimizing in $(\mathbb{R}^n,g)$. Since $g=\bar g$ in $\{x\in\mathbb{R}^n:x^n\leq 0\}$,   $\mathbb{R}^{n-1}\times \{z\}$ is stable with respect to asymptotically constant variations.
\end{proof}
\begin{rema} \label{schoenyaucarlotto}
	As asserted by R.~Schoen and S.-T.~Yau in \cite[p.~1]{pnas}, the catenoid-like barriers \eqref{barriers2} can be used to prove that, in an asymptotically flat Riemannian manifold of dimension $3<n\leq 7$, there exist area-minimizing hypersurfaces that are asymptotic to any given hyperplane. We are grateful to R.~Schoen for pointing this out to us. In view of \cite[Theorem 1 and Theorem 2]{Carlottocalcvar}, these hypersurfaces do not necessarily satisfy \eqref{stable wrt acv}. Barriers of the form \eqref{barriers2} have also been used by R.~Schoen and S.-T.~Yau in their proof of the spacetime positive mass theorem; see \cite[p.~248]{SchoenYau2}.
\end{rema}
\begin{appendices}

\section{Asymptotically flat manifolds} \label{adm appendix}
In this section, we recall background on asymptotically flat manifolds. \\ \indent 
Let $(M, g)$ be a connected complete Riemannian manifold of dimension $n \geq 3$. \\
We say that $(M, g)$ is asymptotically flat of rate $\tau > (n-2)/2$ if there is a nonempty compact set $K \subset M$ and a diffeomorphism
\begin {equation} \label{chartatinfinity}
 \{ x \in \mathbb{R}^n : |x|_{\bar g} > 1/2\}\to M \setminus K
\end {equation}
such that, in the corresponding coordinate system,
\[
|g - \bar g|_{\bar g} + |x|_{\bar g} \, |\bar D g|_{\bar g} + |x|_{\bar g}^2\, |\bar D^2 g|_{\bar g} = O (|x|_{\bar g}^{-\tau}).
\]
Here, $\bar g$ is the Euclidean metric on $\mathbb{R}^n$,  $\bar D$ is the Euclidean derivative, and a bar indicates that a geometric quantity is computed with respect to $\bar g$. In addition, we require the scalar curvature  of $(M,g)$ to be integrable. If $\partial M\neq \emptyset$, we require that the boundary is a compact minimal surface that is outermost in the sense that every closed minimal hypersurface in $(M, g)$ is contained in the boundary. \\ \indent The particular choice of diffeomorphism \eqref{chartatinfinity} is usually fixed and referred to as the chart at infinity of $(M, g)$. Given $r > 1/2$, we use $S_r$ to denote the subset of $M$ corresponding to $S^{n-1}_r(0)=\{x\in\mathbb{R}^n:|x|_{\bar g}=r\}$ in this chart and $B_r$ to denote the bounded component of $M \setminus S_r$.
\\ \indent 
If $(M, g)$ is asymptotically flat, the quantity
\begin{align} \label{def adm mass} 
	m=\frac{1}{2\,(n-1)\,n\,\omega_{n}}\,\lim_{\lambda\to\infty}\lambda^{-1}\,\int_{S^{n-1}_{\lambda}(0)}\sum_{i,\,j=1}^nx^i\,\left[(\bar D_{e_j}g)(e_i,e_j)-(\bar D_{e_i}g)(e_j,e_j)\right]\,\mathrm{d}\bar\mu
\end{align} 
is called the mass of $(M, g)$; see \cite{ADM}. 	Here, $e_1,\dots,e_n$ is the standard basis of $\mathbb{R}^n$.  The existence of the limit in \eqref{def adm mass} follows from the integrability of the scalar curvature and the decay assumptions on $g$. It is independent of the particular choice of chart at infinity; see \cite[Theorem 4.2]{bartnik}. 

\section {Isoperimetric regions and their limits}
\label{iso appendix}
In this section, we collect results on large isoperimetric regions. \\ \indent 
Let $(M, g)$ be a connected complete Riemannian manifold of dimension $3 \leq n \leq 7$ that is asymptotically flat. Recall that, by definition, $\partial M$ is either empty or an outermost minimal surface. 

Let $\hat M$ be an open manifold in which $M$ is embedded.

A subset $\Omega \subset M$ is called a region if 
\[
\hat \Omega = \Omega \cup (\hat M \setminus M)
\] 
is a properly embedded top-dimensional submanifold of $\hat M$. Note that  $\partial \hat \Omega\subset M$ is a properly embedded hypersurface. It does not depend on the  choice of extension $\hat M$ and will be denoted by $\partial \Omega$.

Let $\Omega \subset M$ be a region. The second fundamental form $h$ and the mean curvature scalar $H$ of $\partial \Omega$ are computed with respect to the normal pointing out of $\Omega$. 

We are interested in three special types of regions in this paper.

\begin{enumerate} [$\circ$] 
	\item A region $\Omega \subset M$ is isoperimetric if it is compact and
	\[
	|\partial \Omega| \leq |\partial \tilde \Omega|
	\]
	for every compact region $\tilde \Omega \subset M$ with $|\tilde \Omega| = | \Omega|$.
	\item A region $\Omega \subset M$ has area-minimizing boundary if, for every $U \Subset M$ open, there holds
	\[
	|U \cap \partial \Omega | \leq |U \cap \partial \tilde \Omega|
	\]
	for every region $\tilde \Omega \subset M$ with $\tilde \Omega \, \triangle \, \Omega \Subset U$.
	\item 
	A region $\Omega \subset M$ is locally isoperimetric if, for every $U \Subset M$ open, there holds
	\[
	|U \cap \partial \Omega | \leq |U \cap \partial \tilde \Omega|
	\]
	for every region $\tilde \Omega \subset M$ with $\Omega \, \triangle \, \tilde \Omega \Subset U$ and $|U \cap \tilde \Omega| = |U \cap \Omega|$.
\end{enumerate}

\begin {rema}  \label{regularity remark}
Alternatively, we could introduce these notions using sets with locally finite perimeter and their reduced boundaries instead of properly embedded top-dimensional submanifolds and their boundaries. Standard regularity theory shows that the reduced boundary of a locally isoperimetric such set is smooth; see, e.g., the survey of results in \cite[Section 4]{eichmairmetzger}.
\end {rema}
Let $\Omega_1,\, \Omega_2, \ldots \subset M$ be locally isoperimetric regions of $(M, g)$ with $\partial \Omega_k \neq \emptyset$ for all $k$.
\begin {lem}  \label{area growth estimate}  There holds
\begin{align} \label{appendix area est} 
\sup_{k\geq 1} \, \sup_{r > 1} r^{1 - n} \, |B_r \cap \partial \Omega_k| < \infty.
\end{align} 
Moreover, for every $\alpha>0$ with $\alpha\neq n-1$,
\begin{align} \label{second estimate} 
\limsup_{k\to\infty}\sup_{1<s<t} \left(t^{n-1-\alpha}+s^{n-1-\alpha}\right)^{-1}\,\int_{(B_t\setminus B_{s})\cap  \partial \Omega_k}|x|_{\bar g}^{-\alpha}\,\mathrm{d}\bar\mu <\infty. 
\end{align} 
\end{lem}
\begin {proof}
Because $B_1\cup \partial B_1$ is compact, there exist compact regions $\tilde B_r\subset B_1$, $r\in(0,1)$, such that $|\tilde B_r|=r^n\,|B_1|$ and $\sup_{r\in(0,1)}|\partial \tilde B_r|<\infty$. For convenience, we let  $\tilde B_r=B_r$ if $r\geq 1$.

To prove \eqref{appendix area est},  let  $r>1$ be such that $\partial B_r$ and $\partial \Omega_k$ intersect transversely for all $k$. We may assume that $B_r\cap \partial \Omega_k\neq \emptyset$ so that $0<|B_r\cap \Omega_k|<|B_r|$. It follows that there is $\tilde r\in(0,r)$ such that $|\tilde B_{\tilde r}|=|B_r\cap \Omega_k|$. Let  $\tilde \Omega=\Omega\setminus B_r\cup \tilde B_{\tilde r}$. Note that $\tilde \Omega\,\triangle\, \Omega_k\Subset B_{2\,r}$ and  $|B_{2\,r}\cap\tilde \Omega|=|B_{2\,r}\cap \Omega_k|$. Moreover, we have $\partial \Omega_k\setminus \partial \tilde \Omega=B_r\cap \partial \Omega_k$ and $\partial \tilde \Omega\setminus \partial \Omega_k\subset \partial B_r\cup \partial \tilde B_{\tilde r}$. Since $\Omega_k$ is locally isoperimetric, it follows that
$$
|B_r\cap \partial \Omega_k|\leq |\partial B_r|+|\partial \tilde B_{\tilde r}|.
$$
Since $(M,g)$ is asymptotically flat, we have $$\sup_{r>1}\sup_{\tilde r\in(0,r)}r^{1-n}\,(|\partial B_r|+|\partial \tilde B_{\tilde r}|)<\infty.$$ This completes the proof of \eqref{appendix area est}.

To prove \eqref{second estimate}, we use \eqref{appendix area est} and Lemma \ref{layer cake}.
\end {proof}

\begin {lem}
Let $\Omega \subset M$ be a locally isoperimetric region with $\partial \Omega \neq \emptyset$. Then $\partial \Omega$ has constant mean curvature. The mean curvature is zero when the boundary is area-minimizing.
\end {lem}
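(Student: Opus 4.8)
The plan is to derive the conclusion from the first variation formulas for area and enclosed volume, using that, by local isoperimetry, area is critical along every admissible (exactly volume preserving) deformation. Throughout I work with the smooth properly embedded hypersurface $\partial\Omega$; its components contained in $\partial M$ are closed minimal hypersurfaces, since $\partial M$ is an outermost minimal surface, so $H$ vanishes there and it suffices to treat the remaining components, which lie in the interior of $M$.

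First I would fix an open set $U\Subset M$ disjoint from $\partial M$ and, given $f\in C^\infty_c(\partial\Omega)$ with $\operatorname{spt}(f)\subset U$, choose $X\in C^\infty_c(U,TM)$ with $X=f\,\nu$ along $\partial\Omega$, and let $\Omega(s)$ be the image of $\Omega$ under the flow of $X$. Then $\Omega(s)\,\triangle\,\Omega\Subset U$, and the first variation formulas (see Lemma \ref{variation of area and volume}) give
\[
\frac{d}{ds}\bigg|_{s=0}|U\cap\Omega(s)|=\int_{\partial\Omega}f\,\mathrm{d}\mu,
\qquad
\frac{d}{ds}\bigg|_{s=0}|U\cap\partial\Omega(s)|=\int_{\partial\Omega}H\,f\,\mathrm{d}\mu.
\]
To produce competitors admissible in the definition of a locally isoperimetric region, I would then correct the deformation to be exactly volume preserving. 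Fix $f_0\in C^\infty_c(\partial\Omega)$ with $\operatorname{spt}(f_0)\subset U$ and $\int_{\partial\Omega}f_0\,\mathrm{d}\mu=1$, pick a corresponding vector field $X_0$, and let $\Omega(s,t)$ be the image of $\Omega$ under the composition of the flows of $X$ and $X_0$ for times $s$ and $t$. The volume $V(s,t)=|U\cap\Omega(s,t)|$ satisfies $\partial_tV(0,0)=\int_{\partial\Omega}f_0\,\mathrm{d}\mu=1$, so by the implicit function theorem there is a smooth $t(s)$ with $t(0)=0$, $V(s,t(s))\equiv|U\cap\Omega|$, and $t'(0)=-\int_{\partial\Omega}f\,\mathrm{d}\mu$.

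If moreover $\int_{\partial\Omega}f\,\mathrm{d}\mu=0$, then $t'(0)=0$ and $\Omega(s,t(s))$ is admissible for every small $s$; since $\partial\Omega$ is locally area minimizing among such competitors, $s\mapsto|U\cap\partial\Omega(s,t(s))|$ attains a minimum at $s=0$, and differentiating yields $\int_{\partial\Omega}H\,f\,\mathrm{d}\mu+t'(0)\int_{\partial\Omega}H\,f_0\,\mathrm{d}\mu=\int_{\partial\Omega}H\,f\,\mathrm{d}\mu=0$. Given two points $p,q$ in the interior part of $\partial\Omega$, I would take $U$ a neighborhood of $\{p,q\}$ and $f=\varphi_p-c\,\varphi_q$ for bump functions $\varphi_p,\varphi_q$ concentrating at $p$ and $q$ with $c$ chosen so that $\int_{\partial\Omega}f\,\mathrm{d}\mu=0$; letting the bumps shrink gives $H(p)=H(q)$, so $H$ is constant, proving the first assertion. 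When $\partial\Omega$ is area minimizing, the competitors are not constrained to preserve volume, so the same argument without the correction $f_0$ and without the condition $\int_{\partial\Omega}f\,\mathrm{d}\mu=0$ gives $\int_{\partial\Omega}H\,f\,\mathrm{d}\mu=0$ for all $f\in C^\infty_c(\partial\Omega)$, whence $H\equiv0$.

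This is a routine lemma, so there is no serious obstacle; the one point deserving care is precisely the passage from an infinitesimally volume preserving variation to a genuinely admissible one, which is handled by the implicit function theorem correction above, together with the bookkeeping that components of $\partial\Omega$ contained in $\partial M$ are already minimal.
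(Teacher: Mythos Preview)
Your argument is correct and is the standard first-variation/implicit-function-theorem derivation of constant mean curvature for isoperimetric boundaries. Note that the paper itself does not supply a proof of this lemma at all: it is stated without proof as a routine fact, so there is nothing to compare against beyond observing that your write-up fills in exactly the expected details.
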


\begin {lem} \label{lem:characterizationlocallyisoperimetric}
Let $ \Omega \subset \mathbb{R}^n$ be a locally isoperimetric region with $\partial  \Omega \neq \emptyset$. Then $\partial  \Omega$ is either a hyperplane or a coordinate sphere.
\begin {proof} 
This is \cite [Proposition 1]{Morgan-Ros:2010}. 
\end {proof}
\end {lem}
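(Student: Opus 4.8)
The plan is to read off the Euler--Lagrange equation and the volume--constrained stability inequality from the local isoperimetric condition, to record Euclidean area growth, and then to invoke the classification of volume--preserving stable constant mean curvature hypersurfaces of $\mathbb{R}^n$, distinguishing the compact and non--compact cases.

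By the first variation of area with a Lagrange multiplier, $\partial\check\Omega$ has constant mean curvature $H$ with respect to the normal pointing out of $\check\Omega$ (see the preceding lemma), and by the second variation $\int_{\partial\check\Omega}|h|^2\,f^2\,\mathrm{d}\bar\mu\leq\int_{\partial\check\Omega}|\bar\nabla f|_{\bar g}^2\,\mathrm{d}\bar\mu$ for every $f\in C_c^\infty(\partial\check\Omega)$ with $\int_{\partial\check\Omega}f\,\mathrm{d}\bar\mu=0$; that is, $\partial\check\Omega$ is a properly embedded, volume--preserving stable CMC hypersurface. Cutting $B^n_r(0)\cap\check\Omega$ out of $\check\Omega$ and pasting back a far--away ball of the removed volume, exactly as in the proof of Lemma \ref{area growth estimate}, gives $\sup_{r>1}r^{1-n}\,|B^n_r(0)\cap\partial\check\Omega|_{\bar g}<\infty$. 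A trading argument --- redistributing a fixed amount of volume between two distinct components, or absorbing it in a thin layer far out along a component of Euclidean area growth --- shows that $\partial\check\Omega$ is connected; alternatively, one argues with a single component throughout.

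If $\partial\check\Omega$ is compact, then $H\neq0$, since $\mathbb{R}^n$ has no closed minimal hypersurface (convex hull property), and Alexandrov's theorem identifies a closed embedded CMC hypersurface of $\mathbb{R}^n$ with a round sphere, so $\partial\check\Omega$ is a coordinate sphere. If $\partial\check\Omega$ is non--compact, I first claim $H=0$: were $H\neq0$, the Euclidean area bound together with a standard construction of an unstable volume--preserving direction for complete non--compact CMC hypersurfaces (in the spirit of da Silveira's instability theorem) would contradict the second variation inequality. Next, when $H=0$, I upgrade volume--preserving stability to genuine stability: given $f\in C_c^\infty(\partial\check\Omega)$, using that $\partial\check\Omega$ is non--compact and has bounded second fundamental form (R.~Schoen and L.~Simon \cite{schoensimon}), one constructs on annuli $(B^n_{2R}(0)\setminus B^n_R(0))\cap\partial\check\Omega$ functions $\psi_R$ with $\int\psi_R\,\mathrm{d}\bar\mu=-\int f\,\mathrm{d}\bar\mu$ and $\int|\bar\nabla\psi_R|_{\bar g}^2+|h|^2\psi_R^2\,\mathrm{d}\bar\mu\to0$ as $R\to\infty$; applying the inequality to $f+\psi_R$ and letting $R\to\infty$ yields $\int|h|^2f^2\,\mathrm{d}\bar\mu\leq\int|\bar\nabla f|_{\bar g}^2\,\mathrm{d}\bar\mu$ for all $f\in C_c^\infty(\partial\check\Omega)$. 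Hence $\partial\check\Omega$ is a complete, two--sided, stable minimal hypersurface in $\mathbb{R}^n$ with Euclidean area growth; as in the proofs of Lemma \ref{components reduction} and Lemma \ref{blowdown at infinity are planes}, its blow--downs are stable minimal cones, hence hyperplanes by Simons' classification since $3\leq n\leq7$, with multiplicity one, so that, as in Proposition \ref{graph estimate}, $\partial\check\Omega$ is a graph over a hyperplane with decaying gradient; Bernstein's theorem (or a sliding--plane argument) then identifies $\partial\check\Omega$ with that hyperplane.

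The delicate points lie entirely in the non--compact case: ruling out $H\neq0$ and, once $H=0$, bootstrapping from volume--preserving stability to genuine stability; in dimension $n=3$ the blow--down step additionally relies on Schoen's curvature estimate for stable minimal surfaces. The variational set--up, the area bound, and the compact case are routine.
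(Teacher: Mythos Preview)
The paper's ``proof'' is a bare citation to \cite[Proposition~1]{Morgan-Ros:2010}; there is no argument in the paper to compare against. Your proposal is instead a self-contained attempt via the variational consequences of local isoperimetry (constant mean curvature, volume-preserving stability, Euclidean area growth), followed by Alexandrov's theorem in the compact case and a stability/Bernstein route in the non-compact case. The compact case, the connectedness reduction, and---once $H=0$ is known---the bootstrap from volume-preserving to strong stability together with the blow-down/Bernstein endgame are all sound.

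The genuine gap is the step ruling out $H\neq0$ in the non-compact case ``in the spirit of da Silveira's instability theorem.'' Da Silveira's result concerns \emph{strong} stability of CMC surfaces in three-manifolds; it does not speak to volume-preserving stability in $\mathbb{R}^n$, and you do not actually construct the mean-zero destabilizing direction. The claim is salvageable---bounded geometry of $\partial\check\Omega$ (from isoperimetric regularity) and non-compactness force the bottom of the Dirichlet spectrum on large intrinsic balls below $H^2/(n-1)$, so one finds a compactly supported $\phi$ with $\int|h|^2\phi^2>\int|\bar\nabla\phi|^2$; pairing two such bumps on disjoint far-apart pieces with opposite signs yields a mean-zero second-variation violation---but this must be written out, not gestured at. Note also that your route leans on $3\leq n\leq7$ through Simons' theorem, which is the standing hypothesis of this appendix and hence acceptable; the Morgan--Ros argument, by contrast, is dimension-free and exploits the locally isoperimetric condition more directly, so the two approaches are genuinely different in scope.
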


In Lemma \ref{finite perimter remark} and its proof, $\partial^*\Omega$ denotes the reduced boundary of a set $\Omega\subset \mathbb{R}^n$ of locally finite perimeter in $\mathbb{R}^n$.
\begin {lem} \label{finite perimter remark}
Let $\Omega\subset \mathbb{R}^n$ be a set of positive, locally finite perimeter in $\mathbb{R}^n$ that is locally isoperimetric in $\mathbb{R}^n \setminus \{0\}$.  Then $\partial^* \Omega$ is either a hyperplane or a coordinate sphere.
\end {lem}
\begin{proof}
	Let $\Omega \subset \mathbb{R}^n$ be a  set of locally finite perimeter in $\mathbb{R}^n$, $\partial^*\Omega\neq \emptyset$,  that is locally isoperimetric in $\mathbb{R}^n\setminus \{0\}$. We aim to show that $\Omega\subset \mathbb{R}^n$ is locally isoperimetric in $\mathbb{R}^n$.  The assertion of the lemma then follows from Remark \ref{regularity remark} and Lemma \ref{lem:characterizationlocallyisoperimetric}.
	
	 Let $\tilde \Omega\subset \mathbb{R}^n $ be another set of locally finite perimeter in $\mathbb{R}^n$ and suppose that there is $t>0$ such that $\Omega\,\triangle\, \tilde \Omega \Subset B^n_t(0)$ and $|B^n_t(0)\cap\tilde \Omega|_{\bar g}=|B^n_t(0)\cap \Omega|_{\bar g}$. Given $r\in(0,t/2)$, let 
	 $$\tilde \Omega_r=\{x\in \tilde \Omega:|x|_{\bar g}>r\}\cup \{x\in \Omega:|x|_{\bar g}<r\}.$$
	 Note that $\tilde \Omega_r$ is a set of locally finite perimeter in $\mathbb{R}^n$ with 
	 \begin{align} \label{contained}
	 \tilde \Omega_r\,\triangle\, \Omega\Subset \{x\in\mathbb{R}^n:r/2<|x|_{\bar g}<t\}. 
	 \end{align} 
	 Using that $|B^n_t(0)\cap \tilde \Omega|_{\bar g}=|B^n_t(0)\cap \Omega|_{\bar g}$,   we have
	\begin{equation} \label{volume difference} 
	\begin{aligned} 
	|\{x\in\mathbb{R}^n:r/2<|x|_{\bar g}<t\}\cap \tilde \Omega_r|_{\bar g}&=|\{x\in\mathbb{R}^n:r/2<|x|_{\bar g}<t\}\cap \tilde \Omega|_{\bar g}+o(1)\\&=|\{x\in\mathbb{R}^n:r/2<|x|_{\bar g}<t\}\cap  \Omega|_{\bar g}+o(1)
	\end{aligned} 
	\end{equation} 
as $r\to0$.	Using that $\Omega$ and $\tilde \Omega$ are sets of locally finite perimeter in $\mathbb{R}^n$, we obtain
	\begin{align} \label{perimeter difference} 
	|B^n_t(0)\cap \partial^* \tilde \Omega_r|_{\bar g}=|B^n_t(0)\cap \partial^* \tilde \Omega|_{\bar g}+o(1).
	\end{align} 
	 By \eqref{volume difference}, the argument in \cite[Proposition 3.1]{Morgan} shows that, enlarging $t>0$, if necessary,  for every $r>0$ sufficiently small, there is a set $\check \Omega_r\subset \mathbb{R}^n$ of locally finite perimeter in $\mathbb{R}^n$  with 
	\begin{itemize}
		\item[$\circ$] $\check \Omega_r\,\triangle\, \tilde \Omega_r \Subset \{x\in\mathbb{R}^n:2\,r<|x|_{\bar g}<t\}$,
		\item[$\circ$] $|\{x\in\mathbb{R}^n:r/2<|x|_{\bar g}<t\}\cap \check \Omega_r|_{\bar g}=|\{x\in\mathbb{R}^n:r/2<|x|_{\bar g}<t\}\cap \Omega|_{\bar g}$, and
		\item[$\circ$] $|\{x\in\mathbb{R}^n:r/2<|x|_{\bar g}<t\}\cap \partial^* \check \Omega_r|_{\bar g}=|\{x\in\mathbb{R}^n:r/2<|x|_{\bar g}<t\}\cap \partial^* \tilde \Omega_r|_{\bar g}+o(1)$.
	\end{itemize}
Using \eqref{contained}, we see that $\check \Omega_r\,\triangle\, \Omega\Subset \{x\in\mathbb{R}^n:r/2<|x|_{\bar g}<t\}$.
 Since $\Omega$ is locally isoperimetric in $\mathbb{R}^n\setminus \{0\}$, we have  
 \begin{align*}
 |\{x\in\mathbb{R}^n:r/2<|x|_{\bar g}<t\}\cap \partial^* \Omega|_{\bar g}&\leq |\{x\in\mathbb{R}^n:r/2<|x|_{\bar g}<t\}\cap \partial^* \check \Omega_{r}|_{\bar g}
 \\&=|\{x\in\mathbb{R}^n:r/2<|x|_{\bar g}<t\}\cap \partial^* \tilde \Omega_r|_{\bar g}+o(1).
 \end{align*}  Hence, $| B^n_t(0)\cap \partial^* \Omega|_{\bar g}\leq |B^n_t(0)\cap \partial^*  \tilde \Omega_r|_{\bar g}+o(1)$ as $r\to0$. Using \eqref{perimeter difference}, it follows that 
	$
	|B^n_t(0)\cap \partial^* \Omega|_{\bar g}\leq |B^n_t(0)\cap \partial^* \tilde \Omega|_{\bar g}
	$
	as asserted.
\end{proof}

Let $\Omega_1,\, \Omega_2, \ldots \subset M$ be locally isoperimetric regions of $(M, g)$ with $\partial \Omega_k \neq \emptyset$ for all $k$.

\begin {lem} \label{lem:locisoconverge}
Assume that $\limsup_{k\to\infty} |H (\partial \Omega_k)| < \infty$. Then $\limsup_{k\to\infty } |h (\partial \Omega_k)| < \infty$. Moreover, if there is $K\subset M$ compact such that $K\cap\partial M=\emptyset$ and $K\cap \partial \Omega_k\neq \emptyset$ for all $k$, then there exists a locally isoperimetric region $\Omega \subset M$ such that, passing to a subsequence, $\Omega_k \to \Omega$ locally smoothly.
\begin {proof}
Suppose, for a contradiction, that there are locally isoperimetric regions $\Omega_1,\, \Omega_2, \ldots \subset M$  with $\partial \Omega_k \neq \emptyset$ for all $k$ such that $\limsup_{k\to\infty} |H (\partial \Omega_k)| < \infty$ while $\limsup_{k\to\infty } |h (\partial \Omega_k)| = \infty$. By a standard rescaling argument, we obtain  Riemannian metrics $\tilde g_1,\,\tilde g_2,\ldots $  on $\mathbb{R}^n$ and regions  $\tilde \Omega_1,\,\tilde \Omega_2,\ldots\subset \mathbb{R}^n$ with the following properties. 
\begin{itemize}
	\item[$\circ$] $\tilde g_k$ converges locally smoothly to $\bar g$.
	\item[$\circ$] $0\in \partial \tilde \Omega_k$ for every $k$.
	\item[$\circ$] $\partial \tilde \Omega_k$ and $\partial B^n_k(0)$ intersect transversely for every $k$.
	\item[$\circ$]  For every $k$, there holds
	$$
	|B^n_k(0) \cap \partial \tilde \Omega_k |_{\tilde g_k} \leq |B^n_k(0) \cap \partial \check \Omega|_{\tilde g_k}
	$$
	for every region $\check \Omega \subset \mathbb{R}^n$ with $\check \Omega\,\triangle\,\tilde \Omega_k  \Subset B^n_k(0)$ and $|B^n_k(0) \cap \check  \Omega|_{\tilde g_k} = |B^n_k(0)\cap \tilde \Omega_k|_{\tilde g_k}$.
	\item[$\circ$]$H_{\tilde g_k}(\partial \tilde \Omega_k)=o(1)$ as $k\to\infty$.
	\item[$\circ$]  $\sup_{x\in B^n_k(0)} |h_{\tilde g_k} (\partial\tilde \Omega_k)(x)|_{\tilde g_k} = |h_{\tilde g_k} ( \partial \tilde \Omega_k)(0)|_{\tilde g_k}=1$ for every $k$. 
\end{itemize} 
Using Schauder estimates and standard convergence results from geometric measure theory, we see that $\tilde \Omega_k$ converges locally smoothly to a locally isoperimetric region $\tilde \Omega\subset \mathbb{R}^n$ with $\bar H(\partial \tilde \Omega)=0$ and $|\bar h(\partial \tilde \Omega)(0)|_{\bar g}=1$. This is not compatible with Lemma \ref{lem:characterizationlocallyisoperimetric}.
\end {proof}
\end {lem}

\begin {lem} \label{lem:locallyisoperimetricboundary}
Let $\Omega \subset M$ be a locally isoperimetric region with noncompact boundary. Then $\Omega$ has area-minimizing boundary. All components of $\partial \Omega$ except for one are components of $\partial M$.

\begin {proof}

Suppose, for a contradiction, that $H (\partial \Omega) \neq 0$. Let $x_1,\,x_2,\ldots \in \partial \Omega \setminus B_1$ be points with $|x_\ell|_{\bar g} \to \infty$. It follows from Lemma \ref{lem:locisoconverge} that, passing to a subsequence,
\[
- x_\ell + \Omega \setminus B_1  \to \check {\Omega} \text{ locally smoothly in } \mathbb{R}^n
\] 
where $\check \Omega \subset \mathbb{R}^n$ is a locally isoperimetric region with $\partial \check \Omega \neq \emptyset$ and $\bar H (\partial \check \Omega) = H (\partial \Omega)$. By Lemma \ref{lem:characterizationlocallyisoperimetric}, $\partial \check \Omega$ is a coordinate sphere of radius $(n-1) \, |H(\partial \Omega)|^{-1}$. It follows that $\Omega$ has infinitely many bounded components, each one close to a coordinate ball of radius $(n-1) \, |H (\partial \Omega)|$.  Such a configuration is not compatible with the Euclidean isoperimetric inequality. Thus $H (\partial \Omega) = 0$.

Let $s_1,\,s_2,\ldots >1$ be numbers with $s_\ell \to \infty$. By Lemma \ref{lem:locisoconverge}, passing to a subsequence,
\[
s_\ell^{-1} \, (\Omega \setminus B_1) \to \breve \Omega \text{ locally smoothly in } \mathbb{R}^n \setminus \{0\}
\]
where $\breve \Omega \subset \mathbb{R}^n$ is a locally isoperimetric region whose boundary is nonempty with mean curvature zero. By Lemma \ref{finite perimter remark}, $\breve \Omega$ is a half-space. It follows that $\partial \Omega$ has only one unbounded component. To see that $\partial \Omega$ is area-minimizing, observe that there are constants $\delta, c > 0$ such that, for every $\ell$ sufficiently large and all $v \in (- \delta, \delta),$ there is a region $\tilde \Omega \subset M$ with $\tilde \Omega \,  \triangle \, \Omega \Subset \{x \in \mathbb{R}^n : s_\ell < |x|_{\bar g} < 2 \, s_\ell\} = U_\ell$ and such that
\begin{eqnarray*} 
	s_\ell^{- n} \, \big(  | U_\ell \cap \Omega| - |U_\ell \cap \tilde \Omega| \big) = v  \qquad \text{ and } \qquad s_\ell^{1-n} \, \big| |U_\ell \cap \partial \Omega| - |U_\ell \cap \partial \tilde \Omega | \big| \leq c \, v^2;
	\end {eqnarray*}
	see also \cite[Proposition 3.1]{Morgan}. 	Thus, we can   add and subtract an amount of volume $V$ from $\Omega$ at the cost of changing the boundary area by an amount of order $o(1)$. It follows that $\Omega$ has area-minimizing boundary.
	
	The preceding argument also shows that, in the case where $\partial M=\emptyset$, $\partial \Omega$ has no bounded components, since deleting such components and compensating for the loss of volume far out would decrease area. In the case where $\partial M\neq \emptyset$, bounded components of $\partial \Omega$, being closed minimal surfaces, are contained in $\partial M$.
	\end {proof}
	\end {lem}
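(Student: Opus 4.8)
The plan is to establish, in order: (i) $\partial\Omega$ is minimal, i.e.\ $H(\partial\Omega)=0$; (ii) $\partial\Omega$ has exactly one unbounded component; (iii) $\partial\Omega$ is area-minimizing; and (iv) every bounded component of $\partial\Omega$ is a component of the boundary of $M$, and there are none when that boundary is empty. Since $\Omega$ is locally isoperimetric, $\partial\Omega$ has constant mean curvature $H_0$, so $|h(\partial\Omega)|$ is bounded by Lemma \ref{lem:locisoconverge}. To rule out $H_0\neq 0$, I would choose points $x_\ell\in\partial\Omega$ with $|x_\ell|_{\bar g}$ increasing to infinity and with large gaps, say $|x_{\ell+1}|_{\bar g}>2\,|x_\ell|_{\bar g}+100\,(n-1)\,|H_0|^{-1}$. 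Applying the compactness in Lemma \ref{lem:locisoconverge} to the translates $-x_\ell+(\Omega\setminus B_1)$, whose ambient metrics converge smoothly to $\bar g$ by asymptotic flatness, yields along a subsequence a locally isoperimetric region $\check\Omega\subset\mathbb{R}^n$ with $0\in\partial\check\Omega$ and $\bar H(\partial\check\Omega)=H_0\neq 0$; by Lemma \ref{lem:characterizationlocallyisoperimetric} its boundary is a coordinate sphere of radius $(n-1)\,|H_0|^{-1}$. Smooth convergence on compact sets then forces $\partial\Omega$ to contain, for all large $\ell$, a closed hypersurface graphically close to such a sphere near $x_\ell$; by the choice of gaps these are mutually distinct bounded components of $\partial\Omega$, each enclosing a region of volume close to $\omega_n\,((n-1)\,|H_0|^{-1})^n$. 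Replacing finitely many of these nearly round components by a single round ball of the same total enclosed volume, inside a large coordinate ball $U=B_R$, strictly decreases $|U\cap\partial\Omega|$ while fixing $|U\cap\Omega|$, contradicting that $\Omega$ is locally isoperimetric. Hence $H(\partial\Omega)=0$.

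Next I would blow down. For a sequence $s_\ell\to\infty$, Lemma \ref{lem:locisoconverge} gives, along a subsequence, locally smooth convergence $s_\ell^{-1}(\Omega\setminus B_1)\to\breve\Omega$ in $\mathbb{R}^n\setminus\{0\}$, where $\breve\Omega$ is locally isoperimetric there and $\bar H(\partial\breve\Omega)=0$; since $\partial\Omega$ is properly embedded and non-compact it is unbounded, so $\partial\breve\Omega$ is nonempty. By the version of Lemma \ref{lem:characterizationlocallyisoperimetric} for $\mathbb{R}^n\setminus\{0\}$ recorded in the remark following it, $\partial\breve\Omega$ is a hyperplane, the coordinate sphere being excluded by $\bar H=0$, and it passes through the origin since it is a cone. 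Consequently, for all large $\ell$, the set $\partial\Omega\cap(B_{2\,s_\ell}\setminus B_{s_\ell})$ is a connected minimal graph of small slope over an annular region of the hyperplane $s_\ell\,\partial\breve\Omega$; joining points of $\partial\Omega$ that are far out through such annuli then shows that $\partial\Omega$ has exactly one unbounded component.

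To upgrade to area-minimizing, fix $U\Subset M$ open and a region $\tilde\Omega$ with $\tilde\Omega\,\triangle\,\Omega\Subset U$, and set $V=|U\cap\Omega|-|U\cap\tilde\Omega|$. For $\ell$ large, $U_\ell=B_{2\,s_\ell}\setminus B_{s_\ell}$ is disjoint from $\overline U$, and there $\partial\Omega$ is the nearly flat minimal graph from the previous step, over a region of Euclidean area of order $s_\ell^{n-1}$. Pushing that graph in the normal direction by $t$ times a fixed bump function supported in $U_\ell$ changes the enclosed volume by an amount comparable to $t\,s_\ell^{n-1}$; since the surface is minimal its first variation of area vanishes, so the area changes by at most $C\,t^2\,s_\ell^{n-3}$. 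Choosing $t\sim V\,s_\ell^{1-n}$ to restore the volume deficit $V$ produces a region $\tilde\Omega'$ with $\tilde\Omega'\,\triangle\,\Omega\Subset U\cup U_\ell$, with $|(U\cup U_\ell)\cap\tilde\Omega'|=|(U\cup U_\ell)\cap\Omega|$, and with $|(U\cup U_\ell)\cap\partial\tilde\Omega'|\leq|U\cap\partial\tilde\Omega|+|U_\ell\cap\partial\Omega|+C\,V^2\,s_\ell^{1-n}$. If $|U\cap\partial\tilde\Omega|<|U\cap\partial\Omega|$, then for $\ell$ large this contradicts the locally isoperimetric property of $\Omega$ on $U\cup U_\ell$; hence $|U\cap\partial\Omega|\leq|U\cap\partial\tilde\Omega|$, so $\partial\Omega$ is area-minimizing. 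For the components, any bounded component $C$ of $\partial\Omega$ is a properly embedded compact hypersurface with $H=0$, hence a closed minimal hypersurface; by the definition of asymptotic flatness, if the boundary of $M$ is nonempty then $C$ is contained in it, and being open and closed there, it is a component of it. If the boundary of $M$ is empty, then $C$ bounds a bounded region whose removal from $\Omega$ deletes $C$ and decreases area by $|C|>0$, while the induced volume change is absorbed in a far annulus at cost $o(1)$, again contradicting that $\Omega$ is locally isoperimetric; so no bounded components occur in that case.

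I expect the main obstacle to be the volume compensation in the third paragraph: one must verify that the minimal graph far out can be perturbed to prescribe an arbitrary fixed volume change at only quadratic, hence $o(1)$, cost in area, uniformly as $s_\ell\to\infty$. This is precisely the mechanism that converts the a priori weaker ``locally isoperimetric'' hypothesis into ``area-minimizing,'' and it relies on both the minimality and the blow-down flatness of $\partial\Omega$ established in the earlier steps; for $n=3$ this argument is carried out in \cite[Appendix C]{eichmairmetzger}.
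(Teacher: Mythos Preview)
Your proposal is correct and follows essentially the same approach as the paper's proof: the same four steps in the same order, using Lemma~\ref{lem:locisoconverge} for the translation and blow-down limits, Lemma~\ref{lem:characterizationlocallyisoperimetric} to identify them as a sphere and a half-space respectively, and the far-out volume-compensation mechanism (with the same reference to \cite[Appendix~C]{eichmairmetzger}) to pass from locally isoperimetric to area-minimizing and to rule out bounded components. Your explicit area-cost computation $C\,t^2\,s_\ell^{\,n-3}$ with $t\sim V\,s_\ell^{\,1-n}$ and your observation that the blow-down hyperplane passes through the origin supply a bit more detail than the paper, but the arguments are the same.
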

	
	We now assume that $\Omega_1,\,\Omega_2,\ldots \subset M$ are isoperimetric regions of $(M,g)$ with $|\Omega_k| \to \infty$. Let
	$$
	 \lambda(\partial \Omega_k)=\left(\frac{|\partial \Omega_k|}{n\,\omega_{n}}\right)^{\frac{1}{n-1}}
	$$
	be the area radius of $\partial \Omega_k$. 
	\begin {lem} \label{lem:coarseisobound} 
	There holds
	\begin {eqnarray}
	\lim_{k \to \infty}  \lambda (\partial \Omega_k)^{-n} \, |\Omega_k| = \omega_n.
	\end {eqnarray}
	\begin {proof} 
	By comparison with coordinate balls far out in the asymptotically flat end, we see that
	\[
	\liminf_{k \to \infty}  \lambda (\partial \Omega_k)^{-n} \, |\Omega_k| \geq \omega_n.
	\] 
\indent Let $\varepsilon>0$ and $r>1$ be large such that $\partial B_r$ and $\partial \Omega_k$ intersect  transversely for all $k$ sufficiently large. Let $\Omega_{k,r}=\Omega_k\setminus B_r$. Note that 
	\begin{align}\label{cut and paste comparison}
	| \Omega_{k,r}|\geq |\Omega_k|-|B_r|\qquad \text{and}\qquad |\partial \Omega_k|\geq |\partial  \Omega_{k,r}|-|\partial B_r|.  
	\end{align} 
	By the Euclidean isoperimetric inequality,
	$$
	\bar \lambda (\partial \Omega_{k,r})^{-n}\,|\Omega_{k,r}|_{\bar g}\leq \omega_n.
	$$
	Increasing $r>1$ if necessary, we obtain
	$$
	 \lambda (\partial \Omega_{k,r})^{-n}\,|\Omega_{k,r}|\leq \omega_n+\varepsilon
	$$ 
	for all $k$ sufficiently large. Letting $k\to\infty$ and using \eqref{cut and paste comparison}, we conclude that
	\[
	\limsup_{k \to \infty}  \lambda (\partial \Omega_k)^{-n} \, |\Omega_k| \leq \omega_n+\varepsilon.
	\] 
	The assertion follows. 
	\end {proof}
	\end {lem}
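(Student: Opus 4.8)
The plan is to prove the two one-sided bounds $\liminf_{k\to\infty}\lambda(\partial\Omega_k)^{-n}\,|\Omega_k|\geq\omega_n$ and $\limsup_{k\to\infty}\lambda(\partial\Omega_k)^{-n}\,|\Omega_k|\leq\omega_n$ separately; the first is the easy half.

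For the lower bound, I would compare $\Omega_k$ with a coordinate ball of the same $g$-volume pushed deep into the asymptotically flat end. Fix $k$ and choose $\rho_k>0$ together with a point $x_k$ with $|x_k|_{\bar g}$ comparable to $\rho_k$ such that the coordinate ball $B^n_{\rho_k}(x_k)$ has $g$-volume exactly $|\Omega_k|$; since $|\Omega_k|\to\infty$ we have $\rho_k\to\infty$, and because $g=\bar g+O(\rho_k^{-\tau})$ on $B^n_{\rho_k}(x_k)$ this forces $\omega_n\,\rho_k^n=|\Omega_k|\,(1+O(\rho_k^{-\tau}))$ and $|\partial B^n_{\rho_k}(x_k)|_g=n\,\omega_n\,\rho_k^{n-1}\,(1+O(\rho_k^{-\tau}))$. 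Since $\Omega_k$ is isoperimetric, $|\partial\Omega_k|\leq|\partial B^n_{\rho_k}(x_k)|_g$; eliminating $\rho_k$ gives $\lambda(\partial\Omega_k)^n\leq\omega_n^{-1}\,|\Omega_k|\,(1+O(|\Omega_k|^{-\tau/n}))$, which is the claimed liminf bound after rearranging and letting $k\to\infty$.

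For the upper bound, I would fix $r>1$ and cut off the end by setting $\Omega_{k,r}=\Omega_k\setminus B_r$, a bounded region. Choosing $r$ generic so that $\partial B_r$ meets each $\partial\Omega_k$ transversally (equivalently, passing to sets of locally finite perimeter, where this point disappears), the boundary $\partial\Omega_{k,r}$ is obtained from $\partial\Omega_k$ by discarding its portion inside $B_r$ and adjoining a subset of $\partial B_r$, so $|\Omega_{k,r}|\geq|\Omega_k|-|B_r|$ and $|\partial\Omega_{k,r}|\leq|\partial\Omega_k|+|\partial B_r|$. Since $\Omega_{k,r}$ lies in $\{|x|_{\bar g}>r\}$, where $g$ is $(1+O(r^{-\tau}))$-close to $\bar g$, the Euclidean isoperimetric inequality applied in the chart at infinity gives $|\Omega_{k,r}|_{\bar g}\leq\omega_n\,\big(|\partial\Omega_{k,r}|_{\bar g}/(n\,\omega_n)\big)^{n/(n-1)}$, and converting Euclidean quantities to $g$-quantities yields $|\Omega_{k,r}|\leq\omega_n\,\lambda(\partial\Omega_{k,r})^n\,(1+O(r^{-\tau}))$. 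Combining these estimates, dividing by $\lambda(\partial\Omega_k)^n$, and letting $k\to\infty$ with $r$ held fixed — during which $|B_r|$ and $|\partial B_r|$ are negligible against the diverging quantities $|\Omega_k|$ and $|\partial\Omega_k|$ — gives $\limsup_{k\to\infty}\lambda(\partial\Omega_k)^{-n}\,|\Omega_k|\leq\omega_n\,(1+O(r^{-\tau}))$, and letting $r\to\infty$ finishes the proof.

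The main obstacle is the upper bound: the lower bound follows directly from the isoperimetric property, whereas the upper bound needs the cut-and-paste comparison together with a uniform-in-$k$ accounting of the error terms produced by the discrepancy between $g$ and $\bar g$ outside $B_r$, plus the minor transversality point. These are routine, but they carry the content of the argument; the area-growth control in Lemma \ref{area growth estimate} is available to make the error terms explicit if needed.
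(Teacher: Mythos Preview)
Your proposal is correct and follows essentially the same route as the paper: the lower bound by comparison with coordinate balls far out in the end, and the upper bound by cutting off $B_r$, applying the Euclidean isoperimetric inequality to $\Omega_{k,r}=\Omega_k\setminus B_r$, and controlling the discrepancy between $g$ and $\bar g$ by sending $r\to\infty$. The paper packages the $O(r^{-\tau})$ error as an $\varepsilon$ absorbed by choosing $r$ large, while you carry it explicitly and let $r\to\infty$ at the end; these are the same argument.
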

	
	It follows from Lemma \ref{lem:coarseisobound} that $\lambda (\partial \Omega_k) \to \infty$.
	
	
	From now on, we assume that $H (\partial \Omega_k) > 0$ for all $k$.
	
	\begin {lem} \label{lem:notused}
	There holds $\limsup_{k \to \infty} \lambda (\partial \Omega_k) \, H(\partial \Omega_k) < \infty$.
	\begin {proof}
	Suppose, for a contradiction, that $\lambda (\partial \Omega_k) \, H (\partial \Omega_k) \to \infty$. By  Lemma \ref{lem:coarseisobound}, there are points $x_1,\,x_2,\ldots \in \partial \Omega_k$ such that $\liminf_{k \to\infty} \lambda (\partial \Omega_k)^{-1} \, |x_k|_{\bar g} >0$. Let $r_k = (n-1) \, H(\partial \Omega_k)^{-1}$. By Lemma \ref{lem:characterizationlocallyisoperimetric} and   Lemma \ref{lem:locisoconverge}, passing to a subsequence, there is $\xi\in\mathbb{R}^n$ such that 
	\[
	r_k^{-1} \, (- x_k + \Omega_k \setminus B_1) \to \{x \in \mathbb{R}^n : |x-\xi|_{\bar g} \leq 1\} \text{ locally smoothly in } \mathbb{R}^n. 
	\]
It follows that the component of $\Omega_k$ which contains $x_k$ is close to a coordinate ball of radius $r_k = o (\lambda (\partial \Omega_k))$. Using Lemma \ref{lem:coarseisobound}, we see that $\Omega_k$ contains a second such component.  
Such a configuration is not compatible with the Euclidean isoperimetric inequality. 
	\end {proof}
	\end {lem}
	
	\begin {lem}  \label{large blowdown1}
	Assume that there are points $x_1,\,x_2,\ldots \in \partial \Omega_k \setminus B_1$ with $\lambda (\partial \Omega_k)^{-1} \, |x_k|_{\bar g} \to \infty$. The component of $\Omega_k$ that contains $x_k$ is close to a coordinate ball of radius $(n-1)\,H(\partial \Omega_k)^{-1}$.
	\end {lem}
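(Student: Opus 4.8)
The plan is to prove the statement by blowing $\Omega_k$ up near the points $x_k$ at the scale $r_k := (n-1)\,H(\partial\Omega_k)^{-1}$, paralleling the proof of the preceding lemma but with the weaker information $\limsup_k \lambda(\partial\Omega_k)\,H(\partial\Omega_k) < \infty$ replacing $\lambda(\partial\Omega_k)\,H(\partial\Omega_k)\to\infty$. Note first that $H(\partial\Omega_k)\to 0$, since $\limsup_k \lambda(\partial\Omega_k)\,H(\partial\Omega_k) < \infty$ and $\lambda(\partial\Omega_k)\to\infty$; in particular, $r_k\to\infty$.

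The main obstacle is to show that $r_k = o(|x_k|_{\bar g})$, which is what makes the rescaled ambient metrics converge to $\bar g$ on all of $\mathbb{R}^n$ in the blow-up at $x_k$. Since $|x_k|_{\bar g}\,\lambda(\partial\Omega_k)^{-1}\to\infty$, it is enough to show that $\liminf_k \lambda(\partial\Omega_k)\,H(\partial\Omega_k) > 0$. Suppose this fails; then, passing to a subsequence, $\lambda(\partial\Omega_k)\,H(\partial\Omega_k)\to 0$. I would blow $\Omega_k$ down at the scale $\lambda(\partial\Omega_k)$: using Lemma \ref{lem:coarseisobound} and standard compactness results from geometric measure theory (cp.~Lemma \ref{lem:locisoconverge}), a further subsequence of $\lambda(\partial\Omega_k)^{-1}\,(\Omega_k\setminus B_1)$ converges locally smoothly to a nonempty locally isoperimetric region $\breve\Omega\subset\mathbb{R}^n$ of finite volume whose boundary has constant mean curvature equal to $\lim_k \lambda(\partial\Omega_k)\,H(\partial\Omega_k) = 0$. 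By Lemma \ref{lem:characterizationlocallyisoperimetric}, $\partial\breve\Omega$ is either a coordinate sphere---impossible, since a coordinate sphere has nonzero mean curvature---or a hyperplane, which forces $|\breve\Omega| = \infty$, a contradiction. Hence $\liminf_k \lambda(\partial\Omega_k)\,H(\partial\Omega_k) > 0$, and $r_k = o(|x_k|_{\bar g})$ follows.

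With this in hand, I would translate by $-x_k$, rescale by $r_k^{-1}$, and pass to the limit. Because $r_k = o(|x_k|_{\bar g})$, the rescaled ambient manifolds exhaust $\mathbb{R}^n$ and the rescaled metrics converge to $\bar g$ in $C^{\infty}_{\mathrm{loc}}(\mathbb{R}^n)$; because $\Omega_k$ is isoperimetric, the rescaled regions are locally isoperimetric; and the rescaled boundaries have constant mean curvature $r_k\,H(\partial\Omega_k) = n-1$ and pass through the origin, as $x_k\in\partial\Omega_k\setminus B_1$. By standard compactness results (in the form allowing ambient metrics that converge smoothly to $\bar g$; cp.~Lemma \ref{lem:locisoconverge}), a subsequence converges locally smoothly to a locally isoperimetric region $\check\Omega\subset\mathbb{R}^n$ with $0\in\partial\check\Omega\neq\emptyset$ and $\bar H(\partial\check\Omega) = n-1$. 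By Lemma \ref{lem:characterizationlocallyisoperimetric}, $\partial\check\Omega$ is a hyperplane or a coordinate sphere; since its mean curvature is $n-1\neq 0$, it is a coordinate sphere of radius $1$, and since $H(\partial\Omega_k) > 0$ pins down the orientation, $\check\Omega$ is the corresponding unit coordinate ball rather than its complement. Undoing the translation and rescaling, the component of $\Omega_k$ containing $x_k$---which corresponds to the component of the rescaled region through the origin---is close to a coordinate ball of radius $r_k = (n-1)\,H(\partial\Omega_k)^{-1}$. As every subsequence admits a further subsequence along which this holds, it holds for the full sequence.
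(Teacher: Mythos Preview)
The paper states this lemma without proof, presumably because the blow-up argument is the same as in the preceding lemma: translate by $-x_k$, rescale by $r_k^{-1}$ where $r_k=(n-1)\,H(\partial\Omega_k)^{-1}$, pass to a locally isoperimetric limit in $\mathbb{R}^n$, and invoke Lemma~\ref{lem:characterizationlocallyisoperimetric}. Your write-up makes this explicit and, importantly, isolates the one point that is \emph{not} immediate from the previous proof: one needs $r_k=o(|x_k|_{\bar g})$, equivalently $\liminf_k \lambda(\partial\Omega_k)\,H(\partial\Omega_k)>0$, for the recentred rescalings to converge on all of $\mathbb{R}^n$. (In the paper's only use of the lemma, inside the proof of Lemma~\ref{large blowdown2}, one is already assuming the blow-down at scale $\lambda$ is a ball of radius $r<1$, which forces $\lambda H\to(n-1)/r>n-1$; so there the issue does not arise.)

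Your argument for $\liminf_k \lambda H>0$ has a small gap. You rescale $\Omega_k\setminus B_1$ by $\lambda^{-1}$ \emph{without} recentring and assert that the limit $\breve\Omega$ is nonempty; Lemma~\ref{lem:coarseisobound} and compactness give finite volume and subsequential convergence, but they do not prevent the whole region from drifting to infinity at scale~$\lambda$, in which case $\breve\Omega=\emptyset$ and the contradiction evaporates. The fix is immediate and uses exactly the hypothesis you have: recentre at $x_k$. Since $\lambda^{-1}|x_k|_{\bar g}\to\infty$, the image of $B_1$ escapes to infinity in the rescaled picture, so $\lambda^{-1}(-x_k+\Omega_k\setminus B_1)$ converges locally smoothly in all of $\mathbb{R}^n$ to a locally isoperimetric region with $0$ on its boundary, mean curvature $\lim_k\lambda H=0$, and volume at most $\omega_n$. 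By Lemma~\ref{lem:characterizationlocallyisoperimetric} the boundary is a hyperplane, contradicting finiteness of the volume. With this adjustment your proof is complete, and the remainder of your argument (the blow-up at scale $r_k$, the classification, and the subsequence reasoning) is correct as written.
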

	\begin{proof}
Let $\Omega^0_k\subset \Omega_k$ be the component that contains $x_k$.	Using Lemma \ref{lem:characterizationlocallyisoperimetric}, Lemma  \ref{lem:locisoconverge}, and Lemma \ref{lem:notused}, we see that $\lambda(\partial \Omega_k)^{-1}(-x_k+\Omega^0_k\setminus B_1)$ converges locally smoothly to a Euclidean ball as asserted.
	\end{proof}
	It follows from Lemma \ref{large blowdown1} that $\Omega_k$ has at most one component that, on the scale of $\lambda (\partial \Omega_k)$, is far from $B_1$.
	
	\begin {lem}  \label{large blowdown2}
	Assume that there are points $x_1,\,x_2,\ldots \in \partial \Omega_k \setminus B_1$ with $0 < \liminf_{k \to \infty} \lambda (\partial \Omega_k)^{-1} \, |x_k|_{\bar g} <\infty$. There is $\xi \in \mathbb{R}^n$ such that, passing to a subsequence,
	\[
	\lambda (\partial \Omega_k)^{-1} \,( \Omega_k \setminus B_1) \to \{x \in \mathbb{R}^n : |x - \xi|_{\bar g} \leq 1\}  \text{ locally smoothly in } \mathbb{R}^n \setminus \{0\}.
	\]
	\begin {proof} 
	By Lemma \ref{area growth estimate}, Lemma  \ref{lem:locisoconverge}, and Lemma \ref{lem:notused}, passing to a subsequence, 	$\lambda (\partial \Omega_k)^{-1} \,( \Omega_k \setminus B_1)$ converges locally smoothly in $\mathbb{R}^n\setminus \{0\}$  to a  set of positive, finite perimeter in $\mathbb{R}^n$ that is locally isoperimetric in $\mathbb{R}^n\setminus \{0\}$, the area of whose reduced boundary is at most $n \, \omega_n$. According to Lemma \ref{finite perimter remark}, such a set is a ball of radius $0 < r \leq 1$. Note that 
	$
	(n-1) \, / \, r = \lim_{k \to \infty} \lambda (\partial \Omega_k) \, H (\partial \Omega_k)
	$.
	Suppose, for a contradiction, that $r < 1$. Using Lemma \ref{lem:coarseisobound} and Lemma \ref{large blowdown1}, we see that $\Omega_k$ contains  at least one additional large component that lies far out. Such a configuration is not compatible with the Euclidean isoperimetric inequality. 
	\end {proof}
	\end {lem}
	
	\begin {coro} 
	There holds 
	\begin {eqnarray} \label{H est}
	\lim_{k \to \infty} \lambda (\partial \Omega_k) \, H (\partial \Omega_k) = n-1.
	\end {eqnarray}
	\end {coro}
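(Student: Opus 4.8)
The plan is to show that every subsequential limit $L$ of $\lambda(\partial\Omega_k)\,H(\partial\Omega_k)$ equals $n-1$. Abbreviate $\lambda_k=\lambda(\partial\Omega_k)$ and $H_k=H(\partial\Omega_k)$; by the preceding lemma $\limsup_{k\to\infty}\lambda_k H_k<\infty$, so after passing to a subsequence we may assume $\lambda_k H_k\to L\in[0,\infty)$, and since $\lambda_k\to\infty$ this forces $H_k\to 0$. To feed into Lemma \ref{large blowdown1} or Lemma \ref{large blowdown2} I first need a point of $\partial\Omega_k$ on the scale $\lambda_k$. Let $\rho_k=\sup\{|x|_{\bar g}:x\in\partial\Omega_k\}$. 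Since $|\Omega_k|\to\infty$, $\rho_k\to\infty$; and since $\Omega_k$ is compact while the asymptotically flat end is not, $\Omega_k\subseteq\overline{B_{\rho_k}}$, so $\omega_n\lambda_k^n(1+o(1))=|\Omega_k|\le|B_{\rho_k}|=\omega_n\rho_k^n(1+o(1))$ by Lemma \ref{lem:coarseisobound}. Hence $\liminf_{k\to\infty}\rho_k/\lambda_k>0$. Fix $x_k\in\partial\Omega_k\setminus B_1$ with $|x_k|_{\bar g}=\rho_k$ and pass to a further subsequence so that $\lambda_k^{-1}|x_k|_{\bar g}$ either stays bounded or tends to $\infty$.

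Suppose first that $\lambda_k^{-1}|x_k|_{\bar g}$ stays bounded. Then $0<\liminf_{k\to\infty}\lambda_k^{-1}|x_k|_{\bar g}\le\limsup_{k\to\infty}\lambda_k^{-1}|x_k|_{\bar g}<\infty$, so Lemma \ref{large blowdown2} applies: after passing to a subsequence, $\lambda_k^{-1}(\Omega_k\setminus B_1)$ converges locally smoothly in $\mathbb{R}^n\setminus\{0\}$ to a Euclidean ball of radius one. The boundary $\lambda_k^{-1}\,\partial\Omega_k$ has constant mean curvature $\lambda_k H_k\to L$, while the boundary of a unit ball in $\mathbb{R}^n$ has mean curvature $n-1$; passing to the limit of this $C^2$ convergence gives $L=n-1$.

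Suppose instead that $\lambda_k^{-1}|x_k|_{\bar g}\to\infty$. By Lemma \ref{large blowdown1}, the component $C_k$ of $\Omega_k$ containing $x_k$ is, after rescaling, close to a coordinate ball of radius $r_k=(n-1)\,H_k^{-1}$; since $g$ is asymptotically flat and $|x_k|_{\bar g}\to\infty$, this yields $|C_k|=\omega_n r_k^n(1+o(1))$ and $|\partial C_k|=n\,\omega_n^{1/n}\,|C_k|^{(n-1)/n}(1+o(1))$. From $|C_k|\le|\Omega_k|=\omega_n\lambda_k^n(1+o(1))$ we get $\limsup_{k\to\infty}r_k/\lambda_k\le 1$; as $r_k/\lambda_k=(n-1)/(\lambda_k H_k)$ and $\limsup\lambda_k H_k<\infty$ (so $L>0$), in fact $r_k/\lambda_k\to(n-1)/L$, and therefore $L\ge n-1$. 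Assume, for a contradiction, that $L>n-1$. Then $|\Omega_k\setminus C_k|=\omega_n\lambda_k^n\bigl(1-((n-1)/L)^n\bigr)+o(\lambda_k^n)$, which is comparable to $\lambda_k^n$, and the Euclidean isoperimetric inequality applied to $\Omega_k\setminus C_k$ far out in the end (after cutting off $B_r$ for a fixed large $r$, exactly as in the proof of Lemma \ref{lem:coarseisobound}) gives $|\partial(\Omega_k\setminus C_k)|\ge n\,\omega_n^{1/n}\,|\Omega_k\setminus C_k|^{(n-1)/n}(1-o(1))$. Adding the two boundary estimates and using the strict concavity of $s\mapsto s^{(n-1)/n}$ together with the fact that $|C_k|$ and $|\Omega_k\setminus C_k|$ are both comparable to $|\Omega_k|$, we obtain $|\partial\Omega_k|\ge(1+\delta)\,n\,\omega_n^{1/n}\,|\Omega_k|^{(n-1)/n}$ for a fixed $\delta>0$ and all large $k$. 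But $|\partial\Omega_k|=n\,\omega_n\lambda_k^{n-1}=n\,\omega_n^{1/n}\,|\Omega_k|^{(n-1)/n}(1+o(1))$ by Lemma \ref{lem:coarseisobound}, a contradiction. Hence $L=n-1$ in this case as well, which establishes the corollary.

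The step I expect to be the main obstacle is the last one: ruling out $L>n-1$ requires knowing that the distant part of $\Omega_k$ really is a single round ball realizing the Euclidean-optimal area for its volume — this is the content of Lemma \ref{large blowdown1} — and that the leftover volume cannot be reattached more cheaply, which is precisely where the isoperimetric property of $\Omega_k$ (rather than mere bounded area growth) enters, through the cut-and-paste and strict-concavity argument already used in the proof of Lemma \ref{lem:coarseisobound}.
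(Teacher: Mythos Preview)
Your argument is correct and follows the approach the paper leaves implicit: the corollary is meant to be an immediate consequence of Lemmas \ref{large blowdown1} and \ref{large blowdown2}, and the cut-and-paste/concavity step in your Case B is precisely the ``usual cut-and-paste argument'' invoked inside the proof of Lemma \ref{large blowdown2}. One small expository slip: the parenthetical ``(so $L>0$)'' belongs after the bound $r_k/\lambda_k\le 1+o(1)$ (which forces $\lambda_kH_k\ge (n-1)(1-o(1))$), not after $\limsup\lambda_kH_k<\infty$.
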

	\begin{proof}
		This follows from Lemma \ref{large blowdown2}.
	\end{proof}
	Now, we assume in addition that there is $K \subset M$ compact such that $K\cap \partial M$ and, for all $k$,
	\[
	K \cap \partial \Omega_k \neq \emptyset.
	\]	
	\begin {prop} \label{smooth local convergence}
	There is a region $\Omega \subset M$ with noncompact area-minimizing boundary such that, passing to a subsequence, $\Omega_k \to \Omega$ locally smoothly. There is $\xi \in \mathbb{R}^n$ with $|\xi|_{\bar g} = 1$ such that, passing to a subsequence, $\lambda (\partial \Omega_k)^{-1} \, (\Omega_k \setminus B_1) \to \{ x \in \mathbb{R}^n : |x - \xi|_{\bar g} \leq 1\}$ locally smoothly in $\mathbb{R}^n \setminus \{0\}$.
	\begin {proof}
	This follows from Lemma \ref{lem:locisoconverge}, Lemma \ref{lem:locallyisoperimetricboundary}, and Lemma \ref{large blowdown2}.
	\end {proof}
	\end {prop}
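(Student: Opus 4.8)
The plan is to assemble the statement from Lemma \ref{lem:locisoconverge}, Lemma \ref{lem:locallyisoperimetricboundary}, and Lemma \ref{large blowdown2}, supplying two extra ingredients: a mean-curvature bound extracted from the coarse volume asymptotics, and a short argument that pins the centre of the limiting ball to the unit sphere. First I would use Lemma \ref{lem:coarseisobound} to get $\lambda(\partial\Omega_k)\to\infty$ and then \eqref{H est} to get $H(\partial\Omega_k)=(n-1)\,\lambda(\partial\Omega_k)^{-1}(1+o(1))\to 0$; in particular $\limsup_k|H(\partial\Omega_k)|<\infty$. Lemma \ref{lem:locisoconverge} then yields $\limsup_k|h(\partial\Omega_k)|<\infty$ together with a locally isoperimetric region $\Omega\subset M$ such that, after passing to a subsequence, $\Omega_k\to\Omega$ locally smoothly; passing to the limit in $H(\partial\Omega_k)\to0$ gives $H(\partial\Omega)=0$.

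Next I would use the hypothesis on $K$. Choosing $p_k\in K\cap\partial\Omega_k$ and passing to a further subsequence, $p_k\to p\in K$, and local smooth convergence gives $p\in\partial\Omega$. Since $p$ lies in the interior of $M$ and $\partial\Omega$ is minimal, the component of $\partial\Omega$ through $p$ is a minimal hypersurface that is not contained in the boundary of $M$; by the definition of asymptotic flatness (Appendix \ref{adm appendix}) it cannot be closed, so $\partial\Omega$ is non-compact. Lemma \ref{lem:locallyisoperimetricboundary} then shows that $\Omega$ has area-minimizing boundary (and that all but one component of $\partial\Omega$ lie in the boundary of $M$), which proves the first assertion.

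For the second assertion I would apply Lemma \ref{large blowdown2}, which needs points $x_k\in\partial\Omega_k\setminus B_1$ with $0<\liminf_k\lambda(\partial\Omega_k)^{-1}|x_k|_{\bar g}<\infty$. Since $\Omega_k$ is compact, so is $\partial\Omega_k$; set $d_k=\max_{x\in\partial\Omega_k}|x|_{\bar g}$. If $d_k=o(\lambda(\partial\Omega_k))$ along a subsequence, then $\Omega_k\subset B_{d_k}$, so $|\Omega_k|\le|B_{d_k}|=o(\lambda(\partial\Omega_k)^n)$, contradicting Lemma \ref{lem:coarseisobound}. If $d_k/\lambda(\partial\Omega_k)\to\infty$ along a subsequence, then, for $x_k$ realizing $d_k$, Lemma \ref{large blowdown1} and the remark after it identify the component $C_k$ of $\Omega_k$ containing $x_k$ as the only component far from $B_1$ on the scale $\lambda(\partial\Omega_k)$ and close to a coordinate ball of radius $(n-1)\,H(\partial\Omega_k)^{-1}=(1+o(1))\lambda(\partial\Omega_k)$; then $|\Omega_k\setminus C_k|=o(\lambda(\partial\Omega_k)^n)$ by Lemma \ref{lem:coarseisobound}, and deleting $\Omega_k\setminus C_k$ while restoring the lost volume on $C_k$ strictly decreases boundary area by the cut-and-paste comparison used throughout Appendix \ref{iso appendix}, contradicting that $\Omega_k$ is isoperimetric. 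Hence, along a subsequence, $\lambda(\partial\Omega_k)^{-1}d_k$ stays in a compact subset of $(0,\infty)$, and taking $x_k$ with $|x_k|_{\bar g}=d_k$ (which lies outside $B_1$ for large $k$), Lemma \ref{large blowdown2} provides $\xi\in\mathbb{R}^n$ with $\lambda(\partial\Omega_k)^{-1}(\Omega_k\setminus B_1)\to\{x\in\mathbb{R}^n:|x-\xi|_{\bar g}\le1\}$ locally smoothly in $\mathbb{R}^n\setminus\{0\}$. Finally, I would argue $|\xi|_{\bar g}=1$: the component $S_k$ of $\partial\Omega_k$ converging to the non-compact component of $\partial\Omega$ is connected, passes through $K$, and — there being no far component — reaches out to distance comparable to $\lambda(\partial\Omega_k)$; hence $\lambda(\partial\Omega_k)^{-1}S_k$ has points converging to $0$ and points staying near $\{|x-\xi|_{\bar g}=1\}$, so if $0\notin\{|x-\xi|_{\bar g}=1\}$ the connected set $\lambda(\partial\Omega_k)^{-1}S_k$ would have to cross a spherical shell about the origin on which, by local smooth convergence, it is eventually empty — a contradiction. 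Thus $|\xi|_{\bar g}=1$.

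The step I expect to be the main obstacle is the blow-down: ruling out that the bulk of $\Omega_k$ escapes to distance $\gg\lambda(\partial\Omega_k)$ or drifts off-center at distance comparable to $\lambda(\partial\Omega_k)$, and then matching the blow-down limit $\{|x-\xi|_{\bar g}\le1\}$ with the non-compact, hyperplane-asymptotic fixed-scale limit $\Omega$ to force $|\xi|_{\bar g}=1$. Both rely on the isoperimetric cut-and-paste estimates and the blow-down analysis for isoperimetric regions already developed in Appendix \ref{iso appendix}; the rest is a routine combination of the three cited lemmas.
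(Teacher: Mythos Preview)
Your approach is exactly the paper's: the paper's proof is a one-line citation of Lemmas \ref{lem:locisoconverge}, \ref{lem:locallyisoperimetricboundary}, and \ref{large blowdown2}, and you are correctly supplying the connecting details --- the mean-curvature bound from \eqref{H est}, non-compactness of $\partial\Omega$ from the outermost hypothesis and $K\cap\partial M=\emptyset$, and the production of points $x_k$ with $|x_k|_{\bar g}\sim\lambda(\partial\Omega_k)$ to feed into Lemma \ref{large blowdown2}. Your Case~1/Case~2 dichotomy is sound; in Case~2 the cut-and-paste works because $A_k:=|\partial(\Omega_k\setminus C_k)|=o(\lambda_k^{n-1})$ together with the (approximate) Euclidean isoperimetric bound $|\Omega_k\setminus C_k|\le cA_k^{n/(n-1)}$ makes the volume-restoration cost $O\big(A_k^{2n/(n-1)}\lambda_k^{-n-1}\big)=o(A_k)$.

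There is one gap, in the argument for $|\xi|_{\bar g}=1$. You assert that the component $S_k$ through $p_k\in K$ ``reaches out to distance comparable to $\lambda(\partial\Omega_k)$'' because ``there is no far component''. But ruling out Case~2 only says no component lies at distance $\gg\lambda_k$; it does \emph{not} force $S_k$ itself to extend to $\sim\lambda_k$ --- the maximum $d_k$ could be realized on a different component while $S_k$ stays within $o(\lambda_k)$, and then your connectedness/shell-crossing argument does not apply. The fix is to recycle the very cut-and-paste you used in Case~2: if $|\xi|_{\bar g}\neq1$, the limit sphere $S^{n-1}_1(\xi)$ avoids a neighbourhood of the origin, so $\partial\Omega_k\cap(B_{b\lambda_k}\setminus B_{a\lambda_k})=\emptyset$ for suitable $0<a<b$ and large $k$; the nonempty piece $D_k$ of $\Omega_k$ (or of $M\setminus\Omega_k$, in the case $|\xi|_{\bar g}<1$) trapped in $B_{a\lambda_k}$ has $|\partial D_k|\le C(a\lambda_k)^{n-1}$, and deleting (respectively filling) $D_k$ while compensating volume on the large ball saves area by the same estimate as above, contradicting that $\Omega_k$ is isoperimetric. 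With this modification your outline is complete.
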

	
	In the following lemma, $\hcirc$ denotes the traceless second fundamental form. 
	
	\begin {lem}
	Let $x_1,\,x_2,\ldots \in \partial \Omega_k \setminus B_1$ be points with $|x_k|_{\bar g} \to \infty$. Then 
	\begin{align} \label{hcirc est} 
	\limsup_{k\to\infty} |x_k|_{\bar g} \, | \hcirc (\partial \Omega_k) (x_k)| =0. 
	\end{align}
	\begin {proof}
	If  $\lim_{k \to \infty}  \lambda (\partial \Omega_k)^{-1} \,|x_k|_{\bar g} = 0$, then, by Lemma \ref{finite perimter remark} and Lemma \ref{lem:locisoconverge}, passing to a subsequence, 
	$|x_k|_{\bar g}^{-1} \, ( \Omega_k \setminus B_1)$ converges to a half-space locally smoothly in $\mathbb{R}^n\setminus\{0\}$.
	If $\liminf_{k \to\infty} \, \lambda (\partial \Omega_k)^{-1} |x_k|_{\bar g} >0$, then, by  Proposition \ref{smooth local convergence}, passing to a subsequence, $|x_k|_{\bar g}^{-1} \, (\Omega_k \setminus B_1)$ converges to a ball locally smoothly in $\mathbb{R}^n\setminus \{0\}$. Either way, the assertion follows.
	\end {proof}
	\end {lem}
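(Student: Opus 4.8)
The plan is a blow-up argument at the points $x_k$. I would set $\rho_k=|x_k|_{\bar g}\to\infty$ and study the rescaled regions $\rho_k^{-1}\,(\Omega_k\setminus B_1)\subset\mathbb{R}^n$ together with the rescaled ambient metrics $g_k$ whose Euclidean components are $x\mapsto g_{ij}(\rho_k\,x)$; since $(M,g)$ is asymptotically flat, $g_k\to\bar g$ smoothly on compact subsets of $\mathbb{R}^n\setminus\{0\}$. Each $\rho_k^{-1}\,(\Omega_k\setminus B_1)$ is locally isoperimetric with respect to $g_k$, its boundary has mean curvature $\rho_k\,H(\partial\Omega_k)$, and by \eqref{appendix area est} its boundary has at most Euclidean area growth. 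Provided that the mean curvature stays bounded, Lemma \ref{lem:locisoconverge} produces, after passing to a subsequence, a locally smooth limit $\check\Omega$ in $\mathbb{R}^n\setminus\{0\}$ that is locally isoperimetric there; moreover $\rho_k^{-1}\,x_k\in S^{n-1}_1(0)$ converges along a further subsequence to a point $\xi\in S^{n-1}_1(0)$ lying on $\partial\check\Omega$. Since rescaling a hypersurface by $\rho_k^{-1}$ multiplies the norm of its traceless second fundamental form by $\rho_k$, we have $\big|\hcirc\big(\rho_k^{-1}(\partial\Omega_k)\big)\big(\rho_k^{-1}x_k\big)\big|=\rho_k\,|\hcirc(\partial\Omega_k)(x_k)|$, so it suffices to show that $\hcirc$ of $\partial\check\Omega$ vanishes at $\xi$.

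To identify $\check\Omega$, I would pass to a subsequence along which either $\lambda(\partial\Omega_k)^{-1}\,\rho_k\to 0$ or $\liminf_{k\to\infty}\lambda(\partial\Omega_k)^{-1}\,\rho_k>0$. In the first case, \eqref{H est} gives $\rho_k\,H(\partial\Omega_k)=\big(\rho_k\,\lambda(\partial\Omega_k)^{-1}\big)\big(\lambda(\partial\Omega_k)\,H(\partial\Omega_k)\big)\to 0$, so $\partial\check\Omega$ is minimal; by Lemma \ref{lem:characterizationlocallyisoperimetric}, applied after removing the removable point singularity at the origin, $\check\Omega$ is a half-space. In the second case, Proposition \ref{smooth local convergence} shows that $\lambda(\partial\Omega_k)^{-1}\,(\Omega_k\setminus B_1)$ converges to a unit ball, hence $\rho_k^{-1}\,(\Omega_k\setminus B_1)$ converges to a ball of positive radius; in particular the mean curvature stays bounded, which retroactively justifies the application of Lemma \ref{lem:locisoconverge} above. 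In either case $\partial\check\Omega$ is totally umbilic, since a hyperplane is totally geodesic and a round sphere is umbilic, so $\hcirc$ of $\check\Omega$ vanishes identically and therefore at $\xi$. Passing to the limit yields $\rho_k\,|\hcirc(\partial\Omega_k)(x_k)|\to 0$, which is \eqref{hcirc est}.

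The main obstacle is the blow-up bookkeeping rather than a new idea: one must check that the rescaled regions genuinely satisfy the hypotheses of Lemma \ref{lem:locisoconverge} relative to the varying metrics $g_k$ — in particular a uniform bound on the rescaled mean curvature, which in the case $\lambda(\partial\Omega_k)^{-1}\rho_k\to 0$ comes from \eqref{H est} and in the other case from Proposition \ref{smooth local convergence} — and one must handle the degeneration of the dilations at the origin, so that the blow-up limit is only locally isoperimetric in $\mathbb{R}^n\setminus\{0\}$ and the removability of the point singularity has to be invoked before applying the classification of locally isoperimetric regions. Once every subsequential blow-up limit is known to be a half-space or a ball, the conclusion is immediate because both models are totally umbilic.
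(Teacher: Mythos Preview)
Your proof is correct and follows essentially the same approach as the paper: rescale by $|x_k|_{\bar g}^{-1}$, split into the cases $\lambda(\partial\Omega_k)^{-1}|x_k|_{\bar g}\to 0$ and $\liminf_{k\to\infty}\lambda(\partial\Omega_k)^{-1}|x_k|_{\bar g}>0$, identify the blow-up limit as a half-space (via Lemma~\ref{lem:characterizationlocallyisoperimetric} and Lemma~\ref{lem:locisoconverge}) or a ball (via Proposition~\ref{smooth local convergence}) respectively, and conclude from the total umbilicity of either model. Your write-up is in fact more explicit than the paper's about the bookkeeping---the varying metrics $g_k\to\bar g$, the removability of the singularity at the origin, and the observation that both models are umbilic---but the argument is the same.
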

	
	The isoperimetric profile of $(M, g)$ is the function  $A : (0, \infty) \to (0, \infty)$ given by
	\begin{align}  \label{isoperimetric profile} 
	A(V) = \inf \{ |\partial \Omega| : \Omega \subset M \text{ is a compact region with } |\Omega| = V \}.
	\end{align}
	
	\begin {lem} \label{profile properties} The isoperimetric profile of $(M, g)$ is absolutely continuous. As $V \to \infty$,
	\[
	\left(\omega_n^{-1} \, V\right)^{\frac{1-n}{n}}\,A(V)=n\,\omega_n+o(1).
	\]
For almost every $V>0$, as $V\to\infty$,
	\[
	\left(\omega_n^{-1} \, V\right)^{\frac{1}{n}}\,A'(V)=(n-1)+o(1).
	\]
	If $\partial M\neq \emptyset$, then $A$ is  strictly increasing.
	\begin {proof}
	See, e.g., \cite[Appendix A]{CESH} and \cite[Proposition 4]{eichmairmetzger}.
	\end {proof}
	\end {lem}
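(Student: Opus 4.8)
The plan is to follow the arguments of O.~Chodosh, the first-named author, Y.~Shi, and H.~Yu in \cite[Appendix A]{CESH} and of the first-named author and J.~Metzger in \cite[Proposition 4]{eichmairmetzger}, treating the three assertions in turn. I would begin with the expansion of $A$ itself. For the upper bound, the coordinate balls $B_r$ from the chart at infinity serve as competitors: asymptotic flatness gives $|B_r|_g = \omega_n\,r^n + O(r^{n-\tau})$ and $|\partial B_r|_g = n\,\omega_n\,r^{n-1} + O(r^{n-1-\tau})$, so choosing $r = r(V)$ with $|B_{r(V)}|_g = V$ and inverting yields $A(V) \le |\partial B_{r(V)}|_g = n\,\omega_n\,(\omega_n^{-1}\,V)^{(n-1)/n}\,(1 + o(1))$. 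For the lower bound, I would take, for each large $V$, a compact region $\Omega$ with $|\Omega|_g = V$ and $|\partial\Omega|_g \le A(V) + 1$, fix a large radius $R$ with $\partial B_R$ meeting $\partial\Omega$ transversally, and cut $\Omega$ along $\partial B_R$. The part inside $B_R$ has volume at most $|B_R|_g$, so $|\Omega\setminus B_R|_g \ge V - |B_R|_g$, while $|\partial(\Omega\setminus B_R)|_g \le |\partial\Omega|_g + |\partial B_R|_g$; since $g$ is $(1 + O(R^{-\tau}))$-close to $\bar g$ on $\{x \in \mathbb{R}^n : |x|_{\bar g} > R\}$, the Euclidean isoperimetric inequality applied to $\Omega\setminus B_R$ gives $A(V) + 1 \ge (1 - O(R^{-\tau}))\,n\,\omega_n^{1/n}\,(V - |B_R|_g)^{(n-1)/n} - |\partial B_R|_g$. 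Letting $V \to \infty$ and then $R \to \infty$ produces the matching lower bound. This is the cut-and-paste argument already carried out in the proof of Lemma \ref{lem:coarseisobound}.

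Next I would establish absolute continuity and, when $\partial M \neq \emptyset$, strict monotonicity. The one-sided Hölder bound $A(V') \le A(V) + C\,|V' - V|^{(n-1)/n}$ for $V'$ near $V$ follows by inserting or deleting a small geodesic ball disjoint from a near-optimal region. To upgrade this to a local Lipschitz bound, hence absolute continuity, I would use that isoperimetric regions exist for all small volumes by standard compactness and for all sufficiently large volumes by Lemma \ref{existence iso}, together with the first variation of area: the constant-speed normal flow of the (constant mean curvature) boundary of an isoperimetric region $\Omega_V$ is an admissible competitor whose enclosed volume and boundary area move at rates $|\partial\Omega_V|_g$ and $H(\partial\Omega_V)\,|\partial\Omega_V|_g$, so $A(V + t) \le A(V) + H(\partial\Omega_V)\,t + O(t^2)$ near such $V$. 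When the boundary of $M$ is a nonempty outermost minimal surface, it acts as a barrier that prevents a competing region from discarding volume without losing area, which yields the strict monotonicity of $A$; here I would refer to \cite[Proposition 4]{eichmairmetzger}.

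Finally I would deduce the asymptotics of $A'$. By absolute continuity, $A'(V)$ exists for a.e.\ $V$, and at each such $V$ for which an isoperimetric region $\Omega_V$ exists, the first variation identity gives $A'(V) = H(\partial\Omega_V)$. The blow-down analysis recalled in the appendix shows that any isoperimetric region of large volume satisfies $\lambda(\partial\Omega_V)\,H(\partial\Omega_V) = n - 1 + o(1)$ as $V \to \infty$: either $\Omega_V$ meets a fixed compact set, which is Corollary \ref{H est}, or $\Omega_V$ diverges, in which case Lemma \ref{large blowdown1} and the remark following it identify its unique large component with a coordinate ball of radius $(n-1)\,H(\partial\Omega_V)^{-1}$, and comparing enclosed volume and boundary area gives the same relation. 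Combining this with $\lambda(\partial\Omega_V) = (A(V)/(n\,\omega_n))^{1/(n-1)} = (\omega_n^{-1}\,V)^{1/n}\,(1 + o(1))$ from the first part yields $(\omega_n^{-1}\,V)^{1/n}\,A'(V) = n - 1 + o(1)$.

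I expect the main obstacle to be the absolute continuity statement: it hinges on having isoperimetric regions available across the whole range of volumes, with the delicate large-volume case supplied by Lemma \ref{existence iso}, and on matching the one-sided first-variation bounds from both sides. The lower bound for $A(V)$, while conceptually just the cut-and-paste of Lemma \ref{lem:coarseisobound}, also requires some care in controlling the volume and area lost to the compact core and in tracking the dependence on $R$ before letting $R \to \infty$.
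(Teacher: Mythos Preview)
Your proposal follows precisely the references the paper defers to in lieu of a proof, and the outline is correct. The one dependency you already flag---invoking Lemma \ref{existence iso} to obtain isoperimetric regions for all large volumes---requires positive mass, which is not part of the hypotheses of Lemma \ref{profile properties}. The standard remedy is the concentration--compactness dichotomy for minimizing sequences in an asymptotically flat manifold: for each $V$, either an isoperimetric region exists (and your first-variation argument yields the local Lipschitz bound and $A'(V)=H(\partial\Omega_V)$), or a portion of a minimizing sequence drifts to infinity and contributes the Euclidean profile on that portion of volume, whose regularity and derivative are explicit. With this in hand, both the absolute continuity and the asymptotics of $A'$ go through without assuming positive mass. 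Since the paper only uses Lemma \ref{profile properties} inside the proof of Proposition \ref{many area minimizing}, where positive mass is assumed, your shortcut through Lemma \ref{existence iso} is in any case adequate for the application.
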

	
	\begin {lem} [{\cite[Theorem 1.12]{CCE}}]\label{existence iso}
	Assume that $(M, g)$ has positive mass. For every sufficiently large amount of volume $V>0$, there exists an isoperimetric region $\Omega \subset M$ with $|\Omega| = V$.
	\end {lem}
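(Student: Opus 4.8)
The plan is to run a concentration--compactness argument for a minimizing sequence, with the positive mass used to rule out any loss of volume to infinity. Fix a sufficiently large amount of volume $V$ and a minimizing sequence $\Omega_1,\,\Omega_2,\hdots\subset M$ of compact regions with $|\Omega_i|=V$ and $|\partial \Omega_i|\to A(V)$, where $A$ is the isoperimetric profile \eqref{isoperimetric profile}. By the compactness of sets of finite perimeter together with the interior and boundary regularity theory available in dimension $3\leq n\leq 7$, a subsequence converges in $L^1_{\mathrm{loc}}$, and locally smoothly away from the end, to a set $\Omega_\infty$ of finite perimeter; the only possible failure of convergence is that an amount of volume $\delta:=V-|\Omega_\infty|\geq 0$ escapes into the asymptotically flat end. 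If $\delta=0$, then by lower semicontinuity of perimeter $\Omega_\infty$ realizes the infimum $A(V)$, its boundary is smooth by regularity theory, and a density and monotonicity argument shows $\Omega_\infty$ is bounded; thus $\Omega_\infty$ is the desired isoperimetric region. It therefore suffices to exclude $\delta>0$.

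Assume $\delta>0$. I would first establish the lower bound
$$
A(V)\geq A(V-\delta)+n\,\omega_n^{1/n}\,\delta^{\frac{n-1}{n}}.
$$
Cutting each $\Omega_i$ along a coordinate sphere $\partial B_R$, chosen generic and with $R$ large, the inner piece $\Omega_i\cap B_R$ is a competitor whose volume is close to $V-\delta$, so its perimeter is at least $A(V-\delta)-o(1)$; the outer piece $\Omega_i\setminus B_R$ lies in a region where $g$ is arbitrarily close to $\bar g$ and has volume close to $\delta$, so the Euclidean isoperimetric inequality bounds its perimeter below by $(1-o(1))\,n\,\omega_n^{1/n}\,\delta^{\frac{n-1}{n}}$; the interface terms on $\partial B_R$ are negligible for generic $R$. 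Letting $i\to\infty$ and then $R\to\infty$ gives the claim. Conversely, the competitor obtained by taking a near-optimal region for volume $V-\delta$ together with a round coordinate ball of volume $\delta$ pushed far into the end shows the reverse inequality, so equality must in fact hold whenever volume escapes.

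The heart of the matter, and the step I expect to be the main obstacle, is to contradict this equality using the positive mass $m>0$. The mechanism is that a large centered coordinate ball $B_\rho$ is isoperimetrically strictly more efficient than a Euclidean ball of the same volume: expanding $|\partial B_\rho|$ and $|B_\rho|$ to the order at which the ADM mass \eqref{def adm mass} appears yields $|\partial B_\rho|<n\,\omega_n^{1/n}\,|B_\rho|^{\frac{n-1}{n}}$ once $\rho$ is large, and hence $A(V)<n\,\omega_n^{1/n}\,V^{\frac{n-1}{n}}$ for $V$ large. Combining this strict improvement for $A$ at large volume with the strict subadditivity of $t\mapsto n\,\omega_n^{1/n}\,t^{\frac{n-1}{n}}$, the asymptotics $A(V)=n\,\omega_n^{1/n}\,V^{\frac{n-1}{n}}(1+o(1))$ and $A'(V)=(n-1)\,\omega_n^{1/n}\,V^{-\frac1n}(1+o(1))$ from Lemma \ref{profile properties}, and the near-Euclidean behavior of $A$ at small scales, one obtains $A(V)<A(V-\delta)+n\,\omega_n^{1/n}\,\delta^{\frac{n-1}{n}}$ for every $\delta\in(0,V]$ once $V$ is large enough, contradicting the equality above. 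The delicate point is that this must hold uniformly in $\delta$, and the estimates behave quite differently according to whether the escaping volume $\delta$ or the retained volume $V-\delta$ is a definite fraction of $V$: when both pieces carry a definite fraction of $V$ the contradiction already follows from the subadditivity and the asymptotics of $A$, whereas in the near-total-escape regime, where $V-\delta$ is small, the strict inequality $A(V)<n\,\omega_n^{1/n}\,V^{\frac{n-1}{n}}$ coming from $m>0$ is indispensable.
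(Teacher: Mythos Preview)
The paper does not give a proof of this lemma at all; it is simply quoted from \cite[Theorem 1.12]{CCE}. So there is no in-paper argument to compare your proposal against. Your sketch is, in outline, the argument carried out in \cite{CCE}: run concentration--compactness on a minimizing sequence, identify the possible splitting $A(V)=A(V-\delta)+n\,\omega_n^{1/n}\,\delta^{(n-1)/n}$ when an amount of volume $\delta$ drifts off, and use the positive mass to produce a competitor (a large centered coordinate ball) that beats the Euclidean profile and hence forces $\delta=0$.

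One point worth flagging: the step ``expanding $|\partial B_\rho|$ and $|B_\rho|$ to the order at which the ADM mass appears yields $|\partial B_\rho|<n\,\omega_n^{1/n}\,|B_\rho|^{(n-1)/n}$'' is exactly where the real work in \cite{CCE} lies, and it is not as soft as your phrasing suggests. For a general asymptotically flat metric with $\tau>(n-2)/2$ and merely integrable scalar curvature, the leading correction to the isoperimetric ratio of a coordinate sphere is not a priori controlled by $m$ alone; one has to average appropriately and invoke the integrability of $R$ to see the mass emerge. Likewise, the ``uniform in $\delta$'' part of your final paragraph, especially the regime where $V-\delta$ is of intermediate size, needs the inequality $A(W)\le n\,\omega_n^{1/n}\,W^{(n-1)/n}$ for \emph{all} $W>0$, which one gets by pushing competitors to infinity, together with the strict concavity of $t\mapsto t^{(n-1)/n}$. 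These are the ingredients supplied in \cite{CCE}; your roadmap is correct, but the details there are what make the argument go through.
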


	\section{Variation of area and volume}
	\label{stable cmc} 
	In this section, we recall  the first and second variational formulae for area and volume and the definition of stable constant mean curvature surfaces; see, e.g., \cite[Appendix H]{CCE}. \\ \indent 
	Let $(M,g)$ be a Riemannian manifold without boundary of dimension $n\geq 3$.  Let $\Sigma\subset M$ be a closed hypersurface bounding a compact region $\Omega$. We denote by $\mathrm{d}\mu$ the area element, by $\nu$ the outward pointing unit normal, and by $h$ and $H$ the second fundamental form and the mean curvature of $\Sigma$, respectively, computed with respect to $\nu$. \\ \indent
	Let $\varepsilon>0$ and $U\in C^\infty(\Sigma\times(-\varepsilon,\varepsilon))$ with $U(x,0)=0$ for all $x\in \Sigma$. Decreasing $\varepsilon$ if necessary, we obtain a smooth family $\{\Sigma(s):s\in(-\varepsilon,\varepsilon)\}$ of hypersurfaces $\Sigma(s)\subset M$ where 
	$$
\Sigma(s)=\{\exp_x(U(x,s)\,\nu(x)):x\in \Sigma\}.
	$$
	We define the initial velocity $u\in C^\infty(\Sigma)$ and the initial acceleration $v\in C^\infty(\Sigma)$ by
	$$
	u(x)=\dot{U}(x,0)\qquad\text{and}\qquad v=\ddot{U}(x,0).
	$$  \indent 
	Let $\Omega(s)$ be the compact region bounded by $\Sigma(s)$.
	\begin{lem} \label{variation of area and volume} 
		There holds
		\begin{align} 
\notag		\frac{d}{ds}\bigg|_{s=0}|\Sigma(s)|&=\int_{\Sigma}H\,u\,\mathrm{d}\mu,\\
	\label{second area} 	\frac{d^2}{ds^2}\bigg|_{s=0}|\Sigma(s)|&=\int_{\Sigma}H\,v+H^2\,u^2+|\nabla u|^2-(|h|^2+{Ric}(\nu,\nu))\,u^2\,\mathrm{d}\mu.
		\end{align} 
	Moreover,
	\begin{align*} 
		\frac{d}{ds}\bigg|_{s=0}|\Omega(s)|&=\int_{\Sigma}u\,\mathrm{d}\mu,\\
		\frac{d^2}{ds^2}\bigg|_{s=0}|\Omega(s)|&=\int_{\Sigma}v+H\,u^2\,\mathrm{d}\mu.
	\end{align*} 
	\end{lem}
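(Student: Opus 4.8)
The plan is to follow the classical derivation of these variational formulae, essentially as recalled in \cite[Appendix H]{CCE}; this is standard bookkeeping for normal variations, so I will just set up the computation and point to the two places that need care. I would put $\Phi(x,s)=\exp_x(U(x,s)\,\nu(x))$, so that $\Sigma(s)=\Phi(\Sigma,s)$, and let $X=\partial_s\Phi$ be the variation field, with $X|_{s=0}=u\,\nu$; I write $\nu_s$ for the outward unit normal and $H(s)$ for the mean curvature of $\Sigma(s)$. The one genuinely useful observation is that, for fixed $x$, the curve $s\mapsto\Phi(x,s)$ is a reparametrization of the geodesic $t\mapsto\exp_x(t\,\nu(x))$, so that $\nabla_{\partial_s}X=\ddot U\,\dot\gamma_x$ and hence $\nabla_{\partial_s}X|_{s=0}=v\,\nu$. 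Beyond this, the argument only uses the standard evolution identities for a hypersurface moving with velocity field $X$, namely $\partial_s(\mathrm{d}\mu_s)=(\operatorname{div}_{\Sigma(s)}X)\,\mathrm{d}\mu_s$, $\nabla_{\partial_s}\nu_s=-\nabla^{\Sigma(s)}\langle X,\nu_s\rangle$ (a tangential field), and the linearization of the mean curvature
\[
\partial_sH(s)=-\Delta_{\Sigma(s)}\langle X,\nu_s\rangle-\big(|h(s)|^2+{Ric}(\nu_s,\nu_s)\big)\,\langle X,\nu_s\rangle+\nabla_{X^\top}H(s),
\]
where $X^\top$ is the component of $X$ tangential to $\Sigma(s)$. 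At $s=0$, where $X=u\,\nu$ is normal and hence $X^\top=0$, these read $\operatorname{div}_\Sigma X=H\,u$, $\nabla_{\partial_s}\nu_s=-\nabla u$, and $\partial_sH|_{s=0}=-\Delta_\Sigma u-(|h|^2+{Ric}(\nu,\nu))\,u$; I would fix the sign convention for $h$ and $H$ to be the one compatible with all of this, i.e.\ $H=\operatorname{div}_\Sigma\nu$.

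For the first variations I would set $\rho(s)=\langle X,\nu_s\rangle$ and use $\operatorname{div}_{\Sigma(s)}X=\operatorname{div}_{\Sigma(s)}(X^\top)+\rho(s)\,H(s)$; since $\Sigma(s)$ is closed, the divergence theorem kills the first term, so $\frac{d}{ds}|\Sigma(s)|=\int_{\Sigma(s)}\operatorname{div}_{\Sigma(s)}X\,\mathrm{d}\mu_s=\int_{\Sigma(s)}\rho(s)\,H(s)\,\mathrm{d}\mu_s$, which at $s=0$ is $\int_\Sigma H\,u\,\mathrm{d}\mu$, while $\frac{d}{ds}|\Omega(s)|=\int_{\Sigma(s)}\langle X,\nu_s\rangle\,\mathrm{d}\mu_s$, which at $s=0$ is $\int_\Sigma u\,\mathrm{d}\mu$. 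For the second variations I would differentiate these two expressions once more and substitute the identities above at $s=0$. Differentiating $\int_{\Sigma(s)}\langle X,\nu_s\rangle\,\mathrm{d}\mu_s$ and using $\nabla_{\partial_s}X|_{s=0}=v\,\nu$, $\langle X,\nabla_{\partial_s}\nu_s\rangle|_{s=0}=\langle u\,\nu,-\nabla u\rangle=0$, and $\operatorname{div}_\Sigma X|_{s=0}=H\,u$ yields $\int_\Sigma v+H\,u^2\,\mathrm{d}\mu$. Differentiating $\int_{\Sigma(s)}\rho(s)\,H(s)\,\mathrm{d}\mu_s$, with $\rho(0)=u$, $\rho'(0)=\langle\nabla_{\partial_s}X,\nu\rangle+\langle X,\nabla_{\partial_s}\nu_s\rangle=v$, $\partial_sH|_{s=0}=-\Delta_\Sigma u-(|h|^2+{Ric}(\nu,\nu))\,u$, and $\operatorname{div}_\Sigma X|_{s=0}=H\,u$, gives
\[
\frac{d^2}{ds^2}\bigg|_{s=0}|\Sigma(s)|=\int_\Sigma v\,H+u\big(-\Delta_\Sigma u-(|h|^2+{Ric}(\nu,\nu))\,u\big)+H^2\,u^2\,\mathrm{d}\mu,
\]
and integrating $-u\,\Delta_\Sigma u$ by parts over the closed manifold $\Sigma$ turns the right-hand side into the expression in \eqref{second area}.

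I do not expect a substantial obstacle: the statement is classical and, once the evolution identities are in hand, the derivation is only a few lines. The two points that genuinely need care are (i) choosing the sign convention for $h$ and $H$ so that it is consistent with the stated first variation $\frac{d}{ds}|\Sigma(s)|=\int_\Sigma H\,u\,\mathrm{d}\mu$, equivalently $H=\operatorname{div}_\Sigma\nu$, and (ii) remembering that, although $X=u\,\nu$ is purely normal along $\Sigma=\Sigma(0)$, the field $X=\partial_s\Phi$ has a nonzero tangential component along $\Sigma(s)$ for $s\neq0$, so that one must use the full divergence theorem and the linearization of $H$ with the transport term $\nabla_{X^\top}H(s)$ — which, however, drops out at $s=0$. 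Alternatively, since this appendix merely recalls these facts for later use, one could simply cite \cite[Appendix H]{CCE}.
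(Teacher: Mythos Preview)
Your derivation is correct and is precisely the standard argument the paper has in mind; the paper itself does not prove this lemma but merely recalls it with a reference to \cite[Appendix H]{CCE}, which you already note as an alternative. Either your computation or the bare citation would be appropriate here.
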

Assume that for every such variation  satisfying also
$$
\frac{d}{ds}\bigg|_{s=0}|\Omega(s)|=\frac{d^2}{ds^2}\bigg|_{s=0}|\Omega(s)|=0,
$$
there holds
$$
\frac{d}{ds}\bigg|_{s=0}|\Sigma(s)|=0\qquad\text{and}\qquad \frac{d^2}{ds^2}\bigg|_{s=0}|\Sigma(s)|\geq 0.
$$
Note that the mean curvature $H$ of  $\Sigma$ is constant in this case. We say that $\Sigma$ is a stable constant mean curvature surface.\\ \indent 
Note that each component of the boundary of an isoperimetric region $\Omega\subset M$ is a stable constant mean curvature surface with the same constant mean curvature.
	      \\ \indent 
	We record the following integration by parts formula for the second variation of area formula \eqref{second area} with respect to a Euclidean translation. 
	\begin{lem} \label{euclidean lemma}
Let $ \Sigma\subset \mathbb{R}^n$ be a closed oriented hypersurface with boundary $\partial  \Sigma$. Let $\bar\nu$ be a unit normal of $\Sigma$ and $\bar\omega$ the outward-pointing conormal of $\partial \Sigma$. Let  $\bar u, \bar v:\Sigma\to\mathbb{R}$ be given by 
$$
\bar u=\bar g(e_n,\bar\nu)\qquad\text{and}\qquad \bar v=-\bar h(e_n^{\bar\top},e_n^{\bar\top}).
$$
 There holds 
		$$
		\int_{ \Sigma}\bar H\,\bar v+\bar H^2\,\bar u^2+|\bar\nabla\bar u|_{\bar g}^2-|\bar h|^2_{\bar g}\,\bar u^2\,\mathrm{d}\bar \mu=\int_{\partial \Sigma}\bar h(\bar\omega,e_n^{\bar\top})\,\bar u\,\mathrm{d}\bar{l}-\int_{ \partial \Sigma}\bar g(e_n^{\bar\top},\bar\omega) \bar H\,\bar u\,\mathrm{d}\bar{l}.
		$$
	\end{lem}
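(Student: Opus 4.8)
The plan is to establish the identity by a pointwise computation on $\Sigma$ combined with the divergence theorem, exploiting that $e_n$ is a parallel unit vector field on $\mathbb{R}^n$. Throughout, $\bar\nabla^\Sigma$, $\bar\Delta^\Sigma$, and $\operatorname{div}_\Sigma$ will denote the induced connection, Laplacian, and divergence on $\Sigma$ with respect to $\bar g$, and I use the sign convention $\bar h(X,Y)=\bar g(\bar\nabla_X\bar\nu,Y)$, consistent with the first variation formula in Lemma \ref{variation of area and volume}.

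First I would record the consequences of $\bar\nabla e_n=0$: writing $e_n=e_n^{\bar\top}+\bar u\,\bar\nu$ along $\Sigma$ and differentiating in a tangential direction, the Weingarten relation gives $\bar\nabla^\Sigma\bar u=e_n^{\bar\top}\lrcorner\bar h$ and $\operatorname{div}_\Sigma(e_n^{\bar\top})=-\bar H\,\bar u$. Taking a further divergence and invoking the Codazzi equation in $\mathbb{R}^n$, so that $\bar\nabla^\Sigma\bar h$ is totally symmetric and $\operatorname{div}_\Sigma\bar h=\bar\nabla^\Sigma\bar H$, yields the Jacobi-type identity $\bar\Delta^\Sigma\bar u+|\bar h|^2_{\bar g}\,\bar u=\bar g(\bar\nabla^\Sigma\bar H,e_n^{\bar\top})$. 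Note the extra term on the right coming from $\bar H\neq 0$; it will be crucial below.

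Next I would combine two integrations by parts. Using the Jacobi identity,
$$
\int_\Sigma|\bar\nabla\bar u|^2_{\bar g}-|\bar h|^2_{\bar g}\,\bar u^2\,\mathrm{d}\bar\mu=\int_{\partial\Sigma}\bar u\,\bar g(\bar\nabla^\Sigma\bar u,\bar\omega)\,\mathrm{d}\bar l-\int_\Sigma\bar u\,\bar g(\bar\nabla^\Sigma\bar H,e_n^{\bar\top})\,\mathrm{d}\bar\mu.
$$
On the other hand, computing $\operatorname{div}_\Sigma(\bar H\,\bar u\,e_n^{\bar\top})$ from the two relations above and using $\bar g(\bar\nabla^\Sigma\bar u,e_n^{\bar\top})=\bar h(e_n^{\bar\top},e_n^{\bar\top})=-\bar v$, one gets $\bar H\,\bar v+\bar H^2\,\bar u^2=-\operatorname{div}_\Sigma(\bar H\,\bar u\,e_n^{\bar\top})+\bar u\,\bar g(\bar\nabla^\Sigma\bar H,e_n^{\bar\top})$, hence after integrating
$$
\int_\Sigma\bar H\,\bar v+\bar H^2\,\bar u^2\,\mathrm{d}\bar\mu=\int_\Sigma\bar u\,\bar g(\bar\nabla^\Sigma\bar H,e_n^{\bar\top})\,\mathrm{d}\bar\mu-\int_{\partial\Sigma}\bar H\,\bar u\,\bar g(e_n^{\bar\top},\bar\omega)\,\mathrm{d}\bar l.
$$
Adding the two displays, the interior integrals of $\bar u\,\bar g(\bar\nabla^\Sigma\bar H,e_n^{\bar\top})$ cancel, and substituting $\bar g(\bar\nabla^\Sigma\bar u,\bar\omega)=\bar h(\bar\omega,e_n^{\bar\top})$ into the surviving boundary integral gives exactly the asserted formula.

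I expect the main obstacle to be purely a matter of careful sign bookkeeping: aligning the orientation conventions for $\bar h$, $\bar\nu$, and $\bar\omega$, and correctly tracking the $\bar H$-term in the Jacobi identity, which is precisely what forces the cancellation of all interior contributions. Conceptually, the identity simply reflects that the second variation of area under the translation flow $x\mapsto x+s\,e_n$ vanishes, with the two boundary terms accounting for the displacement of $\partial\Sigma$; but since this flow is not a normal variation, relating it directly to \eqref{second area} would require an additional boundary reparametrization, so I would carry out the direct computation above instead.
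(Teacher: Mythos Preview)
Your proof is correct and follows essentially the same route as the paper: both arguments use the Jacobi-type identity $\bar\Delta\bar u+|\bar h|^2_{\bar g}\,\bar u=\bar g(\bar\nabla\bar H,e_n^{\bar\top})$ to integrate $|\bar\nabla\bar u|^2-|\bar h|^2\bar u^2$ by parts, and then a second integration by parts (your divergence of $\bar H\,\bar u\,e_n^{\bar\top}$ is exactly the paper's ``Codazzi and integrate again'') to produce the $\bar H\,\bar v+\bar H^2\,\bar u^2$ terms, with the interior $\bar u\,\bar g(\bar\nabla\bar H,e_n^{\bar\top})$ contributions cancelling. Your presentation is slightly more explicit about the preliminary identities $\bar\nabla\bar u=e_n^{\bar\top}\lrcorner\bar h$ and $\operatorname{div}_\Sigma(e_n^{\bar\top})=-\bar H\,\bar u$, but the substance is identical.
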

	\begin{proof}
		Let $X,\,Y$ be tangent fields along $\Sigma$. There holds 
		\begin{align*}
			&\circ\qquad 	\bar\nabla_X u=\bar h(e_n^{\bar \top},X)\text{ and}\hspace{8cm} 
		\\&\circ\qquad  \bar g(\bar \nabla_X e_n^{\bar \top},Y)=-\bar h(X,Y)\,\bar u. 
		\end{align*}
		It follows that $\bar \Delta_\Sigma u=(\bar{ \operatorname{div}}_\Sigma\, \bar h)(e_n^\top)-|\bar h|_{\bar g}^2\,\bar u$. Multiplying by $\bar u$ and integrating by parts,  we obtain
		$$
		\int_{\Sigma} |\bar\nabla u|_{\bar g}^2\,\mathrm{d}\bar \mu=\int_{\partial \Sigma}\bar h(\bar\omega,e_n^{\bar\top})\,\bar u\,\mathrm{d}\bar{l}+\int_{\Sigma} |\bar h|^2_{\bar g}\,\bar u^2-(\bar{ \operatorname{div}}_\Sigma\, \bar h)(e_n^\top)\,\bar u\,\mathrm{d}\bar \mu.
		$$
		Using the Codazzi equation $\bar\nabla_{e_n^{\bar\top}}\bar H=(\bar{\operatorname{div}}\,\bar h)(e_n^{\bar\top})$, we have 
		$$
		\bar {\operatorname{div}}_{\Sigma}(\bar H\,\bar u\, e_n^{\bar \top})=(\bar{\operatorname{div}}_\Sigma\bar h)(e_n^{\bar \top})-\bar H^2\,\bar u^2-\bar H\,\bar v.
		$$
		 Using integrating by parts again, we obtain
		$$
		\int_{ \Sigma} (\bar{ \operatorname{div}}_\Sigma\, \bar h)(e_n^\top)\,\bar u\,\mathrm{d}\bar \mu=\int_{ \partial \Sigma}\bar g(e_n^{\bar\top},\bar\omega) \bar H\,\bar u\,\mathrm{d}\bar{l}+\int_{ \Sigma} \bar H^2\,\bar u^2+\bar H\,\bar v\,\mathrm{d}\bar \mu.
		$$
	\end{proof}
	
		\section{Geometry of hypersurfaces in an asymptotically flat end}
	In this section, we assume that $g$ is a Riemannian metric on $\mathbb{R}^n$ where $n\geq 3$ and that, for some $\tau>0$, $$|g-\bar g|+|x|_{\bar g}\,|\bar D g|_{\bar g}+|x|^2_{\bar g}\,|\bar D^2 g|_{\bar g}=O(|x|_{\bar g}^{-\tau}).$$
	Let $\Sigma\subset \mathbb{R}^n$ be a two-sided  hypersurface with area element $\mathrm{d}\mu$, designated normal $\nu$, and second fundamental form $h$ and mean curvature $H$ with respect to $\nu$.  The corresponding Euclidean quantities are denoted with a bar.
	\begin{lem} As $x\to\infty$,
		\label{geometric expansions}
		\begin{align*}
			\nu=&\,\bar\nu+O(|x|_{\bar g}^{-\tau}),\\
			\mathrm{d}\mu=&\,(1+O(|x|_{\bar g}^{-\tau}))\,\mathrm{d}\bar\mu,
			\\	|x|_{\bar g}\,h=&|x|_{\bar g}\,\bar h +O(|x|_{\bar g}^{-\tau})+O(|x|_{\bar g}^{1-\tau}\,|\bar h|_{\bar g}),
			\\	|x|_{\bar g}\,H=&|x|_{\bar g}\,\bar H +O(|x|_{\bar g}^{-\tau})+O(|x|_{\bar g}^{1-\tau}\,|\bar h|_{\bar g}),\\
			|x|^2_{\bar g}\,\nabla h=&|x|^2_{\bar g}\,\bar\nabla \bar h +O(|x|_{\bar g}^{-\tau})+O(|x|_{\bar g}^{1-\tau}\,|\bar h|_{\bar g})+O(|x|_{\bar g}^{2-\tau}\,|\bar\nabla \bar h|_{\bar g}),\text{ and}
			\\	|x|^2_{\bar g}\,\nabla H=&|x|^2_{\bar g}\,\bar\nabla \bar H +O(|x|_{\bar g}^{-\tau})+O(|x|_{\bar g}^{1-\tau}\,|\bar h|_{\bar g})+O(|x|_{\bar g}^{2-\tau}\,|\bar\nabla \bar h|_{\bar g}).
		\end{align*}
	\end{lem}
	\begin{proof} We give a proof of the first and third asserted estimate. 
		Given $t\in[0,1]$, let $g_t=\bar g+t\,\sigma$ where $\sigma=g-\bar g$. Note that $g_0=\bar g$ and $g_1=g$.  We use a dot to denote the linearization of a geometric quantity at $t=0$. 	  Given $p\in \Sigma$, let $\{f_1,\ldots,f_{n-1}\}$ be a  basis of $T_p\Sigma$ that is orthonormal with respect to $\bar g$. 
		
		Using that $g_t(\nu_{g_t},\nu_{g_t})=1$ and $g_t(\nu_{g_t},f_1)=\ldots=g_t(\nu_{g_t},f_{n-1})=0$, we see that 
		\begin{align} \label{dot nu} 
		\dot \nu=-\frac12\,\sigma(\bar \nu,\bar\nu)\,\bar\nu-\sum_{i=1}^{n-1}\sigma(\bar \nu,f_i)\,f_i=O(|x|_{\bar g}^{-\tau}).
		\end{align} 
		
		Let $a,\,b=1,\ldots,n-1$. Using that $h_{g_t}(f_a,f_b)=g_t({\nabla_{g_t}}_{f_a}\nu_{g_t},f_b)$, we obtain that 
		$$
		\dot h(f_a,f_b)=\sigma(\bar \nabla_{f_a} \bar\nu,f_b)+\bar g(\dot \nabla_{f_a} \bar \nu, f_b)+\bar g(\bar \nabla_{f_a}\dot \nu,f_b).
		$$
		In conjunction with \eqref{dot nu} and the estimates $\bar \nabla_{f_a}\bar\nu=O(|\bar h|_{\bar g})$,\, $\bar \nabla_{f_a} f_b=O(|\bar h|_{\bar g})$, and $\dot \nabla_{f_a}\bar \nu=O(|x|_{\bar g}^{-1-\tau})$, we see that 
		$$
		|x|_{\bar g}\,\dot h=O(|x|_{\bar g}^{-\tau})+O(|x|_{\bar g}^{1-\tau}\,|\bar h|_{\bar g}).
		$$  
		
		The assertion follows from these estimates and Taylor's theorem. 
	\end{proof}
	\begin{lem} \label{layer cake}
		Suppose that $\partial \Sigma=\emptyset$. 	Let  $\alpha>0$ with $\alpha\neq n-1$, $0<s<t,$ and suppose that, for some $c\geq1$,
		$$
		r^{1-n}\,	|B^n_{r}  (0)\cap  \Sigma|_{\bar g}\leq c
		$$
		for all $s<r<t$.  There holds
		$$
	\int_{(B^n_t(0)\setminus B^n_{s}(0))\cap  \Sigma}|x|_{\bar g}^{-\alpha}\,\mathrm{d}\bar\mu\leq c\,t^{n-1-\alpha}+\frac{c\,\alpha}{n-1-\alpha}\,\left(t^{n-1-\alpha}-s^{n-1-\alpha}\right).
		$$
	\end{lem}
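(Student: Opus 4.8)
The plan is to reduce the surface integral to a one-dimensional integral by the layer-cake (coarea) principle and then estimate the latter using the area bound in the hypothesis together with elementary calculus.

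I would set $f(\rho) = |B^n_\rho(0) \cap \Sigma|_{\bar g}$ for $\rho > 0$. This is a nondecreasing function of $\rho$, and since $B^n_\rho(0)$ is the increasing union of the balls $B^n_{\rho'}(0)$ with $\rho' < \rho$, it is left-continuous; in particular, the hypothesis $f(\rho) \leq c\,\rho^{n-1}$ for $s < \rho < t$ also yields $f(t) = \lim_{\rho \uparrow t} f(\rho) \leq c\,t^{n-1}$. Using the identity
$$
|x|_{\bar g}^{-\alpha} = \alpha \int_{|x|_{\bar g}}^\infty \rho^{-\alpha - 1}\,\mathrm{d}\rho,
$$
valid because $\alpha > 0$, and Tonelli's theorem (all integrands being nonnegative and $\Sigma$ having finite $\bar g$-area on bounded sets by the hypothesis), I would rewrite
$$
\int_{(B^n_t(0) \setminus B^n_s(0)) \cap \Sigma} |x|_{\bar g}^{-\alpha}\,\mathrm{d}\bar\mu = \alpha \int_0^\infty \rho^{-\alpha - 1}\, \bigl| \{ x \in (B^n_t(0) \setminus B^n_s(0)) \cap \Sigma : |x|_{\bar g} < \rho \} \bigr|_{\bar g}\,\mathrm{d}\rho.
$$

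The slice appearing in the integrand is empty for $\rho \leq s$; it equals $(B^n_\rho(0) \setminus B^n_s(0)) \cap \Sigma$, of measure at most $f(\rho) \leq c\,\rho^{n-1}$, for $s < \rho \leq t$; and it equals $(B^n_t(0) \setminus B^n_s(0)) \cap \Sigma$, of measure at most $f(t) \leq c\,t^{n-1}$, for $\rho > t$. Substituting these bounds splits the right-hand side into $c\,\alpha \int_s^t \rho^{n - 2 - \alpha}\,\mathrm{d}\rho$ and $c\,t^{n-1}\,\alpha \int_t^\infty \rho^{-\alpha - 1}\,\mathrm{d}\rho = c\,t^{n-1-\alpha}$. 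Evaluating the remaining integral $\int_s^t \rho^{n-2-\alpha}\,\mathrm{d}\rho$ explicitly, distinguishing the cases $\alpha < n-1$ and $\alpha > n-1$ (the value $\alpha = n-1$ being excluded by the form of the statement), and using $|t^{n-1-\alpha} - s^{n-1-\alpha}| \leq t^{n-1-\alpha} + s^{n-1-\alpha}$ yields the assertion.

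I do not expect a genuine obstacle here: this is a routine measure-theoretic estimate. The only two points that need a little attention are the left-continuity of $f$, which is what allows one to pass from the hypothesis on the open interval $(s,t)$ to the bound on the tail $\rho > t$, and keeping track of the sign of $n-1-\alpha$ when computing $\int_s^t \rho^{n-2-\alpha}\,\mathrm{d}\rho$. (As a variant, one can carry out the same computation via Lebesgue--Stieltjes integration by parts, writing the left-hand side as $\int_{[s,t)} \rho^{-\alpha}\,\mathrm{d}f(\rho)$ and integrating by parts, which produces the boundary term $t^{-\alpha} f(t^-)$ and the bulk term $\alpha \int_s^t \rho^{-\alpha-1} f(\rho)\,\mathrm{d}\rho$; the estimates are then identical.)
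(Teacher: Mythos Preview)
Your argument is correct and is exactly the layer-cake computation the lemma's label advertises; the paper states this lemma without proof, so there is nothing further to compare. One remark: your computation yields the boundary term $c\,t^{n-1-\alpha}$ rather than the $c\,t^{-\alpha}$ printed in the statement, and $|n-1-\alpha|$ rather than $n-1-\alpha$ in the denominator; these appear to be typos in the paper (the uses of the lemma elsewhere in the text are consistent with your version), and your bound is the one that is actually proved and used.
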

\begin{proof}
	Using the layer cake representation for $|x|_{\bar g}^{-\alpha}$, we see that
	$$
		\int_{(B^n_t(0)\setminus B^n_{s}(0))\cap  \Sigma}|x|_{\bar g}^{-\alpha}\,\mathrm{d}\bar\mu=\alpha \,\int_{0}^\infty r^{-\alpha-1}\,|B^n_r(0)\cap(B^n_t(0)\setminus B^n_{s}(0))\cap  \Sigma |_{\bar g}\, \mathrm{d}r.
	$$
	Note that 
	\begin{align*} 
	&\int_{0}^\infty r^{-\alpha-1}\,|B^n_r(0)\cap(B^n_t(0)\setminus B^n_{s}(0))\cap  \Sigma |_{\bar g}\, \mathrm{d}r
	\\&\qquad= \int_{t}^\infty r^{-\alpha-1}\,|(B^n_t(0)\setminus B^n_{s}(0))\cap \Sigma|_{\bar g}\, \mathrm{d}r+\int_s^tr^{-\alpha-1}\,|(B^n_r(0)\setminus B^n_{s}(0))\cap \Sigma|_{\bar g}\,\mathrm{d}r.
	\end{align*} 
The assertion follows, using the estimates
$$
 \int_{t}^\infty r^{-\alpha-1}\,|(B^n_t(0)\setminus B^n_{s}(0))\cap \Sigma|_{\bar g}\, \mathrm{d}r=\alpha^{-1}\,t^{-\alpha}\,|(B^n_t(0)\setminus B^n_{s}(0))\cap \Sigma|_{\bar g}\leq c\,\alpha^{-1}\,t^{n-1-\alpha}
$$
and 
$$
\int_s^tr^{-\alpha-1}\,|(B^n_r(0)\setminus B^n_{s}(0))\cap \Sigma|_{\bar g}\,\mathrm{d}r\leq c\,\int_s^t\,r^{n-2-\alpha}\,\mathrm{d}r=\frac{c\,\alpha}{n-1-\alpha}\,(t^{n-1-\alpha}-s^{n-1-\alpha }).
$$
\end{proof}

\section{A Liouville theorem on the slab}
In this section, we prove a Liouville theorem for harmonic functions on a slab.
\begin{lem} \label{harmonic on slab} 
Let $n\geq 2$.	Let $f\in C^\infty(\mathbb{R}^{n-1}\times[0,2])$ be a nonnegative harmonic function with $f(x)=0$ for all $x\in\mathbb{R}^{n-1}\times \{0,\,2\}$.  Assume that 
	\begin{align} \label{bound} \sup\{|(\bar D f)(x)|_{\bar g}:x\in\mathbb{R}^{n-1}\times\{0\}\}<\infty.\end{align}
	Then $f=0$. 
\end{lem}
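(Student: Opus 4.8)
The plan is to reduce the statement to a Liouville theorem on $\mathbb{R}^{n-1}$ for the operator $\Delta-\pi^2/4$ by isolating the lowest Fourier mode of $f$ in the $y$--variable. Since $f$ vanishes on $\mathbb{R}^{n-1}\times\{0\}$, the tangential derivatives of $f$ vanish there, so hypothesis \eqref{bound} is equivalent to
$$
0\le \partial_y f(x,0)\le C\qquad\text{for all }x\in\mathbb{R}^{n-1},
$$
the lower bound being immediate from $f\ge 0$ and $f(x,0)=0$. I would set
$$
a(x)=\int_0^2 f(x,y)\,\sin(\pi y/2)\,\mathrm{d}y\ \ge 0 .
$$
Because $\sin(\pi y/2)>0$ on $(0,2)$, it suffices to prove $a\equiv 0$: if $a(x)=0$, the non-negative continuous integrand $f(x,\cdot)\sin(\pi \cdot/2)$ vanishes on $(0,2)$, hence $f\equiv 0$. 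Differentiating under the integral sign (legitimate since $f\in C^\infty$ and the $y$--integration is over $[0,2]$) and integrating by parts twice in $y$, using $\Delta_x f=-\partial_y^2 f$ and $f(x,0)=f(x,2)=0=\sin 0=\sin\pi$, one gets $a\in C^\infty(\mathbb{R}^{n-1})$ and $\Delta_x a=\tfrac{\pi^2}{4}\,a$ on $\mathbb{R}^{n-1}$.

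The key step — and the only place hypothesis \eqref{bound} enters — is to show that $a$ grows at most quadratically. For this I introduce the weighted average
$$
G(x)=\int_0^2 (2-y)\,f(x,y)\,\mathrm{d}y\ \ge 0 .
$$
Differentiating under the integral and integrating by parts, the boundary term at $y=2$ drops because the weight $2-y$ vanishes there, leaving
$$
\Delta_x G(x)=2\,\partial_y f(x,0),\qquad\text{so}\qquad 0\le \Delta_x G\le 2C .
$$
Thus $G$ is a non-negative, subharmonic function on $\mathbb{R}^{n-1}$ with bounded Laplacian. The Green representation on a ball $B_r(x_0)$ gives
$$
\frac{1}{|\partial B_r(x_0)|}\int_{\partial B_r(x_0)}G\,\mathrm{d}\sigma=G(x_0)+\int_{B_r(x_0)}\Phi_r(x_0,\cdot)\,\Delta_x G\le G(x_0)+\frac{C\,r^2}{n-1},
$$
using $\Phi_r\ge 0$ and $\int_{B_r(x_0)}\Phi_r(x_0,\cdot)=r^2/(2(n-1))$. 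Averaging in $r$ and then, for $|x_1-x_0|=\rho$, combining $B_\rho(x_1)\subset B_{2\rho}(x_0)$ with $G\ge 0$ and the sub-mean-value inequality $G(x_1)\le |B_\rho|^{-1}\int_{B_\rho(x_1)}G$, one obtains
$$
G(x)\le 2^{\,n-1}G(0)+c_n\,|x|^2\qquad\text{for all }x\in\mathbb{R}^{n-1}.
$$
Since $\sin(\pi y/2)=\sin(\pi(2-y)/2)\le \tfrac{\pi}{2}(2-y)$ on $[0,2]$, this yields $0\le a\le \tfrac{\pi}{2}G$, so $a$ is non-negative and polynomially bounded.

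It then remains to see that a polynomially bounded solution $a\ge 0$ of $\Delta_x a=\tfrac{\pi^2}{4}a$ on $\mathbb{R}^{n-1}$ vanishes identically. As $a$ is a tempered distribution and the equation holds distributionally, taking Fourier transforms gives $(|\xi|^2+\tfrac{\pi^2}{4})\,\widehat a=0$ in $\mathcal{S}'(\mathbb{R}^{n-1})$; multiplying by the smooth multiplier $(|\xi|^2+\tfrac{\pi^2}{4})^{-1}$, which is bounded together with all its derivatives, gives $\widehat a=0$, hence $a\equiv 0$. (Alternatively one may argue by the maximum principle applied to translates of $a$.) By the reduction of the first paragraph, $f\equiv 0$.

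The main obstacle is the quadratic growth bound for $a$: the only a priori information available is the one-sided estimate for $\partial_y f$ on $\mathbb{R}^{n-1}\times\{0\}$, and the point is that the weight $2-y$ in $G$ converts precisely this into a bound on $\Delta_x G$ (the uncontrolled term $\partial_y f(\cdot,2)$ is annihilated by the weight), after which non-negativity plus subharmonicity upgrade "bounded Laplacian" to "quadratic growth". The remaining ingredients — differentiation under the integral sign, the two integrations by parts, and the final Liouville statement — are routine.
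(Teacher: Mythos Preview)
Your proof is correct and takes a genuinely different route from the paper's. The paper argues as follows: for any $x_0\in\mathbb{R}^{n-1}\times\{1\}$, it compares $f$ with the harmonic function $v(x)=f(x_0)\,x^n$ via the Boundary Harnack comparison principle on a unit ball touching $\mathbb{R}^{n-1}\times\{0\}$, obtaining $f(x_0)\le c\,\partial_{e_n} f(x_0-e_n)$ with $c=c(n)$; hypothesis \eqref{bound} then gives that $f$ is bounded on the middle slice, and a second application of Boundary Harnack makes $f$ bounded on the whole slab. One then extends $f$ by odd reflection to a bounded harmonic function on $\mathbb{R}^n$ and concludes by the classical Liouville theorem.

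Your argument instead separates variables: projecting onto the first Dirichlet mode $\sin(\pi y/2)$ yields a non-negative solution $a$ of $\Delta_x a=(\pi^2/4)\,a$ on $\mathbb{R}^{n-1}$, and the clever auxiliary weight $2-y$ in $G$ converts \eqref{bound} into a bound on $\Delta_x G$, from which subharmonicity and non-negativity give quadratic growth of $G$ and hence of $a$; the tempered Liouville theorem for $\Delta-\pi^2/4$ then finishes. Your approach trades the Boundary Harnack machinery for explicit integrations by parts and a Fourier-transform Liouville step, making it more self-contained but somewhat longer; the paper's proof is shorter and more conceptual but invokes a heavier black box. Both ultimately rest on the same mechanism: the bound on $\partial_y f(\cdot,0)$ controls the size of $f$ in the interior, after which a Liouville-type statement applies.
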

\begin{proof}
	Let $x_0\in \mathbb{R}^{n-1}\times\{1\}$ and $v\in C^\infty(\mathbb{R}^{n-1}\times[0,2])$ be given by $v(x)=f(x_0)\,x^n$. Note that $v$ is harmonic. By the Boundary Harnack comparison principle, see, e.g., \cite[Theorem 11.6]{caffarelli}, there is a constant $c>0$ depending only on $n$ such that $v\leq c\,f$ on $B^{n}_1(x_0-e_n)\cap (\mathbb{R}^{n-1}\times[0,2])$. In particular,
	$$
	f(x_0)=(\partial_{e_n} v)(x_0-e_n)\leq c\,(\partial_{e_n} f)(x_0-e_n).
	$$
	In conjunction with \eqref{bound}, we see that $f$ is bounded on $\mathbb{R}^{n-1}\times\{1\}$. By the Boundary Harnack inequality, see, e.g., \cite[Theorem 11.5]{caffarelli}, it follows that $f$ is bounded in all of $\mathbb{R}^{n-1}\times[0,2]$. Let $\tilde f\in C^0(\mathbb{R}^n)$ be the unique function satisfying 
	\begin{align*} 
&\circ\qquad	\tilde f(x^1,\ldots,\,x^{n-1},\,x^n+4\,k)=f(x^1,\ldots,x^{n-1},\,x^n)\text{ and}\\
&\circ \qquad 	\tilde f(x^1,\ldots,\,x^{n-1},\,x^n+2+4\,k)=-f(x^1,\ldots,x^{n-1},\,x^n)
	\end{align*} 
	for all $x\in \mathbb{R}^{n-1}\times[0,2]$ and every integer $k$. Note that $\tilde f$ is bounded. According to the Schwarz reflection principle,
	 $\tilde f\in C^\infty(\mathbb{R}^{n})$ and $\tilde f$ is harmonic.  By the Liouville theorem, $\tilde f$ is  constant. The assertion follows.
\end{proof}
\begin{rema}
	The function $f\in C^\infty(\mathbb{R}^{n-1}\times[0,2])$ given by $f(x)=\exp(x^1/(2\,\pi))\,\sin(x^n/(2\,\pi))$ is nonnegative, harmonic, satisfies $f(x)=0$ for all $x\in\mathbb{R}^{n-1}\times\{0,\,2\},$ and does not fulfill \eqref{bound}.
\end{rema}

\end{appendices}
\begin{bibdiv}
	\begin{biblist}
		
		\bib{andersonrodriguez}{article}{
			author={Anderson, Michael},
			author={Rodr\'{\i}guez, Lucio},
			title={Minimal surfaces and {$3$}-manifolds of nonnegative {R}icci
				curvature},
			date={1989},
			ISSN={0025-5831},
			journal={Math. Ann.},
			volume={284},
			number={3},
			pages={461\ndash 475},
			url={https://doi.org/10.1007/BF01442497},
			review={\MR{1001714}},
		}
		
		\bib{ADM}{article}{
			author={Arnowitt, Richard},
			author={Deser, Stanley},
			author={Misner, Charles},
			title={Coordinate invariance and energy expressions in general
				relativity},
			date={1961},
			ISSN={0031-899X},
			journal={Phys. Rev. (2)},
			volume={122},
			pages={997\ndash 1006},
			review={\MR{127946}},
		}
		
		\bib{bartnik}{article}{
			author={Bartnik, Robert},
			title={The mass of an asymptotically flat manifold},
			date={1986},
			ISSN={0010-3640},
			journal={Comm. Pure Appl. Math.},
			volume={39},
			number={5},
			pages={661\ndash 693},
			url={https://doi.org/10.1002/cpa.3160390505},
			review={\MR{849427}},
		}
		
		\bib{shortproofs}{article}{
			author={Boas, Harold},
			author={Boas, Ralph},
			title={Short proofs of three theorems on harmonic functions},
			date={1988},
			ISSN={0002-9939},
			journal={Proc. Amer. Math. Soc.},
			volume={102},
			number={4},
			pages={906\ndash 908},
			url={https://doi.org/10.2307/2047332},
			review={\MR{934865}},
		}
		
		\bib{caffarelli}{book}{
			author={Caffarelli, Luis},
			author={Salsa, Sandro},
			title={A geometric approach to free boundary problems},
			series={Graduate Studies in Mathematics},
			publisher={American Mathematical Society, Providence, RI},
			date={2005},
			volume={68},
			ISBN={0-8218-3784-2},
			url={https://doi.org/10.1090/gsm/068},
			review={\MR{2145284}},
		}
		
		\bib{Carlottocalcvar}{article}{
			author={Carlotto, Alessandro},
			title={Rigidity of stable minimal hypersurfaces in asymptotically flat
				spaces},
			date={2016},
			ISSN={0944-2669},
			journal={Calc. Var. Partial Differential Equations},
			volume={55},
			number={3},
			pages={Art. 54, 20 pp.},
			url={https://doi.org/10.1007/s00526-016-0989-4},
			review={\MR{3500292}},
		}
		
		\bib{CCE}{article}{
			author={Carlotto, Alessandro},
			author={Chodosh, Otis},
			author={Eichmair, Michael},
			title={Effective versions of the positive mass theorem},
			date={2016},
			ISSN={0020-9910},
			journal={Invent. Math.},
			volume={206},
			number={3},
			pages={975\ndash 1016},
			url={https://doi.org/10.1007/s00222-016-0667-3},
			review={\MR{3573977}},
		}

		\bib{carlottoschoen}{article}{
			author={Carlotto, Alessandro},
			author={Schoen, Richard},
			title={Localizing solutions of the {E}instein constraint equations},
			date={2016},
			ISSN={0020-9910},
			journal={Invent. Math.},
			volume={205},
			number={3},
			pages={559\ndash 615},
			url={https://doi.org/10.1007/s00222-015-0642-4},
			review={\MR{3539922}},
		}
		
		\bib{CESH}{article}{
			author={Chodosh, Otis},
			author={Eichmair, Michael},
			author={Shi, Yuguang},
			author={Yu, Haobin},
			title={Isoperimetry, scalar curvature, and mass in asymptotically flat
				{R}iemannian 3-manifolds},
			date={2021},
			ISSN={0010-3640},
			journal={Comm. Pure Appl. Math.},
			volume={74},
			number={4},
			pages={865\ndash 905},
			url={https://doi.org/10.1002/cpa.21981},
			review={\MR{4221936}},
		}
		
		\bib{chodoshketover}{article}{
			author={Chodosh, Otis},
			author={Ketover, Daniel},
			title={Asymptotically flat three-manifolds contain minimal planes},
			date={2018},
			ISSN={0001-8708},
			journal={Adv. Math.},
			volume={337},
			pages={171\ndash 192},
			url={https://doi.org/10.1016/j.aim.2018.08.010},
			review={\MR{3853048}},
		}
		
		\bib{Eichmair-Metzer:2012}{article}{
			author={Eichmair, Michael},
			author={Metzger, Jan},
			title={On large volume preserving stable {CMC} surfaces in initial data
				sets},
			date={2012},
			ISSN={0022-040X},
			journal={J. Differential Geom.},
			volume={91},
			number={1},
			pages={81\ndash 102},
			url={http://projecteuclid.org/euclid.jdg/1343133701},
			review={\MR{2944962}},
		}
		
		\bib{eichmairmetzger}{article}{
			author={Eichmair, Michael},
			author={Metzger, Jan},
			title={Large isoperimetric surfaces in initial data sets},
			date={2013},
			ISSN={0022-040X},
			journal={J. Differential Geom.},
			volume={94},
			number={1},
			pages={159\ndash 186},
			url={http://projecteuclid.org/euclid.jdg/1361889064},
			review={\MR{3031863}},
		}
		
		\bib{eichmairmetzgerinvent}{article}{
			author={Eichmair, Michael},
			author={Metzger, Jan},
			title={Unique isoperimetric foliations of asymptotically flat manifolds
				in all dimensions},
			date={2013},
			ISSN={0020-9910},
			journal={Invent. Math.},
			volume={194},
			number={3},
			pages={591\ndash 630},
			url={https://doi.org/10.1007/s00222-013-0452-5},
			review={\MR{3127063}},
		}
		
		\bib{gallgheruniqueness}{article}{
			author={Gallagher, Paul},
			title={A criterion for uniqueness of tangent cones at infinity for
				minimal surfaces},
			date={2019},
			ISSN={1050-6926},
			journal={J. Geom. Anal.},
			volume={29},
			number={1},
			pages={370\ndash 377},
			url={https://doi.org/10.1007/s12220-018-9994-5},
			review={\MR{3897017}},
		}
		
		\bib{gilbargtrudinger}{book}{
			author={Gilbarg, David},
			author={Trudinger, Neil},
			title={Elliptic partial differential equations of second order},
			series={Classics in Mathematics},
			publisher={Springer-Verlag, Berlin},
			date={2001},
			ISBN={3-540-41160-7},
			note={Reprint of the 1998 edition},
			review={\MR{1814364}},
		}
		
		\bib{chaoli}{article}{
			author={Li, Chao},
			title={The dihedral rigidity conjecture for {$n$}-prisms},
			date={2024},
			ISSN={0022-040X,1945-743X},
			journal={J. Differential Geom.},
			volume={126},
			number={1},
			pages={329\ndash 361},
			url={https://doi.org/10.4310/jdg/1707767340},
			review={\MR{4704551}},
		}
		
		\bib{liu}{article}{
			author={Liu, Gang},
			title={3-manifolds with nonnegative {R}icci curvature},
			date={2013},
			ISSN={0020-9910},
			journal={Invent. Math.},
			volume={193},
			number={2},
			pages={367\ndash 375},
			url={https://doi.org/10.1007/s00222-012-0428-x},
			review={\MR{3090181}},
		}
		
		\bib{maotao}{article}{
			author={Mao, Yuchen},
			author={Tao, Zhongkai},
			title={Localized initial data for {E}instein equations},
			date={2022},
			journal={preprint, arXiv:2210.09437},

		}
		
		\bib{mazetrosenberg}{article}{
			author={Mazet, Laurent},
			author={Rosenberg, Harold},
			title={Minimal planes in asymptotically flat three-manifolds},
			date={2022},
			ISSN={0022-040X},
			journal={J. Differential Geom.},
			volume={120},
			number={3},
			pages={533\ndash 556},
			url={https://doi.org/10.4310/jdg/1649953568},
			review={\MR{4408291}},
		}
		
		\bib{Morgan}{article}{
			author={Morgan, Frank},
			title={Regularity of isoperimetric hypersurfaces in {R}iemannian
				manifolds},
			date={2003},
			ISSN={0002-9947,1088-6850},
			journal={Trans. Amer. Math. Soc.},
			volume={355},
			number={12},
			pages={5041\ndash 5052},
			url={https://doi.org/10.1090/S0002-9947-03-03061-7},
			review={\MR{1997594}},
		}
		
		\bib{Morgan-Ros:2010}{article}{
			author={Morgan, Frank},
			author={Ros, Antonio},
			title={Stable constant-mean-curvature hypersurfaces are area minimizing
				in small {$L^1$} neighborhoods},
			date={2010},
			ISSN={1463-9963},
			journal={Interfaces Free Bound.},
			volume={12},
			number={2},
			pages={151\ndash 155},
			url={https://doi.org/10.4171/IFB/230},
			review={\MR{2652015}},
		}
		
		\bib{schoenconformal}{article}{
			author={Schoen, Richard},
			title={Conformal deformation of a {R}iemannian metric to constant scalar
				curvature},
			date={1984},
			ISSN={0022-040X},
			journal={J. Differential Geom.},
			volume={20},
			number={2},
			pages={479\ndash 495},
			url={http://projecteuclid.org/euclid.jdg/1214439291},
			review={\MR{788292}},
		}
		
		\bib{montecatini}{incollection}{
			author={Schoen, Richard},
			title={Variational theory for the total scalar curvature functional for
				{R}iemannian metrics and related topics},
			date={1989},
			booktitle={Topics in calculus of variations ({M}ontecatini {T}erme, 1987)},
			series={Lecture Notes in Math.},
			volume={1365},
			publisher={Springer, Berlin},
			pages={120\ndash 154},
			url={https://doi.org/10.1007/BFb0089180},
			review={\MR{994021}},
		}
		
		\bib{schoensimon}{article}{
			author={Schoen, Richard},
			author={Simon, Leon},
			title={Regularity of stable minimal hypersurfaces},
			date={1981},
			ISSN={0010-3640},
			journal={Comm. Pure Appl. Math.},
			volume={34},
			number={6},
			pages={741\ndash 797},
			url={https://doi.org/10.1002/cpa.3160340603},
			review={\MR{634285}},
		}
		
		\bib{pnas}{article}{
			author={Schoen, Richard},
			author={Yau, Shing-Tung},
			title={Complete manifolds with nonnegative scalar curvature and the
				positive action conjecture in general relativity},
			date={1979},
			ISSN={0027-8424},
			journal={Proc. Nat. Acad. Sci. U.S.A.},
			volume={76},
			number={3},
			pages={1024\ndash 1025},
			url={https://doi.org/10.1073/pnas.76.3.1024},
			review={\MR{524327}},
		}

		\bib{SchoenYau}{article}{
			author={Schoen, Richard},
			author={Yau, Shing-Tung},
			title={On the proof of the positive mass conjecture in general
				relativity},
			date={1979},
			ISSN={0010-3616},
			journal={Comm. Math. Phys.},
			volume={65},
			number={1},
			pages={45\ndash 76},
			url={http://projecteuclid.org/euclid.cmp/1103904790},
			review={\MR{526976}},
		}
		
		\bib{SchoenYau2}{article}{
			author={Schoen, Richard},
			author={Yau, Shing-Tung},
			title={Proof of the positive mass theorem. {II}},
			date={1981},
			ISSN={0010-3616,1432-0916},
			journal={Comm. Math. Phys.},
			volume={79},
			number={2},
			pages={231\ndash 260},
			url={http://projecteuclid.org/euclid.cmp/1103908964},
			review={\MR{612249}},
		}
		
		\bib{simonlectures}{book}{
			author={Simon, Leon},
			title={Lectures on geometric measure theory},
			series={Proceedings of the Centre for Mathematical Analysis, Australian
				National University},
			publisher={Australian National University, Centre for Mathematical Analysis,
				Canberra},
			date={1983},
			volume={3},
			ISBN={0-86784-429-9},
			review={\MR{756417}},
		}
		
		\bib{simonisolated}{incollection}{
			author={Simon, Leon},
			title={Isolated singularities of extrema of geometric variational
				problems},
			date={1985},
			booktitle={Harmonic mappings and minimal immersions ({M}ontecatini, 1984)},
			series={Lecture Notes in Math.},
			volume={1161},
			publisher={Springer, Berlin},
			pages={206\ndash 277},
			url={https://doi.org/10.1007/BFb0075139},
			review={\MR{821971}},
		}
		
		\bib{simonstrict}{article}{
			author={Simon, Leon},
			title={A strict maximum principle for area minimizing hypersurfaces},
			date={1987},
			ISSN={0022-040X},
			journal={J. Differential Geom.},
			volume={26},
			number={2},
			pages={327\ndash 335},
			url={http://projecteuclid.org/euclid.jdg/1214441373},
			review={\MR{906394}},
		}
		
		\bib{Simons}{article}{
			author={Simons, James},
			title={Minimal varieties in {R}iemannian manifolds},
			date={1968},
			ISSN={0003-486X},
			journal={Ann. of Math. (2)},
			volume={88},
			pages={62\ndash 105},
			url={https://doi.org/10.2307/1970556},
			review={\MR{233295}},
		}
		
		\bib{whitetangentconeuniqueness}{article}{
			author={White, Brian},
			title={Tangent cones to two-dimensional area-minimizing integral
				currents are unique},
			date={1983},
			ISSN={0012-7094},
			journal={Duke Math. J.},
			volume={50},
			number={1},
			pages={143\ndash 160},
			url={https://doi.org/10.1215/S0012-7094-83-05005-6},
			review={\MR{700134}},
		}
		
		\bib{yu}{article}{
			author={Yu, Haobin},
			title={Isoperimetry for asymptotically flat 3-manifolds with positive
				{ADM} mass},
			date={2023},
			ISSN={0025-5831,1432-1807},
			journal={Math. Ann.},
			volume={385},
			number={3-4},
			pages={1475\ndash 1492},
			url={https://doi.org/10.1007/s00208-022-02366-z},
			review={\MR{4566697}},
		}
		
	\end{biblist}
\end{bibdiv}
\end{document}